\def\sAd{\sA{\rm d}}
\numberwithin{equation}{section}
\begin{document}
\title{Higher Kac-Moody algebras and symmetries of holomorphic field theories}

\author{Owen Gwilliam}
\address{Department of Mathematics and Statistics \\
Lederle Graduate Research Tower, 1623D \\
University of Massachusetts Amherst \\
710 N. Pleasant Street}
\email{gwilliam@math.umass.edu}

\author{Brian Williams}
\address{Department of Mathematics, 
Northeastern University \\ 
567 Lake Hall \\ 
Boston, MA 02115 \\ U.S.A.}
\email{br.williams@northeastern.edu}

\begin{abstract}
We introduce a higher dimensional generalization of the affine Kac-Moody algebra using the language of factorization algebras. 
In particular, on any complex manifold there is a factorization algebra of "currents" associated to any Lie algebra. 
We classify local cocycles of these current algebras, and compare them to central extensions of higher affine algebras recently proposed by Faonte-Hennion-Kapranov. 
A central goal of this paper is to witness higher Kac-Moody algebras as symmetries of a class of holomorphic quantum field theories. 
In particular, we prove a generalization of the free field realization of an affine Kac-Moody algebra and also develop the theory of $q$-characters for this class of algebras in terms of factorization homology.
Finally, we exhibit the ``large N" behavior of higher Kac-Moody algebras and their relationship to symmetries of non-commutative field theories. 
\end{abstract}

\maketitle
\thispagestyle{empty}

\newpage
\tableofcontents

\newpage

\section*{}

The loop algebra $L\fg = \fg[z,z^{-1}]$, consisting of Laurent polynomials valued in a Lie algebra $\fg$,
admits a non-trivial central extension $\Hat{\fg}$ for each choice of invariant pairing on $\fg$.
This affine Lie algebra and its cousin, the Kac-Moody vertex algebra, are foundational objects in representation theory and conformal field theory. 
A natural question then arises: do there exists multivariable, or higher dimensional, generalizations of the affine Lie algebra and Kac-Moody vertex algebra? 

In this work, we pursue two independent yet related goals:
 
\begin{enumerate}
\item Use factorization algebras to study the (co)sheaf of Lie algebra-valued currents on complex manifolds, and their relationship to higher affine algebras;
\item Develop tools for understanding symmetries of {\em holomorphic field theory} in any dimension, that provide a systematic generalization of methods used in chiral conformal field theory on Riemann surfaces.
\end{enumerate}

Concretely, for every complex dimension $d$ and to every Lie algebra, we define a factorization algebra defined on all $d$-dimensional complex manifolds. 
There is also a version that works for an arbitrary principal bundle. 
When $d=1$, it is shown in \cite{CG1}, that this factorization algebra recovers the ordinary affine algebra by restricting the factorization algebra to the punctured complex line $C^*$. 
When $d > 1$, part of our main result is to show how the factorization algebra on $\CC^d \setminus \{0\}$ recovers a higher dimensional central extensions of $\fg$-valued functions on the punctured plane. 
A model for these ``higher affine algebras" has recently appeared in work of Faonte-Hennion-Kapranov \cite{FHK}, and we will give a systematic relationship between our approaches. 

By a standard procedure, there is a way of enhancing the affine algebra to a vertex algebra. 
The so-called Kac-Moody vertex algebra, as developed in \cite{IgorKM, KacVertex, BorcherdsVertex}, is important in its own right to representation theory and conformal field theory. 
In \cite{CG1} it is also shown how the holomorphic factorization algebra associated to a Lie algebra recovers this vertex algebra. 
The key point is that the OPE is encoded by the factorization product between disks embedded in $\CC$. 
Our proposed factorization algebra, then, provides a higher dimensional enhancement of this vertex algebra through the factorization product of balls or polydisks in $\CC^d$. 
This structure can be thought of as a holomorphic analog of an algebra over the operad of little $d$-disks.

It is the general philosophy of \cite{CG1,CG2} that every quantum field theory defines a factorization algebra of observables.
This perspective allows us to realize the higher Kac-Moody algebra inside of familiar higher dimensional field theories. 
In particular, this philosophy leads to higher dimensional analogs of free field realization via a quantum field theory called the $\beta\gamma$ system, which is defined on any complex manifold. 

In complex dimension one, a vertex algebra is a gadget associated to any conformal field theory that completely determines the algebra of local operators.  
More recently, vertex algebras have been extracted from higher dimensional field theories, such as $4$-dimensional gauge theories~\cite{Beem1,Beem2}. 

A future direction, which we do not undertake here, would be to use these higher dimensional vertex algebras as a more refined invariant of the quantum field theory. 

Before embarking on our main results, we take some time to motivate higher dimensional current algebras from two different perspectives. 

\subsection*{A view from physics}

In conformal field theory, the Kac-Moody algebra appears as the symmetry of a system with an action by a Lie algebra. 
A generic example is a flavor symmetry of a field theory where the matter takes values in some representation.
In ordinary 2d chiral CFT, the central extension appears as the failure of the classical Lie bracket on $\fg$-valued currents to be compatible with the OPE. 
This is measured by the charge anomaly, which occurs as a $2$-point function in the CFT. 

This paper is concerned with symmetries for holomorphic theories in any complex dimension. 
Classically, the story is completely analogous to the ordinary picture in chiral CFT: for holomorphic theories, the action by a Lie algebra is enhanced to a symmetry by an infinite dimensional Lie algebra of currents on the $S^{2d-1}$-modes of the holomorphic theory. 
This current algebra is an algebraic version of the sphere mapping space $\Map(S^{2d-1}, \fg)$. 

In any dimension, there is a chiral charge anomaly for the class of holomorphic field theories that we study, which measures the failure of quantizing the classical symmetry. 
In complex dimension $2$ (real dimension $4$), for instance, the anomaly is a holomorphic version of the Adler-Bardeen-Jackiw anomaly \cite{Adler, BJ}.
In terms of supersymmetric field theory, the anomaly is the holomorphic twist of the Konishi anomaly \cite{Konishi}. 
For a general form of the anomaly in our situation, we refer to Section \ref{sec: qft}, where we consider a general class of theories with ``holomorphic matter". 

Throughout this paper, we use ideas and techniques from the Batalin-Vilkovisky formalism, as articulated by Costello, and from the theory of factorization algebras, following \cite{CG1,CG2}.
In this introduction, however, we will try to explain the key objects and constructions with a light touch,
in a way that does not require familiarity with that formalism,
merely comfort with basic complex geometry and ideas of quantum field theory.

A running example is the following version of the $\beta\gamma$ system.

Let $X$ be a complex $d$-dimensional manifold.
Let $G$ be a complex algebraic group, such as $GL_n(\CC)$, 
and let $P \to X$ be a holomorphic principal $G$-bundle.
Fix a finite-dimensional $G$-representation $V$ and let $V^\vee$ denote the dual vector space with the natural induced $G$-action.
Let $\cV \to X$ denote the holomorphic associated bundle $P \times^G V$, 
and let $\cV^! \to X$ denote the holomorphic bundle $K_X \otimes \cV^\vee$,
where $\cV^* \to X$ is the holomorphic associated bundle $P \times^G V^*$.
Note that there is a natural fiberwise pairing
\[
\langle-,-\rangle: \cV \otimes \cV^! \to K_X \footnote{The shriek denotes the Serre dual, $\sV^! = K_X \tensor V^\vee$.}
\]
arising from the evaluation pairing between $V$ and~$V^\vee$.

The field theory involves fields $\gamma$, for a smooth section of $\cV$, and $\beta$, for a smooth section of $\Omega^{0,d-1} \tensor \sV^\vee$.
Here, $\sV^\vee$ denotes the dual bundle. 
The action functional is
\[
S(\beta,\gamma) = \int_X \langle \beta, \dbar \gamma \rangle,
\]
so that the equations of motion are
\[
\dbar \gamma = 0 = \dbar \beta.
\]
Thus, the classical theory is manifestly holomorphic: it picks out holomorphic sections of $\cV$ and $\cV^!$ as solutions.

The theory also enjoys a natural symmetry with respect to $G$,
arising from the $G$-action on $\cV$ and $\cV^!$.
For instance, if $\dbar \gamma = 0$ and $g \in G$, then the section $g \gamma$ is also holomorphic.
In fact, there is a local symmetry as well.
Let ${\rm ad}(P) \to X$ denote the Lie algebra-valued bundle $P \times^G \fg \to X$ arising from the adjoint representation $\ad(G)$.
Then a holomorphic section $f$ of $\ad(P)$ acts on a holomorphic section $\gamma$ of $\cV$,
and 
\[
\dbar(f \gamma) =  (\dbar f) \gamma + f \dbar \gamma = 0,
\]
so that the sheaf of holomorphic sections of $\ad(P)$ encodes a class of local symmetries of this classical theory.

If one takes a BV/BRST approach to field theory, as we will in this paper,
then one works with a cohomological version of fields and symmetries.
For instance, it is natural to view the classical fields as consisting of the graded vector space of Dolbeault forms
\[
\gamma \in \Omega^{0,*}(X,\cV) \quad \text{and} \quad \beta \in \Omega^{0,*}(X, \cV^!) \cong \Omega^{d,*}(X, \cV^*),
\]
but using the same action functional, extended in the natural way.
As we are working with a free theory and hence have only a quadratic action,
the equations of motion are linear and can be viewed as equipping the fields with the differential $\dbar$.
In this sense, the sheaf $\cE$ of solutions to the equations of motion can be identified with the elliptic complex that assigns to an open set $U \subset X$, the complex
\[
\cE(U) = \Omega^{0,*}(U,\cV) \oplus \Omega^{0,*}(U, \cV^!),
\]
with $\dbar$ as the differential.
This dg approach is certainly appealing from the perspective of complex geometry,
where one routinely works with the Dolbeault complex of a holomorphic bundle.

It is natural then to encode the local symmetries in the same way.
Let $\sAd(P)$ denote the Dolbeault complex of ${\rm ad}(P)$ viewed as a sheaf.
That is, it assigns to the open set $U \subset X$, the dg Lie algebra 
\[
\sAd(P)(U) = \Omega^{0,*}(U,\ad(P))
\]
with differential $\dbar$ for this bundle.
By construction, $\sAd(P)$ acts on $\cE$.
In words, $\cE$ is a sheaf of dg modules for the sheaf of dg Lie algebra~$\sAd(P)$.

So far, we have simply lifted the usual discussion of symmetries to a dg setting,
using standard tools of complex geometry.
We now introduce a novel maneuver that is characteristic of the BV/factorization package of~\cite{CG1,CG2}.

The idea is to work with compactly supported sections of $\sAd(P)$, 
i.e., to work with the precosheaf $\sAd(P)_c$ of dg Lie algebras that assigns to an open $U$,
the dg Lie algebra
\[
\sAd(P)_c(U) = \Omega^{0,*}_c(U,\ad(P)).
\]
The terminology {\em precosheaf} encodes the fact that there is natural way to extend a section supported in $U$ to a larger open $V \supset U$ (namely, extend by zero),
and so one has a functor $\sAd(P) \colon {\rm Opens}(X) \to {\rm Alg}_{\rm Lie}$.

There are several related reasons to consider compact support.\footnote{In Section \ref{sec: fact} we extract factorization algebras from $\sAd(P)_c$,
and then extract associative and vertex algebras of well-known interest.
We postpone discussions within that framework till that section.}
First, it is common in physics to consider compactly-supported modifications of a field.
Recall the variational calculus, where one extracts the equations of motion by working with precisely such first-order perturbations.
Hence, it is natural to focus on such symmetries as well.
Second, one could ask how such compactly supported actions of $\sAd(P)$ affect observables.
More specifically, one can ask about the charges of the theory with respect to this local symmetry.\footnote{We remark that it is precisely this relationship with traditional physical terminology of currents and charges that led de Rham to use {\em current} to mean a distributional section of the de Rham complex.}
Third---and this reason will become clearer in a moment---the anomaly that appears when trying to quantize this symmetry are naturally local in $X$, and hence it is encoded by a kind of Lagrangian density $L$ on sections of $\sAd(P)$.
Such a density only defines a functional on compactly supported sections,
since when evaluated a noncompactly supported section $f$, the density $L(f)$ may be non-integrable.
Thus $L$ determines a central extension of $\sAd(P)_c$ as a precosheaf of dg Lie algebras,
but not as a sheaf.\footnote{We remark that to stick with sheaves, one must turn to quite sophisticated tools \cite{WittenGr, GetzlerGM, ManBeilSch} that can be tricky to interpret, much less generalize to higher dimension, whereas the cosheaf-theoretic version is quite mundane and easy to generalize, as we'll see.}

Let us sketch how to make these reasons explicit.
The first step is to understand how $\sAd(P)_c$ acts on the observables of this theory.

Modulo functional analytic issues,
we say that the observables of this classical theory are the commutative dg algebra
\[
(\Sym(\Omega^{0,*}(X,\cV)^* \oplus \Omega^{0,*}(X, \cV^!)^*), \dbar),
\]
i.e., the polynomial functions on $\cE(X)$.
More accurately, we work with a commutative dg algebra essentially generated by the continuous linear functionals on $\cE(X)$, 
which are compactly supported distributional sections of certain Dolbeault complexes ({\it aka} Dolbeault currents).
We could replace $X$ by any open set $U \subset X$, 
in which case the observables with support in $U$ arise from such distributions supported in $U$.
We denote this commutative dg algebra by $\Obs^{cl}(U)$.
Since observables on an open $U$ extend to observables on a larger open $V \supset U$,
we recognize that $\Obs^{cl}$ forms a precosheaf.

Manifestly, $\sAd(P)_c(U)$ acts on $\Obs^{cl}(U)$,
by precomposing with its action on fields.
Moreover, these actions are compatible with the extension maps of the precosheaves,
so that $\Obs^{cl}$ is a module for $\sAd(P)_c$ in precosheaves of cochain complexes.
This relationship already exhibits why one might choose to focus on $\sAd(P)_c$,
as it naturally intertwines with the structure of the observables.

But Noether's theorem provides a further reason,
when understood in the context of the BV formalism.
The idea is that $\Obs^{cl}$ has a Poisson bracket $\{-,-\}$ of degree 1
(although there are some issues with distributions here that we suppress for the moment).
Hence one can ask to realize the action of $\sAd(P)_c$ via the Poisson bracket.
In other words, we ask to find a map of (precosheaves of) dg Lie algebras
\[
J \colon \sAd(P)_c \to \Obs^{cl}[-1]
\]
such that for any $f \in \sAd(P)_c(U)$ and $F \in \Obs^{cl}(U)$,
we have
\[
f \cdot F = \{J(f),F\}.
\]
Such a map would realize every symmetry as given by an observable,
much as in Hamiltonian mechanics.

In this case, there is such a map:
\[
J(f)(\gamma,\beta) = \int_U \langle\beta, f \gamma\rangle.
\]
This functional is local, and it is natural to view it as describing the ``minimal coupling'' between our free $\beta\gamma$ system and a kind of gauge field implicit in $\sAd(P)$.
This construction thus shows again that it is natural to work with compactly supported sections of $\sAd(P)$,
since it allows one to encode the Noether map in a natural way.
We call $\sAd(P)_c$ the Lie algebra of {\em classical currents} as we have explained how, via $J$, we realize these symmetries as classical observables.

\begin{rmk}
We remark that it is not always possible to produce such a Noether map,
but the obstruction always determines a central extension of $\sAd(P)_c$ as a precosheaf of dg Lie algebras,
and one can then produce such a map to the classical observables.
\end{rmk}

In the BV formalism, quantization amounts to a deformation of the differential on $\Obs^{cl}$,
where the deformation is required to satisfy certain properties.
Two conditions are preeminent:
\begin{itemize}
\item the differential satisfies a {\em quantum master equation}, which ensures that $\Obs^q(U)[-1]$ is still a dg Lie algebra via the bracket,\footnote{Again, we are suppressing---for the moment important---issues about renormalization, which will play a key role when we get to the real work.} and
\item it respects support of observables so that $\Obs^q$ is still a precosheaf.
\end{itemize}
The first condition is more or less what  BV quantization means, 
whereas the second is a version of the locality of field theory.

We can now ask whether the Noether map $J$ determines a map of precosheaves of dg Lie algebras from $\sAd(P)_c$ to $\Obs^q[-1]$.
Since the Lie bracket has not changed on the observables, 
the only question is where $J$ is a cochain map for the new differential $\d^q$
If we write $\d^q = \d^{cl} + \hbar \Delta$,\footnote{By working with smeared observables, one really can work with the naive BV Laplacian $\Delta$. Otherwise, one must take a little more care.} then 
\[
[\d,J] = \hbar \Delta \circ J.
\]
Naively---i.e., ignoring renormalization issues---this term is the functional $ob$ on $\sAd(P)_c$ given 
\[
ob(f) = \int \langle f K_\Delta \rangle,
\]
where $K_\Delta$ is the integral kernel for the identity with respect to the pairing $\langle-,-\rangle$.
(It encodes a version of the trace of $f$ over $\cE$.)
This obstruction, when examined with care, provides a holomorphic analogue of the ABJ and Konishi anomalies.

This functional $ob$ is a cocycle in Lie algebra cohomology for $\sAd(P)$ and hence determines a central extension $\widehat{\sAd(P)}_c$ as precosheaves of dg Lie algebras.
It is the Lie algebra of {\em quantum} currents, as there is a lift of $J$ to a map $J^q$ out of this extension to the quantum observables.

\subsection*{A view from geometry}

There is also a strong motivation for the algebras we consider from the perspective of the geometry of mapping spaces. 
There is an embedding $\fg[z,z^{-1}] \hookrightarrow C^\infty(S^1) \tensor \fg = {\rm Map}(S^1, \fg)$, induced by the embedding of algebraic functions on punctured affine line inside of smooth functions on $S^1$. 
Thus, a natural starting point for $d$-dimensional affine algebras is the ``sphere algebra" 
\beqn\label{mapping space}
{\rm Map}(S^{2d-1}, \fg) ,
\eeqn
where we view $S^{2d-1}$ sitting inside punctured affine space~$\pAA^d = \CC^d \setminus \{0\}$. 

When $d=1$, affine algebras are given by extensions $L\fg$ prescribed by a $2$-cocycle involving the algebraic residue pairing. 
Note that this cocycle is {\em not} pulled back from any cocycle on $\sO_{\rm alg}(\AA^1) \tensor \fg = \fg[z]$. 

When $d > 1$, Hartog's theorem implies that the space of holomorphic functions on punctured affine space is the same as the space of holomorphic functions on affine space.
The same holds for algebraic functions, so that $\sO_{\rm alg}(\pAA^{d}) \tensor \fg = \sO_{\rm alg}(\AA^d) \tensor \fg$. 
In particular, the naive generalization $\sO_{\rm alg}(\pAA^{d}) \tensor \fg$ of (\ref{mapping space}) has no interesting central extensions. 
However, in contrast with the punctured line, the punctured affine space $\pAA^{d}$ has interesting higher cohomology. 

The key idea is to replace the commutative algebra $\cO_{\rm alg}(\pAA^{d})$ by the derived space of functions $\RR \Gamma(\pAA^{d}, \sO_{\rm alg})$. 
This complex has interesting cohomology and leads to nontrivial extensions of the Lie algebra object $\RR \Gamma(\pAA^{d}, \sO) \tensor \fg$, as well as its Dolbeault model $\Omega^{0,*}(\pAA^d) \tensor \fg$.
Faonte-Hennion-Kapranov \cite{FHK} have provided a systematic exploration of this situation.

Our starting point is to work in the style of complex differential geometry and use the sheaf of $\fg$-valued Dolbeault forms $\Omega^{0,*}(X, \fg)$, defined on any complex manifold $X$. 
We deem this sheaf of dg Lie algebras---or rather its cosheaf version $\sG_X = \Omega^{0,*}_c(X, \fg)$---the {\em holomorphic $\fg$-valued currents} on~$X$. 
We will see that there exists cocycls on this sheaf of dg Lie algebras that give rise to interesting extensions of the factorization algebra~$\clieu_*\sG_X$,
which serve as our model for a higher dimensional Kac-Moody algebra. 
Section~\ref{sec:FHK} is devoted to relating our construction to that in~\cite{FHK}.

A novel facet of this paper is that we enhance this Lie algebraic object to a {\em factorization algebra} on the manifold~$X$
by working with whe Lie algebra chains $\clieu_*\sG_X$ of this cosheaf.
It serves as a higher dimensional analog of the chiral enveloping algebra of $\fg$ introduced by Beilinson and Drinfeld \cite{BD}, 
and it yields a higher dimensional generalization of the vertex algebra of a Kac-Moody algebra. 

Analogs of important objects over Riemann surfaces arise from this new construction.
For instance, we obtain a version of bundles of conformal blocks from our higher Kac-Moody algebras:
factorization algebras are local-to-global objects, and one can take the global sections
(sometimes called the factorization or chiral homology).
In this paper we explicitly examine the factorization homology on Hopf manifolds,
which provide a systematic generalization of elliptic curves 
in the sense that their underlying manifolds are diffeomorphic to $S^1 \times S^{2d-1}$.
Due to the appearance $S^1$, one finds connections with traces.
As one might hope, these Hopf manifolds form moduli and so one can obtain, in principle, generalizations of $q$-character formulas.
(Giving explicit formulas is deferred to a future work.)

Another key generalization is given by natural determinant lines on moduli of bundles.
Any finite-dimensional representation $V$ of the Lie algebra $\fg$ determines a line bundle over the moduli of bundles on a complex manifold~$X$: 
take the determinant of the Dolbeault cohomology of the associated holomorphic vector bundle $\cV$ over~$X$.
In \cite{FHK} they use derived algebraic geometry to provide a higher Kac-Moody uniformization for complex $d$-folds and discuss these determinant lines.
We offer a complementary perspective: such a determinant line appears as the global sections of a certain factorization algebra on $X$ determined by the vector bundle~$\cV$.
That is, there is another factorization algebra whose bundle of conformal blocks encodes this determinant.
We construct this factorization algebra as observables of a quantum field theory,
as generalizations of the $bc$ and $\beta\gamma$ systems.\footnote{To be more precise, our construction uses formal derived geometry and works on the formal neighborhood of any point on the moduli of bundles. 
Properly taking into account the global geometry would require more discussion.}
In short, by combining \cite{FHK} with our results, 
there seems to emerge a systematic, higher-dimensional extension of the beautiful, rich dialogue between representation theory of infinite-dimensional Lie algebras, complex geometry, and conformal field theory.

\subsection*{Acknowledgements}

We have intermittently worked on this project for several years,
and our collaboration benefited from shared time at the Max Planck Institute for Mathematics in Bonn, Germany and the Perimeter Institute for Physics in Waterloo, Canada.
We thank both institutions for their support and for their convivial atmosphere.
Most of these results appeared first in Chapter 4 of the second author's Ph.D. thesis \cite{BWthesis},
of which this paper is a revised and enhanced version. 
The second author would therefore like to thank Northwestern University, where he received support as a graduate student whilst most of this work took place.
In addition, the second author enjoyed support as a graduate student research fellow under NSF Award DGE-1324585. 

In addition to institutional support, we received guidance and feedback from Kevin Costello, Giovanni Faonte, Benjamin Hennion, and Mikhail Kapranov. 
We are also grateful for Eugene Rabinovich's careful reading of the first arXiv submitted version of this paper, his helpful suggestions and feedback improved the overall quality of the exposition. 
Thank you!

\section{Current algebras on complex manifolds}
\label{sec: fact}

This paper takes general definitions and constructions from \cite{CG1} and specializes them to the context of complex manifolds.
In this subsection we will review some of the key ideas but refer to \cite{CG1} for foundational results.

\begin{rmk}
It might help to bear in mind the one-dimensional case that we wish to extend. 
Let $\Sigma$ be a Riemann surface, and let $\fg$ be a simple Lie algebra with Killing form $\kappa$.
Consider the local Lie algebra $\sG_\Sigma = \Omega^{0,*}_c(\Sigma) \tensor \fg$ on $\Sigma$.
There is a natural cocycle depending precisely on two inputs:
\[
\theta( \alpha \otimes M, \beta \otimes N) = \kappa(M,N) \, \int_\Sigma \alpha \wedge \partial \beta  ,
\]
where $\alpha, \beta \in \Omega^{0,*}_c(\Sigma)$ and $M,N \in \fg$.
In Chapter 5 of \cite{CG1} it is shown how the twisted enveloping factorization algebra of $\sG_\Sigma$ for this cocycle recovers the Kac-Moody vertex algebra associated to the affine algebra extending~$L\fg = \fg[z,z^{-1}]$.
\end{rmk}

\subsection{Local Lie algebras}

A key notion for us is a sheaf of Lie algebras on a smooth manifold.
These often appear as infinitesimal automorphisms of geometric objects,
and hence as symmetries in classical field theories.

\begin{dfn} 
A {\em local Lie algebra} on a smooth manifold $X$ is 
\begin{itemize}
\item[(i)] a $\ZZ$-graded vector bundle $L$ on $X$ of finite total rank;
\item[(ii)] a degree 1 operator $\ell_1:\sL^{sh} \to \sL^{sh}$ on the sheaf $\sL^{sh}$ of smooth sections of~$L$, and
\item[(iii)] a degree 0 bilinear operator
\[
\ell_2 : \sL^{sh} \times \sL^{sh} \to \sL^{sh}
\]
\end{itemize}
such that $\ell_1^2 = 0$, $\ell_1$ is a differential operator, $\ell_2$ is a bidifferential operator,
\[
\ell_1(\ell_2(x,y)) = \ell_2(\ell_1(x), y) + (-1)^{|x|} \ell_2(x, \ell_1(y))
\]
and the graded Jacobi identity holds
\[
(-1)^{|x||z|} \ell_2(x,\ell_2(y,z)) + (-1)^{|x||y|} \ell_2(y, \ell_2(z,x)) + (-1)^{|y||z|} \ell_2(z,(x,y)) = 0,
\]
for any sections $x,y,z$ of $\sL^{sh}$ of degree $|x|, |y|, |z|$, respectively. 
We call $\ell_1$ the {\em differential} and $\ell_2$ the {\em bracket}.
\end{dfn}

In other words, a local Lie algebra is a sheaf of dg Lie algebras 
where the underlying sections are smooth sections of a vector bundle and 
where the operations are local in the sense of not enlarging support of sections. 
(As we will see, such Lie algebras often appear by acting naturally on the local functionals from physics, namely functionals determined by Lagrangian densities.)

\begin{rmk}
For a local Lie algebra, we reserve the more succinct notation $\sL$ to denote the precosheaf of {\em compactly supported} sections of $L$,
which assigns a dg Lie algebra to each open set $U \subset X$, 
since the differential and bracket respect support.
At times we will abusively refer to $\sL$ to mean the data determining the local Lie algebra,
when the support of the sections is not relevant to the discussion at hand.
\end{rmk}

The key examples for this paper all arise from studying the symmetries of holomorphic principal bundles.
We begin with the specific and then examine a modest generalization.

Let $\pi : P \to X$ be a holomorphic principal $G$-bundle over a complex manifold.
We use $\ad(P) \to X$ to denote the associated {\em adjoint bundle} $P \times^{G} \fg \to X$, 
where the Borel construction uses adjoint action of $G$ on $\fg$ from the left. 
The complex structure defines a $(0,1)$-connection $\dbar_P : \Omega^{0,q}(X ; \ad(P)) \to \Omega^{0,q+1}(X ; \ad(P))$
on the Dolbeault forms with values in the adjoint bundle,
and this connection satisfies $\dbar_P^2 = 0$.
Note that the Lie bracket on $\fg$ induces a pointwise bracket on smooth sections of $\ad(P)$~by
\[
[s,t](x) = [s(x),t(x)]
\]
where $s, t$ are sections and $x$ is a point in $X$.
This bracket naturally extends to Dolbeault forms with values in the adjoint bundle,
as the Dolbeault forms are a graded-commutative algebra.

\begin{dfn}\label{dfn: adjoint local}
For $\pi : P \to X$ a holomorphic principal $G$-bundle,
let $\sAd(P)^{sh}$ denote the local Lie algebra whose sections are $\Omega^{0,*}(X,\ad(P))$,
whose differential is $\dbar_P$, and whose bracket is the pointwise operation just defined above.
\end{dfn}

The dg Lie algebra $\sAd(P)^{sh}(X)$ controls formal deformations of the holomorphic principal $G$-bundle $P$. 
Indeed, given a Maurer-Cartan element $\alpha \in \sAd(P)^{sh}(X)^1 = \Omega^{0,1}(X, {\rm ad}(P))$ one considers the new complex structure defined by the connection $\dbar_P + \alpha$. 
The Maurer-Cartan condition is equivalent to $(\dbar_P + \alpha)^2 = 0$. 

This construction admits important variations.
For example, we can move from working over a fixed manifold $X$ to working over a site.
Let ${\rm Hol}_d$ denote the category whose objects are complex $d$-folds and whose morphisms are local biholomorphisms,\footnote{A biholomorphism is a bijective map $\phi: X \to Y$ such that both $\phi$ and $\phi^{-1}$ are holomorphic. A {\em local} biholomorphism means a map $\phi: X \to Y$ such that every point $x \in X$ has a neighborhood on which $\phi$ is a biholomorphism.}
This category admits a natural Grothendieck topology where a cover $\{\phi_i: U_i \to X\}$ means a collection of morphisms into $X$ such that union of the images is all of $X$.
It then makes sense to talk about a local Lie algebra on the site ${\rm Hol}_d$.
Here is a particularly simple example that appears throughout the paper.

\begin{dfn}
Let $G$ be a complex Lie group and let $\fg$ denote its ordinary Lie algebra.
There is a natural functor 
\[
\begin{array}{cccc}
\sG^{sh} :&  {\rm Hol}_d^\opp & \to & {\dgLie}\\
& X & \mapsto &\Omega^{0,*}(X) \otimes \fg,
\end{array}
\]
which defines a sheaf of dg Lie algebras.
Restricted to each slice ${{\rm Hol}_d}_{/X}$, it determines the local Lie algebra for the trivial principal bundle $G \times X \to X$, in the sense described above.
We use $\sG$ to denote the cosheaf of compactly supported sections $\Omega^{0,*}_c \otimes \fg$ on this site.
\end{dfn}

\begin{rmk}
It is not necessary to start with a complex Lie group: 
the construction makes sense for a dg Lie algebra over $\CC$ of finite total dimension.
We lose, however, the interpretation in terms of infinitesimal symmetries of the principal bundle.
\end{rmk}

\begin{rmk}
For any complex manifold $X$ we can restrict the functor $\sG^{sh}$ to the overcategory of opens in $X$, that we denote by $\sG^{sh}_X$. 
In this case, $\sG^{sh}_X$, or its compactly supported version $\sG_X$, comes from the local Lie algebra of Definition \ref{dfn: adjoint local} in the case of the trivial $G$-bundle on $X$. 
In the case that $X = \CC^d$ we will denote the sheaves and cosheaves of the local Lie algebra by $\sG_d^{sh}, \sG_d$ respectively.
\end{rmk}

\subsection{Current algebras as enveloping factorization algebras of local Lie algebras}
\label{sec: envelopes}

Local Lie algebras often appear as symmetries of classical field theories.
For instance, as we will show in Section \ref{sec: qft}, 
each finite-dimensional complex representation $V$ of a Lie algebra $\fg$
determines a charged $\beta\gamma$-type system on a complex $d$-fold $X$ with choice of holomorphic principal bundle $\pi: P \to X$.
Namely, the on-shell $\gamma$ fields are holomorphic sections for the associated bundle $P \times^G V \to X$, 
and the on-shell $\beta$ fields are holomorphic $d$-forms with values in the associated bundle $P \times^G V^* \to X$.
It should be plausible that $\sAd(P)^{sh}$ acts as symmetries of this classical field theory,
since holomorphic sections of the adjoint bundle manifestly send on-shell fields to on-shell fields.

Such a symmetry determines currents, which we interpret as observables of the classical theory.
Note, however, a mismatch: 
while fields are contravariant in space(time) because fields pull back along inclusions of open sets, 
observables are covariant because an observable on a smaller region extends to any larger region containing it.
The currents, as observables, thus do not form a sheaf but a precosheaf.
We introduce the following terminology.

\begin{dfn}
For a local Lie algebra $(L\to X, \ell_1,\ell_2)$, its precosheaf $\sL[1]$ of {\em linear currents} is given by taking compactly supported sections of~$L$.
\end{dfn}

There are a number of features of this definition that may seem peculiar on first acquaintance.
First, we work with $\sL[1]$ rather than $\sL$.
This shift is due to the Batalin-Vilkovisky formalism. 
In that formalism the observables in the classical field theory possesses a 1-shifted Poisson bracket $\{-,-\}$ (also known as the antibracket), and so if the current $J(s)$ associated to a section $s \in \sL$ encodes the action of $s$ on the observables, i.e.,
\[
\{J(s), F\} = s \cdot F,
\]
then we need the cohomological degree of $J(s)$ to be 1 less than the degree of $s$.
In short, we want a map of dg Lie algebras $J: \sL \to \Obs^\cl[-1]$,
or equivalently a map of 1-shifted dg Lie algebras $J: \sL[1] \to \Obs^\cl$,
where $\Obs^\cl$ denotes the algebra of classical observables.

Second, we use the term ``linear'' here because the product of two such currents is not in $\sL[1]$ itself, 
although such a product will exist in the larger precosheaf $\Obs^\cl$ of observables.
In other words, if we have a Noether map of dg Lie algebras $J: \sL \to \Obs^\cl[-1]$,
it extends to a map of 1-shifted Poisson algebras
\[
J: \Sym(\sL[1]) \to \Obs^\cl
\]
as $\Sym(\sL[1])$ is the 1-shifted Poisson algebra freely generated by the 1-shifted dg Lie algebra $\sL[1]$.
We hence call $\Sym(\fg[1])$ the {\em enveloping 1-shifted Poisson algebra} of a dg Lie algebra~$\fg$.\footnote{See \cite{BashVor, BrLaz, GwHaug} for discussions of these constructions and ideas.}

For any particular field theory, the currents generated by the symmetry for {\em that} theory are given by the image of this map $J$ of 1-shifted Poisson algebras.
To study the general structure of such currents, without respect to a particular theory,
it is natural to study this enveloping algebra by itself.

\begin{dfn}\label{dfn: classical currents}
For a local Lie algebra $(L\to X, \ell_1,\ell_2)$, its {\em classical currents} $\Cur^\cl(\sL)$ is the precosheaf $\Sym(\sL[1])$ given by taking the enveloping 1-shifted Poisson algebra of the compactly supported sections of~$L$.
It assigns
\[
\Cur^\cl(\sL)(U) = \Sym(\sL(U)[1])
\]
to an open subset $U \subset X$. 
\end{dfn}

We emphasize here that by $\Sym(\sL(U)[1])$ we do {\em not} mean the symmetric algebra in the purely algebraic sense, but rather a construction that takes into account the extra structures on sections of vector bundles (e.g., the topological vector space structure).
Explicitly, the $n$th symmetric power  $\Sym^n(\sL(U)[1])$ means the smooth, compactly supported, and $S_n$-invariant sections of the graded vector bundle 
\[
L[1]^{\boxtimes n} \to U^n.
\]
For further discussion of functional analytic aspects (which play no tricky role in our work here),
see \cite{CG1}, notably the appendices.

A key result of \cite{CG1}, namely Theorem 5.6.0.1, is that this precosheaf of currents forms a factorization algebra. 
From hereon we refer to  $\Cur^\cl(\sL)$ as the {\em factorization algebra of classical currents}.
If the local Lie algebra acts as symmetries on some classical field theory,
we obtain a map of factorization algebras $J: \Cur^\cl(\sL) \to \Obs^\cl$ that encodes each current as a classical observable.

There is a quantum counterpart to this construction, in the Batalin-Vilkovisky formalism.
The idea is that for a dg Lie algebra $\fg$, 
the enveloping 1-shifted Poisson algebra $\Sym(\fg[1])$ admits a natural BV quantization via the Chevalley-Eilenberg chains $C_*(\fg)$.  
This assertion is transparent by examining the Chevalley-Eilenberg differential:
\[
\d_{CE}(xy) = \d_\fg(x)y \pm x\, \d_\fg(y) + [x,y]
\]
for $x,y$ elements of $\fg[1]$.
The first two terms behave like a derivation of $\Sym(\fg[1])$, 
and the last term agrees with the shifted Poisson bracket.
More accurately, to keep track of the $\hbar$-dependency in quantization,
we introduce a kind of Rees construction.

\begin{dfn}
\label{def: BD envelope}
The {\em enveloping $BD$ algebra} $U^{BD}(\fg)$ of a dg Lie algebra $\fg$ is given by the graded-commutative algebra in $\CC[\hbar]$-modules
\[
\Sym(\fg[1])[\hbar] \cong \Sym_{\CC[\hbar]}(\fg[\hbar][1]),
\]
but the differential is defined as a coderivation with respect to the natural graded-cocommutative coalgebra structure,
by the condition
\[
\d(xy) = \d_\fg(x)y \pm x\, \d_\fg(y) + \hbar [x,y].
\]
\end{dfn}

This construction determines a BV quantization of the enveloping 1-shifted Poisson algebra,
as can be verified directly from the definitions.
(For further discussion see \cite{GwHaug} and \cite{CG2}.)
It is also straightforward to extend this construction to ``quantize'' the factorization algebra of classical currents.

\begin{dfn}
\label{dfn: quantum currents}
For a local Lie algebra $(L\to X, \ell_1,\ell_2)$, 
its {\em factorization algebra of quantum currents} $\Cur^\q(\sL)$ is given by taking the enveloping $BD$   algebra of the compactly supported sections of~$L$.
It assigns
\[
\Cur^\q(\sL)(U) = U^{BD}(\sL(U))
\]
to an open subset $U \subset X$.
\end{dfn}

As mentioned just after the definition of the classical currents, 
the symmetric powers here mean the construction involving sections of the external tensor product.
Specializing $\hbar = 1$, we recover the following construction.

\begin{dfn}
For a local Lie algebra $(L\to X, \ell_1,\ell_2)$, 
its {\em enveloping factorization algebra} $\UU(\sL)$ is given by taking the Chevalley-Eilenberg chains $\cliels(\sL)$ of the compactly supported sections of~$L$.
\end{dfn}

Here the symmetric powers use sections of the external tensor powers, just as with the classical or quantum currents.

When a local Lie algebra acts as symmetries of a classical field theory,
it sometimes also lifts to symmetries of a BV quantization.
In that case the map $J: \Sym(\sL[1]) \to \Obs^\cl$ of 1-shifted Poisson algebras lifts to a cochain map $J^\q: \Cur^\q(\sL) \to \Obs^\q$ realizing quantum currents as quantum observables.
Sometimes, however, the classical symmetries do not lift directly to quantum symmetries.
We turn to discussing the natural home for the obstructions to such lifts after a brief detour to offer a structural perspective on the enveloping construction.

\subsubsection{A digression on the enveloping $E_n$ algebras}
\label{sec:knudsen}

This construction $\UU(\sL)$ has a special feature when the local Lie algebra is obtained by taking the de Rham forms with values in a dg Lie algebra $\fg$, i.e., when $\sL = \Omega^*_c \otimes \fg$.
In that case the enveloping factorization algebra is locally constant and, on the $d$-dimensional real manifold $\RR^d$, determines an $E_d$ algebra, also known as an algebra over the little $d$-disks operad, by a result of Lurie (see Theorem 5.5.4.10 of \cite{LurieHA}).
This construction satisfies a universal property: 
it is the $d$-dimensional generalization of the universal enveloping algebra of a Lie algebra.

To state this result of Knudsen precisely, we need to be in the context of $\infty$-categories.

\begin{thm}[\cite{Knudsen}] 
\label{thm:knudsen}
Let $\sC$ be a stable, $\CC$-linear, presentable, symmetric monoidal $\infty$-category.
There is an adjunction
\[
U^{E_d} : {\rm LieAlg}(\sC) \leftrightarrows E_d{\rm Alg} (\sC): F
\]
between Lie algebra objects in $\sC$ and $E_d$ algebra objects in $\sC$.
This adjunction intertwines with the free-forget adjunctions from Lie and $E_d$ algebras in $\sC$ to $\sC$ 
so that 
\[
{\rm Free}_{E_d}(X) \simeq U^{E_d} {\rm Free}_{Lie}(\Sigma^{d-1} X)
\]
for any object $X \in \sC$. 

When $\sC$ is the $\infty$-category of chain complexes over a field of characteristic zero,
the $E_d$ algebra $U^{E_d} \fg$ is modeled by the locally constant factorization algebra $\UU(\Omega^*_c \otimes \fg)$ on~$\RR^d$.
\end{thm}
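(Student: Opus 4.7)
The plan is to split the work into two stages: first establish the abstract adjunction and universal property, and then identify the resulting functor with the factorization-algebraic model in the chain complex case.

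For the adjunction, I would proceed by leveraging presentability. Both $\mathrm{LieAlg}(\sC)$ and $E_d\mathrm{Alg}(\sC)$ are presentable $\infty$-categories, and by the adjoint functor theorem it suffices to exhibit a functor $U^{E_d}$ preserving small colimits (equivalently, sifted colimits together with coproducts) that behaves correctly on generators. Every Lie algebra in $\sC$ is a sifted colimit of free Lie algebras $\mathrm{Free}_{Lie}(Y)$, so I would \emph{define} $U^{E_d}\mathrm{Free}_{Lie}(\Sigma^{d-1} X) := \mathrm{Free}_{E_d}(X)$ and extend by left Kan extension along this full subcategory. The right adjoint $F$ would then be described intrinsically: from an $E_d$-algebra $A$ one extracts an underlying Lie algebra using the $(1-d)$-shifted Browder/Gerstenhaber bracket that classes in $H_{d-1}(E_d(2))$ provide, so that maps $\mathrm{Free}_{Lie}(\Sigma^{d-1}X) \to F(A)$ correspond to maps $X \to A$ in $\sC$, matching the defining formula on generators.

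For the factorization-algebraic model, I would first show that $\mathbb{U}(\Omega^*_c \otimes \fg)$ is locally constant on $\RR^d$. Translation invariance is manifest since the de Rham differential and pointwise bracket are translation invariant. For concentric balls $B_r \subset B_R$, the extension map factors through the quasi-isomorphism $\Omega^*_c(B_r) \hookrightarrow \Omega^*_c(B_R)$ coming from the Poincaré lemma for compactly supported forms; since Chevalley-Eilenberg chains of a local Lie algebra are computed via a filtration whose associated graded is a symmetric power functor, the induced map on $\clieu_*$ is a quasi-isomorphism. Lurie's theorem then endows the global sections with an $E_d$-algebra structure. To match with $U^{E_d}\fg$, I would verify the universal property by computing both sides on free generators: if $\fg = \mathrm{Free}_{Lie}(\Sigma^{d-1} V)$, then $\Omega^*_c \otimes \fg \simeq \mathrm{Free}_{Lie}(\Omega^*_c \otimes \Sigma^{d-1} V)$, so its Chevalley-Eilenberg chains compute $\mathrm{Sym}(\Omega^*_c \otimes \Sigma^d V)$. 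The factorization-algebra structure on $\RR^d$ refines this symmetric algebra by configuration-space operations, and the identification with $\mathrm{Free}_{E_d}(V)$ follows from the known description $H_*(\mathrm{Conf}_n(\RR^d))$ in terms of shifted Lie words assembled by the commutative product.

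The main obstacle I expect is the coherence in the extension by sifted colimits: I need to check that the assignment $U^{E_d}\mathrm{Free}_{Lie}(\Sigma^{d-1} X) = \mathrm{Free}_{E_d}(X)$ extends to a simplicially enriched (or $\infty$-categorical) functor on the subcategory of free Lie algebras, compatibly with the bar resolutions used to generate arbitrary Lie algebras. Equivalently, one must show that if $\fg_\bullet$ is a simplicial resolution of $\fg$ by free Lie algebras on shifted objects, then the resulting simplicial $E_d$-algebra is independent of the resolution up to coherent equivalence. In chain complexes, this reduces to a Poincaré-Birkhoff-Witt-type splitting for the $E_d$ operad \emph{à la} Cohen, which in turn is precisely what ensures that the factorization-algebraic envelope $\clieu_*(\Omega^*_c(B) \otimes \fg)$ on a single ball correctly encodes the free $E_d$-algebra structure. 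Once PBW is in hand, the remaining functoriality is essentially formal.
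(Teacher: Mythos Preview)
This theorem is not proved in the paper. It is stated as a result of Knudsen, with the citation \cite{Knudsen}, and the paper offers no argument for it whatsoever; the authors simply quote it and immediately move on to explain why it is suggestive for their holomorphic setting. So there is no ``paper's own proof'' to compare your proposal against.

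Your sketch is a plausible outline of how one might approach Knudsen's theorem, but since the paper treats it as a black box, the comparison you were asked for does not apply here. If you want feedback on whether your sketch matches Knudsen's actual argument, you would need to consult \cite{Knudsen} directly rather than this paper.
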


This theorem is highly suggestive for us:
our main class of examples is $\sG_d$ and $\UU \sG_d$,
which replaces the de Rham complex with the Dolbeault complex.
In other words, we anticipate that $\UU \sG_d$ should behave like a holomorphic version of an $E_d$ algebra
and that it should be the canonical such algebra determined by a dg Lie algebra.
We do not pursue this structural result in this paper,
but it provides some intuition behind our constructions.

\subsection{Local cocycles and shifted extensions}
\label{sec: localcocycle}

Some basic questions about a dg Lie algebra $\fg$, such as the classification of extensions and derivations, are encoded cohomologically, typically as cocycles in the Chevalley-Eilenberg cochains $\clies(\fg,V)$ with coefficients in some $\fg$-representation~$V$.
When working with local Lie algebras, it is natural to focus on cocycles that are also local in the appropriate sense.
(Explicitly, we want to restrict to cocycles that are built out of polydifferential operators.)
After introducing the relevant construction, we turn to studying how such cocycles determine modified current algebras.

\subsubsection{Local cochains of a local Lie algebra}
\label{sec:cloc}

In Section 4.4 of \cite{CG2} the local cochains of a local Lie algebra are defined in detail, 
but we briefly recall it here.
The basic idea is that a local cochain is a Lagrangian density: 
it takes in a section of the local Lie algebra and produces a smooth density on the manifold. 
Such a cocycle determines a functional by integrating the density.
As usual with Lagrangian densities, we wish to work with them up to total derivatives,
i.e., we identify Lagrangian densities related using integration by parts and hence ignore boundary terms.

In a bit more detail, for $L$ is a graded vector bundle, let $JL$ denote the corresponding $\infty$-jet bundle,
which has a canonical flat connection.
In other words, it is a left $D_X$-module, where $D_X$ denotes the sheaf of smooth differential operators on $X$.
For a local Lie algebra, this $JL$ obtains the structure of a dg Lie algebra in left $D_X$-modules.
Thus, we may consider its reduced Chevalley-Eilenberg cochain complex $\clies(JL)$ in the category of left $D_X$-modules. 
By taking the de Rham complex of this left $D_X$-module, we obtain the local cochains.
For a variety of reasons, it is useful to ignore the ``constants'' term and work with the reduced cochains.
Hence we have the following definition.

\begin{dfn}
Let $\sL$ be a local Lie algebra on $X$.
The {\em local Chevalley-Eilenberg cochains}  of $\sL$~is 
\[
\cloc^*(\sL) = \Omega^{*}_X[2d] \tensor_{D_X} \cred^*(J L) .
\]
This sheaf of cochain complexes on $X$ has global sections that we denote by~$\cloc^*(\sL(X))$.
\end{dfn}

Note that we use the smooth de Rham forms, not the holomorphic de Rham forms.

\begin{rmk}
This construction $\cloc^*(\sL)$ is just a version of diagonal Gelfand-Fuks cohomology \cite{Fuks, LosikDiag},
where the adjective ``diagonal'' indicates that we are interested in continuous cochains whose integral kernels are supported on the small diagonals.
\end{rmk}

\subsubsection{Shifted extensions}

For an ordinary Lie algebra $\fg$, central extensions are parametrized by 2-cocycles on $\fg$ valued in the trivial module~$\CC$. 
It is possible to interpret arbitrary cocycles as determining as determining {\em shifted} central extensions as {\em $L_\infty$ algebras}.
Explicitly, a $k$-cocycle $\Theta$ of degree $n$ on a dg Lie algebra $\fg$ determines an $L_\infty$ algebra structure on the direct sum $\fg \oplus \CC[n-k]$ with the following brackets $\{\Hat{\ell}_m\}_{m \geq 1}$: $\Hat{\ell}_1$ is simply the differential on $\fg$, $\Hat{\ell}_2$ is the bracket on $\fg$, $\Hat{\ell}_m = 0$ for $m >2$ except
\[
\Hat{\ell}_k(x_1 + a_1, \ldots, x_k + a_k) = 0+ \Theta(x_1,x_2,\ldots, x_k).
\]
(See \cite{KonSoi,LodVal} for further discussion. Note that $n=2$ for $k=2$ with ordinary Lie algebras.)
Similarly, local cocycles provide shifted central extensions of local Lie algebras.

\begin{dfn}
For a local Lie algebra $(L, \ell_1,\ell_2)$, a cocycle $\Theta$ of degree $2+k$ in $\cloc^*(\sL)$ determines a {\em $k$-shifted central extension}
\beqn\label{kext}
0 \to \CC[k] \to \Hat{\sL}_\Theta \to \sL \to 0
\eeqn
of precosheaves of $L_\infty$ algebras, where the $L_\infty$ structure maps are defined by
\[
\Hat{\ell}_n(x_1,\ldots,x_n) = (\ell_n(x_1,\ldots,x_n), \int \Theta(x_1,\ldots,x_n)).
\]
Here we set $\ell_n = 0$ for $n > 2$.
\end{dfn}

As usual, cohomologous cocycles determine quasi-isomorphic extensions. 
Much of the rest of the section is devoted to constructing and analyzing various cocycles and the resulting extensions.

\subsubsection{Twists of the current algebras}

Local cocycles give a direct way of deforming the various current algebras a local Lie algebra.
For example, we have the following construction.

\begin{dfn} 
Let $\Theta$ be a degree 1 local cocycle for a local Lie algebra $(L \to X, \ell_1,\ell_2)$. 
Let $K$ denote a degree zero parameter so that $\CC[K]$ is a polynomial algebra concentrated in degree zero.
The {\em twisted enveloping factorization algebra} $\UU_\Theta(\sL)$ assigns to an open $U \subset X$, the cochain complex
\begin{align*}
\UU_\Theta(\sL)(U) & = \left(\Sym(\sL(U)[1] \oplus \CC \cdot K), \d_{\sL} + K \cdot \Theta\right) \\
& = \left(\Sym(\sL(U)[1])[K] , \d_{\sL} + K \cdot \Theta\right),
\end{align*}
where $\d_{\sL}$ denotes the differential on the untwisted enveloping factorization algebra and $\Theta$ is the operator extending the cocycle $\Theta : \Sym(\sL(U)[1]) \to \CC \cdot K$ to the symmetric coalgebra as a graded coderivation.
This twisted enveloping factorization algebra is module for the commutative ring~$\CC[K]$,
and so specializing the value of $K$ determines nontrivial modifications of~$\UU(\sL)$. 
\end{dfn}

An analogous construction applies to the quantum currents, which we will denote~$\Cur^\q_\Theta(\sL)$.

\subsubsection{A special class of cocycles: the $\fj$ functional} 
\label{sec: g j functional}

There is a particular family of local cocycles that has special importance in studying symmetries of higher dimensional holomorphic field theories. 

Consider 
\[
\theta \in \Sym^{d+1}(\fg^*)^\fg,
\]
so that $\theta$ is a $\fg$-invariant polynomial on $\fg$ of homogenous degree $d+1$. 
This data determines a local functional for $\sG = \Omega^{0,*} \otimes \fg$ on any complex $d$-fold as follows.

\begin{dfn}
For any complex $d$-fold $X$, extend $\theta$ to a functional $\fJ_X(\theta)$ on $\sG_X = \Omega^{0,*}_c(X) \tensor \fg$ by the formula
\beqn\label{j g formula}
\fJ_X(\theta)(\alpha_0 ,\ldots,\alpha_d) = \int_X \theta(\alpha_0,\partial \alpha_1,\ldots,\partial \alpha_d),
\eeqn
where $\partial$ denotes the holomorphic de Rham differential.
In this formula, we define the integral to be zero whenever the integrand is not a $(d,d)$-form.
\end{dfn}

To make this formula as clear as possible, suppose the $\alpha_i$ are pure tensors of the form $\omega_i \otimes y_i$ with $\omega_i \in \Omega^{0,*}_c(X)$ and $y_i \in \fg$.
Then
\beqn\label{jthetafactored}
\fJ_X(\theta) (\omega_0 \tensor y_0,\ldots,\omega_{d} \tensor y_{d}) = \theta(y_0,\ldots,y_{d}) \int_X \omega_0\wedge \partial \omega_1 \cdots \wedge \partial \omega_{d}.
\eeqn
Note that we use $d$ copies of the holomorphic derivative $\partial: \Omega^{0,*} \to \Omega^{1,*}$ to obtain an element of $\Omega^{d,*}_c$ in the integrand and hence something that can be integrated.

This formula manifestly makes sense for any complex $d$-fold $X$, 
and since integration is local on $X$, 
it intertwines nicely with the structure maps of~$\sG_X$.

\begin{dfn}\label{dfn: j}
For any complex $d$-fold $X$ and any $\theta \in \Sym^{d+1}(\fg^*)^\fg$, 
let $\fj_X(\theta)$ denote the local cochain in $\cloc^*(\sG_X)$ defined~by
\[
\fj_X(\theta)(\alpha_0 ,\ldots,\alpha_d) = \theta(\alpha_0,\partial \alpha_1,\ldots,\partial \alpha_d).
\]
Hence $\fJ_X(\theta) = \int_X \fj_X(\theta)$.
\end{dfn}

This integrand $\fj_X(\theta)$ is in fact a local cocycle, and 
in a moment we will use it to produce an important shifted central extension of~$\sG_X$.

\begin{prop}\label{prop j map} 
The assignment 
\[
\begin{array}{cccc}
\fj_X : & \Sym^{d+1} (\fg^*)^\fg [-1]  & \to & \cloc^*(\sG_X) \\ 
& \theta &\mapsto & \fj_X(\theta)
\end{array}
\]
is an cochain map.
\end{prop}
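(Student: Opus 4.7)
Since $\Sym^{d+1}(\fg^*)^\fg[-1]$ carries zero differential, the claim reduces to showing that $\fj_X(\theta)$ is a cocycle in $\cloc^*(\sG_X)$ for each $\fg$-invariant polynomial $\theta$. I would decompose the Chevalley-Eilenberg differential as $\d_{\rm CE} = \d_{\dbar} + \d_{[,]}$, where $\d_{\dbar}$ is induced by the Dolbeault differential on $\sG_X$ and $\d_{[,]}$ by the pointwise Lie bracket. These contributions land in different arities ($\d_{\dbar}$ preserves the number of inputs, while $\d_{[,]}$ raises it by one), so they must vanish independently in $\cloc^*(\sG_X)$.

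For $\d_{\dbar}\fj_X(\theta)$: the graded Leibniz rule gives
\[
\dbar\bigl(\theta(\alpha_0, \partial\alpha_1, \ldots, \partial\alpha_d)\bigr) = \theta(\dbar\alpha_0, \partial\alpha_1, \ldots, \partial\alpha_d) + \sum_{i\geq 1}\pm\,\theta(\alpha_0, \partial\alpha_1, \ldots, \dbar\partial\alpha_i, \ldots, \partial\alpha_d),
\]
and substituting $\dbar\partial = -\partial\dbar$ identifies the right-hand side, up to an overall sign, with $(\d_{\dbar}\fj_X(\theta))(\alpha_0, \ldots, \alpha_d)$. Hence $\d_{\dbar}\fj_X(\theta)$ is represented by a total $\dbar$-derivative, and such total derivatives vanish in $\cloc^*(\sG_X) = \Omega^*_X[2d]\otimes_{D_X}\cred^*(J\sG_X)$ because the tensor product over $D_X$ precisely implements integration by parts at the level of jet expansions.

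For $\d_{[,]}\fj_X(\theta)$: expanding gives a signed sum over pairs $(i,j)$ of the expressions $\theta([\alpha_i, \alpha_j], \partial\alpha_{k_1}, \ldots, \partial\alpha_{k_d})$, where $\{k_1, \ldots, k_d\}$ are the remaining indices. Using that $\partial$ is a derivation of the $\fg$-valued bracket on Dolbeault forms, one distributes the $\partial$'s across $[\alpha_i, \alpha_j]$ and then applies the $\fg$-invariance identity $\sum_k\theta(x_0, \ldots, [y, x_k], \ldots, x_d) = 0$ repeatedly. The remaining terms reassemble into another total holomorphic derivative, again zero in $\cloc^*(\sG_X)$.

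The principal obstacle throughout is Koszul sign bookkeeping: each $\alpha_i$ carries its own Dolbeault degree and sits in the shifted complex $\sG_X[1]$, while the CE differential contributes permutation signs. A cleaner organization, which sidesteps the case-by-case sign chase, is to recognize $\fj_X(\theta)$ as (the polarization of) a Dolbeault analogue of the Chern-Simons form associated to $\theta$ and the formal ``connection'' $\alpha$; the cocycle property then follows from a standard Chern-Weil-style argument combining the Bianchi identity with $\fg$-invariance of~$\theta$, handling both pieces of the computation in one stroke.
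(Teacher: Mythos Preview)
Your proposal is correct and follows essentially the same route as the paper: decompose the differential on $\cloc^*(\sG_X)$ as $\dbar + \d_\fg$, show the $\dbar$-piece produces a total derivative (hence zero in local cochains), and kill the bracket piece using $\fg$-invariance of~$\theta$.

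Your handling of $\d_{[,]}$ is in fact more explicit than the paper's. The paper dispatches this piece in one line, asserting it vanishes ``since $\theta$ is closed for the Chevalley--Eilenberg differential for $\fg$,'' whereas you correctly flag that one must also use that $\partial$ is a derivation of the bracket and then absorb residual terms as total holomorphic derivatives. One small inaccuracy in your write-up: when you expand $\d_{[,]}\fj_X(\theta)$, the bracket $[\alpha_i,\alpha_j]$ does not land only in the undifferentiated slot---terms of the form $\theta(\alpha_{k_0},\ldots,\partial[\alpha_i,\alpha_j],\ldots)$ also appear, and it is precisely these that your ``$\partial$ is a derivation'' step unpacks. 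Your closing Chern--Weil/Chern--Simons reframing is a legitimate alternative organization that the paper does not use.
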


\begin{proof} 
The element $\fj_X(\theta)$ is local as it is expressed as a density produced by polydifferential operators.
We need to show that $\fj_X(\theta)$ is closed for the differential on $\cloc^*(\sG_X)$. 
Note that $\sG_X$ is the tensor product of the dg commutative algebra $\Omega^{0,*}_X$ and the Lie algebra $\fg$.
Hence the differential on the local cochains of $\sG_X$ splits as a sum $\dbar + \d_{\fg}$ where $\dbar$ denotes the differential on local cochains induced from the $\dbar$ differential on the Dolbeault forms and $\d_{\fg}$ denotes the differential induced from the Lie bracket on~$\fg$. 
We now analyze each term separately.

Observe that for any collection of $\alpha_i \in \sG$, we have
\begin{align*}
\dbar(\fj_X(\theta)(\alpha_0,\partial \alpha_1,\ldots,\partial \alpha_d)) 
&= \fj_X(\theta)(\dbar\alpha_0,\partial \alpha_1,\ldots,\partial \alpha_d) \pm \fj_X(\theta)(\alpha_0,\dbar \partial \alpha_1,\ldots,\partial \alpha_d) \pm \cdots \\ & \;\;\;\; \cdots \pm \fj_X(\theta)(\alpha_0,\partial \alpha_1,\ldots,\dbar\partial \alpha_d)\\
&= (\dbar \fj_X(\theta))(\alpha_0,\partial \alpha_1,\ldots,\partial \alpha_d)
\end{align*}
because $\dbar$ is a derivation and $\theta$ wedges the form components.
(It is easy to see this assertion when one works with inputs like in equation \eqref{jthetafactored}.)
Hence viewing $\fj_X(\theta)$ as a map from $\sG$ to the Dolbeault complex, 
it commutes with the differential $\dbar$.
This fact is equivalent to $\dbar \fj_X(\theta) = 0$ in local cochains.

Similarly, observe that for any collection of $\alpha_i \in \sG$, we have
\begin{align*}
(\d_\fg \fj_X(\theta))(\alpha_0, \alpha_1,\ldots, \alpha_d)
&= (\d_\fg\fj_X(\theta))(\alpha_0,\partial \alpha_1,\ldots,\partial \alpha_d)) \\
&= 0
\end{align*}
since $\theta$ is closed for the Chevalley-Eilenberg differential for $\fg$. 
\end{proof}

As should be clear from the construction, everything here works over the site ${\rm Hol}_d$ of complex $d$-folds, and hence we use $\fj(\theta)$ to denote the local cocycle for the local Lie algebra $\sG$ on~${\rm Hol}_d$.

This construction works nicely for an arbitrary holomorphic $G$-bundle $P$ on $X$,
because the cocycle is expressed in a coordinate-free fashion.
To be explicit, on a coordinate patch $U_i \subset X$ with a choice of trivialization of the adjoint bundle $\ad(P)$,
the formula for $\fj_X(\theta)$ makes sense.
On an overlap $U_i \cap U_j$, the cocycles patch because $\fj_X(\theta)$ is independent of the choice of coordinates.
Hence we can glue over any sufficiently refined cover to obtain a global cocycle. 
Thus, we have a cochain map
\[
\fj_X^P : \Sym^{d+1} (\fg^*)^\fg [-1] \to \cloc^*(\sAd(P)(X))
\]
given by the same formula as in~\eqref{j g formula}.

\subsubsection{Another special class: the LMNS extensions}
\label{sec: nekext}

Much of this paper focuses on local cocycles of type $\fj_X(\theta)$, where $\theta \in \Sym^{d+1}(\fg^*)^\fg$.
But there is another class of local cocycles that appear naturally when studying symmetries of holomorphic theories. 
Unlike the cocycle $\fj_X(\theta)$, which only depend on the manifold $X$ through its dimension, 
this class of cocycles depends on the geometry.

In complex dimension two, this class of cocycles has appeared in the work of Losev-Moore-Nekrasov-Shatashvili (LMNS) \cite{LMNS1,LMNS2,LMNS3} in their construction of a higher analog of the ``chiral WZW theory". 
Though our approaches differ, we share their ambition to formulate a higher analogs of constructions and ideas in chiral CFT. 

Let $X$ be a complex manifold of dimension $d$ with a choice of $(k,k)$-form~$\eta$. 
Choose a form $\theta_{d+1-k} \in \Sym(\fg^*)^\fg$.
This data determines a local cochain on~$\sG_X$ whose local functional~is:
\[
\begin{array}{cccc}
\displaystyle \phi_{\theta, \eta} : & \sG(X)^{\tensor d + 1 - k} & \to & \CC \\
\displaystyle & \alpha_0 \tensor\cdots \tensor \alpha_{d-k} & \mapsto & \displaystyle \int_X \eta \wedge \theta_{d+1-k}(\alpha_0, \partial\alpha_1,\ldots,\partial \alpha_{d-k})
\end{array}.
\]
Such a cochain is a cocycle only if $\dbar \eta = 0$, because $\eta$ does not interact with the Lie structure.

Note that a K\"{a}hler manifold always produces natural choices of $\eta$ by taking $\eta = \omega^{k}$, where $\omega$ is the symplectic form.
In this way, K\"{a}hler geometry determines an important class of extensions.
It would be interesting to explore what aspects of the geometry are reflected by these associated current algebras.
The following is a direct calculation.

\begin{lem}\label{lem: cocycle KM}
Fix $\theta \in \Sym^{d+1-k}(\fg^*)^\fg$.
If a form $\eta \in \Omega^{k,k}(X)$ satisfies $\dbar \eta = 0$ and $\partial \eta = 0$,
then the local cohomology class $[\phi_{\theta,\omega}] \in H^1_{\rm loc}(\sG_X)$  depends only on the cohomology class $[\omega] \in H^{k}(X , \Omega^k_{cl})$.
\end{lem}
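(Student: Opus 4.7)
The plan is a transgression argument: for every homologous pair of representatives of a class in $H^k(X,\Omega^k_{cl})$, I would produce an explicit primitive in $\cloc^*(\sG_X)$ whose differential measures the difference of the corresponding cocycles $\phi_{\theta,\eta}$. The first step is to make the coboundary relation in $H^k(X,\Omega^k_{cl})$ concrete. Using the Dolbeault-type resolution of the sheaf $\Omega^k_{cl}$ of $\partial$-closed holomorphic $k$-forms by the complex of smooth $\partial$-closed $(k,\bullet)$-forms with $\dbar$ as differential, two $\partial$- and $\dbar$-closed $(k,k)$-forms $\eta$ and $\eta'$ determine the same class in $H^k(X,\Omega^k_{cl})$ precisely when $\eta - \eta' = \dbar\mu$ for some $\mu \in \Omega^{k,k-1}(X)$ with $\partial\mu = 0$.

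Given such a $\mu$, I would introduce the candidate primitive
\[
\psi_{\theta,\mu}(\alpha_0,\ldots,\alpha_{d-k}) = \int_X \mu \wedge \theta(\alpha_0,\partial \alpha_1,\ldots,\partial \alpha_{d-k}),
\]
a degree $0$ class in $\cloc^*(\sG_X)$; it is polydifferential with the smooth form $\mu$ entering as an order-zero coefficient, and so qualifies as a local cochain. I would then compute $\d \psi_{\theta,\mu}$ by splitting the local cochain differential as $\dbar + \d_\fg$, as in the proof of Proposition~\ref{prop j map}. The Chevalley-Eilenberg piece $\d_\fg \psi_{\theta,\mu}$ vanishes by the same $\fg$-invariance argument used there, since $\mu$ does not enter the Lie-theoretic structure.

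For the $\dbar$ piece, I would start from the identity $\int_X \dbar[\mu \wedge \theta(\alpha_0, \partial \alpha_1, \ldots, \partial \alpha_{d-k})] = 0$, which holds by Stokes's theorem since the $\alpha_i$ are compactly supported. Expanding $\dbar$ as a graded derivation and using $\dbar \partial = -\partial \dbar$ to move the $\dbar$ off each $\partial \alpha_i$, this rewrites as $\phi_{\theta,\dbar\mu} = \pm\, \dbar\psi_{\theta,\mu}$ in $\cloc^*(\sG_X)$. Since $\dbar\mu = \eta - \eta'$, this shows $\phi_{\theta,\eta} - \phi_{\theta,\eta'}$ is a local coboundary, giving $[\phi_{\theta,\eta}] = [\phi_{\theta,\eta'}]$ in $H^1_{\rm loc}(\sG_X)$, as required.

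The main obstacle is the sign bookkeeping in the integration-by-parts step, which is analogous to the calculation in the proof of Proposition~\ref{prop j map} but with the extra wedge factor of $\mu$. Notably, the hypothesis $\partial\mu = 0$ is used only to recognize $\mu$ as a valid coboundary datum in the complex computing $H^k(X,\Omega^k_{cl})$; it does not enter the cochain-level identity $\d \psi_{\theta,\mu} = \pm \phi_{\theta,\dbar\mu}$ itself.
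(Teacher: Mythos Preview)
Your argument is correct and is precisely the ``direct calculation'' the paper alludes to but omits: the paper states the lemma with the remark ``The following is a direct calculation'' and gives no further proof. Your transgression via $\psi_{\theta,\mu}$, together with the observation that $\d_\fg$ annihilates it by the same invariance argument as in Proposition~\ref{prop j map} and that the $\dbar$-piece yields $\phi_{\theta,\dbar\mu}$ via Stokes, is exactly what one expects that calculation to be.
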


When $\eta = 1$, it trivially satisfies the conditions of the lemma. 
In this case $\phi_{\theta, 1} = \fj_X(\theta)$ in the notation of the last section. 

\subsection{The higher Kac-Moody factorization algebra}

Finally, we can introduce the central object of this paper.

\begin{dfn}
Let $X$ be a complex manifold of complex dimension $d$ equipped with a holomorphic principal $G$-bundle $P$.
Let $\Theta$ be a degree 1 cocycle in $\cloc^*(\sAd(P))$, 
which determines a 1-shifted central extension $\sAd(P)_\Theta$.
The {\em Kac-Moody factorization algebra on $X$ of type $\Theta$} is the twisted enveloping factorization algebra $\UU_\Theta (\sAd(P))$ that assigns
\[
\left(\Sym\left(\Omega^{0,*}_c(U, \ad(P))[1]\right) [K] , \dbar + \d_{CE} + \Theta\right) 
\]
to an open set $U \subset X$.
\end{dfn}

\begin{rmk} 
As in the definition of twisted enveloping factorization algebras, the factorization algebras $\UU_\Theta(\sAd(P))$ are modules for the ring $\CC[K]$. 
In keeping with conventions above, when $P$ is the trivial bundle on $X$, 
we will denote the Kac-Moody factorization algebra by $\UU_\Theta(\sG_X)$. 
\end{rmk}

The most important class of such higher Kac-Moody algebras makes sense over the site ${\rm Hol}_d$ of all complex $d$-folds.

\begin{dfn}
Let $\fg$ be an ordinary Lie algebra and let $\theta \in \Sym^{d+1}(\fg^*)^\fg$.  
Let $\sG_{d,\theta}$ denote the 1-shifted central extension of $\sG_d$ determined by the local cocycle $\fj(\theta)$.
Let $\UU_\theta (\sG)$ denote the {\em $\theta$-twisted enveloping factorization algebra} $\UU_{\fj(\theta)} (\sG)$ for the local Lie algebra $\sG = \Omega^{0,*}_c \otimes \fg$ on the site ${\rm Hol}_d$ of complex $d$-folds.
\end{dfn}

In the case $d = 1$ the definition above agrees with the Kac-Moody factorization algebra on Riemann surfaces given in \cite{CG1}.
There, it is shown that this factorization algebra, restricted to the complex manifold $\CC$, recovers a vertex algebra isomorphic to that of the ordinary Kac-Moody vertex algebra.
(See Section 5 of Chapter 5.)
Thus, we think of the object $\UU_\Theta(\sAd(P))$ as a higher dimensional version of the Kac-Moody vertex algebra.

\subsubsection{Holomorphic translation invariance and higher dimensional vertex algebras} \label{sec: hol trans main}

To put some teeth into the previous paragraph,
we note that \cite{CG1} introduces a family of colored operads ${\rm PDiscs_d}$, the little $d$-dimensional polydiscs operads,
that provide a holomorphic analog of the little $d$-disks operads~$E_d$.
Concretely, this operad ${\rm PDiscs_d}$ encodes the idea of the operator product expansion, 
where one now understands observables supported in small disks mapping into observables in large disks, rather than point-like observables.

In the case $d=1$, Theorem 5.3.3 of \cite{CG1} shows that a ${\rm PDiscs_1}$-algebra $\cA$ determines a vertex algebra $\VV(\cA)$ so long as $\cA$ is suitably equivariant under rotation .
This construction $\VV$ is functorial.
As shown in \cite{CG1}, many vertex algebras appear this way, and any vertex algebras that arise from physics should, in light of the main results of~\cite{CG1,CG2}.

For this reason, one can interpret ${\rm PDiscs_d}$-algebras, particularly when suitably equivariant under rotation, as providing a systematic and operadic generalization of vertex algebras to higher dimensions. 
Proposition 5.2.2 of \cite{CG1} provides a useful mechanism for producing ${\rm PDiscs_d}$-algebra: 
it says that if a factorization algebra is equivariant under translation in a holomorphic manner, then it determines such an algebra.

Hence it is interesting to identify when the higher Kac-Moody factorization algebras are invariant in the sense needed to produce ${\rm PDiscs_d}$-algebras.
We now address this question.

First, note that on the complex $d$-fold $X = \CC^d$, 
the local Lie algebra $\sG_{d}$ is manifestly equivariant under translation.

It is important to recognize that this translation action is holomorphic in the sense that the infinitesimal action of the (complexified) vector fields $\partial/\partial \Bar{z}_i$ is homotopically trivial.
Explicitly, consider the operator $\eta_i = \iota_{\partial/\partial \Bar{z}_i}$ on Dolbeault forms
(and which hence extends to $\sG_{\CC^d}$), and
note that
\[
[\dbar, \eta_i] = \partial/\partial \Bar{z}_i.
\]
Both the infinitesimal actions and this homotopical trivialization extend canonically to the Chevalley-Eilenberg chains of $\sG_{\CC^d}$ and hence to the enveloping factorization algebra and the current algebras.
(For more discussion of these ideas see \cite{BWhol} and Chapter 10 of \cite{CG2}.)

A succinct way to express this feature is to introduce a dg Lie algebra  
\[
\CC^d_{\rm hol} = \text{span}_\CC\{\partial/\partial z_1, \ldots, \partial/\partial z_d, \partial/\partial \Bar{z}_1,\ldots,\partial/\partial \Bar{z}_d, \eta_1,\ldots, \eta_d\}
\]
where the partial derivatives have degree 0 and the $\eta_i$ have degree $-1$,
where the brackets are all trivial, 
and where the differential behaves like $\dbar$ in the sense that the differential of $\eta_i$ is $\partial/\partial \Bar{z}_i$.
We just argued in the preceding paragraph that $\sG_{\CC^d}$ and its current algebras are all strictly $\CC^d_{\rm hol}$-invariant. 

When studying shifted extensions of $\sG_{\CC^d}$, 
it then makes sense to consider local cocycles that are also translation invariant in this sense.
Explicitly, we ask to work with cocycles~in
\[
\cloc^*(\sG_{d})^{\CC^d_{\rm hol}} \subset \cloc^*(\sG_{d}).
\]
Local cocycles here determine higher Kac-Moody algebras that are holomorphically translation invariant and hence yield ${\rm PDiscs}_d$-algebras.

The following result indicates tells us that we have already encountered all the relevant cocycles so long as we also impose rotation invariance, which is a natural condition.

\begin{prop}
\label{prop: trans j}
The map $\fj_{\CC^d} :  \Sym^{d+1}(\fg^*)^\fg [-1] \to \cloc^*(\sG_{d})$ factors through the  subcomplex of local cochains that are rotationally and holomorphically translation invariant.
Moreover, it determines an isomorphism on $H^1$:
\[
H^1(\fj_{\CC^d}) : \Sym^{d+1}(\fg^*)^\fg \xto{\cong} H^1 \left(\cloc^*(\sG_d))^{\CC^{d}_{\rm hol}}\right)^{U(d)} .
\]
\end{prop}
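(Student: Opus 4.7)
The plan has two parts: verifying that $\fj_{\CC^d}(\theta)$ factors through the claimed invariant subcomplex, and computing the cohomology of that subcomplex via a Gelfand--Fuks type reduction. For the first part, strict translation invariance is immediate from the coordinate-free definition. Holomorphic translation invariance reduces to checking $\eta_i$-invariance, but $\eta_i$ acts on an input $\alpha_j$ by contraction with $\partial/\partial\bar z_i$, lowering antiholomorphic Dolbeault degree, so the integrand becomes a $(d,d-1)$-form on $\CC^d$ that integrates to zero. Hence $\fj_{\CC^d}(\theta)$ is strictly, not just homotopically, $\eta_i$-invariant. Rotation invariance follows because the only coordinate-dependent piece of the integrand is the volume form $dz_1 \wedge \cdots \wedge dz_d \wedge d\bar z_1 \wedge \cdots \wedge d\bar z_d$, which transforms by $\det \otimes \det^{-1}$ under $U(d)$ and is therefore invariant, while $\theta$ contracts only at the Lie algebra level.

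For the cohomology computation, the first step is a two-stage Gelfand--Fuks reduction. Translation invariance identifies $\cloc^*(\sG_d)^{\CC^d}$ with the external exterior algebra $\bigwedge^*(dz_i, d\bar z_i)[2d]$ in the constant one-forms, tensored with the reduced Chevalley--Eilenberg cochains of the jet dg Lie algebra at the origin, $J\sG_{d,0} = \fg \otimes \CC[[z_i, \bar z_i, d\bar z_i]]$, equipped with its induced $\dbar$. The operators $\eta_i \in \CC^d_{\rm hol}$ act as contracting homotopies for the antiholomorphic differential on both the external and internal factors: since $\CC[[\bar z_i, d\bar z_i]]$ is a Koszul resolution of $\CC$ via $\dbar$, taking derived $\CC^d_{\rm hol}$-invariants collapses the antiholomorphic sector. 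The outcome is a quasi-isomorphism between $\cloc^*(\sG_d)^{\CC^d_{\rm hol}}$ and a complex of the form $\Lambda \otimes \cred^*(\fg[[z_1, \ldots, z_d]])$, where $\Lambda$ is a one-dimensional line generated by the top holomorphic form $dz_1 \wedge \cdots \wedge dz_d$, carrying the $\det$ character of $U(d)$ in an appropriate cohomological degree.

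The final step is to extract $H^1$ of the $U(d)$-invariants. A $U(d)$-invariant cocycle in the relevant degree corresponds to a $\fg$-invariant element of $\cred^{d+1}(\fg[[z]])$ whose polynomial weight in the $\partial_{z_i}$'s is exactly $d$ and which transforms in $\det^{-1}$ (to cancel the $\det$ character from $\Lambda$). A Schur--Weyl calculation applied to $\bigwedge^{d+1}(\fg^* \otimes \CC[\partial_z])$ identifies this isotypic piece with $\Sym^{d+1}(\fg^*)$, via the unique configuration in which $d$ of the $d+1$ inputs each carry one derivative (antisymmetrized to produce $\partial_{z_1} \wedge \cdots \wedge \partial_{z_d}$) and one input is undifferentiated, leaving a symmetric tensor on $\fg$. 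Restricting to $\fg$-invariants yields $\Sym^{d+1}(\fg^*)^\fg$, and a direct unfolding confirms that the cocycle associated to $\theta$ under this identification is precisely $\fj_{\CC^d}(\theta)$, giving both injectivity and surjectivity. The main technical obstacle is the Gelfand--Fuks reduction of the second step: showing that the $\eta_i$-homotopies correctly contract both external and internal antiholomorphic generators, and tracking the cohomological shifts through the double complex, requires careful bookkeeping; but once that reduction is established, the representation-theoretic identification is a finite calculation.
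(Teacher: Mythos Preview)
Your overall strategy matches the paper's: reduce to the jet fiber by translation invariance, strip the antiholomorphic sector via $\CC^d_{\rm hol}$, then do invariant theory under $U(d)$. The gap is in your second step. Taking $\CC^d_{\rm hol}$-invariants kills only the antiholomorphic generators: the $\eta_i$ trivialize the $\partial_{\bar z_i}$-action, so the $\bar z_i$, $d\bar z_i$ (both external and internal) drop out, but nothing in $\CC^d_{\rm hol}$ touches the holomorphic $dz_i$'s. The correct output is $\CC[dz_1,\ldots,dz_d][2d]\otimes \cred^*(\fg[[z_1,\ldots,z_d]])$, the full holomorphic exterior algebra tensored with the jet CE complex, \emph{together with} the residual de Rham differential $\sum dz_i\,\partial_{z_i}$ coming from the $D$-module structure. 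Your claim that $\Lambda$ is the one-dimensional top line $dz_1\wedge\cdots\wedge dz_d$ is an over-contraction with no justification.

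This error is not cosmetic. After taking $U(d)$-invariants, each $\Lambda^k[dz]$ pairs with the weight-$(-k)$ isotypic piece of the jet CE cochains to produce $\Omega^k(B\fg)$, and the surviving de Rham differential assembles these into the truncated de Rham complex $\sO_{red}(B\fg)[2d]\to\Omega^1(B\fg)[2d-1]\to\cdots\to\Omega^d(B\fg)[d]$. It is the formal Poincar\'e lemma on $B\fg$ that then identifies this with $\Omega^{d+1}_{cl}(B\fg)[d]$ and hence gives $H^1\cong\Sym^{d+1}(\fg^*)^\fg$. If you keep only the $k=d$ piece, the degree-$1$ cocycles form $Z^1_{\rm Lie}(\fg;\Sym^d\fg^*)$, not $(\Sym^{d+1}\fg^*)^\fg$, and your Schur--Weyl step reflects this: the $\det^{-1}$ isotype of $\Lambda^{d+1}(\fg^*\otimes\Sym V^*)$ at $z$-weight $d$ is $\fg^*\otimes\Sym^d\fg^*$, not $\Sym^{d+1}\fg^*$ as you assert. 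Passing from one to the other, and ruling out exactness, both require the de Rham differential linking the lower $\Lambda^k$ pieces that you have discarded.
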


As the proof is rather lengthy, we provide it in Appendix~\ref{sec: hol trans}.

\section{Local aspects of the higher Kac-Moody factorization algebras} 
\label{sec: sphere ops}

A factorization algebra encodes an enormous amount of information, 
and hence it is important to extract aspects that are simpler to understand.
In this section we will take two approaches:
\begin{enumerate}
\item by compactifying along a sphere of real dimension $2d-1$, 
we obtain an algebra (more precisely, a homotopy-coherent associative algebra) that encodes the higher dimensional version of ``radial ordering'' of operators from two-dimensional conformal field theory, and
\item by compactifying along a torus $(S^1)^d$, 
we obtain an algebra over the little $d$-disks operad.
\end{enumerate}
In both cases these algebras behave like enveloping algebras of homotopy-coherent Lie algebras (in a sense we will spell out in detail below), which allows for simpler descriptions of some phenomena. 
It is important to be aware, however, that these algebras do not encode the full algebraic structure produced by the compactification; instead, they sit as dense subalgebras.
We will elaborate on this subtlety below.

For factorization algebras, compactification is accomplished by the pushforward operation.
Given a map $f: X \to Y$ of manifolds and a factorization algebra $\cF$ on $X$,
its {\em pushforward} $f_* \cF$ is the factorization algebra on $Y$ where
\[
f_*\cF(U) = \cF(f^{-1}(U))
\]
for any open $U \subset Y$.
The first example we treat arises from the radial projection map
\[
r: \CC^d \setminus \{0\} \to (0,\infty)
\]
sending $z$ to its length $|z|$. 
The preimage of a point is simply a $2d-1$-sphere,
so one can interpret the pushforward Kac-Moody factorization algebra $r_* \UU_\theta \cG_d$ as compactification along these spheres.
Our first main result is that there is a locally constant factorization algebra $\cA$ along $(0,\infty)$ with a natural map
\[
\phi: \cA \to r_* \UU_\theta \cG_d
\]
that is dense from the point of view of the topological vector space structure.
By a theorem of Lurie, locally constant factorization algebras on $\RR$ correspond to homotopy-coherent associative algebras,
so that we can interpret $\phi$ as saying that the pushforward is approximated by an associative algebra, in this derived sense.
We will show explicitly that this algebra is the $A_\infty$ algebra arising as the enveloping algebra of an $L_\infty$ algebra already introduced by Faonte-Hennion-Kapranov.

For the physically-minded reader, 
this process should be understood as a version of radial ordering.
Recall from the two-dimensional setting that it can be helpful to view the punctured plane as a cylinder,
and to use the radius as a kind of time parameter.
Time ordering of operators is then replaced by radial ordering.
Many computations can be nicely organized in this manner,
because a natural class of operators arises by using a Cauchy integral around the circle of a local operator.
The same technique works in higher dimensions where one now computes residues along the $2d-1$-spheres.
From this perspective, the natural Hilbert space is associated to the origin in the plane
(more accurately to an arbitrarily small disk around the origin),
and this picture also extends to higher dimensions.
Hence we obtain a kind of vacuum module for this higher dimensional generalization of the Kac-Moody algebras.

Our second cluster of results uses compactification along the projection map
\[
\begin{array}{ccc}
\CC^d \setminus \{\text{coordinate hyperplanes}\} & \to & (0,\infty)^d \\
(z_1,\ldots,z_d) & \mapsto & (|z_1|,\ldots,|z_d|).
\end{array}
\]
We construct a locally constant factorization algebra on $(0,\infty)^d$ that maps densely into the pushforward of the higher Kac-Moody algebra. 
Lurie's theorem shows that locally constant factorization algebras on $\RR^d$ correspond to $E_d$ algebras,
so we obtain a higher-dimensional analog of the spherical result.

\subsection{Compactifying the higher Kac-Moody algebras along spheres}
\label{sec: spheres}

Our approach is modeled on the construction of the affine Kac-Moody Lie algebras and their associated vertex algebras from Section 5.5 of~\cite{CG1} and~\cite{GwThesis},
so we review the main ideas to orient the reader.

On the punctured plane $\CC^*$, the sheaf $
\sG_1^{sh} = \Omega^{0,*} \otimes \fg$ is quasi-isomorphic to the sheaf $\cO \otimes \fg$.
The restriction maps of this sheaf tell us that for any open set $U$, there is a map of Lie algebras
\[
\cO(\CC^*) \otimes \fg \to \cO(U) \otimes \fg,
\]
so that we get a map of Lie algebras
\[
\cO_{\rm alg}(\CC^*) \otimes \fg = \fg[z,z^{-1}] \to  \cO(U) \otimes \fg
\]
because Laurent polynomials $\CC[z,z^{-1}] = \cO_{\rm alg}(\CC^*)$ are well-defined on any open subset of the punctured plane.
This {\em loop algebra} $L\fg = \fg[z,z^{-1}]$ admits interesting central extensions,
known as the affine Kac-Moody Lie algebras.
These extensions are labeled by elements of $\Sym^2(\fg^*)^\fg$, 
which is compatible with our work in Section~\ref{sec: g j functional}.

To apply radial ordering to this sheaf---or rather, its associated current algebras---it is convenient to study the pushforward along the radial projection map $r(z) = |z|$.
Note that the preimage of an interval $(a,b)$ is an annulus, so
\[
r_* \sG_1^{sh}((a,b)) = \sG_1^{sh}(\{a < |z| < b\})
\]
and hence we have a canonical map of Lie algebras
\[
\fg[z,z^{-1}] \to \cO(\{a < |z| < b\}) \otimes \fg \hookrightarrow r_* \sG_1^{sh}((a,b)).
\]
We can refine this situation by replacing the left hand side with the locally constant sheaf $\underline{\fg[z,z^{-1}]}$ to produce a map of sheaves $\underline{\fg[z,z^{-1}]} \to  r_* \sG_1^{sh}((a,b))$.
The Poincar\'e lemma tells us that $\Omega^*$ is quasi-isomorphic to the locally constant sheaf $\underline{\CC}$,
and so we can introduce a sheaf
\[
\mathtt{Lg}^{sh} = \Omega^* \otimes \fg[z,z^{-1}]
\]
that is a soft resolution of $\underline{\fg[z,z^{-1}]}$.
There is then a map of sheaves of dg Lie algebras
\beqn
\label{eqn:looptolinearcurrent}
\mathtt{Lg}^{sh} \to r_* \sG_1^{sh}
\eeqn
that sends $\alpha \otimes x\, z^n$ to $[r^*\alpha]_{0,*} \cdot z^n \otimes x$, with $x \in \fg$, $\alpha$ a differential form on $(0,\infty)$, and $[r^*\alpha]_{0,*}$ the $(0,*)$-component of the pulled back form.
This map restricts nicely to compactly support sections $\mathtt{Lg} \to r_* \sG_1$.
By taking Chevalley-Eilenberg chains on both sides, we obtain a map of factorization algebras
\beqn
\label{eqn:Uoflooptolinearcurrent}
\UU\mathtt{Lg} = \cliels(\mathtt{Lg}) \to \cliels(r_* \sG_1) = r_*\UU\sG_1.
\eeqn
The left hand side $\UU\mathtt{Lg}$ encodes the associative algebra $U(L\fg)$, the enveloping algebra of $L\fg$,
as can be seen by direct computation (see section 3.4 of \cite{CG1}) or by a general result of Knudsen~\cite{Knudsen}.
The right hand side contains operators encoded by Cauchy integrals, 
and it is possible to identify such as operator, up to exact terms, as the limit of a sequence of elements from~$U(L\fg)$.

We extend this argument to the affine Kac-Moody Lie algebras by working with suitable extensions on $\mathtt{Lg}$.
It is a deformation-theoretic argument, as we view the extensions as deforming the bracket.

We wish to replace the punctured plane $\CC^*$ by the punctured $d$-dimensional affine space 
\[
\pAA^d = \CC^d \setminus \{0\},
\] 
the current algebras of $\sG_1$ by the current algebras of $\sG_d$,
and, of course, the extensions depending on $\Sym^2(\fg^*)^\fg$ by other local cocycles.
There are two nontrivial steps to this generalization:
\begin{enumerate}
\item finding a suitable replacement for the Laurent polynomials, so that we can recapitulate (without any issues) the construction of the maps \eqref{eqn:looptolinearcurrent} and \eqref{eqn:Uoflooptolinearcurrent}, and
\item deforming this construction to encompass the extensions of $\sG_d$ and hence the twisted enveloping factorization algebras~$\UU_\theta \sG_d$.
\end{enumerate}
We undertake the steps in order.

\subsubsection{Derived functions on punctured affine space}
\label{sec:functionsofpunctureddspace}

When $d=1$, we note that
\[
\CC[z,z^{-1}] \subset \cO(\CC^*) \xto{\simeq} \Omega^{0,*}(\CC^*),
\]
and so the Laurent polynomials are a dense subalgebra of the Dolbeault complex.
When $d >1$, Hartog's lemma tells us that every holomorphic function on punctured $d$-dimensional space extends through the origin:
\[
\cO(\pAA^d) = \cO(\AA^d).
\]
This result might suggest that $\pAA^d$ is an unnatural place to seek a generalization of the loop algebra,
but such pessimism is misplaced because $\pAA^d$ is not affine 
and so its {\em derived} algebra of functions, 
given by the derived global sections $\RR \Gamma(\pAA^d, \cO)$, 
is more interesting than the underived global sections~$\cO(\pAA^d)$.

Indeed, a straightforward computation in algebraic geometry shows
\[
H^*(\pAA^{d}, \sO_{\rm alg}) = 
\begin{cases} 
0, & * \neq 0, d-1 \\ 
\CC[z_1,\ldots,z_d], & * = 0 \\ \CC[z_1^{-1},\ldots,z_d^{-1}] \frac{1}{z_1 \cdots z_d}, & * = d-1 
\end{cases}.
\]
(For instance, use the cover by the affine opens of the form $\AA^d \setminus \{z_i =0\}$.)
When $d=1$, this computation recovers the Laurent polynomials,
so we should view the cohomology in degree $d-1$ as providing the derived replacement of the polar part of the Laurent polynomials.
A similar result holds in analytic geometry, of course,
so that we have a natural map
\[
\RR \Gamma(\pAA^d, \cO_{\rm alg}) \to \RR \Gamma(\pAA^d, \cO_{\rm an}) \simeq \Omega^{0,*}(\pAA^d)
\]
that replaces our inclusion of Laurent polynomials into the Dolbeault complex on~$\pAA^d$.

For explicit constructions, it is convenient to have an explicit dg commutative algebra that models the derived global sections.
It should be no surprise that we like to work with the Dolbeault complex,
but there is also an explicit dg model $A_d$ for the algebraic version derived global sections due to Faonte-Hennion-Kapranov \cite{FHK} and based on the Jouanolou method for resolving singularities. 
In fact, they provide a model for the algebraic $p$-forms as well.

\begin{dfn}
Let $a_d$ denote the algebra  
\[
\CC[z_1,\ldots,z_d, z_1^*,\ldots,z_d^*][(z z^*)^{-1}]
\]
defined by localizing the polynomial algebra with respect to $zz^* = \sum_i z_i z^*_i$.
View this algebra $a_d$ as concentrated in bidegree $(0,0)$, 
and consider the bigraded-commutative algebra $R^{*,*}_d$ over $a_d$ that is freely generated in bidegree $(1,0)$ by elements
\[
\d z_1,\ldots , \d z_d,
\] 
and in bidegree $(0,1)$ by
\[
\d z_1^*,\ldots, \d z_d^*.
\]
We care about the subalgebra $A^{*,*}_d$ where $A^{p,m}_d$ consisting of elements $\omega \in R^{p,m}_d$ such that
\begin{itemize}
\item[(i)] the coefficient of $\d z^*_{i_1} \cdots \d z^*_{i_m}$ has degree $-m$ with respect to the $z_k^*$ variables, and
\item[(ii)] the contraction $\iota_\xi \omega$ with the Euler vector field $\xi = \sum_{i} z_i^* \partial_{z_{i}^*}$ vanishes.
\end{itemize}
This bigraded algebra admits natural differentials in both directions:
\begin{enumerate}
\item define a map $\dbar : A_d^{p,q} \to A_d^{p,q+1}$ of bidegree $(0,1)$~by
\[
\dbar = \sum_i \d z^*_i \frac{\partial}{\partial z_i^*},
\]
\item define a a map of bidegree $(1,0)$~by
\[
\partial = \sum_i \d z_i \frac{\partial}{\partial z_i} .
\]
\end{enumerate}
These differentials commute, so $\dbar \partial = \partial \dbar$,
and each squares to zero.
\end{dfn}

We denote the subcomplex with $p=0$~by 
\[
(A_d, \dbar) = (\bigoplus_{q = 0}^d A_d^{q}[-q], \dbar),
\] 
and it has the structure of a dg commutative algebra.
For $p>0$, the complex $A^{p,*}_d = (\oplus_q A^{p,q}[-q], \dbar)$ is a dg module for $(A_d, \dbar)$.

From the definition, one can guess that the variables $z_i$ should be understood as the usual holomorphic coordinates on affine space $\CC^d$ and the variables $z^*_i$ should be understood as the antiholomorphic coordinates $\zbar_i$.
The following proposition confirms that guess;
it also summarizes key properties of the dg algebra $A_d$ and its dg modules $A_{d}^{p,*}$,
by aggregating several results of \cite{FHK}.

\begin{prop}[\cite{FHK}, Section 1]
\label{prop: Ad} $\;$
\begin{enumerate}
\item
The dg commutative algebra $(A_d,\dbar)$ is a model for $\RR \Gamma(A^{d\times}, \sO^{alg})$:
\[
A_d \simeq \RR\Gamma(\AA^{d \times}, \sO^{alg}) .
\]
Similarly, $(A_d^{p,*},\dbar) \simeq \RR \Gamma(\AA^{d\times}, \Omega^{p,alg})$.
\item There is a dense map of commutative bigraded algebras
\[
\jou : A^{*,*}_d \to \Omega^{*,*}(\CC^d \setminus \{0\}) 
\]
sending $z_i$ to $z_i$, $z_i^*$ to $\Bar{z}_i$, and $\d z_i^*$ to $\d \zbar_i$, and the map intertwines with the $\dbar$ and $\partial$ differentials on both sides.
\item There is a unique $\GL_n$-equivariant residue map
\[
{\rm Res}_{z=0} : A_d^{d,d-1} \to \CC
\]
that satisfies
\[
\Res_{z=0} \left(f(z) \omega_{BM}^{alg}(z,z^*) \d z_1 \cdots \d z_d\right) = f(0)
\]
for any $f (z) \in \CC[z_1,\ldots,z_d]$. 
In particular, for any $\omega \in A^{d,d-1}_d$,
\[
{\rm Res}_{z=0} (\omega) = \oint_{S^{2d-1}} \jou(\omega)
\]
where $S^{2d-1}$ is any sphere centered at the origin in $\CC^d$. 
\end{enumerate}
\end{prop}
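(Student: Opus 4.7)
The plan is to follow the Jouanolou-style strategy of \cite{FHK}: one trades the non-affine punctured space $\AA^{d\times}$ for an affine bundle over it whose algebra of functions is explicit, so that derived global sections become ordinary global sections. Concretely, I would introduce the variety $\widetilde{\AA^{d\times}} = \{(z,z^*) : \langle z,z^*\rangle = 1\}$ inside $\AA^d \times \AA^d$, note that projection to the first factor is an affine bundle with contractible fibers (hence a weak equivalence), and identify its coordinate ring with the degree-zero piece of $A_d$ once we set $(zz^*) = 1$. The full bigraded algebra $A^{*,*}_d$ then arises as the (algebraic) de Rham/Dolbeault-type bicomplex for this affine model, with the constraints (i) homogeneity and (ii) vanishing under $\iota_\xi$ cutting out precisely the forms pulled back from $\AA^{d\times}$.

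For part (1), I would compute cohomology of $(A_d,\dbar)$ by hand and match it with the cohomology of $\sO^{alg}$ on $\AA^{d\times}$ stated earlier in the excerpt. On degree-zero cohomology one sees polynomials in $z_1,\ldots,z_d$ by killing all $z^*_i$ via the Koszul-like differential. In top antiholomorphic degree $d-1$, the constraints force representatives proportional to $\omega_{BM}^{alg} = \tfrac{1}{(zz^*)^d}\sum_i (-1)^{i-1} z^*_i \d z^*_1\cdots \widehat{\d z^*_i}\cdots \d z^*_d$ multiplied by arbitrary polynomials in the $z_i$ and the negative monomials $(z_1\cdots z_d)^{-1}\CC[z_1^{-1},\ldots,z_d^{-1}]$, reproducing exactly $\CC[z_1^{-1},\ldots,z_d^{-1}]\tfrac{1}{z_1\cdots z_d}$. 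The analogous argument for $A^{p,*}_d$ gives $\Omega^{p,alg}$, since the constraints treat the $\d z_i$ as formal symbols that commute through $\dbar$.

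For part (2), the map $\jou$ is defined on generators by the substitution $z_i \mapsto z_i$, $z_i^* \mapsto \bar z_i$, $\d z_i^* \mapsto \d \bar z_i$, $\d z_i \mapsto \d z_i$. Since $(zz^*)^{-1}$ goes to $|z|^{-2}$, which is smooth on $\CC^d\setminus\{0\}$, this is well-defined, and compatibility with $\dbar, \partial$ is immediate from the chain rule. Density follows because $\jou(A_d)$ contains all polynomial-coefficient Dolbeault forms with poles only along $|z|^2=0$, and these are dense in $\Omega^{*,*}(\CC^d\setminus\{0\})$ in the natural Fréchet topology by a standard Runge-type argument on the complement of the origin.

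For part (3), define $\Res_{z=0}$ as the composition of $\jou$ with integration over any sphere $S^{2d-1}$ centered at the origin; Stokes' theorem plus $\dbar$-closedness of representatives makes this independent of the radius, and the stated normalization follows from the classical computation $\oint_{S^{2d-1}} f(z)\,\omega_{BM}(z,\bar z)\,\d z_1\cdots \d z_d = f(0)$ for the Bochner-Martinelli kernel. Uniqueness of a $\GL_d$-equivariant linear map with this property reduces, by representation theory, to showing that the $\GL_d$-invariants in $A_d^{d,d-1}/\dbar A_d^{d,d-2}$ are one-dimensional, spanned by $\omega_{BM}^{alg}\,\d z_1\cdots \d z_d$; this is the main obstacle and I would verify it by combining the explicit cohomology computation from part (1) with the decomposition of polynomial representations of $\GL_d$. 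Once uniqueness is in hand, the residue must coincide with the sphere integral on its whole domain, giving the final formula.
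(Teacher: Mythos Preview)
The paper does not give its own proof of this proposition: it is stated with the attribution ``\cite{FHK}, Section 1'' and is quoted as a summary of results from Faonte--Hennion--Kapranov, so there is nothing in the paper to compare your sketch against. Your outline is a reasonable reconstruction of the Jouanolou-torsor argument that underlies the cited reference, and the overall architecture (replace $\AA^{d\times}$ by an affine model, compute cohomology by hand, define the residue via the sphere integral and argue uniqueness through $\GL_d$-invariants) is the right one.

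One point in your sketch is imprecise: in part (1) you write that in degree $d-1$ the representatives are $\omega_{BM}^{alg}$ times ``arbitrary polynomials in the $z_i$ and the negative monomials,'' which conflates the two pieces. The positive-power polynomials in $z_i$ times $\omega_{BM}^{alg}$ are $\dbar$-exact; what survives in $H^{d-1}$ is exactly the span of $z_1^{-n_1}\cdots z_d^{-n_d}\,\omega_{BM}^{alg}$ with all $n_i\ge 1$, matching $\CC[z_1^{-1},\ldots,z_d^{-1}]\tfrac{1}{z_1\cdots z_d}$. You should state this cleanly if you flesh the argument out. Otherwise your proposal is consistent with the source the paper cites.
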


It is a straightforward to verify that the formula for the Bochner-Martinelli kernel makes sense in the algebra $A_d$.
That is, we define
\[
\omega_{BM}^{alg} (z,z^*) = \frac{(d-1)!}{(2 \pi i)^d} \frac{1}{(zz^*)^d} \sum_{i=1}^d (-1)^{i-1} z_i^* \d z_1^* \wedge \cdots \wedge \Hat{\d z_i^*} \wedge \cdots \wedge \d z_d^*,
\]
which is an element of~$A_d^{0,d-1}$. 

\subsubsection{The sphere algebra of $\fg$}

The loop algebra $L\fg = \fg[z,z^{-1}]$ arises as an algebraic model of the mapping space $\Map(S^1,\fg)$,
which obtains a natural Lie algebra structure from the target space~$\fg$.
For a topologist, a natural generalization is to replace the circle $S^1$, which is equal to the unit vectors in $\CC$, by the sphere $S^{2d-1}$, which is equal to the unit vectors in $\CC^d$.
That is, consider the ``sphere algebra'' of $\Map(S^{2d-1},\fg)$.
An algebro-geometric sphere replacement of this sphere is the punctured affine $d$-space $\pAA^{d}$ or a punctured formal $d$-disk,
and so we introduce an algebraic model for the sphere algebra.

\begin{dfn}
For a Lie algebra $\fg$, the {\em sphere algebra} in complex dimension $d$ is the dg Lie algebra~$A_d \otimes \fg$.
Following \cite{FHK} we denote it by~$\fg^\bullet_d$.
\end{dfn}

There are natural central extensions of this sphere algebra as {em $L_\infty$ algebras},
in parallel with our discussion of extensions of the local Lie algebras.
For any $\theta \in \Sym^{d+1}(\fg^*)^\fg$, Faonte-Hennion-Kapranov define the cocycle
\[
\label{fhk cocycle}
\begin{array}{cccc}
\theta_{FHK} : & (A_d \tensor \fg)^{\tensor (d+1)} & \to & \CC\\ 
& a_0 \otimes \cdots \otimes a_d & \mapsto & \Res_{z=0} \theta(a_0,\partial a_1,\ldots,\partial a_d)
\end{array}.
\]
This cocycle has cohomological degree $2$ and so determines an unshifted central extension as $L_\infty$ algebras of~$A_d \tensor \fg$:
\beqn\label{gdt}
\CC \cdot K \to \widetilde{\fg}^\bullet_{d, \theta} \to A_d \tensor \fg .
\eeqn
Our aim is now to show how the Kac-Moody factorization algebra $\UU_\theta \sG_d$ is related to this $L_\infty$ algebra,
which is a higher-dimensional version of the affine Kac-Moody Lie algebras. 

\subsubsection{The case of zero level}

Here we will consider the higher Kac-Moody factorization algebra on $\CC^d \setminus \{0\}$ ``at level zero," namely the factorization algebra $\UU(\sG_{\CC^d \setminus\{0\}})$.
In this section we will omit $\CC^d \setminus \{0\}$ from the notation, and simply refer to the factorization algebra by $\UU(\sG_d)$. 
Our construction will follow the model case outlined in the introduction to this section.
Recall that $r: \pAA^d \to (0,\infty)$ is the radial projection map that sends $(z_1,\ldots,z_d)$ to its length $\sqrt{z_1\zbar_1 + \cdots z_d \zbar_d}$.

\begin{lem}
There is a map of sheaves of dg commutative algebras on~$\RR_{>0}$
\[
\pi: \Omega^* \to r_* \Omega^{0,*}
\]
sending a form $\alpha$ to the $(0,*)$-component of its pullback $r^*\alpha$.
\end{lem}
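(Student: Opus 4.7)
The plan is to factor $\pi$ as the composition of the pullback $r^*$ with the Dolbeault bidegree projection $[\cdot]^{0,*}$, and to verify that each factor is a map of sheaves of dg commutative algebras; the composition then inherits the structure automatically.

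First, $r^{*} : \Omega^{*} \to r_{*} \Omega^{*}$ is a map of sheaves of dg commutative algebras on $\RR_{>0}$ by the usual naturality of de Rham pullback: it is functorial with respect to restriction, multiplicative with respect to wedge products, and intertwines the de Rham differentials. The target is the pushforward because $r^{-1}$ sends opens in $\RR_{>0}$ to opens in~$\pAA^{d}$, and nothing in the argument requires $r$ to be anything more than smooth.

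Second, for any complex manifold the bidegree projection $[\cdot]^{0,*} : (\Omega^{*}, d) \to (\Omega^{0,*}, \dbar)$ is itself a map of sheaves of dg commutative algebras. Multiplicativity follows from $\Omega^{p,q} \wedge \Omega^{p',q'} \subseteq \Omega^{p+p', q+q'}$: the only contributions to the $(0,*)$-part of $\alpha \wedge \beta$ come from pairs of factors of pure holomorphic degree $0$, which gives $[\alpha \wedge \beta]^{0,*} = \alpha^{0,*} \wedge \beta^{0,*}$. Compatibility with differentials follows from the decomposition $d = \partial + \dbar$ with $\partial : \Omega^{p,q} \to \Omega^{p+1,q}$ and $\dbar : \Omega^{p,q} \to \Omega^{p,q+1}$: the $\partial$-contributions land in strictly positive holomorphic degree and vanish under projection, leaving $[d\omega]^{0,*} = \dbar[\omega]^{0,*}$. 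Applying $r_{*}$ preserves all of these properties sectionwise.

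Composing the two gives $\pi$ as a map of sheaves of dg commutative algebras, completing the verification. There is no substantial obstacle: the statement is essentially a formal consequence of how the de Rham pullback interacts with the Dolbeault bigrading, and no analytic or geometric input about $r$ beyond its smoothness enters. The explicit formula for $\pi(\alpha)$ on a $1$-form $\alpha = f(t)\, dt$---namely $\tfrac{f(r)}{2r} \sum_{i} z_{i}\, d\bar{z}_{i}$ after writing $dr = \tfrac{1}{2r}\sum_{i}(z_{i}\, d\bar{z}_{i} + \bar{z}_{i}\, dz_{i})$---can be extracted from the factorization, but is not needed for the lemma itself.
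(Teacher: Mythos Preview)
Your proposal is correct and follows essentially the same approach as the paper: factor $\pi$ as the de Rham pullback $r^*$ followed by the projection $\Omega^{*,*} \to \Omega^{0,*}$, and observe that each factor is a map of dg commutative algebras. The paper's proof is a single sentence stating exactly this factorization; you have simply supplied the verification that the projection is multiplicative and intertwines $d$ with $\dbar$, which the paper leaves implicit.
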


This result is straightforward since the pullback $r^*$ denotes a map of dg algebras to $r_* \Omega^{*,*}$ and we are simply postcomposing with the canonical quotient map of dg algebras $\Omega^{*,*} \to \Omega^{0,*}$. 

We also have a map of dg commutative algebras $A_d \to \Omega^{0,*}(U)$ for any open set $U \subset \pAA^d$,
by postcomposing the map $\jou$ of proposition~\ref{prop: Ad} with the restriction map.
We abusively denote the composite by $\jou$ as well.
Thus we obtain a natural map of dg commutative algebras
\[
\pi_A: \Omega^* \otimes A_d \to r_* \Omega^{0,*}
\]
sending $\alpha \otimes \omega$ to $\pi(\alpha) \wedge \jou(\omega)$.
By tensoring with $\fg$, we obtain the following.

\begin{cor}
There is a map of sheaves of dg Lie algebras on~$\RR_{>0}$
\[
\pi_{\fg, d}: \Omega^* \otimes \fg^\bullet_d \to r_* (\Omega^{0,*}\otimes \fg) = r_*(\sG_d^{sh})
\]
sending $\alpha \otimes x$ to $\pi(\alpha) \otimes x$.
\end{cor}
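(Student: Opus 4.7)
The plan is to obtain $\pi_{\fg,d}$ simply by tensoring the dg commutative algebra map $\pi_A$ from the preceding lemma with the Lie algebra $\fg$, and then to verify that the resulting map respects both the differential and the Lie bracket. Since $\fg_d^\bullet = A_d \otimes \fg$ by definition and tensoring with $\fg$ commutes with $\Omega^* \otimes (-)$, the source becomes $\Omega^* \otimes A_d \otimes \fg$; applying $\pi_A \otimes \mathrm{id}_\fg$ lands in $r_* \Omega^{0,*} \otimes \fg$, which is canonically identified with $r_*(\Omega^{0,*} \otimes \fg) = r_*(\sG_d^{sh})$ because pushforward commutes with tensoring by a finite-dimensional vector space. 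This defines the map on the nose by $\alpha \otimes \omega \otimes x \mapsto \pi(\alpha) \wedge \jou(\omega) \otimes x$, which specializes to the stated formula $\alpha \otimes x \mapsto \pi_A(\alpha) \otimes x$ for $\alpha \in \Omega^*$ and $x \in \fg_d^\bullet$.

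Next I would check compatibility with the differentials. The source carries the total differential $d_{\mathrm{dR}} + \dbar_{A_d}$ tensored trivially with $\fg$, and the target carries the differential $\dbar$ on $\Omega^{0,*}$ tensored trivially with $\fg$. Compatibility with these differentials is already encoded in the statement that $\pi_A$ is a map of dg commutative algebras, and tensoring with the identity on $\fg$ preserves this.

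For the bracket, recall that on both sides the dg Lie structure is obtained from a graded-commutative dg algebra tensored with $\fg$ via the formula $[\omega \otimes x, \omega' \otimes y] = (\omega \wedge \omega') \otimes [x,y]$. Because $\pi_A$ is a map of graded-commutative algebras, meaning $\pi_A(u \cdot v) = \pi_A(u) \wedge \pi_A(v)$, we immediately get
\[
\pi_{\fg,d}([u \otimes x, v \otimes y]) = \pi_A(u \cdot v) \otimes [x,y] = \pi_A(u) \wedge \pi_A(v) \otimes [x,y] = [\pi_{\fg,d}(u \otimes x), \pi_{\fg,d}(v \otimes y)],
\]
so the bracket is preserved. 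The sheaf-theoretic naturality is inherited from that of $\pi_A$, which was itself a map of sheaves on $\RR_{>0}$.

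There is no substantive obstacle here; the statement is a formal consequence of the lemma together with the observation that the functor $(-) \otimes_\CC \fg$ sends maps of (sheaves of) graded-commutative dg algebras to maps of (sheaves of) dg Lie algebras when we equip both sides with the standard ``commutative-algebra-tensor-Lie-algebra'' bracket. The only thing worth being careful about is the identification $r_*\Omega^{0,*} \otimes \fg \cong r_*(\Omega^{0,*} \otimes \fg)$, which holds because $\fg$ is finite-dimensional, so taking sections commutes with tensoring by $\fg$.
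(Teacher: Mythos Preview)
Your proposal is correct and takes essentially the same approach as the paper. The paper's own proof is a single sentence noting that $\Omega^* \otimes \fg^\bullet_d = \Omega^* \otimes A_d \otimes \fg$ and that $\pi_{\fg,d}$ is simply $\pi_A \otimes \id_\fg$; you have unpacked this, verifying the compatibility with differential and bracket and the identification $r_*\Omega^{0,*} \otimes \fg \cong r_*(\Omega^{0,*}\otimes\fg)$, but the underlying argument is identical.
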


Note that $\Omega^* \otimes \fg^\bullet_d = \Omega^* \otimes A_d \otimes \fg$, so $\pi_{\fg, d}$ is simply $\pi_A \otimes \id_\fg$.

This map preserves support and hence restricts to compactly-supported sections.
In other words, we have a map between the associated cosheaves of complexes (and precosheaves of dg Lie algebras).
In summary, we have shown our key result.

\begin{prop}
\label{prop: fact lie}
The map
\[
\pi_{\fg, d}: \Omega^*_{\RR_{>0},c} \otimes \fg^\bullet_d \to r_*\sG_d 
\] 
is a map of precosheaves of dg Lie algebras.
It determines a map of factorization algebras
\[
\cliels(\pi_{\fg, d}) : \UU\left(\Omega^{*}_{\RR_{>0}} \tensor \fg^\bullet_d\right) \to r_*\left(\UU \sG_d \right) .
\]
\end{prop}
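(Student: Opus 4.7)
The plan is to prove the two assertions of the proposition in sequence; neither is particularly involved given the corollary just proven. The corollary already shows that $\pi_{\fg,d}$ is a map of \emph{sheaves} of dg Lie algebras on $\RR_{>0}$, so the first assertion reduces to verifying that $\pi_{\fg,d}$ sends compactly supported sections to compactly supported sections. The second assertion then follows by functoriality of Chevalley--Eilenberg chains.

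For the first assertion, I would argue as follows. An element of $\Omega^*_{\RR_{>0},c}(U) \otimes \fg^\bullet_d$ is a finite sum of pure tensors $\alpha \otimes \omega \otimes x$ with $\alpha \in \Omega^*_c(U)$, $\omega \in A_d$, and $x \in \fg$; its image under $\pi_{\fg,d}$ is the corresponding sum of sections $\pi(\alpha) \wedge \jou(\omega) \otimes x$ of $\Omega^{0,*}(r^{-1}(U)) \otimes \fg$. Since $\mathrm{supp}(\alpha) \subset U \subset (0,\infty)$ is compact, it lies in some closed interval $[a,b]$ with $a > 0$; hence $r^{-1}(\mathrm{supp}(\alpha))$ is a closed shell in $\CC^d \setminus \{0\}$, bounded above by $b$ and bounded away from the origin by $a$, and therefore compact in $\pAA^d$. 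The support of $\pi(\alpha) \wedge \jou(\omega)$ sits inside the support of the first factor, so the image is compactly supported in $r^{-1}(U)$, as required. It is worth flagging that $\jou(\omega)$ need not be compactly supported on its own, since $A_d$ contains polynomials (unbounded at infinity) as well as Bochner--Martinelli-type expressions involving $(zz^*)^{-1}$ (blowing up at the origin); the compactly supported de Rham factor $\alpha$ on the interval is precisely what confines everything to a compact shell.

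For the second assertion, I would invoke the functoriality of $\cliels$ together with Theorem 5.6.0.1 of \cite{CG1}. Applying Chevalley--Eilenberg chains to a (pre)cosheaf of dg Lie algebras with compact support produces a factorization algebra, with factorization products built from the natural symmetric-coalgebra structure on $\cliels$; this applies to both $\Omega^*_{\RR_{>0},c} \otimes \fg^\bullet_d$ on $\RR_{>0}$ and to $r_*\sG_d$ (the latter because pushforward along the continuous map $r$ preserves the factorization axioms, and $\cliels$ commutes with pushforward through the identification $\cliels(\sG_d(r^{-1}(U))) = (r_*\cliels\sG_d)(U)$). Any map of precosheaves of dg Lie algebras strictly respects this coalgebra structure, so $\cliels(\pi_{\fg,d})$ is automatically a map of factorization algebras.

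The only genuine step in the proof is the support check in the second paragraph; once that is in place, the passage through $\cliels$ is formal. Thus the main ``obstacle'' is really just the bookkeeping needed to see that the combination of a compactly supported de Rham form on $(0,\infty)$ with an arbitrary (possibly singular, possibly unbounded) element of $A_d$ still yields a section supported in a compact shell of $\pAA^d$.
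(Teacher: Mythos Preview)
Your proposal is correct and follows essentially the same approach as the paper: the paper simply asserts that ``this map preserves support and hence restricts to compactly-supported sections,'' then notes that applying $\clieu_*(-)$ is functorial and commutes with pushforward by inspection. Your argument just makes the support check explicit, which is a welcome elaboration but not a different route.
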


The map of factorization algebras follows from applying the functor $\clieu_*(-)$ to the map $\pi_{\fg, d}$;
this construction commutes with push-forward by inspection. 

Both maps are dense in every cohomological degree with respect to the natural topologies on these vector spaces,
leading to the following observation.


\begin{cor}
By Theorem \ref{thm:knudsen} of Knudsen, 
the enveloping $E_1$ algebra of the sphere algebra $\fg^\bullet_d$ is dense inside the pushforward factorization algebra $r_*\left(\UU \sG_d \right)$.
\end{cor}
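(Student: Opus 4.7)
The plan is to combine Proposition~\ref{prop: fact lie} with Knudsen's theorem (Theorem~\ref{thm:knudsen}). The proposition already produces a map of factorization algebras
\[
\cliels(\pi_{\fg, d}) : \UU\bigl(\Omega^{*}_{\RR_{>0}} \otimes \fg^\bullet_d\bigr) \to r_*(\UU \sG_d),
\]
and the remark preceding the corollary asserts that this map is dense in every cohomological degree. So there are only two things to check: (a) that the source on the left is a model for the enveloping $E_1$ algebra $U^{E_1}(\fg^\bullet_d)$, and (b) that the density statement really does hold at the level of Chevalley-Eilenberg chains.

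For step (a), I would apply Theorem~\ref{thm:knudsen} directly to the dg Lie algebra $\fg^\bullet_d = A_d \otimes \fg$, taking $\sC$ to be a suitable $\infty$-category of chain complexes. The theorem identifies $U^{E_1}(\fg^\bullet_d)$ with the locally constant factorization algebra $\UU(\Omega^{*}_c \otimes \fg^\bullet_d)$ on $\RR$; under the diffeomorphism $\RR_{>0} \cong \RR$ this is precisely the source of $\cliels(\pi_{\fg, d})$. Thus the map of the preceding proposition can be reinterpreted as a map of factorization algebras from a model of $U^{E_1}(\fg^\bullet_d)$ into $r_*(\UU \sG_d)$.

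For step (b), since $\pi_{\fg, d} = \pi_A \otimes \id_\fg$, density reduces to density of $\pi_A: \Omega^*_{\RR_{>0}} \otimes A_d \to r_* \Omega^{0,*}$. On an open interval $(a,b) \subset \RR_{>0}$ the target is the Dolbeault complex on the annulus $\{a < |z| < b\}$, and density of the Jouanolou map $\jou: A_d \to \Omega^{0,*}(\{a<|z|<b\})$ is exactly part~(2) of Proposition~\ref{prop: Ad}. Taking compactly supported de Rham forms along the radial direction and tensoring preserves density, and then passing to the Chevalley-Eilenberg chains $\clieu_*$ preserves it as well, since the symmetric powers that define $\clieu_*$ are computed from compactly supported sections of external tensor product bundles (as emphasized after Definition~\ref{dfn: classical currents}).

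The only nontrivial point — really a bookkeeping task rather than a genuine obstacle — is verifying that density of generators propagates cleanly through the completed symmetric powers appearing in $\clieu_*(\sG_d(U))$, with the topologies inherited from sections of external tensor products. Given this, combining (a) with (b) yields the desired conclusion that $U^{E_1}(\fg^\bullet_d)$ sits densely inside $r_*(\UU \sG_d)$.
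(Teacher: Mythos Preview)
Your proposal is correct and follows essentially the same approach as the paper. The paper in fact gives no explicit proof of this corollary: it simply asserts, just before stating it, that ``both maps are dense in every cohomological degree with respect to the natural topologies,'' and then invokes Knudsen's theorem; your steps (a) and (b) are exactly a spelled-out version of that sentence, with the added (and reasonable) observation that density ultimately traces back to the density of the Jouanolou map in Proposition~\ref{prop: Ad}(2).
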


\subsubsection{The case of non-zero level}

Pick a $\theta \in \Sym^{d+1}(\fg^*)^\fg$. 
This choice determines a higher Kac-Moody factorization algebra $\UU_\theta \sG_d$,
and we would like to produce maps akin to those of Proposition~\ref{prop: fact lie}.

The simplest modification of the level zero situation is to introduce a central extension of the precosheaf
\[
\mathtt{G}_d = \Omega^*_{\RR_{>0},c} \otimes \fg^\bullet_d
\] 
as a precosheaf of $L_\infty$ algebras on $\RR_{>0}$,
with the condition that this extension intertwines with the extension $r_*\sG_{d,\theta}$ of~$r_* \sG_{d}$.
In other words, we need a map 
\[
\xymatrix{
0 \ar[r] & \CC \cdot K [-1]  \ar[d]^{=} \ar[r] & \mathtt{G}_{d,\Theta'} \ar[d]^{{\Hat{\pi}}_{\fg,d}} \ar[r] & \mathtt{G}_d \ar[d]^{\pi_{\fg, d}} \ar[r] & 0 \\
0 \ar[r] & \CC \cdot K [-1] \ar[r] & r_*\sG_{d,\theta} \ar[r] & r_* \sG_{d} \ar[r] & 0 .
}
\]
of central extensions of $L_\infty$ algebras.
This condition fixes the problem completely, 
because we simply pull back the extension defining $r_*\sG_{d,\theta}$.
Let us extract an explicit description,
which will be useful later.
On an open $U \subset \RR_{>0}$, the extension for $r_*\sG_{d,\theta}$ is given by an integral
\[
\int_{r^{-1}(U)} \theta(\alpha_0,\partial \alpha_1,\ldots,\partial \alpha_d) = \int_U \int_{S^{2d-1}} \theta(\alpha_0,\partial \alpha_1,\ldots,\partial \alpha_d)
\]
that can be factored into a double integral. 
This formula indicates that $\Theta'$ must be given by the cocycle whose value on elements $\phi_i \otimes a_i \in \Omega^*_c \otimes \fg^\bullet_d$~is
\begin{align*}
\Theta'(\phi_0 \otimes a_0, \ldots, \phi_d \otimes a_d)
&= \int_U \int_{S^{2d-1}} \theta(\pi(\phi_0) \wedge \jou(a_0),\partial (\pi(\phi_1) \wedge \jou(a_1)),\ldots,\partial(\pi(\phi_d) \wedge \jou(a_d))) \\
\end{align*}
We thus obtain the following result.

\begin{lem} 
For $\theta \in \Sym^{d+1}(\fg^*)^\fg$,
let $\mathtt{G}_{d,\theta}$ denote the precosheaf of $L_\infty$ algebras obtained by extending $\mathtt{G}_d$ by the cocycle
\[
(\phi_0 \otimes a_0, \ldots, \phi_d \otimes a_d) \mapsto \int_U \int_{S^{2d-1}} \theta(\pi(\phi_0) \wedge \jou(a_0),\partial (\pi(\phi_1) \wedge \jou(a_1)),\ldots,\partial(\pi(\phi_d) \wedge \jou(a_d))).
\]
By construction, there is a canonical map 
\[
\pi_{\fg,d,\theta}: \mathtt{G}_{d,\theta} \to r_* \sG_{d,\theta}
\]
of precosheaves of $L_\infty$ algebras on $\RR_{>0}$, 
and hence there is a map of factorization algebras
\[
\UU(\pi_{\fg,d,\theta}): \UU_{\theta} \mathtt{G}_d \to r_* \UU_\theta \sG_d.
\]
\end{lem}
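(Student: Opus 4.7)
The plan is to recognize the cocycle $\Theta'$ as the pullback along $\pi_{\fg,d}$ of the cocycle on $r_*\sG_d$ induced by $\fj_{\pAA^d}(\theta)$, and then to invoke general facts about pullbacks of $L_\infty$ central extensions together with the functoriality of twisted Chevalley--Eilenberg chains. Concretely, for an open $U \subset \RR_{>0}$, the cocycle $\fJ_{\pAA^d}(\theta)$ restricts to a cocycle on $r_*\sG_d(U) = \sG_d(r^{-1}(U))$ given by $\int_{r^{-1}(U)} \theta(\alpha_0,\partial\alpha_1,\ldots,\partial\alpha_d)$. Since $r$ is a trivial $S^{2d-1}$-fibration over $\RR_{>0}$, Fubini factors this integral as the iterated integral $\int_U \int_{S^{2d-1}}$ whenever the integrand is integrable. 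Substituting $\alpha_i = \pi_{\fg,d}(\phi_i \otimes a_i) = \pi(\phi_i) \wedge \jou(a_i)$ (tensored with the $\fg$-component of $a_i$) and using that $\theta$ separates differential-form and Lie-algebraic factors, one recovers the explicit formula of the lemma. Thus $\Theta' = \pi_{\fg,d}^*\bigl(r_*\fJ_{\pAA^d}(\theta)\bigr)$, and since the pullback of a cocycle along a dg Lie algebra morphism is again a cocycle, $\Theta'$ is a cocycle on $\mathtt{G}_d$ and the $L_\infty$ extension $\mathtt{G}_{d,\theta}$ is well-defined.

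Next, the universal property of pullback extensions automatically produces a canonical map of $L_\infty$ central extensions
\[
\pi_{\fg,d,\theta} : \mathtt{G}_{d,\theta} \to r_*\sG_{d,\theta}
\]
that is the identity on the central $\CC \cdot K[-1]$ and agrees with $\pi_{\fg,d}$ on the underlying Lie algebra, fitting into the commutative diagram of extensions displayed just before the lemma. The construction is compatible with the precosheaf structure on $\RR_{>0}$ because $\pi_{\fg,d}$ is a map of precosheaves (Proposition~\ref{prop: fact lie}) and the cocycles are built from integrals of differential-operation outputs that respect openwise restriction and extension-by-zero. Applying the twisted enveloping factorization algebra functor $\UU_{(-)}$ to this morphism of extensions---which is functorial in maps of $L_\infty$ central extensions by the construction of Section~\ref{sec: localcocycle}---yields the desired map of factorization algebras. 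The identification $\UU_\theta(r_*\sG_d) \cong r_*(\UU_\theta\sG_d)$ is immediate from the openwise definitions of both pushforward and $\clieu_*$.

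The only genuine technical point is verifying the integrability required for Fubini in the first step: since each $\phi_i$ is compactly supported in $U \subset \RR_{>0}$ and $S^{2d-1}$ is compact, the integrand on $r^{-1}(U)$ has compact support in the radial direction and is smooth (hence bounded) in the angular directions, so the iterated integral converges absolutely and equals the double integral. Beyond this, the lemma is purely an assembly of pullback and functoriality statements, so no further obstacles arise.
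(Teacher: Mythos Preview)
Your proposal is correct and follows essentially the same approach as the paper. The paper treats this lemma as a ``by construction'' statement: the discussion immediately preceding it explains that one pulls back the extension defining $r_*\sG_{d,\theta}$ along $\pi_{\fg,d}$, factors the integral over $r^{-1}(U)$ via Fubini into $\int_U \int_{S^{2d-1}}$, and reads off the explicit cocycle formula---exactly as you do. Your additional remarks on integrability and on the functoriality of the twisted enveloping construction simply make explicit what the paper leaves implicit.
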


The maps remain degreewise dense, but now we are working with a twisted enveloping factorization algebra,
which is slightly different in flavor than Knudsen's construction.
The central parameter $K$ parametrizes, in fact, a family of $E_1$ algebras that specializes at $K=0$ to the enveloping $E_1$ algebra of the sphere algebra~$\fg^\bullet_d$.

\begin{cor}
There is a family of $E_1$ algebras over the affine line ${\rm Spec}(\CC[K])$ with the enveloping $E_1$ algebra of the sphere algebra $\fg^\bullet_d$ at the origin.
This family is dense within the pushforward $r_*\left(\UU_\theta \sG_d \right)$.
\end{cor}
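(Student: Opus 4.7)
The plan is to deduce the corollary from the preceding lemma together with the deformation-theoretic interpretation of the $\CC[K]$-module structure on $\UU_\theta \mathtt{G}_d$, reducing to an application of Lurie's theorem on locally constant factorization algebras on $\RR$.

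First I would argue that $\UU_\theta \mathtt{G}_d$ is a locally constant factorization algebra on $\RR_{>0}$. The underlying (untwisted) factorization algebra $\UU \mathtt{G}_d = \clieu_*(\Omega^*_c \otimes \fg^\bullet_d)$ is locally constant by Theorem~\ref{thm:knudsen} of Knudsen applied to the dg Lie algebra $\fg^\bullet_d$; indeed, Knudsen's theorem identifies it with the $E_1$ algebra $U^{E_1}(\fg^\bullet_d)$. The twisting cocycle $\Theta'$ modifies only the internal differential on sections over each open (by adding $K \cdot \Theta'$) and does not affect the extension-by-zero structure maps. Because $\Theta'$ is itself built from integration of translation-equivariant densities, the structure maps between nested intervals remain quasi-isomorphisms, so local constancy is preserved.

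Next I would invoke Lurie's theorem in the appropriate linear setting: locally constant factorization algebras on $\RR$ valued in $\CC[K]$-modules correspond to $E_1$ algebras in $\CC[K]$-modules, equivalently to a flat family of $E_1$ algebras over $\Spec(\CC[K])$. Since $\UU_\theta \mathtt{G}_d$ is by construction a factorization algebra in $\CC[K]$-modules (the parameter $K$ is a formal variable of degree zero that commutes with everything), this produces the desired family. To identify the fiber at the origin, one sets $K = 0$, which kills the twisting term in the differential and returns the untwisted factorization algebra $\UU \mathtt{G}_d$; by Knudsen's theorem this is exactly the enveloping $E_1$ algebra $U^{E_1}(\fg^\bullet_d)$ of the sphere algebra, as required.

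For the density statement I would use the map
\[
\UU(\pi_{\fg,d,\theta}) : \UU_\theta \mathtt{G}_d \to r_* \UU_\theta \sG_d
\]
constructed in the preceding lemma. At the level of each open $U \subset \RR_{>0}$ and each symmetric power, this map is obtained by applying $\Sym$ to the degreewise dense inclusion $\pi_{\fg,d} : \Omega^*_c(U) \otimes \fg^\bullet_d \hookrightarrow r_* \sG_d(U)$ (via $\jou$ and the pullback of forms), extended $\CC[K]$-linearly. Since $\jou$ is dense by Proposition~\ref{prop: Ad}(2) and the pullback of compactly supported de Rham forms is dense in compactly supported $(0,\ast)$-forms on the annuli $r^{-1}(U)$, the tensor products and symmetric powers remain degreewise dense with respect to the natural topologies. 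The main obstacle in carrying all of this out rigorously is the bookkeeping of density under the completed symmetric powers and the verification that the twisting cocycle $\Theta'$ really does preserve local constancy; both reduce to checking that the cocycle formula is local on $\RR_{>0}$ and translation-equivariant in a manner compatible with extension by zero, which follows from the factored form of the integral appearing just before the lemma.
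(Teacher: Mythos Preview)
Your proposal is correct and follows essentially the same approach the paper takes: the paper states the corollary immediately after the lemma, with the reasoning supplied by the brief paragraph in between (the maps remain degreewise dense, and the central parameter $K$ parametrizes a family of $E_1$ algebras specializing at $K=0$ to $U^{E_1}(\fg^\bullet_d)$). Your write-up simply unpacks this in more detail---invoking local constancy, Lurie's theorem in $\CC[K]$-modules, Knudsen at $K=0$, and density from the lemma---which is exactly the intended argument.
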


\subsubsection{A comparison with the work of Faonte-Hennion-Kapranov}
\label{sec:FHK}

There is a variant of the preceding result that is particularly appealing in light of \cite{FHK},
which is to provide a map of factorization algebras on the positive reals
\[
\UU(\Tilde{\pi}_{\fg,d,\theta}): \UU(\Omega^*_c \otimes \Tilde{\fg}^\bullet_{d,\theta}) \to r_* \sG_{d,\theta},
\] 
where the source is the factorization algebra encoding the enveloping $E_1$ algebra of $\Tilde{\fg}^\bullet_{d,\theta}$.
Specializing the central parameters to zero on both sides must recover the map $\UU\pi_{\fg, d}$ of Proposition~\ref{prop: fact lie}.
Such a map has two connected consequences:
\begin{enumerate}
\item It shows that the higher current Lie algebras $\Tilde{\fg}^\bullet_{d,\theta}$ of \cite{FHK} ``control'' our twisted current factorization algebras $\sG_{d,\theta}$ in the same way that the affine Kac-Moody Lie algebras control their vertex algebras.
\item It shows that our factorization algebras $\sG_{d,\theta}$ know the information encoded by the Lie algebras $\Tilde{\fg}^\bullet_{d,\theta}$.
\end{enumerate}
In short this map provides a conduit for transferring insights between derived algebraic geometry (as represented by the \cite{FHK} approach) and quantum field theory (as represented by ours).

\begin{rmk}
Before embarking on the construction of the map,
we remark that it was a pleasant surprise to come upon \cite{FHK} 
and to find that they had explored terrain that we had approached from the direction exposed in this paper,
i.e., the higher dimensional generalization of results from \cite{CG1}.
Their Jouanolou model $A_d$ gave a more explicit and more tractable analogue to Laurent polynomials and hence allowed us to sharpen our results into something more punchy,
and their discussion of the global derived geometry verified natural guesses, 
which were beyond our technical powers.
Although we had found the same extensions, 
our explanations were based on finding an explicit generalization of the $d=1$ formula,
with confirmation arising from Feynman diagram computations.
By contrast, Faonte, Hennion, and Kapranov give a beautiful structural explanation via cyclic homology,
which resonates with our physical view of large $N$ limits
We come back to this structure in more detail in Section~\ref{sec: largeN}.
We thank Faonte, Hennion, and Kapranov for inspiring and enlightening conversations and correspondence on these subjects.
\end{rmk}

Constructing the map requires overcoming two issues.
First, note that 
\[
\Tilde{\mathtt{G}}_{d,\theta} = \Omega^*_c \otimes \Tilde{\fg}^\bullet_{d,\theta}
\]
can be viewed as an extension
\[
\Omega^*_c \otimes \CC \to \Tilde{\mathtt{G}}_{d,\theta} \to \mathtt{G}_d
\]
of precosheaves of $L_\infty$ algebras on $\RR_{>0}$.
By contrast, $r_* \sG_{d,\theta}$ is an extension by the constant precosheaf $\CC K[-1]$.
There is, however, a natural map of precosheaves
\[
\int: \Omega^*_c \to \CC[-1]
\]
to employ, since integration is well-defined on compactly-supported forms.
This map indicates the shape of the underlying map of short exact sequences.

The second issue looks more serious:
the two cocycles at play seem different at first glance.
The pushforward $r_* \sG_{d,\theta}$ uses a cocycle whose behavior on the image under $\pi_{\fg, d}$ is given~by
\begin{align*}
\Theta_{push}(&\phi_0 \otimes a_0, \ldots, \phi_d \otimes a_d) \\
&= \int_U \int_{S^{2d-1}} \theta(\pi(\phi_0) \wedge \jou(a_0),\partial (\pi(\phi_1) \wedge \jou(a_1)),\ldots,\partial(\pi(\phi_d) \wedge \jou(a_d))),
\end{align*}
where we use  elements of the form $\phi_i \otimes a_i \in \Omega^*_c(U) \otimes \fg^\bullet_d$ with $U$ an open subset of $\RR_{>0}$.
On the other hand, on those same elements, the FHK extension is given~by
\begin{align*}
\Theta_{FHK}(\phi_0 \otimes a_0, &\ldots, \phi_d \otimes a_d) \\
&= (\phi_0 \wedge \cdots \wedge \phi_d) \int_{S^{2d-1}} \theta(\jou(a_0),\partial (\jou(a_1)),\ldots,\partial(\jou(a_d))).
\end{align*}
(Note that in the FHK case, we do not integrate over $U$ because we extend by $\Omega^*_c$.)
The key difference here is that the FHK extension does not involve applying $\partial$ to the $(0,*)$-components of the pulled back forms $r^* \phi_i$.
It separates the $\phi_i$ and $a_i$ contributions,
whereas the other cocycle mixes them.
The tension is resolved by showing these cocycles are cohomologous.

\begin{lem}
There is a cochain $\eta$ for $\mathtt{G}_d$ such that
\[
\Theta_{push} = \int \Theta_{FHK} + \d \eta,
\]
where $\d$ here denotes the differential on the Lie algebra cochains of~$\mathtt{G}_d$.
\end{lem}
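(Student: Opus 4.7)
The plan is to exhibit the cochain $\eta$ explicitly and verify by direct computation that $\d_{CE}\eta = \Theta_{push} - \int \Theta_{FHK}$. The two cocycles agree in their dependence on $\fg$ via $\theta$ and in their ``spherical'' residue structure; they differ only in how the radial form data $\phi_i \in \Omega^*_c(U)$ interacts with the holomorphic derivative $\partial$. My strategy is to use the Leibniz rule for $\partial$ to decompose $\Theta_{push}$ into a main term that reproduces $\int\Theta_{FHK}$, plus correction terms that are manifestly exact.

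First, I would expand each factor using
\[
\partial\bigl(\pi(\phi_i)\wedge \jou(a_i)\bigr) = \partial\pi(\phi_i)\wedge \jou(a_i) + (-1)^{|\phi_i|}\,\pi(\phi_i)\wedge \partial \jou(a_i),
\]
and multi-linearly expand $\theta(\pi(\phi_0)\jou(a_0),\partial(\cdots),\ldots,\partial(\cdots))$ into a sum of $2^d$ terms indexed by choosing, at each slot $i\ge 1$, whether $\partial$ lands on $\pi(\phi_i)$ or on $\jou(a_i)$. The unique ``all $\partial$ on $\jou$'' term is $\bigl(\pi(\phi_0)\wedge\cdots\wedge \pi(\phi_d)\bigr)\cdot \theta(\jou(a_0),\partial\jou(a_1),\ldots,\partial\jou(a_d))$; since $\pi(\phi_i)$ is the $(0,*)$-component of $r^*\phi_i$ and the spherical integrand is already of pure type, Fubini plus the identification $\pi(\phi_0)\wedge\cdots\wedge \pi(\phi_d)$ along the radial direction collapses the $U$-integral to $\int \phi_0\wedge\cdots\wedge\phi_d$, reproducing $\int\Theta_{FHK}$.

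The remaining $2^d-1$ terms each contain at least one factor $\partial \pi(\phi_i)$. For these I would use the key identity $\partial\pi(\phi_i) = \pi(\d\phi_i) - \bar\partial(\text{lower piece})$, which follows from $\d(r^*\phi_i) = (\partial+\bar\partial)(r^*\phi_i)$ and the fact that $r^*\d\phi_i = \d r^*\phi_i$. This lets me trade each $\partial\pi(\phi_i)$ against a combination of $\pi(\d\phi_i)$ and an honest $\bar\partial$-exact term. The $\bar\partial$-exact pieces are absorbed into the $\bar\partial$-component of $\d_{CE}$ acting on a candidate cochain
\[
\eta(\phi_0\otimes a_0,\ldots,\phi_d\otimes a_d) = \sum_I \pm \int_U \int_{S^{2d-1}} \theta\bigl(\cdots\bigr),
\]
where the sum runs over proper subsets $I\subsetneq\{1,\ldots,d\}$ indicating the slots at which $\partial$ has been ``moved off'' of $\pi(\phi_i)$, with a compensating $\bar\partial^{-1}$-style primitive inserted. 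The $\pi(\d\phi_i)$ pieces are handled by integration by parts on $U$, which uses the compact support of $\phi_i$ so there are no boundary terms; this matches the de~Rham part of $\d_{CE}$ applied to $\eta$.

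The main obstacle will be the combinatorial and sign bookkeeping: one must check that the signs produced by $\theta$'s symmetry, the Leibniz rule, the Koszul sign from $(-1)^{|\phi_i|}$, and the alternation in $\d_{CE}$ all conspire so that the correction terms assemble into a single well-defined cochain $\eta$ and that its total CE differential is exactly the difference of cocycles. Because $\fg$ enters both cocycles only through the invariant polynomial $\theta$ and the brackets cancel out of $\d_{CE}\eta$ by the $\fg$-invariance of $\theta$ (just as in the proof of Proposition~\ref{prop j map}), the verification reduces to a purely ``abelian'' computation of $\d + \bar\partial$ on a polyform, which is routine once the indexing is fixed.
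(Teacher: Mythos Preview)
Your reduction to a purely analytic statement (drop $\fg$ and $\theta$, work in the abelian case) is exactly what the paper does, and the Leibniz expansion of $\Theta_{push}$ is the right first move: the ``all $\partial$ on $\jou(a_i)$'' term does reproduce $\int\Theta_{FHK}$, using that $\partial r \wedge \omega = 0$ for $\omega$ of type $(d,d-1)$ so that $\bar\partial r \wedge \omega = dr \wedge \omega$, together with the radius-independence of the spherical integral on $\jou(A_d^{d,d-1})$.

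The gap is in your handling of the remaining $2^d - 1$ correction terms. The identity you invoke, $\partial\pi(\phi_i) = \pi(\d\phi_i) - \bar\partial(\text{lower piece})$, does not typecheck: $\partial\pi(\phi_i)$ is a $(1,*)$-form on $\CC^d\setminus\{0\}$, whereas $\pi$ by definition lands in $(0,*)$-forms, so $\pi(\d\phi_i)\in\Omega^{0,*}$ and no $\bar\partial$-correction can reconcile the bidegrees. What the relation $r^*\d = \d r^*$ actually gives you is $\pi(\d\phi_i) = \bar\partial\pi(\phi_i)$, which is the compatibility of $\pi$ with the differentials, not a formula for $\partial\pi(\phi_i)$. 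If you introduce the $(1,*)$-projection $\rho$ of $r^*$ you do get $\partial\pi(\phi_i) = \rho(\d\phi_i) - \bar\partial\rho(\phi_i)$, but $\rho(\phi_i)$ is not an input to a cochain on $\mathtt{G}_d$, so you still have to decide how to package it into an honest formula for $\eta$. Your description of $\eta$ as a sum over subsets with ``$\bar\partial^{-1}$-style primitives'' never produces such a formula, and this is where the proof stalls.

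The paper sidesteps this by writing $\eta$ down directly rather than reverse-engineering it from the Leibniz expansion. The two geometric ingredients are the Euler vector field $E = r\,\partial_r$ on $\RR_{>0}$ and the $(1,0)$-form $\d\vartheta = \sum_i \d z_i/z_i$ on $\CC^d\setminus\{0\}$; the cochain is
\[
\eta(\varphi_0 a_0,\ldots,\varphi_d a_d) = \sum_{i=1}^d \left(\int_U \varphi_0\,(\iota_E\varphi_i)\,\varphi_1\cdots\widehat{\varphi_i}\cdots\varphi_d\right)\left(\oint (a_0 a_i\,\d\vartheta)\,\partial a_1\cdots\widehat{\partial a_i}\cdots\partial a_d\right).
\]
Here $\iota_E\varphi_i$ turns the radial $1$-form back into a function, and $\d\vartheta$ supplies the holomorphic degree that in your heuristic would have come from $\partial r$ inside $\partial\pi(\phi_i)$. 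The verification that this cobounds the difference is then a direct integration-by-parts and bigrading check. The upshot is that the primitive is not determined by formal manipulations alone; one must choose a specific $(1,0)$-form to carry the extra holomorphic degree, and $\d\vartheta$ is the natural algebraic choice compatible with~$A_d$.
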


\begin{proof}
We note that the Lie algebra $\fg$ and the invariant polynomial $\theta$ play no substantive role in the problem.
The issue here is about calculus.
Hence it suffices to consider the case that $\fg$ is the one-dimensional abelian Lie algebra and $\theta$ is the unique-up-to-scale monomial of degree $d+1$ (i.e., ``$x^{d+1}$'').

Let
\[
E = r \frac{\partial}{\partial r}
\]
denote the Euler vector field on $\RR_{>0}$,
and let
\[
\d \vartheta = \sum_i \frac{\d z_i}{z_i} 
\]
denote a $(1,0)$-form on $\pAA^d = \CC^{d} \setminus 0$. 

For concision we express the element $\varphi_i \otimes a_i $ in $\Omega^*_c(U) \otimes A_d $ by $\varphi_i a_i$.
We now define
\[
\eta(\varphi_0 a_0,\ldots,\varphi_da_d) = \sum_{i=1}^d \left(\int_U \varphi_0 \left(\iota_{E} \varphi_i \right) \varphi_1 \cdots \Hat{\varphi_i} \cdots \varphi_d\right)\left(\oint \left(a_0 a_i \d \vartheta\right) \partial a_1 \cdots \Hat{\partial a_i} \cdots \partial a_d \right)  .
\]
It is a straightforward exercise in integration by parts and the bigrading of Dolbeault forms to verify that $\eta$ cobounds the difference of the cocycles.
\end{proof}

With this explicit cochain $\eta$ in hand, we can produce the desired map.

\begin{prop}
There is an $L_\infty$ map of $L_\infty$ algebras
\[
\Tilde{\pi}_{\fg,d,\theta}: \Omega^*_c \otimes \Tilde{\fg}^\bullet_{d,\theta} \rightsquigarrow r_* \sG_{d,\theta},
\]
by which we mean there is a sequence of multilinear maps
\[
\Tilde{\pi}_{\fg,d,\theta}\<n\>: \prod_{i=1}^n \Omega^*_c \otimes \Tilde{\fg}^\bullet_{d,\theta} \to r_* \sG_{d,\theta},
\]
that have degree $2-n$ and are skew-symmetric and intertwine with the $L_\infty$ brackets on both sides (cf. \cite{KonSoi, LodVal}).
The terms $\Tilde{\pi}_{\fg,d,\theta}\<n\>$ vanish for $n \neq 1, d+1$.
The $n =1$ map fits into the commuting diagram of short exact sequences
\[
\xymatrix{
0 \ar[r] & \Omega^*_c \cdot K [-1]  \ar[d]^{\int} \ar[r] & \Omega^*_c \otimes \Tilde{\fg}^\bullet_{d,\theta} \ar[d]^{\Tilde{\pi}_{\fg,d,\theta}\<n\>} \ar[r] & \mathtt{G}_d \ar[d]^{\pi_{\fg, d}} \ar[r] & 0 \\
0 \ar[r] & \CC \cdot K [-1] \ar[r] & r_*\sG_{d,\theta} \ar[r] & r_* \sG_{d} \ar[r] & 0 .
}
\]
The $n=d+1$ map sends the $d+1$-tuple $(\phi_0 \otimes a_0, \ldots, \phi_d \otimes a_d)$ to
\[
\eta(\phi_0 \otimes a_0, \ldots, \phi_d \otimes a_d).
\]

This $L_\infty$ map is equivalent to giving a map of dg conilpotent cocommutative coalgebras on the Chevalley-Eilenberg chains of these $L_\infty$ algebras,
which in fact provides a map
\[
\UU(\Tilde{\pi}_{\fg,d,\theta}): \UU(\Omega^*_c \otimes \Tilde{\fg}^\bullet_{d,\theta}) \to r_* \UU_\theta\sG_d
\]
of factorization algebras.
\end{prop}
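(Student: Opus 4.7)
The plan is to build the $L_\infty$ morphism by combining two pieces of data already in hand: the strict map of dg Lie algebras $\pi_{\fg,d}\colon \mathtt{G}_d \to r_*\sG_d$ and the cochain $\eta$ from the previous lemma that witnesses the equality $\Theta_{\mathrm{push}} = \int \Theta_{FHK} + \d\eta$. Since a central extension of $L_\infty$ algebras is determined by a graded splitting plus a cocycle, I choose the obvious splittings of both short exact sequences in the diagram, which reduces the problem to specifying components $\Tilde{\pi}\<n\>$ for $n \geq 1$ and checking the $L_\infty$ relations.

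First, I define the linear component $\Tilde{\pi}\<1\>$ by $\pi_{\fg,d}$ on the quotient $\mathtt{G}_d$ and by the integration-over-$\RR_{>0}$ map $\int\colon \Omega^*_c[-1] \to \CC[-1]$ on the central part $\Omega^*_c \cdot K[-1]$. Both summands are strictly compatible with the differential (the integration map is a chain map in the relevant bidegree, and $\pi_{\fg,d}$ was already established as a map of dg Lie algebras), so $\Tilde{\pi}\<1\>$ fits into the advertised commutative diagram of short exact sequences. Next I declare $\Tilde{\pi}\<n\> = 0$ for $2 \leq n \leq d$, and define $\Tilde{\pi}\<d+1\>$ using the explicit formula for $\eta$, viewed as landing in the central summand $\CC \cdot K[-1] \subset r_*\sG_{d,\theta}$. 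All higher components $\Tilde{\pi}\<n\>$ for $n > d+1$ are set to zero.

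The main work is then to check the $L_\infty$ relations. Because the source $L_\infty$ algebra $\Omega^*_c \otimes \Tilde{\fg}^\bullet_{d,\theta}$ has only a binary bracket $\ell_2$ and a single higher bracket $\ell_{d+1}$ given by the FHK cocycle $\Theta_{FHK}$, and the target has the analogous structure with $\Theta_{\mathrm{push}}$, only a few relations are nontrivial. The binary compatibility reduces to the statement that $\pi_{\fg,d}$ is a map of dg Lie algebras and that the extension classes are compatible on the linear level, which is immediate. The essential relation is the one at arity $d+1$, namely
\[
\d_{\mathrm{target}}\bigl(\Tilde{\pi}\<d+1\>\bigr) + \Tilde{\pi}\<1\> \circ \ell_{d+1} \;=\; \ell'_{d+1}\bigl(\Tilde{\pi}\<1\>^{\otimes(d+1)}\bigr) \pmod{\text{lower terms}},
\]
and this is precisely the content of the lemma $\Theta_{\mathrm{push}} = \int\Theta_{FHK} + \d\eta$, after identifying $\Tilde{\pi}\<1\>$ on the central direction with $\int$ and on the quotient with $\pi_{\fg,d}$. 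Relations of arity greater than $d+1$ vanish because any composite involves either two instances of an $\ell_{d+1}$-type operation (which lands in the center, where all brackets vanish) or a nonzero higher component that we have set to zero.

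Once the $L_\infty$ morphism is established, the promotion to a map of factorization algebras is formal. An $L_\infty$ morphism is by definition a dg coalgebra map between Chevalley--Eilenberg chain coalgebras; forgetting the coalgebra structure produces the desired cochain map $\UU(\Tilde{\pi}_{\fg,d,\theta})$. Because each component $\Tilde{\pi}\<n\>$ is built from local operations (the strict map $\pi_{\fg,d}$, integration over an open $U \subset \RR_{>0}$, and the integrand $\eta$ which is itself a sum of integrals over $U$ and $S^{2d-1} \subset r^{-1}(U)$), the morphism is compatible with the extension maps of precosheaves on $\RR_{>0}$ and so descends to a map of factorization algebras. The main obstacle I expect is purely bookkeeping: pinning down the signs and degree conventions in the $L_\infty$ relation at arity $d+1$ so that the lemma applies verbatim, and verifying that no spurious arity-$>d+1$ terms appear, which requires a careful tally of how $\ell_2$ interacts with $\ell_{d+1}$ on $\Tilde{\fg}^\bullet_{d,\theta}$ under the splitting used to define $\Tilde{\pi}\<1\>$.
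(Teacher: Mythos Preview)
Your proposal is correct and follows essentially the same approach as the paper: define $\Tilde{\pi}\langle 1\rangle$ from $\pi_{\fg,d}$ together with $\int$, set the intermediate components to zero, use $\eta$ for the $(d+1)$-ary component, and invoke the lemma $\Theta_{\mathrm{push}} = \int\Theta_{FHK} + \d\eta$ to verify the one nontrivial $L_\infty$ relation. Your write-up is in fact more detailed than the paper's own proof; the only point to tighten is your justification for arities $>d+1$, where you should also note that the term $\Tilde{\pi}\langle d+1\rangle(\ell_2(-,-),\ldots)$ vanishes because the full Chevalley--Eilenberg differential in the lemma already forces $\d_\fg\eta = 0$ in the correct arity.
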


\begin{proof}
Note that for our $L_\infty$ algebras, the only nontrivial brackets are $\ell_1$, $\ell_2$, and $\ell_{d+1}$.
We already know that the $n=1$ map intertwines with $\ell_1$ and $\ell_2$ brackets,
as it does modulo the central extensions.
We can thus set the maps for $n=2,\ldots, d$ to zero. 
The first nontrivial issue arises at $n=d+1$, as the $n=1$ map does not intertwine the $\ell_{d+1}$ brackets.
The defining property of $\eta$, however, ensures that $\Tilde{\pi}_{\fg,d,\theta}\<d+1\>$ corrects the failure.
Hence we may set the maps for $n > d+1$ to zero as well.
\end{proof}

\begin{cor}
The enveloping $E_1$ algebra of $\fg^\bullet_{d,\theta}$ is dense inside the pushforward $r_* \UU_\theta\sG_d$.
\end{cor}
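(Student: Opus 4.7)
The plan is to deduce the corollary directly from the preceding proposition, which supplies the map
\[
\UU(\Tilde{\pi}_{\fg,d,\theta}) \colon \UU(\Omega^*_c \otimes \Tilde{\fg}^\bullet_{d,\theta}) \longrightarrow r_* \UU_\theta \sG_d
\]
of factorization algebras on $\RR_{>0}$. There are really two ingredients to assemble: first, that the source may be identified with the enveloping $E_1$ algebra of $\Tilde{\fg}^\bullet_{d,\theta}$, and second, that $\UU(\Tilde{\pi}_{\fg,d,\theta})$ lands densely in the target.

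For the first ingredient, I would appeal to Knudsen's theorem (Theorem~\ref{thm:knudsen}) together with the observation that $\Omega^*_c \otimes \Tilde{\fg}^\bullet_{d,\theta}$ is precisely the de~Rham resolution of the locally constant precosheaf with stalk $\Tilde{\fg}^\bullet_{d,\theta}$ on $\RR_{>0}$. Chevalley--Eilenberg chains commute with this resolution, so $\UU(\Omega^*_c \otimes \Tilde{\fg}^\bullet_{d,\theta})$ is a locally constant factorization algebra whose underlying $E_1$ algebra is the enveloping $E_1$ algebra of $\Tilde{\fg}^\bullet_{d,\theta}$, in the sense of Knudsen.

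For density, I would argue levelwise on Chevalley--Eilenberg chains. On an open interval $U \subset \RR_{>0}$ with preimage the annular region $r^{-1}(U) \subset \pAA^d$, the $n$th symmetric power of the target is built from compactly supported Dolbeault forms on $r^{-1}(U)^n$ tensored with $\fg^{\otimes n}$ (together with a polynomial in $K$). The corresponding symmetric power of the source is built from $\Omega^*_c(U^n)$ tensored with $(A_d)^{\otimes n}$ and $\fg^{\otimes n}$. Density then reduces to two facts already recorded earlier in the paper: part (2) of Proposition~\ref{prop: Ad} tells us that the Jouanolou map $\jou \colon A_d \to \Omega^{0,*}(\pAA^d)$ has dense image, and pullback along $r$ combined with multiplication by $\Omega^*_c(U)$ produces a dense subspace of $\Omega^{0,*}_c(r^{-1}(U))$ by a standard partition-of-unity and Stone--Weierstrass argument on the cylinder $\RR_{>0} \times S^{2d-1}$. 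Taking completed symmetric powers preserves these density statements.

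The main obstacle, and really the only subtle point, is the density claim at the level of the twisted enveloping algebra rather than just the underlying vector spaces: one has to check that the deformation of the differential by $\Theta_{FHK}$ on the source and by $\Theta_{push}$ on the target is compatible in the sense that the map of complexes still has dense image in every cohomological degree. This is not automatic, but it is guaranteed by the proposition, since the $L_\infty$ correction $\Tilde{\pi}_{\fg,d,\theta}\langle d+1\rangle$ supplied by the explicit cochain $\eta$ lives in the same spaces of compactly supported sections, and so does not disturb the degreewise density argument. Once this is observed, the corollary follows immediately.
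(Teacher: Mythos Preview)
Your proposal is correct and matches the paper's approach: the corollary is stated without proof, as an immediate consequence of the preceding proposition together with Knudsen's theorem and the degreewise density already noted in the untwisted case. You have simply filled in the details the paper leaves implicit, and your observation that the $L_\infty$ correction term $\Tilde{\pi}_{\fg,d,\theta}\langle d+1\rangle$ does not disturb density (since it lands in the same spaces of compactly supported sections) is exactly the point needed to carry the untwisted density argument over to the twisted setting.
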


\subsection{Compactifying along tori} 

There is another direction that one may look to extend the notion of affine algebras to higher dimensions.
The affine algebra is a central extension of the loop algebra on $\fg$. 
Instead of looking at higher dimensional sphere algebras, one can consider higher {\em torus} algebras, i.e., iterated loop algebras:
\[
L^d \fg = \CC[z_1^{\pm}, \cdots, z_d^{\pm}] \tensor \fg .
\]
These iterated loop algebras are algebraic versions of the torus mapping space 
\[
{\rm Map}(S^1 \times \cdots \times S^1, \fg).
\] 
We now explore what information the Kac-Moody factorization algebras encode about extensions of such iterated loop algebras.

To do this, we study the Kac-Moody factorization algebras on the complex manifold $(\CC^\times)^d$, 
which is an algebro-geometric version of the torus $(S^1)^{d}$.  
As with the punctured affine space $\pAA^d$, we compactify by pushing forward to $(\RR_>0)^d$ along a radial projection map
\[
\begin{array}{cccc}
\vec{r} : & (\CC^\times)^d & \to & (\RR_{>0})^d\\
& (z_1,\ldots,z_d) & \mapsto & (|z_1|, \cdots, |z_d|)
\end{array}.
\]
The preimage of a point $(r_1,\ldots,r_d)$ is a $d$-fold product of circles, and
the preimage of an open $d$-cube is a polyannulus---a $d$-fold product of annuli.
Observe that on a polyannulus $U$, the underived and derived algebras of functions coincide,
\[
\Gamma(U, \cO) \overset{\simeq}{\hookrightarrow} \RR\Gamma(U,\cO),
\]
as $U$ is a Stein manifold because it is a product of Stein manifolds.
Similarly, the scheme $(\AA^1 \setminus \{0\})^d$ is affine and so its structure sheaf has no higher cohomology:
\[
\RR\Gamma((\AA^1 \setminus \{0\})^d,\cO) \simeq \CC[z_1, z_1^{-1}, \ldots, z_d,z_d^{-1}].
\]
Note that the iterated loop algebras $L^d \fg$ appear precisely by tensoring $\fg$ with functions on this product of punctured affine lines.
Thus, in contrast to $\pAA^d$, we seem to be able to work in an underived setting.

This impression is misleading, however, in the sense that it ignores some additional algebraic structure that naturally appears at the level of current algebras:
there is an $E_d$ algebra that sits densely inside the pushforward~$\vec{r}_* \sG_d$.

\begin{lem}
There is a map 
\[
\rho_{d}: \Omega^*  \to \vec{r}_* \Omega^{0,*}
\]
of sheaves of dg commutative algebras on $(\RR_{>0})^d$ sending a form $\alpha$ to the projection of the pulled back $\vec{r}^* \alpha$ onto its $(0,*)$-components.
\end{lem}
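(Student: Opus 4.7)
The plan is to mimic exactly the construction of the single-variable map $\pi : \Omega^* \to r_* \Omega^{0,*}$ from the sphere-compactification section, just with the radial projection $r$ replaced by the multi-radial projection~$\vec{r}$. The strategy is to exhibit $\rho_d$ as a composition of two tautological maps of sheaves of dg commutative algebras: pullback along $\vec{r}$, followed by projection onto the $(0,*)$-component of the Hodge bigrading.

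First, since $\vec{r} \colon (\CC^\times)^d \to (\RR_{>0})^d$ is a smooth map of smooth manifolds, pullback of forms gives a morphism
\[
\vec{r}^{\,*} \colon \Omega^*_{(\RR_{>0})^d} \to \vec{r}_* \,\Omega^*_{(\CC^\times)^d}
\]
of sheaves of dg commutative algebras, where both sides carry their full de Rham differential~$d$. On the target we have the Dolbeault bigrading $\Omega^* = \bigoplus_{p,q} \Omega^{p,q}$ with $d = \partial + \dbar$, and I will compose $\vec{r}^{\,*}$ with the canonical projection $\mathrm{pr}_{0,*} \colon \Omega^{*,*} \to \Omega^{0,*}$.

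The main point to verify is that $\mathrm{pr}_{0,*}$ is itself a map of sheaves of dg commutative algebras from $(\Omega^{*,*}, d)$ to $(\Omega^{0,*}, \dbar)$. Multiplicativity is immediate: writing any form as a sum of $(p,q)$-pieces, the only way to produce a $(0,*)$-piece in a wedge product is to wedge two $(0,*)$-pieces, so $(\omega \wedge \omega')^{0,*} = \omega^{0,*} \wedge (\omega')^{0,*}$. Compatibility with differentials follows by decomposing $d = \partial + \dbar$: the operator $\partial$ strictly raises holomorphic degree while $\dbar$ preserves it, hence $\mathrm{pr}_{0,*}(d\omega) = \mathrm{pr}_{0,*}(\dbar \omega) = \dbar(\mathrm{pr}_{0,*} \omega)$, since any $\dbar$ applied to a $(p,q)$-piece with $p>0$ is killed by the projection.

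Composing, $\rho_d := \mathrm{pr}_{0,*} \circ \vec{r}^{\,*}$ is a map of sheaves of dg commutative algebras, and it visibly has the form stated in the lemma. Since both factors are defined by natural pointwise operations that commute with restriction to open subsets of $(\RR_{>0})^d$, the result is a morphism of sheaves on $(\RR_{>0})^d$. There is no real obstacle here; the lemma is a direct multi-variable analogue of the single-variable statement proved earlier for~$r$, and the only content is the bookkeeping of Hodge types in the preceding paragraph.
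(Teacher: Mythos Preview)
Your proposal is correct and follows exactly the approach the paper takes: the paper treats this lemma as immediate, noting (for the analogous single-variable map $\pi$) only that pullback $\vec{r}^{\,*}$ is a map of dg algebras and that one is ``simply postcomposing with the canonical quotient map of dg algebras $\Omega^{*,*} \to \Omega^{0,*}$.'' You have supplied the Hodge-type bookkeeping that makes that quotient a dg algebra map, which is precisely the content the paper leaves implicit.
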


As algebraic functions sit inside holomorphic functions and hence inside the Dolbeault complex,
there is a map of dg commutative algebras
\[
\CC[z_1, z_1^{-1}, \ldots, z_d,z_d^{-1}] \to \Omega^{0,*}(U)
\]
for any open $U \subset (\CC\setminus\{0\})^d$.
There is thus a map 
\[
\rho_{d}': \Omega^* \otimes \CC[z_1, z_1^{-1}, \ldots, z_d,z_d^{-1} \to \vec{r}_* \Omega^{0,*}
\]
of dg commutative algebras.
We tensor with $\fg$ to obtain the following result.

\begin{lem}
There is a map
\[
\rho_{d,\fg}: \Omega^* \otimes L^d\fg \to \vec{r}_* \sG_d^{sh}
\]
of sheaves of dg Lie algebras on $(\RR_{>0})^d$ sending an element $\alpha \otimes x$ to $\rho_d(\alpha) \otimes x$.s
As this map preserves support, it restricts to a map  
\[
\rho_{d,\fg}: \Omega^*_c \otimes L^d\fg \to \vec{r}_* \sG_d
\]
of precosheaves of dg Lie algebras on~$(\RR_{>0})^d$.
\end{lem}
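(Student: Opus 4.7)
The plan is to reduce the claim to the two immediately preceding lemmas. The map $\rho_d' : \Omega^* \otimes \CC[z_1^{\pm}, \ldots, z_d^{\pm}] \to \vec{r}_* \Omega^{0,*}$ is already established as a morphism of sheaves of dg commutative algebras on $(\RR_{>0})^d$. Tensoring both sides over $\CC$ with the Lie algebra $\fg$, viewed as concentrated in degree zero with trivial differential, produces at the level of graded sheaves the desired map $\rho_{d,\fg}$. I would then verify that this map respects the dg Lie structure: on either side the bracket has the form $[\alpha \otimes x,\, \beta \otimes y] = (\alpha \cdot \beta) \otimes [x,y]$, where the product on form-valued components comes from the graded-commutative algebra structure. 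Since $\rho_d'$ intertwines those products by hypothesis, the tensor map intertwines the resulting brackets. Compatibility with differentials is automatic because $\rho_d'$ is already a cochain map and the differential on $\fg$ vanishes.

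The second assertion, that $\rho_{d,\fg}$ restricts to compactly supported sections, is where what little care is required goes. Given $\alpha \otimes p \otimes x \in \Omega^*_c(U) \otimes L^d \fg$ for an open $U \subset (\RR_{>0})^d$, its image is the section $(\vec{r}^* \alpha)^{0,*} \cdot p \otimes x$ of the Dolbeault complex on $\vec{r}^{-1}(U) \subset (\CC^\times)^d$. The Laurent polynomial $p$ is holomorphic on all of $(\CC^\times)^d$, so the support of the image is contained in $\vec{r}^{-1}(\operatorname{supp} \alpha)$. Since $\vec{r}$ has compact fibres diffeomorphic to $(S^1)^d$, it is proper, and hence $\vec{r}^{-1}(\operatorname{supp} \alpha)$ is compact in $\vec{r}^{-1}(U)$. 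Consequently the image lies in $\Omega^{0,*}_c(\vec{r}^{-1}(U)) \otimes \fg = \vec{r}_* \sG_d(U)$, as required.

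The main obstacle, such as it is, is purely bookkeeping: one must keep the bigrading straight, projecting $\vec{r}^* \alpha$ onto its $(0,*)$-component before multiplying by $p$, and verify that this projection is compatible with the wedge product. Both points follow from the corresponding properties of $\rho_d$ already established. No analytic or cohomological input beyond what has been assembled is needed; in particular, no Hartogs-type extension results enter, because we are working on the already-punctured torus $(\CC^\times)^d$, where Laurent polynomials tautologically live.
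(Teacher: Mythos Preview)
Your proposal is correct and follows exactly the route the paper intends: the lemma is stated without proof there, as it is meant to be immediate from the preceding map $\rho_d'$ of dg commutative algebras by tensoring with $\fg$. Your check that the bracket $[\alpha\otimes x,\beta\otimes y]=(\alpha\cdot\beta)\otimes[x,y]$ is preserved, and your observation that properness of $\vec{r}$ (compact torus fibers) guarantees the restriction to compactly supported sections, are precisely the points the paper is taking for granted.
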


By taking Chevalley-Eilenberg chains, we obtain a statement at the level of factorization algebras.

\begin{cor}
There is a map
\[
\UU(\rho_{d,\fg}): \UU(\Omega^*_c \otimes \fg) \to \vec{r}_* \UU\sG_d
\]
of factorization algebras on $(\RR_{>0})^d$.
As the source is locally constant, it corresponds to an $E_d$ algebra, 
which is the enveloping $E_d$ algebra of $L^d\fg$, by Knudsen's theorem.
\end{cor}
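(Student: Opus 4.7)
The plan is to assemble the corollary from two independent components: the factorization-algebra map and the operadic identification of the source. For the first component, I would simply apply the Chevalley--Eilenberg chains functor $\clieu_*$ to the map $\rho_{d,\fg}\colon \Omega^*_c \otimes L^d\fg \to \vec{r}_* \sG_d$ of precosheaves of dg Lie algebras furnished by the preceding lemma. Since $\clieu_*$ is defined pointwise on opens and commutes with the support-preserving extension maps, it carries precosheaves of dg Lie algebras to prefactorization algebras, and pushforward commutes with it by inspection: $\clieu_*(\vec{r}_*\sG_d)(U) = \clieu_*(\sG_d(\vec{r}^{-1}(U))) = \vec{r}_*(\UU\sG_d)(U)$. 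That both sides are genuine factorization algebras (not merely prefactorization) follows on the right from Theorem~5.6.0.1 of \cite{CG1} applied to the local Lie algebra $\sG_d$ and on the left from the same theorem applied to $\Omega^*_c \otimes L^d\fg$, viewed as the sections of a local Lie algebra on $(\RR_{>0})^d$ with fiber $L^d\fg$.

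For the second component, I would invoke Knudsen's theorem (Theorem~\ref{thm:knudsen}) directly in the form stated in the excerpt: the enveloping factorization algebra $\UU(\Omega^*_c \otimes \fh)$ on $\RR^d$ associated to any dg Lie algebra $\fh$ is locally constant and models $U^{E_d}(\fh)$. Applied with $\fh = L^d\fg$, the iterated loop algebra viewed as an ordinary Lie algebra, this immediately identifies the source of $\UU(\rho_{d,\fg})$ with the enveloping $E_d$ algebra $U^{E_d}(L^d\fg)$. The only point that requires a moment's thought is that we are working on $(\RR_{>0})^d$ rather than on $\RR^d$, but $(\RR_{>0})^d$ is diffeomorphic to $\RR^d$ and the construction is natural under such diffeomorphisms, so Knudsen's result transports verbatim.

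The main subtlety---and the step that I expect to demand the most care---is the verification of local constancy of $\UU(\Omega^*_c \otimes L^d\fg)$ on $(\RR_{>0})^d$, that is, checking that for an inclusion $U \hookrightarrow V$ of convex opens (say, open cubes) the induced map on Chevalley--Eilenberg chains is a quasi-isomorphism. This reduces, via the Poincar\'e lemma and the spectral sequence for the symmetric-power filtration on $\clieu_*$, to the statement that $\Omega^*_c(U) \hookrightarrow \Omega^*_c(V)$ is a quasi-isomorphism between copies of $\CC[-d]$, which is standard. Once local constancy is in hand, Knudsen's theorem completes the identification, and the corollary follows by composing the two components.
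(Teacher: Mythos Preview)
Your proposal is correct and matches the paper's approach. The paper states this corollary without proof, relying on the same two ingredients you identify: applying the Chevalley--Eilenberg chains functor $\clieu_*$ to the precosheaf map $\rho_{d,\fg}$ from the preceding lemma (with the observation that $\clieu_*$ commutes with pushforward, exactly as in the analogous sphere case of Proposition~\ref{prop: fact lie}), and then invoking Knudsen's Theorem~\ref{thm:knudsen} with $\fh = L^d\fg$ to identify the source as the enveloping $E_d$ algebra. Your added discussion of local constancy via the Poincar\'e lemma and the symmetric-power filtration is a welcome bit of detail that the paper leaves implicit.
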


This map has dense image in each degree, and so we see that the enveloping $E_d$ algebra of the iterated loop algebra $L^d \fg$ ``controls'' the pushforward $\vec{r}_* \UU\sG_d$ in this sense.

\begin{rmk}
When $d=1$ one can understand the radially ordered products of operators by evaluating these current factorization algebras on nested annuli.
For $d >1$ one can read likewise understand interesting phenomena about operator products by evaluating these current factorization algebras these polyannuli.
In particular, the connection with $E_d$ algebra indicates that there is a (possibly nontrivial) $1-d$-shifted Poisson bracket between operators, even at the level of cohomology.
\end{rmk}

In the case of the extended Lie algebras $\sG_{d,\theta}$, we note that one can pull back the extension along the map $\rho_{d,\fg}$ to determine an extension of $\Omega^*_c \otimes L^d\fg$ as a precosheaf of $L_\infty$ algebras.
One can view this extension as extending $L^d\fg$ as an $L_\infty$ algebra:
\[
\CC[d-1] \to \Tilde{L^d \fg}_\theta \to L^d \fg,
\]
It is essentially immediate from the definitions that the cocycle~is
\[
L^d \theta(f_0 \tensor x_0)\tensor \cdots \tensor (f_d \tensor x_d) = \theta(x_0,\ldots,x_d)  \oint_{|z_1| = 1} \cdots \oint_{|z_d| = 1} f_0 \d f_1 \cdots \d f_d
\]
where $f_i \in \CC[z_1^{\pm 1}, \ldots, z_d^{\pm 1}]$ and $x_i \in \fg$.
This formula is just an iterated version of the usual residue pairing.

This extension then determines a twist of the enveloping $E_d$ algebra, as well.
By techniques analogous to what we did in comparing with \cite{FHK},
one can show the following.

\begin{prop}
For $\theta \in \Sym^{d+1}(\fg^*)^\fg$,
there is a map of factorization algebras
\[
\rho_{d,\fg,\theta}: \UU(\Omega^*_c \otimes \Tilde{L^d \fg}_\theta) \to \rho_* \UU_\theta \sG_d
\]
that has dense image in each degree.
\end{prop}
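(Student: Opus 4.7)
The strategy mirrors the comparison with~\cite{FHK} carried out in Section~\ref{sec:FHK}, but with spheres replaced by tori. We already have the strict map of dg Lie algebras $\rho_{d,\fg}\colon \Omega^*_c \otimes L^d\fg \to \vec{r}_* \sG_d$, and pulling back the extension cocycle that defines $\vec{r}_* \sG_{d,\theta}$ along this map yields a ``mixed'' cocycle $\Theta_{\text{push}}$ on $\Omega^*_c \otimes L^d\fg$, given on a polyannulus $\vec{r}^{-1}(U)$ by
\[
\Theta_{\text{push}}(\phi_0 \otimes f_0 \otimes x_0,\ldots,\phi_d \otimes f_d \otimes x_d) = \int_{\vec{r}^{-1}(U)} \theta\bigl(\rho_d(\phi_0)f_0 x_0,\partial(\rho_d(\phi_1)f_1 x_1),\ldots,\partial(\rho_d(\phi_d)f_d x_d)\bigr).
\]
The FHK-style cocycle $L^d\theta$ defining $\widetilde{L^d\fg}_\theta$ is by contrast ``separated'': the form factor $\phi_0 \wedge \cdots \wedge \phi_d$ is integrated over $U$ via $\int$, and is multiplied by the purely algebraic iterated residue in the $f_i$. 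The first step is to write out both expressions and locate the discrepancy, which arises from the cross terms in $\partial(\rho_d(\phi_i)f_i) = \rho_d(\phi_i)\partial f_i + \partial\rho_d(\phi_i) \cdot f_i$.

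Next, I would construct an explicit primitive $\eta$ cobounding the difference $\Theta_{\text{push}} - \int \circ L^d\theta$, in direct analogy with the cochain $\eta$ produced in Section~\ref{sec:FHK}. Here one uses the $d$ commuting Euler vector fields $E_j = r_j \partial/\partial r_j$ on $(\RR_{>0})^d$ and the $d$ logarithmic forms $\d z_j/z_j$ on $(\CC^\times)^d$; for each choice of index $j$ and slot $i$ one writes a term in which $\phi_i$ is contracted against $E_j$ and $\d z_j / z_j$ is inserted into the algebraic iterated residue, and $\eta$ is an appropriate signed sum of such terms. That $\d\eta$ realizes the difference of cocycles is a direct calculation: Stokes' theorem in the radial directions converts the $\partial\rho_d(\phi_i)$ cross terms into boundary-free contributions that match the separated form.

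With $\eta$ in hand, the $L_\infty$ morphism $\widetilde{\rho}_{d,\fg,\theta}\colon \Omega^*_c \otimes \widetilde{L^d\fg}_\theta \rightsquigarrow \vec{r}_*\sG_{d,\theta}$ is assembled exactly as in the proof for the sphere case. Its only nonzero Taylor components are in arities $1$ and $d+1$: the arity-$1$ component is $\rho_{d,\fg}$ on the quotient combined with $\int\colon \Omega^*_c \to \CC[-1]$ on the central line (this is forced because the FHK extension is by $\Omega^*_c \cdot K$, not by the constant $\CC \cdot K$), and the arity-$(d+1)$ component is $\eta$. Since the only nontrivial $L_\infty$ brackets on either side are $\ell_1$, $\ell_2$, and $\ell_{d+1}$, the $L_\infty$ relations collapse to the cochain condition on $\rho_{d,\fg}$ and $\int$ together with the single identity $\ell_1 \eta = \Theta_{\text{push}} - \int \circ L^d\theta$, which is the defining property of $\eta$. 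Passing to Chevalley-Eilenberg chains yields the desired map $\rho_{d,\fg,\theta}$ of factorization algebras. Density in each cohomological degree follows from density of $\CC[z_1^{\pm 1},\ldots,z_d^{\pm 1}]$ inside $\Omega^{0,*}$ on a polyannulus, promoted to external tensor powers via the symmetric-power definition of $\UU$.

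The principal obstacle is constructing $\eta$ with the correct signs and combinatorics. In the sphere case of Section~\ref{sec:FHK} a single Euler field $r\partial_r$ and the single form $\d\vartheta = \sum_i \d z_i/z_i$ suffice; here one has $d$ independent radial directions and $d$ independent logarithmic forms, so one must sum over more choices of which slot and which coordinate are paired in each term, and verify carefully that integration by parts in the $r_j$-directions exactly accounts for every cross term produced by distributing $\partial$ across the mixed product $\rho_d(\phi_i) f_i$. Once the bookkeeping is in order, the rest is formal.
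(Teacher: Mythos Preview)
Your proposal is correct and follows exactly the approach the paper indicates: the paper does not give a detailed proof of this proposition but simply asserts it with the remark ``By techniques analogous to what we did in comparing with \cite{FHK}, one can show the following,'' and your outline faithfully carries out that analogy, correctly identifying the $d$ Euler vector fields and $d$ logarithmic forms as the torus-side replacements for the single $r\partial_r$ and $\d\vartheta$ in the sphere case. If anything, your writeup is more explicit than what the paper provides.
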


In this sense the enveloping $E_d$ algebra of $\Tilde{L^d \fg}_\theta$ controls the twisted enveloping factorization algebra.

\section{The holomorphic charge anomaly} \label{sec: qft}

In this section, we change our focus and exhibit a natural occurrence of the Kac-Moody factorization algebra as a symmetry of a simple class of higher dimensional quantum field theories. 
This example generalizes the free field realization of the affine Kac-Moody algebra as a subalgebra of differential operators on the loop space. 

Our approach is through the general machinery of perturbative quantum field theory developed by Costello \cite{CosRenorm} and Costello-Gwilliam \cite{CG1,CG2}.
We study the quantization of a particular {\em free} field theory, which makes sense in any complex dimension.
Classically, the theory depends on the data of a $G$-representation, and the holomorphic nature of the theory allows us the classical current algebra $\Cur^{\cl}(\sG_X)$ at ``zero level" to act as a symmetry. 
We find that upon quantization, the symmetry is broken, but in a way that we can measure by an explicit anomaly, i.e., local cocycle for $\sG_X$. 
This failure leads to a symmetry of the quantum theory via the quantum current algebra $\Cur^{\q}(\sG_X)$ twisted by this cocycle.  

\subsection{Holomorphic bosons}

We introduce a classical field theory on any complex manifold $X$ in the BV formalism whose equations of motion, in part, include holomorphic functions on $X$.
When the complex dimension is $d = 1$, our theory is identical to the chiral $\beta\gamma$ system, 
which is a bosonic version of the familiar $bc$ system in conformal field theory. 
In dimensions $d=2$ and $d=3$, this class of theories is still of physical importance.
They are equivalent to minimal twists of supersymmetric matter multiplets. 

To start, we fix a finite dimensional $\fg$-module $V$ and an integer $d > 0$.
There are two fields, a field $\gamma : \CC^d \to V$, given by a smooth function into $V$, and
a field $\beta \in \Omega^{d,d-1}(\CC^d, V^\vee)$, 
given by a differential form of Hodge type $(d,d-1)$, valued in the dual vector space $V^\vee$. 
The action functional describing the classical field theory~is
\beqn\label{actionfnl}
S(\gamma,\beta) = \int \<\beta, \dbar\gamma\>_V
\eeqn
where $\<-,-\>_V$ denotes the evaluation pairing between $V$ and its dual. 
The classical equations of motion of this theory are 
\[
\dbar \beta = 0 =\dbar \gamma
\]
and hence pick out pairs $(\gamma,\beta)$ that are holomorphic. 

The symmetry we consider comes from the $\fg$-action on $V$. 
It extends, in a natural way, to an action of the ``gauged'' Lie algebra $C^\infty(X, \fg)$ on the $\gamma$ fields: an element $x(z,\zbar) \in C^\infty(X,\fg)$ acts simply by $x(z,\zbar) \cdot \gamma(z,\zbar)$ where the dot indicates the pointwise action via the $\fg$-module structure on $V$. 
There is a dual action on the $\beta$ fields.
This Lie algebra action is compatible with the action functional (\ref{actionfnl})---that is, it preserves solutions to the equations of motion---precisely when $x(z,\zbar)$ is holomorphic: $\dbar x(z,\zbar) = 0$. 
In other words, the natural Lie algebra of symmetries is $\cO_X \otimes \fg$, the holomorphic functions on $X$ with values in~$\fg$.

Notice that the original action functional (\ref{actionfnl}) has an ``internal symmetry'' via the gauge transformation
\[
\beta \mapsto \beta + \dbar \beta' 
\]
with $\beta'$ an arbitrary element of $\Omega^{d,d-2} (X, V^*)$. 
Thus, the space $\Omega^{d,d-2} (X, V^\vee)$ provide ghosts in the BRST formulation of this theory. 
Moreover, there are ghosts for ghosts $\beta'' \in \Omega^{d,d-3}(X , V^\vee)$, and so on.
Together with all of the antifields and antighosts, the full theory consists of two copies of a Dolbeault complex.
The precise definition is the following.

\begin{dfn}
In the BV formalism the {\em classical $\beta\gamma$ system} on the complex manifold $X$ has space of fields
\[
\sE_V = \Omega^{0,*}(X , V) \oplus \Omega^{d,*}(X , V^*)[d-1],
\]
with the linear BRST operator given by $Q = \dbar$.
We will write fields as pairs $(\gamma,\beta)$ to match with the notation above.
There is a $(-1)$-shifted symplectic pairing is given by integration along $X$ combined with the evaluation pairing between $V$ and its dual: 
\[
\<\gamma, \beta\> = \int_X \<\gamma, \beta\>_V.
\] 
The action functional for this free theory is thus
\[
S_V (\beta,\gamma) = \int_X \<\beta, \dbar \gamma\>_{V} .
\]
\end{dfn}

\begin{rmk}
As usual in homological algebra, the notation $[d-1]$ means we shift that copy of the fields down by $d-1$. 
Note that the elements in degree zero (i.e., the ``physical'' fields) are precisely maps $\gamma : X \to V$ and sections $\beta \in\Omega^{d,d-1} (X ; V^\vee)$, just as in the initial description of the theory. 
The gauge symmetry $\beta \to \beta + \dbar \beta'$ has naturally been incorporated into our BRST complex (which only consists of a linear operator since the theory is free).
We note that the pairing only makes sense when at least one of the inputs is compactly-supported or $X$ is closed;
but, as usual in physics, it is the Lagrangian density that is important, rather than the putative functional it determines.
\end{rmk}

\begin{rmk}
This theory is a special case of a nonlinear $\sigma$-model, where the linear target $V$ is replaced by an arbitrary complex manifold $Y$.
When $d=1$ this theory is known as the (classical) curved $\beta\gamma$ system and has received extensive examination \cite{WittenCDO, WG2, Nek, GGW};
when a quantization exists, the associated factorization algebra of quantum observables encodes the vertex algebra known as chiral differential operators of $Y$.
The second author's thesis \cite{BWthesis} examines the theories when $d>1$ and uncovers a systematic generalization of chiral differential operators.
\end{rmk}

In parallel with our discussion above, once we include the full BV complex, 
it is natural to encode the symmetry $\cO_X \otimes \fg$ by the action of  the {\em dg Lie algebra} $\sG_X^{sh} = \Omega^{0,*}(X, \fg)$. 
The action by $\sG_X^{sh}$ extends to a natural action on the fields of the $\beta\gamma$ system in such a way that the shifted symplectic pairing is preserved. 
In other words, $\alpha$ determines a symplectic vector field on the space of fields.

This vector field is actually a Hamiltonian vector field, 
and we will encode it by an element $\alpha \in \sG_X^{sh}$ by a {\em local} functional $I_\alpha^{\sG} \in \oloc(\sE_V)$. 
It is a standard computation in the BV formalism to verify the following.

\begin{dfn/lem}
The {\em $\sG_X$-equivariant $\beta\gamma$ system} on $X$ with values in $V$ is defined by the local functional
\[
I^{\sG}(\alpha, \beta, \gamma) = \int \<\beta, \alpha \cdot \gamma\>_V \in \oloc(\sE_V \oplus \sG_X[1]) .
\]
This functional satisfies the $\sG_X$-equivariant classical master equation
\[
(\dbar + \d_{\sG}) I^{\sG} + \frac{1}{2} \{I^{\sG}, I^{\sG}\} = 0 .
\] 
\end{dfn/lem}

The classical master equation encodes the claim that the function $I^{\sG}$ defines a dg Lie algebra action on the theory $\sE_V$. 
In particular, $I^{\sG}$ determines a map of sheaves of dg Lie algebras 
\[
I^{\sG} : \sG^{sh}_X \to \oloc(\sE_V)[-1],
\] 
where the Lie bracket on the right hand side is defined by the BV bracket $\{-,-\}$. 
If we post-compose with the map $\oloc(\sE_V)[-1] \to \Der_{\rm loc}(\sE_V)$ that sends a functional $f$ to the Hamiltonian vector field $\{f,-\}$,
then we find the composite is precisely the action of $\sG^{sh}_X$ on fields already specified.

We view the sum 
\[
S(\beta,\gamma) + I^{\sG}(\alpha, \beta, \gamma)
\] 
as the action functional of a field theory in which the $\alpha$ fields parametrize a family of field theories,
i.e., provide a family of backgrounds for the $\beta\gamma$ system.
We call it the equivariant classical action functional.

\noindent {\bf Note:} For the remainder of the section we will restrict ourselves to the space $X =~\CC^d$. 

\subsubsection{The $\beta\gamma$ factorization algebra}

It is the central result of \cite{CG1,CG2} that the observables of a quantum field theory form a factorization algebra on the underlying spacetime. 

For any theory, the factorization algebra of classical observables assigns to every open set $U$, the cochain complex of polynomial functions on the fields that only depend on the behavior of the fields in $U$.
(In other words, each function must have support in $U$.)  
For the $\beta\gamma$ system, the complex of classical observables\footnote{We work here with polynomial functions but it is possible to work with formal power series instead, which is typically necessary for interacting theories. We use $\Sym$ to denote polynomials and $\widehat{\Sym}$ to denote the completion, which are formal power series.} assigned to an open set $U \subset \CC^d$~is
\[
\Obs^{\cl}_V(U) = \left(\Sym \left(\Omega^{0,*}(U)^\vee \tensor V^\vee \oplus \Omega^{d,*}(U)^\vee \tensor V [-d+1]\right), \dbar\right) .
\]
As discussed following Definition~\ref{dfn: classical currents}, we use the completed tensor product when defining the symmetric products. 
It follows from the general results of Chapter 6 of \cite{CG2} that this assignment defines a factorization algebra on $\CC^d$. 

The functional $I^{\sG}$ defines a map of dg Lie algebras $I^{\sG} : \sG_d(\CC^d) \to \Obs^{\cl}_V(\CC^d)$.
(Note  that we have switched here from $\sG^{sh}_d$ to $\sG_d$, and hence are working with compactly supported $\alpha$.)
Thanks to the shifted symplectic pairing on the fields, 
the factorization algebra $\Obs_V^{\cl}$ is equipped with a 1-shifted Poisson bracket and hence a $P_0$-structure. 
In Section \ref{sec: envelopes} we also discussed how a local Lie algebra determines a $P_0$-factorization algebra via its classical current algebra. 
The classical Noether's theorem, as proved in Theorem 11.0.1.1 of \cite{CG2}, then implies that $I^{\sG}$ determines a map between these factorization algebras. 

\begin{prop}[\cite{CG2}, Classical Noether's Theorem]
\label{prop:CNT}
The assignment that sends an element $\alpha \in \Omega^{0,*}_c(U, \fg)$ to the observable
\[
\gamma \tensor \beta \in \Omega^{0,*}(U, V) \tensor \Omega^{d,*}(U, V^*) \mapsto \int_U \<\beta, \alpha \cdot \gamma\>_V
\]
determines a map of $P_0$-factorization algebras 
\[
J^{\cl} : \Cur^{\cl} (\sG_d) \to \Obs^{\cl}_V 
\]
on the manifold $\CC^d$.
\end{prop}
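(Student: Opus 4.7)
The plan is to apply the classical Noether's theorem of \cite{CG2} cited in the excerpt, which mechanically promotes a Hamiltonian action of a local Lie algebra on a classical BV theory to a map of $P_0$-factorization algebras out of $\Cur^{\cl}$. All the input needed by that theorem has already been assembled in the Definition/Lemma that precedes the statement, so the proof amounts to unpacking what the general machine produces here.

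First, I would identify the linear Noether map. Since $I^{\sG}$ is linear in the $\sG$-variable $\alpha$, it may be regarded as a degree $0$ assignment
\[
J : \sG_d(U) \to \Obs^{\cl}_V(U)[-1], \qquad \alpha \mapsto \Big((\beta,\gamma) \mapsto \int_U \langle \beta, \alpha \cdot \gamma\rangle_V\Big).
\]
The cohomological degree works out because the $(-1)$-shifted symplectic pairing on $\sE_V$ induces a degree $+1$ Poisson bracket on $\Obs^{\cl}_V$, and shifting by $[-1]$ turns this into an honest degree $0$ Lie bracket.

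Second, I would decompose the equivariant classical master equation
\[
(\dbar + \d_\sG) I^{\sG} + \tfrac{1}{2}\{I^{\sG}, I^{\sG}\} = 0
\]
by polynomial order in $\alpha$. The order-one component records that $\dbar \circ J = J \circ \dbar$, so $J$ is a cochain map; the order-two component records that $\{J(\alpha_1), J(\alpha_2)\} = J([\alpha_1,\alpha_2])$, so $J$ is a map of dg Lie algebras from $\sG_d$ to $\Obs^{\cl}_V[-1]$. By Definition \ref{dfn: classical currents}, $\Cur^{\cl}(\sG_d)(U)$ is the free $1$-shifted Poisson algebra on $\sG_d(U)[1]$, so the universal property extends $J$ canonically and uniquely to a map of $P_0$-algebras $J^{\cl}_U: \Cur^{\cl}(\sG_d)(U) \to \Obs^{\cl}_V(U)$. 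On linear generators this is the formula in the statement; on products it is the polynomial extension, which makes sense precisely because $J(\alpha)$ is a linear observable smeared by a smooth compactly supported section $\alpha$, so no coincident-point singularities arise in forming $J^{\cl}_U(\alpha_1 \cdots \alpha_n)$.

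Finally, I would verify that the family $\{J^{\cl}_U\}_U$ assembles into a morphism of factorization algebras, not merely a collection of $P_0$-algebra maps. Extension-by-zero on $\sG_d$ intertwines with the covariant structure maps on $\Obs^{\cl}_V$ because $I^{\sG}$ is a local functional, hence $J(\alpha)$ is supported wherever $\alpha$ is; compatibility with the factorization product on disjoint opens follows from the multiplicativity of the universal $P_0$-extension applied to elements with disjoint supports. The main obstacle is the functional-analytic bookkeeping needed to guarantee that the $P_0$-bracket and the enveloping construction are genuinely well-defined on the completed symmetric powers underlying $\Cur^{\cl}$ and $\Obs^{\cl}_V$; but the smoothness and compact support of the inputs in $J$ mean that this is exactly the regime covered by Theorem 11.0.1.1 of \cite{CG2}, and the proposition follows.
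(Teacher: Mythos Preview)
Your proposal is correct and matches the paper's approach: the paper does not give an independent proof but simply invokes Theorem~11.0.1.1 of \cite{CG2}, noting that the equivariant classical master equation for $I^{\sG}$ supplies exactly the input that theorem requires. Your write-up goes further than the paper by unpacking the $\alpha$-degree decomposition of the master equation and the universal property of the enveloping $P_0$-algebra, but this is elaboration of the same argument rather than a different one.
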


This formula for $J^{\cl}$ is identical to that of the local functional $I^\fg(\alpha)$ defining the action of $\sG_d$ on the $\beta\gamma$ system,
but it is only defined for compactly supported sections $\alpha$.
Note an important point here: if $\alpha$ is not compactly supported, then $I^\fg(\alpha)$ is not a functional on arbitrary fields because the density $\<\beta, \alpha \cdot \gamma\>_V$ may not be integrable.
In general, a local functional need not determine an observable on an open set since the integral may not exist.
When $\alpha$ is compactly supported on $U$, however, then $I^\sG (\alpha)$ does determine an observable on $U$, namely the observable~$J^{\cl}(\alpha)$. 
We also want to note that the map $J^{\cl}$ is quadratic.

The challenge is to extend this relationship to the quantum situation. 
Being a free field theory, the $\beta\gamma$ system admits a natural quantization and hence a factorization algebra $\Obs^{\q}_V$ of quantum observables (whose definition we recall below). 
The natural question arises whether the symmetry by the dg Lie algebra $\sG_d$ persists upon quantization. 
We are asking if we can lift $J^{\cl}$ to a ``quantum current" $J^{\q} : \Cur^\q(\sG_d) \to \Obs^\q_V$, where $\Cur^\q(\sG_d)$ is the factorization algebras of quantum currents of Definition~\ref{dfn: quantum currents}. 
The existence of this map of factorization algebras is controlled by the equivariant quantum master equation, to which we now turn.

\subsection{The equivariant quantization}

The approach to quantum field theory we use follows Costello's theory of renormalization and the Batalin-Vilkovisky formalism developed in \cite{CosRenorm}.
The formalism dictates that in order to define a quantization, it suffices to define the theory at each energy (or length) scale and to ask that these descriptions be compatible as we vary the scale.
Concretely, this compatibility is through an exact {\em renormalization group (RG) flow} and is encoded by an operator $W(P_{\epsilon < L}, -)$ acting on the space of functionals. 
The functional $W(P_{\epsilon < L},-)$ is defined as a sum over weights of graphs which is how Feynman diagrams appear in Costello's formalism.
A theory that is compatible with the RG flow is called a ``prequantization". 
In order to obtain a quantization, one must solve the quantum master equation (QME). 
For us, the quantum master equation encodes the failure of lifting the classical $\sG_d$-symmetry to one on the prequantization.

The quantization we work with follows Costello's approach quite closely, 
but we will use a sophisticated version where some of the fields are ``background'' fields and hence are not integrated over.
This allows us to study the equivariant theory we just introduced.
(This version is discussed in more depth in \cite{CG2}.) 
The two main ingredients to construct the weight are the propagator $P_{\epsilon < L}$ and the classical interaction $I^{\sG}$. 
The propagator only depends on the underlying free theory, that is, the higher-dimensional $\beta\gamma$ system. 
As above, the interaction describes how the linear currents $\sG_d$ act on the free theory. 

The construction of $P_{\epsilon<L}$, which makes sense for a wide class theories of this holomorphic flavor, can be found in Section 3.2 of~\cite{BWhol}.
For us, it is important to know that $P_{\epsilon<L}$ satisfies the following properties:

\begin{enumerate}
\item[(1)] For $0 < \epsilon < L < \infty$ the propagator 
\[
P_{\epsilon < L} \in \sE_V \Hat{\tensor} \sE_V 
\]
is a symmetric under the $\ZZ/2$-action.
Moreover, $P_{0 < \infty} = \lim_{\epsilon \to 0}\lim_{L \to \infty}$ is a symmetric element of the distributional completion $\Bar{\sE}_V \Hat{\tensor} \Bar{\sE}_V$. 

\item[(2)] 
The propagator lies in the subspace
\[
\Omega^{d,*}(\CC^d \times \CC^d, V \tensor V^*) \oplus \Omega^{d,*}(\CC^d \times \CC^d, V^* \tensor V) \subset \sE_V \Hat{\tensor} \sE_V .
\]
If we use coordinates $(z,w) \in \CC^d \times \CC^d$, the propagator has the form
\beqn
P_{\epsilon<L} = P^{an}_{\epsilon<L}(z,w) \tensor \left({\rm id}_{V} + {\rm id}_{V^*}\right)
\eeqn
where ${\rm id}_V, {\rm id}_{V^*}$ are the elements in $V \tensor V^*, V^* \tensor V$ that represent identity maps. 
Moreover, $P^{an}_{0 < \infty} (z,w)$ is the Green's function for the operator $\dbar$ on $\CC^d$:
\[
\dbar P^{an}_{0<\infty} (z,w) = \delta (z-w) .
\]

\item[(3)] Let $K_t \in C^\infty((0,\infty)_t) \tensor \sE_V \Hat{\tensor} \sE_V$ be the heat kernel for the Hodge Laplacian
\[
\triangle_{\rm Hodge} K_t + \frac{\partial}{\partial t} K_t = 0 .
\]
Thus, $P_{\epsilon < L}$ provides a $\dbar$-homotopy between $K_\epsilon$ and $K_L$:
\[
\dbar P_{\epsilon < L} = K_{t=L} - K_{t=\epsilon} .
\]
\end{enumerate}

To define the quantization, we recall the definition of a weight of a Feynman diagram adjusted to this equivariant context.
To simplify our discussion, we introduce the notation $\sO(\sG_d[1])$ to mean the underlying graded vector space of $\clie^*(\sG_d)$, which is the (completed) symmetric algebra on the dual of~$\sG_d$. 

For the free $\beta\gamma$ system, the homotopy RG flow from scale $L>0$ to $L'>0$ is an invertible linear map 
\beqn\label{weight1}
W(P_{L < L'} , -) : \sO(\sE) [[\hbar]] \to \sO(\sE)[[\hbar]]
\eeqn
defined as a sum of weights of graphs 
\[
W (P_{L<L'}, I) = \sum_{\Gamma} W_{\Gamma}(P_{L<L'}, I). 
\]
Here, $\Gamma$ denotes a graph, and the weight $W_\Gamma$ associated to $\Gamma$ is defined as follows.
One labels the vertices of valence $k$ by the $k$th homogenous component of the functional $I$. 
The edges of the graph are labeled by the propagator $P_{L<L'}$.
The total weight is given by iterative contractions of the homogenous components of the interaction with the propagator. 
Formally, we can write the weight as
\[
e^{W(P_{\epsilon <L}, I)} = e^{\hbar \partial_{P_{\epsilon <L}}} e^{I / \hbar}
\]
where $\partial_P$ denotes contraction with $P$. 
(For a complete definition, see Chapter 2 of~\cite{CosRenorm}.)

To define the equivariant version, we extend (\ref{weight1}) to a $\sO(\sG_d[1])$-linear map
\[
W^{\sG} (P_{L < L'} , -) : \sO(\sE \oplus \sG_d[1]) [[\hbar]] \to \sO(\sE \oplus \sG_d[1])[[\hbar]] .
\]

\begin{dfn/lem}
A {\em prequantization} of the $\sG_d$-equivariant $\beta\gamma$ system on $\CC^d$ is defined by the family of functionals $\{I^{\sG}[L]\}_{L > 0}$, where
\beqn\label{prequant}
I^{\sG} [L] = \lim_{\epsilon \to 0} W^{\sG} (P_{\epsilon<L} , I^{\sG}) .
\eeqn 
This family satisfies homotopy RG flow:  
\[
I[L'] = W(P_{L<L'}, I[L]) .
\]
for all $L < L'$.
\end{dfn/lem}

\begin{proof}
The key claim to justify is why the $\epsilon \to 0$ limit of $W^{\sG} (P_{\epsilon<L} , I^{\sG})$ exists,
since it implies immediately that we have a family of actions satisfying homotopy RG flow. 
This key claim follows from the following two intermediate results:
\begin{itemize}
\item[(1)] 
Only one-loop graphs appear in the weight expansion $W^{\sG} (P_{\epsilon < L}, I^{\sG})$. 

\item[(2)] Let $\Gamma$ be a one-loop graph.
Then
\[
\lim_{\epsilon \to 0} W^{\sG}_\Gamma(P_{\epsilon < L}, I^{\sG})
\]
exists.
\end{itemize}

Claim (1) is a direct combinatorial observation.
Recall that the weight is defined as a sum over {\em connected} graphs,
and only two types of graphs appear: 
\begin{itemize}
\item trees with a $\gamma$ leg, a $\beta$ leg, and arbitrarily many $\alpha$ legs or
\item trivalent wheels with just $\alpha$ legs.
\end{itemize}
To see this, note that the inner edges that ared labeled by the propagator $P_{\epsilon < L}$, which only depends on the fields $\beta$ and $\gamma$. 
The trivalent vertex has the form $\int \beta [\alpha, \gamma]$.
If one connects two vertices, one is left with a single $\gamma$ leg and a single $\beta$ leg but two $\alpha$ legs.
Similarly, if one connects $n$ vertices, one is left  with a single $\gamma$ leg and a single $\beta$ leg but $n$ $\alpha$ legs.
If one uses a propagator to connect $\gamma$ and $\beta$ leg, one has a wheel with $n$ $\alpha$ legs,
and no more propagators can be attached.

Claim (2) follows from Theorem 3.4 of \cite{BWhol}, which asserts that the $\epsilon \to 0$ limit of the weights is finite. 
\end{proof}

As an immediate consequence of the proof, we see that only polynomial values of $\hbar$ occur in the expansion of $I^{\sG}[L]$, indeed the answer is linear in $\hbar$. 
This fact will be used later on when we make sense of the ``free field realization" of the Kac-Moody granted by this equivariant quantization. 

\begin{cor}
For each $L > 0$, the functional $I^{\sG}[L]$ lies in the subspace 
\[
\sO(\sE \oplus \sG_d[1]) \oplus \hbar \sO(\sE \oplus \sG_d[1]) \subset \sO(\sE \oplus \sG_d[1]) [[\hbar]].
\] 
\end{cor}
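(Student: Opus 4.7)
The plan is to read off the corollary directly from the combinatorial structure of the Feynman expansion already established in the proof of the preceding definition/lemma. In Costello's formalism, each connected graph $\Gamma$ appearing in the sum defining $W^{\sG}(P_{\epsilon<L}, I^{\sG})$ contributes with a factor of $\hbar^{g(\Gamma)}$, where $g(\Gamma)$ denotes the first Betti number (the loop number) of $\Gamma$. This is the standard $\hbar$-weighting in the BV formalism: trees contribute at order $\hbar^0$, one-loop graphs at order $\hbar^1$, and so on.

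The first step is to invoke Claim (1) from the proof just above, which isolated the only two families of graphs that can appear in the expansion of $I^{\sG}[L]$: trees carrying one $\gamma$ leg, one $\beta$ leg, and arbitrarily many $\alpha$ legs, together with trivalent wheels whose external legs are only $\alpha$'s. The former are simply trees, so they have $g(\Gamma) = 0$; the latter are single-cycle graphs, so they have $g(\Gamma) = 1$. No graph with two or more loops ever appears.

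Combining these two observations yields the claim: every non-vanishing term in the sum $I^{\sG}[L] = \sum_\Gamma W^{\sG}_\Gamma(P_{\epsilon<L}, I^{\sG})$ is a monomial in $\hbar$ of degree at most one, so $I^{\sG}[L]$ lies in $\sO(\sE \oplus \sG_d[1]) \oplus \hbar\, \sO(\sE \oplus \sG_d[1])$. There is no genuine obstacle here: all the work is already done in Claim (1), and the corollary is simply the observation that the loop-counting in $\hbar$ together with the restricted graph combinatorics of this free theory with a cubic background coupling forbids any higher powers of $\hbar$ from appearing.
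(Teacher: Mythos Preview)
Your proposal is correct and matches the paper's approach exactly: the paper states this corollary as ``an immediate consequence of the proof'' of the preceding definition/lemma, and your argument spells out precisely that consequence by combining Claim~(1)'s graph classification (trees and single wheels only) with the standard $\hbar^{g(\Gamma)}$ weighting.
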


To define the quantum master equation, we must introduce the BV Laplacian $\Delta_L$ and the scale $L$ BV bracket $\{-,-\}_L$. 
For $L > 0$, the operator $\Delta_L : \sO(\sE_V) \to \sO(\sE_V)$ is defined by contraction with the heat kernel $K_L$ defined above. 
Similarly, $\{-,-\}_L$ is a bilinear operator on $\sO(\sE_V)$ defined by
\[
\{I,J\}_L = \Delta_L(IJ) - (\Delta_L I)J - (-1)^{|I|} I \Delta_L J .
\] 
There are equivariant versions of each of these operators given by extending $\sO(\sG_d[1])$-linearly.
For instance, the BV Laplacian is a degree one operator of the form
\[
\Delta_L : \sO(\sE \oplus \sG_d[1]) \to \sO(\sE \oplus \sG_d[1]) .
\]
A functional $J \in \sO(\sE_V \oplus \sG_d[1])$ satisfies the $\sG_d$-{\em equivariant scale $L$ quantum master equation} (QME) if
\[
(\dbar + \d_{\sG}) J+ \frac{1}{2} \{J, J\}_L + \hbar \Delta_L J = 0 .
\]

The main object of study in this section is the {\em failure} for the quantization $I^{\sG}[L]$ to satisfy the equivariant QME. 

\begin{dfn}
The $\sG_d$-{\em equivariant charge anomaly} at scale $L$, denoted $ \Theta_V [L]$, is defined~by
\[
\hbar \Theta_V [L] = (\dbar + \d_{\sG}) I^{\sG} [L] + \frac{1}{2} \{I^{\sG}[L], I^{\sG}\}_L + \hbar \Delta I^{\sG}[L] .
\]
The operator $\d_{\sG}$ is the Chevalley-Eilenberg differential $\clie^*(\sG_d) = \left(\sO(\sG_d[1]), \d_{\sG}\right)$. 
\end{dfn}

\begin{rmk}
Since the underlying non-equivariant BV theory $\sE_V$ is free, 
in the Feynman graph expansion of $I^\sG$,
{\em none} of the external edges of any 1-loop term are labeled by $\sE_V$. 
In other words, the $\hbar$ contribution is a function only of the $\alpha$ fields (i.e., the symmetries).
Similarly, the obstruction to solving the QME is only a function of the local Lie algebra~$\sG_d$.
For this reason, the QME is automatically solved modulo the space of functionals $\clie^*(\sG_d) \subset \sO(\sE \oplus \sG_d[1])$,
if we view those as the relevant ``constants.''
We are interested, however, in making the action of $\sG_d$ ``inner'' (in the terminology of \cite{CG2}).
in which case this obstruction term is relevant.
\end{rmk}

\subsection{The charge anomaly for $\beta\gamma$}

To calculate this anomaly, we utilize a general result about the quantum master equation for holomorphic field theories formulated in \cite{BWhol}. 
In general, since the effective field theory defining the prequantization $\{I^{\sG}[L]\}$ is given by a Feynman diagram expansion, the anomaly to solving the quantum master equation is also given by a potentially complicated sum of diagrams. 
As an immediate corollary of Proposition 4.4 of \cite{BWhol} for holomorphic theories on $\CC^d$, we find that only a simple class of diagrams appear in the anomaly. 

\begin{lem}\label{lem: obs}
Let $\Theta_V[L]$ be the $\sG_d$-equivariant charge anomaly for the $\beta\gamma$ system with values in~$V$.
Then 
\begin{itemize}
\item[(1)] the limit $\Theta_V = \lim_{L \to 0} \Theta_V[L]$ exists and is a {\em local} cocycle so that $\Theta_V \in \cloc^*(\sG_d)$.
\item[(2)] This element $\Theta$ is computed by the following limit
\[
\hbar \Theta_V = \frac{1}{2} \lim_{L \to 0} \lim_{\epsilon \to 0} \sum_{\Gamma \in {\rm Wheel}_{d+1} \; , e} W_{\Gamma, e}(P_{\epsilon <L}, K_\epsilon, I^{\sG}) ,
\] 
where the sum is over all wheels of valency $(d+1)$ with a distinguished internal edge~$e$, and the weight puts $K_\epsilon$ on $e$ but the propagator on all other internal edges. 
\end{itemize}
\end{lem}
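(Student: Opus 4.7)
The plan is to unpack the $\sG_d$-equivariant quantum master equation term by term as a sum over Feynman diagrams, then invoke the general anomaly analysis for holomorphic field theories on $\CC^d$ from \cite{BWhol} to isolate which diagrams survive and to establish locality of the limit. The argument naturally splits into a combinatorial step (identifying which graphs contribute), a degree-counting step (fixing the valency), and an analytic step (existence of the limits and locality of the result).

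First I would identify the contributing graphs. The cubic vertex $\int \langle \beta, \alpha \cdot \gamma \rangle$ has exactly one $\beta$-, one $\gamma$-, and one $\alpha$-leg, and the only propagator pairs $\beta$ with $\gamma$. The same combinatorial argument used in the prequantization existence proof then shows that any connected Feynman graph built from $I^\sG$ and $P_{\epsilon<L}$ is either a tree with one $\gamma$-leg, one $\beta$-leg, and arbitrarily many $\alpha$-legs, or a trivalent wheel whose only external legs are $\alpha$. Expanding the three terms defining $\hbar \Theta_V[L]$ and using the identity $\dbar P_{\epsilon<L} = K_L - K_\epsilon$, the standard BV cancellations (cf.\ Chapter~9 of \cite{CG2}) kill every tree contribution and match the $K_L$ pieces coming from $(\dbar + \d_\sG) I^\sG[L]$ against the $\Delta_L$ and $\{-,-\}_L$ terms. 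What survives is a sum over one-loop wheels in which a single distinguished internal edge carries the heat kernel $K_\epsilon$ and all other internal edges carry $P_{\epsilon<L}$, with external legs only of $\alpha$ type.

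Next I would pin down the valency. Here Proposition~4.4 of \cite{BWhol} applies directly: for a holomorphic theory on $\CC^d$ built from a cubic interaction of this shape, the wheels contributing nontrivially in the double limit $\epsilon \to 0$, $L \to 0$ are precisely those of valency $d+1$. The underlying reason is a form-degree count on the configuration space of $n$ points in $\CC^d$; the propagators and heat kernel are assembled from components of $(d,d-1)$-type, and only for $n = d+1$ does the total form degree along the small diagonal saturate the required $(d,d)$-degree per factor, while other valences vanish either by type or by the scaling analysis in \cite{BWhol}.

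Finally, existence and locality of the limit follow from Theorem~3.4 and Proposition~4.4 of \cite{BWhol}, which together show that each surviving wheel integral converges as $\epsilon \to 0$ and $L \to 0$ to an element of $\cloc^*(\sG_d)$ supported on the small diagonal of $(\CC^d)^{d+1}$. The cocycle property $(\dbar + \d_\sG) \Theta_V = 0$ is a consequence of applying $(\dbar + \d_\sG)$ to the QME failure and using the graded Jacobi identity for $\{-,-\}_L$ together with $[\dbar, \Delta_L] = 0$; since $\Theta_V$ is purely a functional of $\sG_d$ (not of $\sE_V$), the would-be error terms vanish in the limit. I expect the main obstacle to be the dimension-counting step in the third paragraph, which is where the complex dimension $d$ enters essentially and selects the $\Sym^{d+1}$ structure of the resulting anomaly; everything else amounts to bookkeeping around results already available in \cite{BWhol} and \cite{CG2}.
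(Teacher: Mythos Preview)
Your proposal is correct and follows the same route as the paper: both treat the lemma as an immediate corollary of Proposition~4.4 (together with Theorem~3.4) of \cite{BWhol}, after the combinatorial observation from the prequantization argument that only trees and one-loop wheels can appear. The paper in fact gives no further proof beyond citing that proposition, so your write-up is more detailed than the original but not substantively different.
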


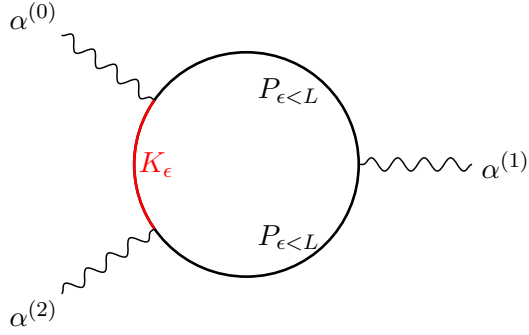
\begin{figure}
\begin{center}
\begin{tikzpicture}[line width=.2mm, scale=1.5]



		\draw[fill=black] (0,0) circle (1cm);
		\draw[fill=white] (0,0) circle (0.99cm);
		\draw[line width=0.35mm,red] ++(145:0.995) arc (145:215:0.995);

		\draw[vector](145:2) -- (145:1);
		\node at (145:2.3) {$\alpha^{(0)}$};
		\node at (60:0.75) {$P_{\epsilon<L}$};
		\node at (-60:0.75) {$P_{\epsilon<L}$};
		\draw[vector](215:2) -- (215:1cm);
		\node at (215:2.3) {$\alpha^{(2)}$};
		\node[red] at (180:0.8) {$K_\epsilon$};
		\draw[vector](0:2) -- (0:1);
		\node at (0:2.3) {$\alpha^{(1)}$};
	    	\clip (0,0) circle (1cm);
\end{tikzpicture}
\caption{The diagram representing the weight $W_{\Gamma, e}(P_{\epsilon<L}, K_\epsilon, I^\fg)$ in the case $d=2$. 
On the black internal edges are we place the propagator $P_{\epsilon < L}$ of the $\beta\gamma$ system. 
On the red edge labeled by $e$ we place the heat kernel $K_\epsilon$.
The external edges are labeled by elements $\alpha^{(i)} \in \Omega^{0,*}_c(\CC^2)$.}
\label{fig:liewheel}
\end{center}
\end{figure}

This description of the local anomaly may seem obscure because it uses Feynman diagrams.
It admits, however, a very elegant algebraic characterization, using the identification of Proposition~\ref{prop: trans j}. 

\begin{prop}\label{prop: bg anomaly}
The charge anomaly for quantizing the $\sG_d$-equivariant $\beta\gamma$ system on $\CC^d$ is equal to
\[
\Theta_V = \frac{1}{(2\pi i)^d} \fj (\ch_{d+1}^{\fg}(V)),
\]
where $\fj$ is the isomorphism from Proposition~\ref{prop: trans j}.
\end{prop}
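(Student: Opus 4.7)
The plan is to combine Lemma~\ref{lem: obs}, which computes $\Theta_V$ as a sum of weights of wheel diagrams of valence $d+1$, with the classification result Proposition~\ref{prop: trans j}, which says that rotation- and holomorphically-translation-invariant local cocycles of $\sG_d$ in cohomological degree $1$ are detected by the map $\fj$ from $\Sym^{d+1}(\fg^*)^\fg$. The strategy is thus: (1) argue on symmetry grounds that $\Theta_V$ must equal $\fj(\theta_V)$ for some invariant polynomial $\theta_V$, and then (2) identify $\theta_V$ by direct computation of the Feynman integral.

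\textbf{Step 1: Invariance and reduction to $\fj(\theta_V)$.} The propagator $P_{\epsilon<L}$ and heat kernel $K_\epsilon$ for the $\beta\gamma$ system on $\CC^d$ are both translation invariant and $U(d)$-equivariant, and the interaction $I^\sG$ is also built from translation- and rotation-invariant data. Hence each weight $W_{\Gamma,e}(P_{\epsilon<L},K_\epsilon,I^\sG)$, and thus the limit $\Theta_V$, lies in the subspace of local cocycles that are simultaneously holomorphically translation invariant (the contracting homotopies $\eta_i$ act trivially after passing to the $\epsilon,L \to 0$ limits because of the $(d,*)$-type structure of the propagator) and $U(d)$-invariant. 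By Proposition~\ref{prop: trans j}, $H^1$ of this subcomplex is isomorphic via $\fj$ to $\Sym^{d+1}(\fg^*)^\fg$, so there exists a unique invariant polynomial $\theta_V \in \Sym^{d+1}(\fg^*)^\fg$ such that $[\Theta_V] = [\fj(\theta_V)]$.

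\textbf{Step 2: Extracting the representation-theoretic factor.} A wheel of valence $d+1$ has $d+1$ external legs, each labeled by an element $\alpha^{(i)} \in \Omega^{0,*}_c(\CC^d)\otimes \fg$. The trivalent vertex is $\int\langle\beta,\alpha\cdot\gamma\rangle_V$, so each vertex contributes a factor of the representation $\rho: \fg \to \End(V)$ acting on $V$. Propagating around the wheel and taking the trace over $V$ (which is forced by the cyclic structure of the wheel, together with the fact that the $V$ and $V^\vee$ indices of the propagator are the identity), one obtains the Lie-algebraic factor
\[
\tfrac{1}{(d+1)!}\,\mathrm{sym}\,\tr_V\!\bigl(\rho(x_0)\rho(x_1)\cdots\rho(x_d)\bigr),
\]
where the symmetrization comes from summing over cyclic (and reflective) relabelings of the wheel. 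Up to normalization this is exactly $\ch_{d+1}^\fg(V)$, the degree $d+1$ component of the $\fg$-equivariant Chern character of $V$.

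\textbf{Step 3: The analytic computation.} To pin down the scalar $1/(2\pi i)^d$, it is enough, by Step 1, to evaluate $\Theta_V$ on a single convenient test tuple $(\alpha^{(0)},\dots,\alpha^{(d)})$ and compare with $\fj(\theta_V)$, which has the explicit form $\theta_V(\alpha_0,\partial\alpha_1,\dots,\partial\alpha_d)$. After contracting all representation indices (Step 2), the remaining analytic weight is exactly the wheel integral computed in \cite{BWhol} for a scalar holomorphic theory with one loop and valence $d+1$. There the propagator $P^{an}_{\epsilon<L}$ is a regularized Bochner-Martinelli kernel, and one shows, via a change of variables to relative and center-of-mass coordinates on the $d+1$ copies of $\CC^d$ at each vertex, that the $\epsilon\to 0$, $L\to 0$ limit of the sum of wheel weights equals
\[
\frac{1}{(2\pi i)^d}\int_{\CC^d} \alpha^{(0)}\wedge \partial\alpha^{(1)}\wedge\cdots\wedge \partial\alpha^{(d)},
\]
because, up to boundary terms killed in the limit, $d$ of the propagators each supply one $\partial$ derivative and the remaining propagator together with the heat kernel edge collapses against $\alpha^{(0)}$ and integrates the $(d,d)$-form $\omega^{alg}_{BM}\wedge\partial\omega^{alg}_{BM}\wedge\cdots$ to give a residue of $(2\pi i)^{-d}$.

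\textbf{Main obstacle.} Step 1 is essentially bookkeeping and Step 2 is a tidy combinatorial exercise, so the real work, and the main obstacle, is Step 3: carrying out the regularized analytic integral that computes one wheel weight, verifying that the $\epsilon\to 0$ and $L\to 0$ limits commute, and extracting the precise coefficient $1/(2\pi i)^d$. This is where one must use the holomorphic analogue of the standard BV one-loop calculation from \cite{BWhol}, together with the explicit form of the Bochner-Martinelli kernel from Proposition~\ref{prop: Ad}; everything else in the proof is determined by symmetry considerations.
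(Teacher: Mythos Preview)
Your proposal is correct and follows essentially the same approach as the paper: first use the $U(d)$- and holomorphic-translation-invariance of the propagator, heat kernel, and interaction together with Proposition~\ref{prop: trans j} to reduce $\Theta_V$ to $\fj(\theta_V)$ for some $\theta_V\in\Sym^{d+1}(\fg^*)^\fg$; then read off $\theta_V=\ch_{d+1}^\fg(V)$ from the Lie-algebraic factor of the wheel; and finally defer the analytic constant $1/(2\pi i)^d$ to a separate Feynman-integral computation (carried out in Appendix~\ref{sec: feynman} of the paper rather than in \cite{BWhol} itself, though the latter supplies the setup). Your identification of Step~3 as the main obstacle matches the paper's structure exactly.
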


Let us unravel $\Theta_V$ in even more explicit terms:
for $\alpha_0$, \dots, $\alpha_d$ compactly-supported, $\fg$-valued Dolbeault forms,
\[
\Theta_V(\alpha_0, \ldots, \alpha_d) = \frac{1}{(2\pi i)^d} \int_{\CC^d} \Tr_V(\rho(\alpha_0) (\rho(\partial\alpha_1) \cdots \rho(\partial \alpha_d))
\]
where $\rho: \fg \to \End(V)$ denotes the action of $\fg$ on~$V$.

Based on our analysis of the local Lie algebra cohomology of $\sG_d$, 
it is clear that the obstruction must have this form, up to a scalar multiple. 
But we provide a more detailed proof.

\begin{proof}
First, we note that the element $\Theta_V \in \cloc^*(\sG_d)$ sits in the subspace of $U(d)$-invariant, holomorphic translation invariant local cocycles because both the functional $I^{\sG}$ and propagator $P_{\epsilon<L}$ are $U(d)$-invariant, holomorphic translation invariant.
By Proposition \ref{prop: trans j} we see that $\Theta_V$ must be cohomologous to a cocycle of the form
\[
(\alpha_0, \ldots, \alpha_d) \mapsto \int_{\CC^d} \theta(\alpha_0 \wedge \partial \alpha_1 \wedge \cdots \wedge \partial \alpha_d) 
\]
where $\theta$ is some element of $\Sym^{d+1}(\fg^*)^\fg$.
To use the notation of Section~\ref{sec: fact}, it is some element $\fJ_d (\theta)$. 
This cocycle factors in the following way:
\beqn
\label{composition}
\begin{tikzcd}
\left(\Omega^{0,*}_c(\CC^d) \tensor \fg\right)^{\tensor (d+1)} \ar[r,"\fa \fn"] & \left(\Omega^{0,*}_c(\CC^d) \tensor \fg\right) \tensor \left(\Omega^{1,*}_c(\CC^d)\tensor \fg\right)^{\tensor d} \ar[r, "\theta"] & \Omega^{d, *}_c(\CC^d) \ar[r, "\int"] & \CC . 
\end{tikzcd}
\eeqn
The first map is $\fa \fn : \alpha_0 \tensor \cdots \tensor \alpha_d \mapsto \alpha_0 \tensor \partial \alpha_1 \tensor \cdots \tensor \partial \alpha_d$.
The second map applies the symmetric function $\theta : \fg^{\tensor (d+1)} \to \CC$ to the Lie algebra factor and takes the wedge product of the differential forms. 

Lemma \ref{lem: obs} implies that the obstruction is given by the sum over Feynman weights associated to graphs of wheels of valency $(d+1)$.
We can identify the algebraic component, corresponding to $\theta$ in the above composition (\ref{composition}), directly from the shape of this graph. 
The propagator $P_{\epsilon<L}$ and heat kernel $K_\epsilon$ factor as
\[
P_{\epsilon<L} = P^{an}_{\epsilon<L} \tensor \left({\rm id}_{V} + {\rm id}_{V^*}\right)
\quad\quad\text{and}\quad\quad 
K_{\epsilon} = K_{\epsilon}^{an} \tensor \left({\rm id}_{V} + {\rm id}_{V^*}\right),
\]
where ${\rm id}_V, {\rm id}_{V^*}$ are the elements in $V \tensor V^*, V^* \tensor V$ representing the respective identity maps. 
The analytic factors $P^{an}_{\epsilon<L},  K_{\epsilon}^{an}$ only depend on the dimension $d$, and we recall their explicit form in Appendix~\ref{sec: feynman}. 

Each trivalent vertex of the wheel is also labeled by both an analytic factor and Lie algebraic factor. 
The Lie algebraic part of each vertex can be thought of as the defining map of the representation $\rho : \fg \to {\rm End}(V)$. 
The diagrammatics of the wheel amounts to taking the trace of the symmetric $(d+1)$st power of this Lie algebra factor. 
Thus, the Lie algebraic factor of the weight of the wheel is the $(d+1)$st component of the character of the representation
\[
{\rm ch}_{d+1}^\fg(V) = \frac{1}{(d+1)!} {\rm Tr}\left(\rho(X)^{d+1}\right) \in \Sym^{d+1}(\fg^*) .
\]

By these symmetry arguments, we know that the anomaly will be of the form $\Theta = A \fj (\ch_{d+1}^{\fg}(V))$ for some number $A \in \CC$.
In Appendix \ref{sec: feynman}, we perform an explicit calculation of this constant $A$, which depend on the specific form of the analytic propagator and heat kernel. 
\end{proof}

\subsection{The quantum observables of the $\beta\gamma$ system}

Before deducing the main consequence of the anomaly calculation, we introduce the quantum observables of the $\beta\gamma$ system. 
The quantum observables $\Obs^{\q}_V$ define a quantization of the classical observables in the sense that
as $\hbar \to 0$, they degenerate to $\Obs^\cl_V$.
More precisely, 
\[
\Obs^{\cl}_V \cong \Obs^\q_V \tensor_{\CC[\hbar]} \CC[\hbar]/(\hbar).
\]
In practice, the Costello's version of the BV formalism suggests that the quantum observables arise by 
\begin{enumerate}
\item[(a)] tensoring the underlying graded vector space of $\Obs^\cl_n$ with $\CC[[\hbar]]$ and
\item[(b)] deforming the differential to $\dbar +\hbar \Delta_L$, where $\Delta_L$ is the BV Laplacian.
\end{enumerate}
This construction actually defines a family of quantum observables, one for each length scale $L$. 
A main idea of \cite{CG2} says that by considering the collection of functionals at all length scales $L$, the observables $\Obs^\q_V$ still define a factorization algebra. 

The fact that this works is quite subtle, since naively the differential $\Delta_L$ seems to have support on all of $\CC^d$, so it is not obvious how to define the corestriction maps of the factorization algebra. 
In the case of free theories, such as the $\beta\gamma$ system, there is a way to circumvent this difficulty. 
One can work with an {\it a priori} smaller class of observables, namely those arising from smooth functionals, not distributional ones.
(A physicist might say we used ``smeared'' observables.)
The limit $\Delta = \lim_{L \to 0} \Delta_L$ then makes sense, and we just use this BV Laplacian and work at scale~0. 
This approach is developed in detail for the free $\beta\gamma$ system on $\CC$ in Chapter 5, Section 3 of~\cite{CG1}. 
The case for $\CC^d$ is essentially identical. 
This approach yields a factorization algebra~$\Tilde{\Obs}^\q_V$, as we now explain.

As shown in~\cite{CG1},
a classical result of Atiyah and Bott~\cite{AB} can be extended to show that for any complex manifold $U$, 
the inclusion
\[
\Omega^{p,*}_c(U) \subset \Bar{\Omega}^{p,*}_c(U)
\]
of compactly-supported smooth Dolbeault forms into compactly-supported smooth distributional Dolbeault forms is a quasi-isomorphism. 
Consequently we introduce the subcomplex 
\[
\Tilde{\Obs}^{\cl}_V(U) =  \left(\Sym(\Omega^{d,*}_c(U,V^*)[d] \oplus \Omega^{0,*}_c(U,V)[1]), \dbar\right)
\]
of 
\[
\Obs_V^{\cl}(U) = \left(\Sym(\Bar{\Omega}^{d,*}_c(U,V^*)[d] \oplus \Bar{\Omega}^{0,*}_c(U,V)[1]), \dbar \right).
\]
The Atiyah-Bott lemma ensures that the inclusion $\Tilde{\Obs}^{\cl}_V(U) \hookrightarrow \Obs_V^{\cl}(U)$ is a quasi-isomorphism.

The assignment $U \mapsto \Tilde{\Obs}^{\cl}_V(U)$ defines a factorization algebra on $\CC^d$, 
and so we have a quasi-isomorphism of factorization algebras $\Tilde{\Obs}^{\cl}_V \xto{\simeq} \Obs^{\cl}_V$.

\begin{dfn}\label{dfn: qobs}
The {\em smoothed quantum observables} supported on $U \subset \CC^d$ is the cochain complex
\[
\Tilde{\Obs}^\q_V(U) = \left(\Sym(\Omega^{d,*}_c(U,V^*)[d] \oplus \Omega^{0,*}_c(U,V)[1]), \dbar + \hbar \Delta\right) .
\]
\end{dfn}

By Theorem 5.3.10 of \cite{GwThesis}, the assignment $U \mapsto \Tilde{\Obs}^\q_V(U)$ defines a factorization algebra on $\CC^d$. 
Just as in the classical case, there is an induced quasi-isomorphism of factorization algebras $\Tilde{\Obs}^\q_{V} \xto{\simeq} \Obs^\q_V$,
as shown in the proof of Lemma 11.24 in~\cite{GGW}. 
Hence this smoothed version $\Tilde{\Obs}^\q_{V}$ agrees with the construction $\Obs^\q_V$ of~\cite{CG2}.

\subsection{Free field realization}

Proposition~\ref{prop:CNT} provides a factorization version of the classical Noether construction: 
there is a map of factorization algebras from the current algebra $\Cur^{\cl}(\sG_d)$ to the factorization algebra of classical observables $\Obs^\cl_V$.
It is natural to ask whether this map lifts along the ``dequantization'' map $\Obs^\q_V \to \Obs^\cl_V$, or
in other words, whether quantization preserves the symmetries.
Theorem 12.1.0.2 \cite{CG2} provides a general result about lifting classical Noether maps.
It says that if $\Theta$ is the obstruction to solving the the $\sL$-equivariant quantum master equation, 
then there is a map from the {\em twisted} quantum current algebra $\Cur^\q_{\Theta}(\sL)$ to the observables of the quantum theory. 
Thus, applied to our situation, it provides the following consequence of our Feynman diagram calculation above. 

\begin{prop}
Let $\hbar \Theta_V$ be the obstruction to satisfying the $\sG_d$-equivariant quantum master equation. 
There is a map of factorization algebras on $\CC^d$ from the twisted quantum current algebra to the quantum observables
\beqn\label{qnoether}
J^\q : \Cur_{\hbar \Theta_V}^\q (\sG_d) \to \Obs^\q_V 
\eeqn
that fits into the diagram of factorization algebras
\[
\begin{tikzcd}
\Cur^\q_{\hbar \Theta_V} (\sG_d) \ar[d, "\hbar \to 0"'] \ar[r, "J^\q"] & \Obs^\q_V \ar[d, "\hbar \to 0"] \\
\Cur^\cl (\sG_d) \ar[r,"J^\cl"] & \Obs^{\cl}_V .
\end{tikzcd}
\]
\end{prop}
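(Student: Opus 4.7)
The plan is to invoke the general quantum Noether theorem of \cite{CG2} (Theorem 12.1.0.2) applied to the $\sG_d$-equivariant prequantization $\{I^\sG[L]\}$ of the $\beta\gamma$ system constructed above. That theorem takes as input an $\sL$-equivariant prequantization whose equivariant QME fails by an anomaly $\Theta \in \cloc^*(\sL)$ and produces a map of factorization algebras from the $\Theta$-twisted quantum current algebra to the quantum observables. Our situation provides exactly this data: the prequantization was constructed via equation~\eqref{prequant}, and Proposition~\ref{prop: bg anomaly} identifies the precise anomaly as $\hbar \Theta_V$.

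Unpacking the map: for each open $U \subset \CC^d$ and each scale $L > 0$, contracting the $\alpha$-legs of $I^\sG[L]$ against a compactly supported input from $\Sym(\sG_d(U)[1])[\hbar]$ produces a scale-$L$ smoothed observable supported in $U$. Compatibility with homotopy RG flow intertwines these maps for varying $L$ under $W(P_{L<L'},-)$ on both sides, so they assemble into a single $\CC[\hbar]$-linear map to $\Obs^\q_V(U)$. Locality of $I^\sG$ and compact support of inputs ensure compatibility with the factorization extension structure. The cochain-map property relative to the $\hbar \Theta_V$-twisted differential on the source is precisely the equivariant scale-$L$ QME: rearranged, it asserts that the failure of $(\dbar + \d_\sG + \hbar \Delta_L)$ to commute with contraction against $I^\sG[L]$ equals multiplication by $\hbar \Theta_V[L]$, which is exactly what the twist in $\Cur^\q_{\hbar \Theta_V}$ absorbs.

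Compatibility with $J^\cl$ under $\hbar \to 0$ is automatic and structural: setting $\hbar = 0$ kills both the twist (which is proportional to $\hbar$) and the BD quantum correction to the source differential, reducing $\Cur^\q_{\hbar \Theta_V}(\sG_d)$ to $\Cur^\cl(\sG_d)$; simultaneously $\Obs^\q_V \otimes_{\CC[\hbar]} \CC[\hbar]/(\hbar) = \Obs^\cl_V$. Under this specialization $I^\sG[L]$ reduces to $I^\sG$ and the quantum Noether map reduces to $J^\cl$. The principal subtlety to verify is that the smoothed-observable construction of Definition~\ref{dfn: qobs} meshes with the equivariant Noether machinery of~\cite{CG2}---specifically, that the scale-$0$ BV Laplacian $\Delta$ makes sense on the observables in the image of the map and that the $L \to 0$ limits of the relevant anomaly terms are well-defined. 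Since Lemma~\ref{lem: obs} already produces a scale-$0$ local cocycle $\Theta_V$, this reduces to routine bookkeeping within the framework of \cite{CG2}, and the nontrivial analytic input (finiteness of one-loop weights, Atiyah-Bott quasi-isomorphism) has been assembled in the preceding subsections.
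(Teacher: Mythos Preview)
Your proposal is correct and takes essentially the same approach as the paper: the proposition is stated there as a direct application of Theorem~12.1.0.2 of \cite{CG2}, with no further proof given beyond the sentence preceding it. Your write-up simply unpacks what that citation entails in this specific setting, which is more detail than the paper itself provides.
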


The quantum current algebra $\Cur^\q_{\hbar \Theta_V}$ is a $\CC[\hbar]$-linear factorization algebra on $\CC^d$. 
It therefore makes sense to specialize the value of $\hbar$;
our convention is to take
\[
\hbar = (2 \pi i)^d .
\]
From our calculation of the charge anomaly $\Theta_V$ above, once we specialize $\hbar$, we can realize the current algebra as an enveloping factorization algebra
\[
\left. \Cur^\q_{\hbar \Theta_V} (\sG_d) \right|_{\hbar = (2\pi i)^d} \cong \UU_{\ch_{d+1}^\fg(V)} (\sG_d) .
\]
Thus, as an immediate corollary of the above proposition, $J^\q$ specializes to a map of factorization algebras
\beqn\label{free field}
J^\q : \UU_{\ch_{d+1}^\fg(V)} (\sG_d) \to \left. \Obs^\q_V \right|_{\hbar = (2\pi i)^d} .
\eeqn
We interpret this result as a {\em free field realization} of the higher Kac-Moody factorization algebra: 
the map embeds the higher Kac-Moody algebra into the quantum observables of a free theory, namely the $\beta\gamma$ system. 

\subsubsection{Sphere operators}

This formulation may seem abstract because it uses factorization algebras,
but we obtain a more concrete result once we specialize to the sphere operators. 
It realizes a representation of $\Tilde{\fg}^\bullet_{d,\theta}$ inside a Weyl algebra determined by  the $\beta\gamma$ system.

Recall from Section~\ref{sec:functionsofpunctureddspace} the dg algebra $A_d$ that provides a dg model for functions on punctured affine space.
Indeed, we have an inclusion of cochain complexes $A_d \hookrightarrow \Omega^{0,*}(\CC^d \setminus 0)$ that is dense in cohomology.

Consider the dg vector space
\[
A_d \tensor (V \oplus V^*[d-1])
\]
where $V$ is our $\fg$-representation. 
The dual pairing between $V$ and $V^*$ combined with the higher residue defines a symplectic structure $\omega_V$ on this dg vector space via
\[
\omega_V(\alpha \tensor v, \beta \tensor v^*) = \<v, v^*\>_V \oint_{S^{2d-1}} \alpha \wedge \beta\, \d^d z .
\]
This structure leads to the following dg version of the usual canonical quantization story.

\begin{dfn}
The {\em Heisenberg dg Lie algebra} $\sH_{d,V}$ of this symplectic dg vector space $A_d \tensor (V \oplus V^*[d-1])$ is the central extension
\[
\CC \to \sH_{d,V} \to A_d \tensor (V^*[d-1] \oplus V) 
\]
determined by the $2$-cocycle $\omega_V$. 
Explicitly, the nontrivial bracket is
\[
[c, b] =  \oint_{S^{2d-1}} \<c \wedge b\>_V\, \d^d z,
\]
where $\<c \wedge b\>_V$ refers to taking the wedge product in $A_d$ together with the pairing between $V$ and its dual. 
\end{dfn}

The universal enveloping algebra $U(\sH_{d,V})$ is a dg version of the Weyl algebra.
We think of this algebra as an algebraic replacement of differential operators on the mapping space $\Map(S^{2d-1},V)$.

The algebra $U(\sH_{d,V})$ is our algebraic replacement for the $E_1$ algebra associated to the pushforward factorization algebras $r_* \Obs^\q_{V}$. 
Indeed, just as in the case of the current algebra, there is a map of $E_1$ algebras
\[
U(\sH_{d,V}) \hookrightarrow r_* \Obs^\q_{V}|_{\hbar = (2 \pi i)^d}
\]
that is dense at the level of cohomology. 

In consequence there is a more concrete version of free field realization.

\begin{cor} \label{cor: free}
The map (\ref{free field}) determines a map 
\beqn\label{free field2}
\oint_{S^{2d-1}} J^\q : U\left(\Tilde{\fg}^{\bullet}_{d,\ch^\fg_{d+1}(V)}\right) \to U(\sH_{d,V}) 
\eeqn
of $E_1$-algebras.
\end{cor}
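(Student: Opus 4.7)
The plan is to obtain the corollary by pushing forward the free field realization~\eqref{free field} along the radial projection $r:\CC^d\setminus\{0\}\to\RR_{>0}$, and then identifying dense $E_1$ subfactorization algebras on both sides. Concretely, restricting $J^\q$ to the punctured affine space and applying $r_*$ yields a map of factorization algebras
\[
r_* J^\q \colon r_*\UU_{\ch_{d+1}^\fg(V)}(\sG_d) \;\longrightarrow\; r_*\,\Obs^\q_V\big|_{\hbar=(2\pi i)^d}
\]
on $\RR_{>0}$. Since dense maps of factorization algebras on $\RR$ preserve the locally constant subalgebras up to homotopy, the corollary will follow once we pin down the $E_1$ structures on both sides and check they are intertwined by $r_* J^\q$.

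On the source side, the proposition at the end of Section~\ref{sec:FHK} provides a dense map from the enveloping $E_1$ algebra of $\Tilde{\fg}^\bullet_{d,\theta}$, with $\theta=\ch^\fg_{d+1}(V)$, into $r_*\UU_\theta\sG_d$. On the target side, the discussion immediately preceding the corollary exhibits an analogous dense map from the Weyl-type algebra $U(\sH_{d,V})$ into $r_*\Obs^\q_V|_{\hbar=(2\pi i)^d}$; the $(2d-1)$-sphere residue of the pairing $\omega_V$ is precisely what reproduces the scale-$0$ BV bracket $\{-,-\}$ restricted to $\beta\gamma$-modes supported on an annulus. So both pushforwards contain, as dense locally constant subalgebras, the $E_1$-envelopes of the relevant Heisenberg/FHK Lie algebras.

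It then remains to verify that $r_* J^\q$ carries one dense subalgebra into the other. At the linear level, Proposition~\ref{prop:CNT} says $J^\q$ sends $\alpha$ to the quadratic observable $\int \langle\beta,\alpha\cdot\gamma\rangle_V$; restricted to sections $\alpha\in A_d\otimes\fg\subset\Omega^{0,*}(\CC^d\setminus 0)\otimes\fg$, this quadratic expression lies manifestly in the image of $U(\sH_{d,V})$ inside the pushed-forward observables, because the $\beta$ and $\gamma$ modes appearing are precisely those making up $A_d\otimes(V\oplus V^*[d-1])$ and the $\fg$-action $\rho\colon\fg\to\mathrm{End}(V)$ converts $\fg$-tensoring into Heisenberg generators. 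The central parameter $K$ must match: on the current side, the extension is controlled by the FHK cocycle $\mathrm{Res}_{z=0}\,\ch^\fg_{d+1}(V)(a_0,\partial a_1,\ldots,\partial a_d)$, while on the Weyl side the relevant $\hbar$-scaled commutator of two quadratic observables is computed by a one-loop Feynman diagram—exactly the wheel of valency $d+1$ evaluated in Lemma~\ref{lem: obs}. Proposition~\ref{prop: bg anomaly} identifies that wheel with $\tfrac{1}{(2\pi i)^d}\fj(\ch^\fg_{d+1}(V))$, and our specialization $\hbar=(2\pi i)^d$ converts this into the FHK cocycle on the nose. So the bracket-level identity holds, and the higher $L_\infty$ correction $\Tilde\pi_{\fg,d,\theta}\langle d+1\rangle$ built from $\eta$ tracks the same data on both sides.

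The main obstacle is this final cocycle matching: one must ensure that under the two dense inclusions—one coming from the Jouanolou–FHK model and one coming from the Heisenberg/Weyl algebra—the $E_1$-level commutators are expressed by the same integral kernel, so that the $L_\infty$ $(d+1)$-ary term on the current side is consumed by the associative product of quadratic observables on the Weyl side. This is essentially an integration-by-parts/residue argument parallel to the one used in the proof of the lemma comparing $\Theta_{push}$ with $\int \Theta_{FHK}$ via the cochain $\eta$; once this is in hand, the induced map on $E_1$-envelopes is the desired $\oint_{S^{2d-1}} J^\q$.
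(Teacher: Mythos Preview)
Your overall strategy matches the paper's: push $J^\q$ forward along the radial projection and identify the locally constant subfactorization algebras on each side with $U\bigl(\Tilde{\fg}^\bullet_{d,\theta}\bigr)$ and $U(\sH_{d,V})$ respectively. The substantive difference is in how you argue that $r_*J^\q$ actually restricts to these subalgebras. You attempt a direct verification (checking that the quadratic observable $\int\langle\beta,\alpha\cdot\gamma\rangle_V$ lands in the image of $U(\sH_{d,V})$, then matching cocycles by re-running the wheel computation). The paper instead invokes $U(d)$-equivariance: the $\beta\gamma$ system, the current algebra, the BV Laplacian, and the family $\{I^\sG[L]\}$ are all $U(d)$-invariant, so $r_*J^\q$ preserves $U(d)$-eigenspaces. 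On each side, the sum of $U(d)$-eigenspaces is exactly the locally constant subalgebra in question, and the restriction follows immediately.

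This matters because your direct argument has a gap. The assertion that ``dense maps of factorization algebras on $\RR$ preserve the locally constant subalgebras up to homotopy'' is not a general principle; density is topological and says nothing about landing in a particular subalgebra. Likewise, the claim that the quadratic observable built from $\alpha\in A_d\otimes\fg$ ``lies manifestly'' in the image of $U(\sH_{d,V})$ needs justification: the observable is a functional on all of $\Omega^{0,*}$, not only on $A_d$-modes, and you have to explain why it is represented (up to exact terms) by a product of Heisenberg generators. The $U(d)$-eigenspace argument handles this in one stroke. Finally, your extended discussion of cocycle matching is unnecessary here: $J^\q$ is already a map of factorization algebras by the quantum Noether theorem, so compatibility of brackets and central terms is automatic once you know the subalgebras are preserved.
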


This theorem says that there is a homotopy-coherent map between two explicit algebras,
but does not spell it out formulaically.
For $d=1$ both the algebras are concentrated in degree zero and so there is no room for homological subtleties:
the map is the well-known map from the affine Kac-Moody Lie algebra to the Weyl algebra of formal loops.
This free field realization is a linear, in the sense that it arises from a representation of the Lie algebra $\fg$. 
In the next subsection we give explicit formulas in the case~$d=2$.
It would be interesting to see a richer free field realization, such as the Wakimoto realization~\cite{Wakimoto, FF1, FF2}, in higher dimensions. 

The types of free field realizations we observe are used extensively in \cite{KacInf} to prove number theoretic identities, such as the Jacobi triple product identity, by studying characters. 
In the next section we discuss character theory of the the higher Kac-Moody algebra, but leave calculations to later work.

\begin{proof}
Let $r : \CC^d \setminus 0 \to \RR_{>0}$ be the radial projection. 
Consider the induced map
\[
r_* J^\q :  r_* \UU_{\ch_{d+1}^\fg(V)} (\sG_d) \to  r_* \left. \Obs^\q_V \right|_{\hbar = (2\pi i)^d}  .
\]
Sitting inside the domain and codomain are locally constant subfactorization algebras that encode the enveloping algebras of $\Hat{\fg}_{d,\ch^\fg_{d+1}(V)}$ and $\sH_{d,V}$, respectively.
We need to understand how the map $r_* J^\q$ intertwines these subalgebras.

The key is to use the action of $U(d)$ by rotating $\CC^d$.
The classical $\beta\gamma$ system on $\CC^d$ (and $\CC^d \setminus \{0\})$ is manifestly equivariant under this rotation action, as is the classical current algebra.
The map $J^\cl$ preserves this action, and hence $_* J^\cl$ is $U(d)$-equivariant.
This equivariance persists upon quantization, since the BV Laplacian is also compatible with this action. 
Thus, $U(d)$-eigenspaces are preserved.

On the Kac-Moody side, recall that there is subfactorization algebra of $\UU_{\ch_{d+1}^\fg(V)} (\sG_d)$ given by the sum of the $U(d)$-eigenspaces. 
It is precisely $\UU_{\ch_{d+1}^\fg(V)} \mathtt{G}_d$, where $\mathtt{G}_d = \Omega^*_{\RR_{>0},c} \otimes \fg^\bullet_d$, as in Section~\ref{sec: spheres}.

On the observables side, the subfactorization algebra of $r_* \left. \Obs^\q_V \right|_{\hbar = (2\pi i)^d}$ consisting of $U(d)$-eigenspaces is also locally constant on $\RR$,
and it is 
\[
\UU_\omega (\Omega^*_{\RR_{>0},c} \otimes (V \oplus V^*[d-1)).
\]
Analogously to the Kac-Moody case, it is straightforward to see that this factorization algebra is equivalent to $U(\sH_{d,V})$ as $E_1$ algebras.
(A detailed proof is available in Chapter 3 of~\cite{BWthesis}).

Finally, note that the family of functionals $\{I^{\sG}[L]\}$ defining the Noether map are all $U(d)$-invariant.
Thus, $J^\q$ preserves the subfactorization algebras of $U(d)$-eigenspaces, and the result follows.
\end{proof}

\subsubsection{Explicit formulas for two-dimensional free field realization}

We want to show that these results are not just abstract statements but lead to explicit, useful formulas.
One can analyze this higher free field realization with bare hands.
To keep things concrete, we will work out all the details only in the case $d=2$, but our methods work without difficulty (beyond careful bookkeeping) in arbitrary dimension.

The essential idea is familiar from quantum mechanics and field theory:
the commutation relations (on the algebra side) can be identified with contractions with the propagator (on the diagrammatic side),
i.e., the two faces of Wick's formula.
In the setting of factorization algebras,
this relationship arises from our embedding of the algebra $U(\cH_{d,V})$ into the factorization algebra $r_* \Obs^q_V$.
(A pedagogical discussion, with extensive examples, can be found in Chapter 4 of~\cite{CG1}.)
We now spell out this relationship in some detail.

First, it is useful for us present our algebras in terms of residues over the sphere $S^{2d-1}$. 
This presentation is completely analogous to the mode expansions in a vertex algebra via ordinary residues.

Recall that the higher residue determines a pairing $(-,-)_{\oint}: A_d \tensor A_d \to   \CC$ with
\[
(a, b)_{\oint} = \oint_{S^{2d-1}} a \wedge b \wedge \d^d z ,
\]
and hence determines a map $A_d \to A_d^\vee [-d + 1]$ that sends an element $a$ to the linear functional $b \mapsto (a, b)_{\oint}$.
This construction makes sense after tensoring with a vector space $V$,
so an element $\alpha \in A_d \otimes V^*$ determines a functional 
\[
\begin{array}{cccc}
\oint_{S^{2d-1}} a  : & A_d \tensor V & \to & \CC \\
& a & \mapsto & \oint_{S^{2d-1}} \<a \wedge c\>_V \wedge \d^d z .
\end{array}
\]
Setting $d=1$, the reader will recognize standard manipulations with the usual residue formula.

We can now describe explicitly a simple linear operator in $U(\sH_{d,V})$ as an element of $r_* \Obs^q_V$ via residues.   
Let $I \subset \RR_{>0}$ be an interval. 
An element in the linear component 
\[
\rho \tensor a \in \Omega^*_c(I) \tensor \left(A_d \tensor V^*[d-1]\right)
\] 
determines the linear observable 
\[
\cO_{\rho \otimes a}(\gamma) = \int_{r^{-1}(I)} \rho(r) \<a(z,\zbar), \gamma(z,\zbar)\> \,\d^d z 
\] 
where $\gamma \in \Omega^{0,*}(r^{-1}(I)) \otimes V$ is a field on the shell~$r^{-1}(I)$.
In this way, one can generate, in fact, a generating collection of linear operators for the whole enveloping algebra.

We have seen how to embed simple operators,
since $A_d$ is a subcomplex of $\Omega^{0,*}(\CC^d)$,
so now we build toward describing how the commutator in $U(\sH_{d,V})$ in terms of contractions with the propagator $P = P_{0 < \infty}$ for~$\Omega^{0,*}(\CC^d)$. 

Recall that the propagator is a distributional $(0,1)$-form on $\CC^d \times \CC^d$ with values in $V^* \tensor V$, 
and it satisfies $\d^2 z\, \dbar P = \delta_{{\rm diag}} (z) \id_V$, where $\delta_{{\rm diag}}$ is the $\delta$-distribution along the diagonal $\CC^d$ in $\CC^{2d}$. 
We view $\id_V$ as an element of $V^* \tensor V$. 
We note that $P$ is smooth away from the diagonal; 
its singularities lie only along the diagonal.

The analytic and the algebraic factors decouple, 
so to simplify notations, let $P(z) = p(z) \tensor \id_V$, 
where $p(z)$ is the differential form part of the propagator above. 
Since $\d^d z \, \dbar p(z)$ is the $\delta$-function on $\CC^d \setminus 0$,
one has the residue formula
\[
\oint_{S^{2d-1}} p(z) \,\d^d z = 1 .
\]
Thus, $p(z)$ is the $\delta$-function for the $(2d-1)$-sphere.
In the case~$d=2$,
the propagator is given by
\[
P (z,\zbar) = \frac{1}{(2\pi i)^d} \frac{\zbar_1 \d \zbar_2 - \zbar_2 \d \zbar_1}{|z|^4} \tensor \id_{V}.
\]
We can finally examine the commutator-contraction relation.

Consider the quadratic observables in $U (\sH_{d,V})$ given~by
\[
F_a(b,c) = \oint_{S^3} \d^2 z\, \<b, (a \cdot c)\>_V 
\]
and
\[
F_{a'}(b,c) = \quad \oint_{S^3} \d^2 z\, \<b, (a'\cdot c)\>_V,
\]
where $a,a' \in A_2 \tensor \fg$. 
The commutator bracket in $U (\sH_{d,V})$~is
\beqn\label{bracket}
\left[\oint_{z \in S^{3}} \<b , a \cdot c \>_V , \oint_{w \in S^{3}} \<b , a' \cdot c\>_V \right],
\eeqn
and it should be computable as the sum of three types of terms: 
\begin{itemize}
\item the terms that arise by contracting one $b$ from one observable with one $c$ field from the other observable, using the propagator --- this yields a new quadratic observable;
\item the term that contracts away all the fields --- this yields a constant term; and
\item a contact term.
\end{itemize}
The first kind of term has the form
\[
\oint_{z \in S^3} \d^2 z\, \oint_{w \in S^3} \d^2 w\, p(z-w) \<b(z,\zbar), [a(z,\zbar), a'(w,\wbar)] \, c(w,\wbar)\>_V  .
\]
Let $\eta(z,\zbar;w,\wbar) = \<b(z,\zbar), [a(z,\zbar), a'(w,\wbar)] \cdot c(w,\wbar)\>_V$.
Since $\d^2 w\, \dbar p(z-w) = \delta(z-w)$, we can write 
\beqn\label{contact}
\oint_{w \in S^3} \d^2 w\, p(z-w) \eta(z,\zbar;w,\wbar) = \eta(z,\zbar;z,\zbar)- \int_{N_{\epsilon}} \d^2 w\, p(z-w) \rho(w,\wbar) \dbar_w \eta(z,\zbar;w,\wbar) .
\eeqn
Here, $N_\epsilon$ is a neighborhood of the $3$-sphere inside of $\CC^2$ and $\rho(w,\wbar)$ is a compactly supported function that is identically $1$ near the inner boundary of the neighborhood and identically zero near the outer boundary.

The second term in the expression (\ref{contact}) spoils the compatibility of the bracket in $A_2 \tensor \fg$ and the commutator in the Heisenberg algebra.
It arises precisely because our differential form s may not be holomorphic.
By contrast, in the $d=1$ case, the model $A_1 = \CC[z,z^{-1}]$ consists of holomorphic Laurent polynomials, and so the Lie brackets agree on the nose. 
The failure of this bracket is corrected by an $A_\infty$ morphism, as we will see in the section below.

There is potentially another term in the expansion of (\ref{bracket}) involving a {\em double} contraction and hence has integrand proportional to $p(z-w)^2$. 
In fact, this term vanishes identically since $p(z-w)$ is a $(0,1)$-form. 
Thus, the double contraction does not appear when $d=2$ (but it could in other dimensions).
Again, this result differs from the $d=1$ case, where the double contraction produces the term equal proportional to the level of the affine Kac-Moody algebra. 

This example of the commutator of quadratic observables encodes the key information in this situation.
We now make a clean general statement.

The map of interest
\[
\oint_{S^3} J^\q : U\left(\Tilde{\fg}_{2, \ch_3(V)}\right) \to U(\sH_{2})
\] 
goes from an $A_\infty$ algebra to the enveloping algebra of a strict dg Lie algebra, which is a strict dg associative algebra. 
Indeed, by definition, $U\left(\Tilde{\fg}_{2, \ch_3(V)}\right)$ is the enveloping algebra of an $L_\infty$ algebra, so it has nonzero higher multiplications
\[
m_k : U\left(\Tilde{\fg}_{2, \ch_3(V)}\right)^{\tensor k} \to U\left(\Tilde{\fg}_{2, \ch_3(V)}\right)
\] 
for $k = 1, 2$, and $3$ because the $L_\infty$ algebra $\Tilde{\fg}_{2, \ch_3(V)}$ only has nonzero $\ell_1 = \dbar, \ell_2 = [-.-]$ and $\ell_3$. 
The multiplications satisfy $m_k = 0$ for $k > 2$. 

\begin{prop}\label{prop: ainfinty}
In dimension $d = 2$, an $A_\infty$ model for the map (\ref{free field2}) of $E_1$-algebras is given by the sequence of maps $(\sJ_k)_{k = 1,2,\ldots}$, where
\[
\sJ_k : U\left(\Tilde{\fg}_{2, \ch_3(V)}\right)^{\tensor k} \to U(\sH_V)[2-k],
\]
with $\sJ_k = 0$ for~$k > 2$,
\[
\sJ_1 (a) = \oint_{S^3} \<b, a \cdot c\>_V
\]
and 
\[
\sJ_2 (a, a') = \oint_{z \in S^3} \int_{w \in N_{\epsilon}}  \<b(z), [a(z),a'(z)] c(w) \>_V \, \rho(w) p(z-w) .
\] 
where $N_\epsilon$ is some neighborhood of $S^3$ and $\rho$ is a compactly supported function as in the  preceding computation. 
\end{prop}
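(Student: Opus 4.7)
The plan is to verify directly the $A_\infty$ morphism relations obeyed by the proposed sequence $(\sJ_k)$. The source $U(\Tilde{\fg}_{2,\ch_3(V)})$ is the enveloping $E_1$ algebra of an $L_\infty$ algebra with nonzero brackets only $\ell_1 = \dbar$, $\ell_2 = [-,-]$, and $\ell_3 = \ch_3(V)_{\mathrm{FHK}} \cdot K$, while the target $U(\sH_{2,V})$ is the universal enveloping algebra of a strict dg Lie algebra and hence a strict dg associative algebra. Therefore for each $n \geq 1$ there is a single nontrivial $A_\infty$ relation to check, and the task reduces to exhibiting it with the given $\sJ_1, \sJ_2$ and $\sJ_k = 0$ for $k \geq 3$. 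The $n=1$ relation — that $\sJ_1$ is a cochain map — is essentially immediate: $\sJ_1$ is the restriction to rotation-invariant sphere-mode subalgebras, identified in the proof of Corollary \ref{cor: free}, of the Noether map $J^{\q}$ from (\ref{free field}), which is a cochain map of factorization algebras.

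The $n=2$ relation — that the failure of $\sJ_1$ to intertwine $\ell_2$ with the commutator in $U(\sH_{2,V})$ is $\dbar$-exact via $\sJ_2$ — is the computation already carried out in the paragraphs preceding the statement. Using $\dbar_w p(z-w)\,\d^2 w = \delta(z-w)$ and integrating by parts against the compactly supported cutoff $\rho$ on a collar $N_\epsilon \supset S^3$, one extracts the contact term $\sJ_1(\ell_2(a,a'))$ together with a remainder that agrees with $[\dbar,\sJ_2](a,a') + \sJ_2(\dbar a, a') \pm \sJ_2(a, \dbar a')$ up to signs. Crucially, the putative ``double contraction'' term $p(z-w)^2$ vanishes because $p$ is a $(0,1)$-form on $\CC^2$, so no central contribution is generated at this order.

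The main obstacle is the $n=3$ relation, which has to absorb the higher bracket $\ell_3$ of the FHK extension. The identity to establish takes the schematic form
\[
\sJ_1\bigl(\ell_3(a,a',a'')\bigr) \;=\; \sum_{\sigma \in S_3} \pm\, m_2\bigl(\sJ_1(a_{\sigma(1)}), \sJ_2(a_{\sigma(2)}, a_{\sigma(3)})\bigr) \;-\; (\dbar\text{-exact terms}).
\]
Diagrammatically the right-hand side is exactly the trivalent wheel on three external $\alpha$-legs built from two single contractions by the propagator $p$; its evaluation is the Feynman weight computed in the proof of Proposition \ref{prop: bg anomaly}, equal to $\fj(\ch_3(V))$ by the residue identity $\oint_{S^3} p\,\d^2 z = 1$ and the normalization worked out in Appendix \ref{sec: feynman}. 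The plan is to organize the boundary terms produced by $\rho$ in each of the two nested contractions using Stokes' theorem, and then match the resulting central element in $U(\sH_{2,V})$ with $K \cdot \ch_3(V)_{\mathrm{FHK}}(a,a',a'')$. This is the combinatorially subtle step: it contains the same analytic content as the anomaly calculation but must now be read as an identity of operators rather than of scalars, and the matching of signs through the $A_\infty$ Koszul conventions is the delicate bookkeeping.

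Finally, the $A_\infty$ relations for $n \geq 4$ will be automatic. The source has $\ell_k = 0$ for $k \geq 4$, and since $p \wedge p = 0$ no Feynman graph with four or more external legs can appear in $r_* \Obs^{\q}_V$ at the relevant order. Hence truncating with $\sJ_k = 0$ for $k \geq 3$ is consistent with all higher relations, and $(\sJ_1,\sJ_2)$ is the claimed $A_\infty$ model of the map (\ref{free field2}).
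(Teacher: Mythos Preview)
Your approach is the same as the paper's: verify the $A_\infty$ morphism relations level by level, using the preceding commutator computation for $n=2$ and identifying the $n=3$ relation with the anomaly/wheel calculation. The $n=1$, $n=2$, and $n\geq 4$ discussions are fine and match the paper.

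There is, however, a genuine gap in your $n=3$ analysis. Your ``schematic form'' of the relation is missing a term: the $A_\infty$ morphism relation at $n=3$ (with $\sJ_3=0$) reads
\[
\sJ_1\bigl(\ell_3(a,a',a'')\bigr)
= \Bigl(\bigl[\sJ_1(a),\sJ_2(a',a'')\bigr] + \text{perms}\Bigr)
+ \Bigl(\sJ_2\bigl(a,[a',a'']\bigr) + \text{perms}\Bigr),
\]
and you have omitted the second parenthesis, the $\sJ_2\circ\ell_2$ terms. This matters because the commutators $[\sJ_1(a),\sJ_2(a',a'')]$ are \emph{not} purely the trivalent wheel: each such commutator has a single-contraction piece, which is a quadratic observable of the form
\[
\oint_{z\in S^3}\d^2 z\int_{w\in N_\epsilon}\d^2 w\,\bigl\langle b(z),\,[a(z),[a'(z),a''(z)]]\,c(w)\bigr\rangle_V\,\rho(w)\,p(z-w),
\]
in addition to a double-contraction (scalar) piece carrying three propagator factors $p(z-w)\,p(z-u)\,p(w-u)$. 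Only the latter is the wheel that reproduces $\oint_{S^3}\Tr_V(a\wedge\partial a'\wedge\partial a'')$. The single-contraction quadratic pieces must be canceled, and what cancels them is precisely the $\sJ_2(a,[a',a''])+\text{perms}$ contribution you dropped. This is exactly how the paper organizes the argument: the single contractions cancel against $\sJ_2\circ\ell_2$, and the remaining double contraction is then matched (via the same analysis as in Appendix~\ref{sec: feynman}) with the FHK cocycle. Once you restore those terms, your outline becomes the paper's proof.
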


\begin{proof}
In the computation of the commutator of quadratic observables,
we saw that the $1$-ary map $\sJ_1$ fails to be compatible with the commutator.
This failure is precisely corrected by the $2$-ary map $\sJ_2$: 
we will show
\beqn\label{infinity relation}
\dbar \sJ_2(a, a') \pm \sJ_2(\dbar a, a') \pm \sJ_2(a, \dbar a') = \sJ_1([a,a']) - [\sJ_1(a), \sJ_1(a')] .
\eeqn
Suppose for simplicity that $a,a'$ are holomorphic.
In that case, the desired identity simplifies to 
\[
\dbar \sJ_2(a, a') = \sJ_1([a,a']) - [\sJ_1(a), \sJ_1(a')].
\]
Above, we saw that the right-hand side~is
\[
\oint_{z \in S^3} \int_{w \in N_{\epsilon}} \d^2 w\,  \<b(z), [a(z),a'(z)] \dbar_w c(w) \>_V \, \rho(w) p(z-w),
\]
which is precisely the functional~$\dbar \sJ_2(a,a')$. 

If $a,a'$ are not holomorphic, there are extra contact terms in the expansion of $[\sJ_1(a), \sJ_1(a')]$ which exactly cancel the terms  $\sJ_2(\dbar a, a') \pm \sJ_2(a, \dbar a')$ on the left-hand side of (\ref{infinity relation}).

It remains to show how $\sJ_2$ is compatible with the $L_\infty$ extension,
i.e., that the $\ell_3$ term is the homotopy in the Jacobi relation up to homotopy.
For $a,a',a'' \in A_d \tensor \fg$: we want
\beqn\label{relationsb}
\oint_{S^3} \Tr_V \left(a\wedge \partial a' \wedge \partial a''\right) = \displaystyle \left(\sJ_2(a, [a',a'']) + \;{\rm permutations} \right) + \left(\left[\sJ_1(a), \sJ_2(a',a'') \right] + \; {\rm permutations} \right)
\eeqn
There are two types of terms present here. 

First, there are the single contractions of the propagator, which contributes terms of the form
\[
\oint_{z \in S^3} \d^2 z\, \int_{w \in N_{\epsilon}} \d^2 w\,  \<b(z), [a(z), [a(z),a'(z)] ] c(w) \>_V \, \rho(w) p(z-w).
\]
These single contraction terms cancel the first term in parentheses in Equation~(\ref{relationsb}). 

The second type of term involves a double contraction of the propagator. 
It contributes terms of the form
\[
\oint_{z \in S^3} \int_{w \in N_\epsilon} \oint_{u \in S^3} \Tr_V(a(z) \wedge a'(w) \wedge a''(w)) \rho(w) p(z-w) p(z-u) p(w-u) .
\]
We note the similarities of this term and the weight of the $3$-vertex wheel appearing the calculation of the anomaly to satisfying the equivariant quantum master equation. 
In fact, similar manipulations as in Appendix \ref{sec: feynman} show that it equals the left-hand side of (\ref{relationsb}), and hence the $A_\infty$-relation is satisfied.
\end{proof}

 \section{Some global aspects of the higher Kac-Moody factorization algebras}

A compelling aspect of factorization algebras is that they are local-to-global objects,
and hence the global sections---the factorization homology---can contain quite interesting information.
For instance, in the case of a one-dimensional locally constant factorization algebra, the global sections along a circle encodes the Hochschild homology of the corresponding associative algebra. 
In the complex one-dimensional situation, the factorization homology along Riemann surfaces is closely related to the conformal blocks of the associated vertex algebra. 

In the first part of this section, we direct our attention to a class of complex manifolds called {\em Hopf manifolds},
whose underlying smooth manifold has the form $S^1 \times S^{2d-1}$.
They provide a natural generalization of elliptic curves, and hence to generalizations of interesting phenomena from chiral CFT in complex dimension one.
In particular, the factorization homology on Hopf manifolds serves as a natural home for characters of representations of the sphere algebra~$\Tilde{\fg}_{d,\theta}^\bullet$.
We demonstrate that by identifying the factorization homology with the Hochschild homology of~$\Tilde{\fg}_{d,\theta}^\bullet$.

In the next section, we provide examples of such characters using field theory.
In physical terms, the factorization homology is related to the partition function of $\sG$-equivariant holomorphic field theories, such as the higher dimensional $\beta\gamma$ and $bc$ systems.
We compute the partition functions on these Hopf manifolds, which are close cousins to superconformal indices, 
and give a concise description in terms of Hochschild homology, in Proposition~\ref{prop: hopf}. 

Finally, we return to the LMNS variants of the twisted higher Kac-Moody factorization algebra that exist on other closed $d$-folds and assert a relationship to the ordinary Kac-Moody algebra on Riemann surfaces. 

\subsection{Hopf manifolds and Hochschild homology}

Given any (possibly dg) algebra $A$, a trace is a linear map
\[
\tr : A \to \CC
\]
which vanishes on pure commutators $\tr ([a,b]) = 0$, $a,b \in A$. 
Thus, every trace determines (and is determined by) a linear map $\tr : A / [A,A] \to \CC$. 

There is a mathematical object associated to an algebra, called the {\em Hocschild homology} $HH_*(A)$ of $A$, which describes all traces simultaneously. 
For an ordinary algebra, in degree zero, the Hochschild homology is precisely $HH_0(A) = A / [A,A]$. 
Thus, the space of all traces for $A$ can be identified with its dual $HH_0(A)^\vee$. 
A natural source of traces, of course, come from modules. 
Given a finite dimensional module, $V$, the ordinary trace is defined and hence determines an element $\tr_V \in HH_0(A)^\vee$. 

A (possibly $A_\infty$) associative algebra is equivalent to the data of a locally constant factorization algebra on $\RR$. 
The global sections, or factoriation homology, of the factorization algebra along the circle $S^1$ is precisely the Hochschild homology. 

In this section, we view the higher Kac-Moody factorization algebra as a factorization algebra on $\CC^d \setminus 0 \cong S^{2d-1} \times \RR$. 
By $U(d)$-equivariance, this determines a factorization algebra on any Hopf manifold (which we define momentarily), which is topologically a product of odd spheres $S^{2d-1} \times S^1$. 
Pushing forward along the radial projection map allows us to view the global sections of the factorization homology as the Hochshild homology of the $A_\infty$ algebra $U(\Tilde{\fg}^{\bullet}_{d,\theta})$.
Combining techniques of factorization algebras and ordinary algebra, we thus arrive at complete description of traces for the higher modes algebra.

\subsubsection{Overview of Hopf manifolds}

Recall that for every complex number $q$ such that $0< |q| < 1$, 
there is a natural action of $\ZZ$ on the punctured plane $\CC^\times$ where $n \cdot z = q^n z$.
We will denote this multiplicative action with the succinct notation $q^\ZZ$.
The quotient space $\CC^\times/q^\ZZ$ is then an elliptic curve,
and the punctured unit disk $\{0< |q|<1\}$ parametrizes elliptic curves in a convenient way.

This story admits an obvious higher dimensional generalization, first explored by \\Hopf~\cite{Hopf}.

\begin{dfn}
Let $d$ be a positive integer.
Let ${\bf q} = (q_1,\ldots, q_d)$ be a $d$-tuple of complex numbers where $0 < |q_i| < 1$ for $i = 1, \ldots, d$. 
The $d$-dimensional {\em Hopf manifold} $X_{\bf q}$ is the quotient of punctured affine space $\CC^d \setminus \{0\}$ by the multiplicative action of~$\ZZ$:
\[
X_{\bf q} = \left. \left(\CC^d \setminus \{0\}\right) \right/ {\bf q}^\ZZ.  
\]
In other words, we quotient by the equivalence relation
\[
(z_1,\ldots,z_d) \sim (q_1^{n} z_1, \ldots,q_d^{n} z_d)
\]
where $n$ runs over $\ZZ$.
\end{dfn}

We denote the obvious quotient map by $p_{\bf q} : \CC^d \setminus \{0\} \to X_{\bf q}$. 
It is a straightforward exercise to check that $X_{\bf q}$ is diffeomorphic to $S^{2d-1} \times S^1$ as smooth manifolds;
the clearest situation is where $q_1 = q_2 = \cdots = q_d$,
which is the most direct generalization of elliptic curves.

\begin{rmk}
By definition, a Hopf manifold of dimension $d$ is a complex manifold diffeomorphic to $S^{2d-1} \times S^1$. 
Our arguments below extend with only a little extra effort to an arbitrary Hopf manifold, 
but this class is interesting and easy to work with.
Note that for $d>1$, $H^{2}_{dR} (X_{\bf q}) = 0$ and so Hopf manifolds are {\em not} K\"{a}hler in complex dimensions bigger than one. 
\end{rmk}

A key fact for us is that the Dolbeault complex of a Hopf manifold admits a small model.

\begin{lem}\label{lem:tanre}
For any Hopf manifold $X_{\bf q}$, there is a quasi-isomorphic inclusion of bigraded complexes
\[
(\CC[a, b, \alpha, \beta], \delta) \hookrightarrow (\Omega^{*,*}(X_{\bf q}),\dbar)
\]
where the generator $a$ has bidegree $(1,1)$, $b$ has bidegree $(d,d-1)$, $\alpha$ has bidegree $(0,1)$, and $\beta$ has bidegree $(1,0)$, and where $\delta(a) = 0$, $\delta(b) = a^{d}$, $\delta(\alpha) = 0$, and $\delta(\beta) = a$.
Here $\dbar$ and $\delta$ both have bidegree~$(0,1)$.
\end{lem}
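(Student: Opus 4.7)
The plan is to build the inclusion map by explicitly choosing images in the Dolbeault complex, verify the chain-map relations, and then conclude the quasi-isomorphism by computing cohomology on both sides and matching generators.

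For the map I would use the fact that while $\log|z|^2$ on the cover $\CC^d\setminus\{0\}$ shifts by a real constant under each element of ${\bf q}^{\ZZ}$, its holomorphic and antiholomorphic partials are genuinely invariant and descend to $X_{\bf q}$. Set
\[
a\longmapsto\dbar\partial\log|z|^2,\qquad \beta\longmapsto\partial\log|z|^2,\qquad \alpha\longmapsto c_\alpha\,\dbar\log|z|^2,
\]
for a normalization $c_\alpha$ chosen so that $\alpha$ integrates to $1$ along an elliptic fiber of the projection $X_{\bf q}\to\mathbb{P}^{d-1}$ (in the diagonal case), and send
\[
b\longmapsto \tfrac{1}{(2\pi i)^d}\, dz_1\wedge\cdots\wedge dz_d\wedge \omega_{BM}(z,\bar z)
\]
using the Bochner-Martinelli kernel $\omega_{BM}$; a direct calculation shows this is ${\bf q}^{\ZZ}$-invariant in the diagonal case, and for general ${\bf q}$ a short averaging adjustment produces an invariant representative. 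The chain-map relations then reduce to quick computations: $\delta\beta = a$ is exactly $\dbar(\partial\log|z|^2)=\dbar\partial\log|z|^2$, closure of $a$ and $\alpha$ is automatic, and $\delta b = a^d$ holds because both sides map to zero---the form $\dbar\partial\log|z|^2$ is pulled back from the Fubini-Study form on $\mathbb{P}^{d-1}$ via the tautological projection $\CC^d\setminus\{0\}\to\mathbb{P}^{d-1}$, so $(\dbar\partial\log|z|^2)^d=0$ for dimension reasons, while $dz_1\wedge\cdots\wedge dz_d\wedge\omega_{BM}$ is $\dbar$-closed away from the origin.

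Next I would compute the cohomology of the small complex. As a bigraded-commutative algebra it is $\CC[a]\otimes\Lambda(b,\alpha,\beta)$, and because $(\CC[a]\otimes\Lambda(\beta),\,\delta\beta=a)$ is the classical Koszul resolution of $\CC$, a filtration by powers of $b$ yields an explicit calculation: the cohomology is four-dimensional, spanned by $1,\,\alpha,\,[b-a^{d-1}\beta],\,\alpha[b-a^{d-1}\beta]$ in bidegrees $(0,0),\,(0,1),\,(d,d-1),\,(d,d)$ respectively. (Note that $b-a^{d-1}\beta$ is a cocycle since $\delta(b-a^{d-1}\beta)=a^d-a^{d-1}\cdot a=0$.) On the other side, the Dolbeault cohomology of a Hopf manifold is a classical computation with $h^{0,0}=h^{0,1}=h^{d,d-1}=h^{d,d}=1$ and all other Hodge numbers vanishing, so the cohomological dimensions match bidegree by bidegree.

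The main obstacle is verifying that our four explicit forms are nonzero in cohomology. Non-vanishing of $[\dbar\log|z|^2]$ in $H^{0,1}$ is a short direct argument: any $C^\infty$ antiderivative on $X_{\bf q}$ would pull back to one on $\CC^d\setminus\{0\}$ differing from $\log|z|^2$ by a holomorphic function, but no holomorphic function can correct the failure of $\log|z|^2$ to descend to the quotient. The non-triviality of the Bochner-Martinelli class is the subtler point; I would deduce it from the Leray spectral sequence of the elliptic fibration $X_{\bf q}\to\mathbb{P}^{d-1}$ in the diagonal case, whose $E_2$-page identifies $H^{*,*}_{\dbar}(X_{\bf q})$ with a small model of precisely the stated form and places our explicit forms as representatives of the generators. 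For general ${\bf q}$, an alternative is to decompose Dolbeault forms on $X_{\bf q}$ into ${\bf q}^{\ZZ}$-eigenspaces in the Dolbeault cohomology of $\CC^d\setminus\{0\}$; since $|q_i|<1$, only the weight-zero classes survive, yielding exactly four surviving classes that coincide with the images of our generators.
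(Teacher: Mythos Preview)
Your approach is genuinely different from the paper's. The paper does not construct explicit Dolbeault representatives at all; it simply cites Tanr\'e (example~4.63) and sketches the structural argument: the Hopf manifold fibers as $T^2 \to X_{\bf q} \to \CC\PP^{d-1}$, base and fiber are K\"ahler and hence Dolbeault-formal by \cite{DGMS}, and the model for the total space is the twisted tensor product of their small models, with the twisting recorded by $\delta(\beta)=a$ and $\delta(b)=a^d$. Your route---explicit forms, verification of the chain-map identities, direct cohomology comparison---is more concrete and actually supplies the representatives the paper leaves implicit (the paper only writes down the image of $\alpha$, in a later example). Your computation of $H^*(\CC[a,b,\alpha,\beta],\delta)$ is correct.

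There is, however, a real gap for non-diagonal ${\bf q}$. The forms $\partial\log|z|^2$, $\dbar\log|z|^2$, and $\partial\dbar\log|z|^2$ descend to $X_{\bf q}$ only when $|z|^2$ transforms by a scalar under the $\ZZ$-action; but $(z_1,\dots,z_d)\mapsto(q_1 z_1,\dots,q_d z_d)$ sends $\sum|z_i|^2$ to $\sum|q_i|^2|z_i|^2$, which is a scalar multiple of $|z|^2$ only when all $|q_i|$ coincide. So for general ${\bf q}$ you have no candidates for $a,\alpha,\beta$, not just for $b$; the ``short averaging adjustment'' you mention for $b$ does not address this, and your fallback eigenspace argument is too loose (the Dolbeault cohomology of the quotient is not simply the invariants in that of the cover---one needs the Cartan--Leray spectral sequence for the $\ZZ$-cover, which contributes an $H^1(\ZZ,-)$ term). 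In fairness, the paper's own fibration sketch also presumes the diagonal case, since the holomorphic projection to $\CC\PP^{d-1}$ only exists there; the general case really does rest on the cited reference. For the non-triviality of the $(d,d-1)$ class in the diagonal case, rather than invoking the Leray spectral sequence (which is essentially the paper's argument), you could argue more directly via Serre duality: pair your candidate for $b$ against your $\alpha$ by integrating over $X_{\bf q}$ and check the result is nonzero.
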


We borrow this claim from chapter 4 of~\cite{Tanre}, particularly example 4.63,
and simply sketch the main points.
Notably, a Hopf manifold is the total space of a fibration
\[
S^1 \times S^1 \to X_{\bf q} \to \CC\PP^{d-1}.
\]
Topologically, this fibration is the product of a circle with the Hopf fibration $S^1 \to S^{2d+1} \to \CC\PP^{d-1}$. 
Each torus fiber can be equipped with the complex structure of some elliptic curve, so that the fibration is holomorphic with respect to the natural complex structures on $X_{\bf q}$ and~$\CC\PP^{d-1}$.
Both projective space and an elliptic curve are K\"ahler and hence admit small models, 
following~\cite{DGMS}.
As the Hopf manifold is a fibration, one obtains a model for its Dolbeault complex by twisting the differential on the tensor product of those models.
The lemma above specifies the relevant twisting.

It is also possible to obtain a small model for the {\em de Rham} complex by a further twisting. 
(See Theorem 4.70 of~\cite{Tanre}.)
For the Hopf manifold, however, life is particularly simple.

\begin{lem}[Example 4.72, \cite{Tanre}]
For $X_{\bf q}$ a Hopf manifold,
the Dolbeault cohomology coincides with the de Rham cohomology $H^*(S^1) \otimes H^*(S^{2d-1})$.
Moreover, the complex $(\CC[a, b, \alpha, \beta], \delta)$ is also a de Rham model and hence is quasi-isomorphic to the de Rham complex.
\end{lem}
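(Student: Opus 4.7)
The plan is to prove both assertions by explicit computation with the small model from Lemma~\ref{lem:tanre}. The de Rham side is immediate from K\"unneth applied to $X_{\bf q} \simeq S^1 \times S^{2d-1}$: the de Rham cohomology is $\Lambda[u,v]$ with $|u|=1$, $|v|=2d-1$, whose Poincar\'e polynomial is $(1+t)(1+t^{2d-1}) = 1 + t + t^{2d-1} + t^{2d}$. The content of the lemma is that this also equals the Dolbeault cohomology of $X_{\bf q}$, even though $X_{\bf q}$ is not K\"ahler for $d > 1$, and that the same small dg algebra serves as a de Rham model.

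The underlying bigraded algebra of the small model is $\CC[a] \tensor \Lambda[b,\alpha,\beta]$, since $a$ has total degree $2$ while $b,\alpha,\beta$ are of odd total degree. With $\delta\alpha = 0$, $\delta\beta = a$, and $\delta b = a^d$, I would observe that every positive power of $a$ is exact via $a^k = \delta(a^{k-1}\beta)$, and that $b$ and $a^{d-1}\beta$ both have $\delta$-image $a^d$, so the combination $b - a^{d-1}\beta$ is closed. A direct bookkeeping over bidegrees shows that the only surviving cohomology classes are
\[
[1], \quad [\alpha], \quad [\,b - a^{d-1}\beta\,], \quad [\alpha(b - a^{d-1}\beta)]
\]
in bidegrees $(0,0)$, $(0,1)$, $(d,d-1)$, $(d,d)$ respectively, with all other classes cancelled by the two acyclic pairs $\{\beta,a\}$ and $\{b,a^{d-1}\beta\}$. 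This gives total Poincar\'e polynomial $1 + t + t^{2d-1} + t^{2d}$, matching the de Rham computation and establishing the first assertion.

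For the second assertion, I would construct a chain map into $(\Omega^*(X_{\bf q}), d_{\rm dR})$ by lifting the generators $a,b,\alpha,\beta$ to closed differential forms on $X_{\bf q}$: a $(1,1)$-form representing $a$, a $(0,1)$-form along the $S^1$-direction for $\alpha$, a $(d,d-1)$-form for $b$, and a $(1,0)$-form for $\beta$, chosen so that their $\partial$-parts are compatible with the $\delta$ of the model. Following Tanr\'e's recipe (Theorem 4.70 of \cite{Tanre}), this produces a quasi-isomorphism from $(\CC[a,b,\alpha,\beta], \delta)$ to the de Rham complex, under which the four classes above map to the K\"unneth generators of $H^*(S^1) \tensor H^*(S^{2d-1})$.

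The principal obstacle is precisely the failure of the K\"ahler hypothesis for $d > 1$: without Hodge theory or a $\partial\dbar$-lemma available, the equality of Dolbeault and de Rham cohomology is not automatic and has to be extracted from the specific algebraic form of the model. The crucial computational fact is that both $b$ and $a^{d-1}\beta$ cobound $a^d$, which promotes their difference into the closed class realizing the $H^{2d-1}(S^{2d-1})$ generator; once this cancellation pattern is in hand, both the coincidence of cohomologies and the identification as a de Rham model follow by a short check against the explicit K\"unneth generators.
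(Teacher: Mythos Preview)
The paper does not prove this lemma; it is stated with attribution to Tanr\'e (Example~4.72), and the surrounding text only gestures at the mechanism by referring to Theorem~4.70 of~\cite{Tanre} on passing from a Dolbeault model to a de Rham model by a further twist.

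Your cohomology computation of $(\CC[a,b,\alpha,\beta],\delta)$ is correct and does establish the first assertion once Lemma~\ref{lem:tanre} is granted: factoring out $\Lambda[\alpha]$ and then observing that $\{a^k\beta \mapsto a^{k+1}\}$ and $\{a^k b\beta \mapsto a^{k+1}b - a^{k+d}\beta\}$ kill everything except $[1]$ and $[b-a^{d-1}\beta]$ is exactly right.

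Your treatment of the second assertion has a slip. You propose to send the generators $a,b,\alpha,\beta$ to \emph{closed} differential forms on $X_{\bf q}$, but $\beta$ and $b$ are not $\delta$-closed in the model, so their images cannot be $d$-closed; a dga map must send $\beta$ to a form whose de Rham differential represents $a$, and likewise for $b$. Beyond this, your construction of the quasi-isomorphism is really just an appeal to Tanr\'e's Theorem~4.70, which is the same appeal the paper makes.

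A cleaner self-contained argument for both assertions at once: your four cohomology classes live in bidegrees $(0,0)$, $(0,1)$, $(d,d-1)$, $(d,d)$, so every differential $d_r$ in the Fr\"olicher spectral sequence (which has bidegree $(r,1-r)$) is zero for degree reasons. Hence the spectral sequence degenerates at $E_1$, giving $H^{*,*}_{\dbar}(X_{\bf q}) \cong H^*_{\rm dR}(X_{\bf q})$, and the same degree count forces the twisting of Theorem~4.70 to be trivial on the small model.
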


Our primary interest, however, is in the $(0,*)$-forms.
The complex $\Omega^{0, *}(X_{\bf q})$ sits inside $\Omega^{*, *}(X_{\bf q})$ as a summand,
and similarly the complex $\CC[\alpha]$ sits inside $(\CC[a, b, \alpha, \beta], \delta)$ as a summand,
yielding the following result.

\begin{lem}
There is a quasi-isomorphic inclusion 
\[
(\CC[\alpha], 0) \hookrightarrow (\Omega^{0,*}(X_{\bf q}), \dbar)
\]
induced by the inclusion in lemma~\ref{lem:tanre}.
\end{lem}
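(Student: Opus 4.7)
The plan is to extract the result from Lemma \ref{lem:tanre} by restricting its inclusion to the $p=0$ column of the bigraded complex. The starting observation is that both sides of the inclusion in Lemma \ref{lem:tanre} are bigraded, and both differentials $\delta$ and $\dbar$ have bidegree $(0,1)$, i.e., they preserve the first ($p$) grading while raising the second ($q$) by one. Consequently each side canonically splits as a direct sum indexed by $p$ of the corresponding columns $\bigoplus_q (\cdot)^{p,q}$, each a cochain complex in its own right. Since the inclusion of Lemma \ref{lem:tanre} is asserted to be a map of bigraded complexes, it preserves each bidegree piece, and so restricts on each $p$-column to a chain map.

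The next step is to identify the $p=0$ summand of the small model $\CC[a,b,\alpha,\beta]$. Of the four generators, only $\alpha$ has $p$-degree zero; the generators $a, b, \beta$ have $p$-degrees $1, d, 1$ respectively. A monomial lies in the $p=0$ part precisely when it involves none of $a, b, \beta$. Because $\alpha$ has odd total degree $1$ (and thus squares to zero in the graded-commutative algebra), the $p=0$ subspace is just $\CC \oplus \CC\cdot\alpha$, which is exactly $\CC[\alpha]$ as stated. The differential $\delta$ annihilates both $1$ and $\alpha$ (we are told $\delta(\alpha) = 0$), so $\delta$ restricts to the zero differential on this column, matching the target $(\CC[\alpha], 0)$.

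Finally, since the original inclusion of Lemma \ref{lem:tanre} is a quasi-isomorphism of bigraded complexes and the total cohomology of each side is the direct sum over $p$ of the cohomologies of the $p$-columns, the induced map on each $p$-column is itself a quasi-isomorphism. Applying this to the column $p=0$ gives exactly the statement: the inclusion
\[
(\CC[\alpha],0)\ \hookrightarrow\ (\Omega^{0,*}(X_{\bf q}),\dbar)
\]
induced from Lemma \ref{lem:tanre} is a quasi-isomorphism.

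The substantive content has really already been delegated to Lemma \ref{lem:tanre}; the only thing to be careful about here is that the quasi-isomorphism in that lemma is genuinely one of bigraded complexes (not merely of total complexes), so that passing to the $p=0$ column is legal. If one were worried about that, one could cross-check the resulting cohomology against the expected answer $H^{0,0}(X_{\bf q}) = \CC$ and $H^{0,1}(X_{\bf q}) = \CC$ (with higher $(0,q)$ vanishing), which matches $H^*(\CC[\alpha],0)$. I do not expect any real obstacle beyond this bookkeeping verification.
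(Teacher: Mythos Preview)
Your proposal is correct and follows essentially the same approach as the paper: the paper simply observes that $\Omega^{0,*}(X_{\bf q})$ sits inside $\Omega^{*,*}(X_{\bf q})$ as a summand, and likewise $\CC[\alpha]$ sits inside $(\CC[a,b,\alpha,\beta],\delta)$ as a summand, which is exactly your $p=0$ column argument spelled out in more detail.
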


Note that the source is precisely the small model for the Dolbeault complex $\Omega^{0, *}(E)$ of an elliptic curve,
and---more importantly---for the de Rham complex of a circle.

\begin{eg}
Consider 
the $(0,1)$-form 
\[
\dbar (\log |z|^2) = \sum_i \frac{z_i\d \zbar_i}{|z|^2} 
\]
on $\CC^d \setminus \{0\}$.
It is $\ZZ$-invariant, and hence descends along the map $p_{\bf q} : \CC^d \setminus 0 \to X$
when ${\bf q} = (q,\ldots,q)$ with $|q| < 1$.
The descended form thus provides an explicit Dolbeault representative for~$\alpha$.
\end{eg}

Finally, the complex  $\Omega^{d, *}(X_{\bf q})$ will appear later. 
We note that the corresponding subcomplex inside $(\CC[a, b, \alpha, \beta], \delta)$ is the summand
\[
\CC b \oplus \CC a^{d-1} \beta \to \CC b \alpha \oplus \CC a^{d} \oplus \CC a^{d-1} \alpha \beta \to \CC a^{d} \alpha,
\]
concentrated between bidegrees $(d,d-1)$ and $(d,d+1)$.
As $\delta(a^{d-1} \alpha \beta) = a^d \alpha$, and $\delta(a^{d-1} \beta) = a^d$, 
we see that the cohomology is spanned by $b$ in bidegree $(d,d-1)$ and $b \alpha$ in bidegree~$(d,d)$.

\subsubsection{Current algebras on Hopf manifolds}

For any choice of ${\bf q} = (q_1,\ldots,q_d)$, we have the local Lie algebra $\sG_{X_{\bf q}} = \Omega^{0,*}(X_{\bf q}, \fg)$, and the corresponding Kac-Moody factorization algebra obtained by the enveloping factorization algebra $\UU\sG_{X_{\bf q}}$.
A choice of invariant polynomial $\theta \in \Sym^{d+1}(\fg^*)^\fg$ defines a $\CC[K]$-linear twisted factorization enveloping algebra $\UU_{\theta}(\sG_{X_{\bf q}})$.  
To reduce clutter, we will drop the subscript $X_{\bf q}$ from $\sG_{X_{\bf q}})$, as our construction is uniform in~${\bf q}$.

Our first result is a computation of the global sections of this factorization algebra.  

\begin{prop}
\label{prop: hopf}
Let $X_{\bf q}$ be a Hopf manifold and let $\theta \in \Sym^{d+1}(\fg^*)^\fg$ be an $\fg$-invariant polynomial of degree $(d+1)$. 
There is a quasi-isomorphism of $\CC[K]$-modules
\[
\UU_\theta \sG (X_{\bf q}) \xto{\simeq} \Hoch_*(U \fg)[K] .
\]
\end{prop}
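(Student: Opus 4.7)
The plan is to reduce the computation to a finite-dimensional dg Lie algebra via a small model of the Dolbeault complex, to verify that the twisting cocycle vanishes on this model, and then to apply the standard factorization-homology description of Hochschild homology.

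By Lemma~\ref{lem:tanre}, the inclusion of bigraded dg commutative algebras $\CC[\alpha] \hookrightarrow \Omega^{0,*}(X_{\bf q})$ is a quasi-isomorphism, with $\alpha$ in bidegree $(0,1)$ (so $\alpha^2 = 0$) and zero differential on the source. Tensoring with $\fg$ produces a quasi-isomorphism $\iota : \fg[\alpha] \hookrightarrow \sG(X_{\bf q})$ of dg Lie algebras, where $\fg[\alpha] := \fg \oplus \fg\cdot\alpha$ carries the pointwise bracket and zero differential. The crucial step is to show that $\fj(\theta)$ vanishes identically on the image of $\iota$. One chooses the Dolbeault representative of $\alpha$ so that $\partial \alpha$ is $\dbar$-exact on $X_{\bf q}$: this is automatic for $d \geq 2$, since $H^{1,1}(X_{\bf q}) = 0$ (as one reads off from the small model), while for $d = 1$ one takes $\alpha = d\bar z$ so that $\partial\alpha = 0$ on the nose. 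For any inputs $a_i = x_i + x'_i \alpha \in \fg[\alpha]$, the integrand $\theta(a_0, \partial a_1, \ldots, \partial a_d)$ is either identically zero (when some $x'_i$ vanishes) or contains a $\dbar$-exact factor; in either case its $(d,d)$-component is $\dbar$-exact, so its integral over the closed manifold $X_{\bf q}$ vanishes by Stokes' theorem.

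Given this strict vanishing, the $\CC[K]$-linear extension of the Chevalley-Eilenberg chain functor applied to $\iota$ is a quasi-isomorphism
\[
\clieu_*(\fg[\alpha])[K] \xrightarrow{\;\simeq\;} \UU_\theta\sG(X_{\bf q}),
\]
because the twisting coderivation $K \cdot \fj(\theta)$ pulls back to zero on the source, and a filtration by powers of $K$ reduces the statement to the untwisted quasi-isomorphism at $K = 0$. Finally, I identify $\clieu_*(\fg[\alpha])$ with $\Hoch_*(U\fg)$ using Knudsen's theorem (Theorem~\ref{thm:knudsen}): the locally constant factorization algebra $\UU(\Omega^*_c \otimes \fg)$ on $\RR$ represents the $E_1$-algebra $U\fg$, and factorization homology of an $E_1$-algebra over $S^1$ computes Hochschild homology. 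Pushing forward along $\RR \to S^1$ and using the equivalence $\Omega^*(S^1) \simeq \CC[\alpha]$ of dg commutative algebras, one obtains
\[
\clieu_*(\fg[\alpha]) \simeq \int_{S^1} \UU(\Omega^*_c \otimes \fg) \simeq \Hoch_*(U\fg),
\]
completing the argument. The main obstacle is the strict vanishing of $\iota^*\fj(\theta)$; once that is settled, the remaining steps are essentially formal manipulations with Chevalley-Eilenberg chains and factorization homology.
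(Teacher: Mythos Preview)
Your argument is correct in substance and shares the paper's key insight---that the twist $\fj(\theta)$ vanishes on the small model $\fg[\alpha]\subset\sG(X_{\bf q})$---but the technical execution differs in two places, and there is one soft spot worth flagging.

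\textbf{Comparison with the paper.} The paper does not use the inclusion $\iota$ and a spectral sequence. Instead it fixes a Hermitian metric, builds a genuine deformation retraction $(\iota,\pi,\eta)$ between $\sG(X_{\bf q})$ and the harmonic forms $\fg[\alpha]$ via Hodge theory, and then applies the homological perturbation lemma to the deformation $K\,d_\theta$ of the Chevalley--Eilenberg differential. Because $d_\theta\circ\iota=0$ (a fact the paper simply asserts, whereas you actually prove it via $H^{1,1}(X_{\bf q})=0$ and Stokes), the perturbed differential on $\clieu_*(\fg[\alpha])[K]$ is undeformed and the perturbed projection $\tilde\pi$ furnishes the quasi-isomorphism in the stated direction $\UU_\theta\sG(X_{\bf q})\to\clieu_*(\fg[\alpha])[K]$. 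For the identification $\clieu_*(\fg[\alpha])\simeq\Hoch_*(U\fg)$ the paper gives a direct algebraic argument: $\clieu_*(\fg\oplus\fg[-1])\cong\clieu_*(\fg,\Sym(\fg^{ad}))\cong\clieu_*(\fg,U\fg^{ad})$ by PBW, and then invokes the standard comparison $\clieu_*(\fg,M)\simeq\Hoch_*(U\fg,M)$. Your route via Knudsen and factorization homology over $S^1$ is perfectly valid but uses heavier machinery; the paper's route is more self-contained.

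\textbf{The soft spot.} Your sentence ``a filtration by powers of $K$ reduces the statement to the untwisted quasi-isomorphism at $K=0$'' is not quite safe as written: the decreasing $K$-adic filtration on $\Sym(\sG[1])[K]$ is Hausdorff but not complete (the completion is $\Sym(\sG[1])[[K]]$), so the associated spectral sequence need not converge. The fix is easy: filter instead by symmetric word-length in $\Sym(\sG(X_{\bf q})[1])$. This filtration is bounded below and exhaustive, $\dbar$ preserves it while $d_{CE}$ and $K\,d_\theta$ strictly lower it, so the $E_1$-page is $(\Sym^n(\iota))[K]$, a quasi-isomorphism because $\iota:\fg[\alpha]\to\sG(X_{\bf q})$ is one. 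Alternatively you could borrow the paper's homological-perturbation step, which sidesteps convergence entirely. Finally, note that your map runs opposite to the arrow in the proposition; this is harmless in the derived sense but worth acknowledging.
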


\begin{rmk}
A character on $\fg$ is a conjugation-invariant function of $\fg$, and so an element of $\Sym(\fg^*)^\fg$.
It thus determines a linear functional on the Hochschild homology group 
\[
{\rm HH}_0(U\fg) = U\fg/[U\fg,U\fg] \cong \Sym(\fg)_\fg.
\]
This map then says that any local current (i.e., holomorphic symmetry) determines a $K$-dependent family of elements of ${\rm HH}_0(U\fg)$, i.e., a kind of cocharacter.
A careful examination of the proof will show that in the $\theta$-twisted case, 
the map can be computed in the style of Wick contractions, except that $\theta$ determines contractions of $d+1$-tuples of inputs.
(When $d=1$, one recovers the Wick-like flavor of the usual Kac-Moody algebras.)
Another interesting feature of this map is its ${\bf q}$-dependence,
so that varying over the moduli of Hopf manifolds, we obtain a ${\bf q}$-cocharacter formula.
In the next subsection, we pair these maps with the characters determined by holomorphic free field theories.
\end{rmk}

\begin{proof}
Fix ${\bf q}$ and write simply $X$ for $X_{\bf q}$ and $\sG$ for $\sG_X$. 
We first consider the untwisted case, with $\theta = 0$, where the statement reduces to $\UU\sG(X) \simeq \Hoch_*(U \fg)$.
The factorization homology on the left hand side is computed by
\[
\UU \sG (X) = \clieu_*(\Omega^{0,*}(X) \tensor \fg) .
\]
Our goal is to compute the Lie algebra homology of the dg Lie algebra $\Omega^{0,*}(X) \tensor \fg$.  

We know that $H^{0,*}(X) \cong \CC[\alpha]$ where $\alpha$ has degree 1, by the arguments above.
Hodge theory will let us produce a quasi-isomorphism $(\Omega^{0,*}(X), \dbar) \to \CC[\alpha]$.
Indeed, any Hermitian metric on $X$ determines an adjoint $\dbar^*$ to $\dbar$ and hence a Dolbeault Laplacian $\Delta_{\dbar} = [\dbar,\dbar^*]$.
Denote by $\sH^{0,*}_{\dbar}(X)$ the graded vector space of harmonic $(0,*$-forms, i.e., those annihilated by $\Delta_{\dbar}$. 
In light of lemma~\ref{lem:tanre}, the orthogonal projection determines a quasi-isomorphism
\beqn\label{hopfquasi}
\pi_{\sH}^{0,*} : \left(\Omega^{0,*}(X), \dbar \right) \xto{\simeq} \sH^{0,*}_{\dbar}(X) \cong \CC[\alpha].
\eeqn
Tensoring with $\fg$, we obtain a quasi-isomorphism 
\beqn\label{Lieproj}
\pi_{\sH}^{0,*} \otimes \id_\fg :\sG (X) \to \fg[\alpha]
\eeqn
of dg Lie algebras, where the target has trivial differential.

\begin{rmk}\label{rmk: hpl1}
In fact, by ellipticity, the orthogonal projection extends to a deformation retraction of dg Lie algebras
\begin{equation*}\label{contraction}
  \begin{tikzpicture}[baseline=(L.base),anchor=base,->,auto,swap]
     \path node (L) {$(\fg[\alpha],0)$} ++(3,0) node (M) {$(\sG (X), \dbar)$} 
     (M.mid) +(0,.075) coordinate (raise) +(0,-.075) coordinate (lower);
     \draw (L.east |- lower) to node {$\scriptstyle \iota$} (M.west |- lower);
     \draw (M.west |- raise) to node {$\scriptstyle \pi_{\sH}^{0,*} $} (L.east |- raise);
     \draw (M.south east) ..controls +(1,-.5) and +(1,.5) .. node {$\scriptstyle \eta$} (M.north east);
  \end{tikzpicture}\end{equation*}
The map $\iota$ is the inclusion of harmonic forms. 
The operator $\eta$ is constructed from the Green's function $p(z,w) \in \Omega^{d,*}(X \times X)$ of the $\dbar$ operator on $X$, 
which satisfies 
\[
\dbar_w p(z,w) = \omega_{diag}
\]
where $\omega_{diag}$ is the volume form along the diagonal in $X \times X$. 
The homotopy $\eta$ is defined by
\[
(\eta \mu)(z) = \int_{w \in X} p(z,w) \mu(w)
\]
where $\mu$ is a $(0,*)$-form on~$X$.
This data satisfies the homotopy retraction condition
\[
\iota \circ \pi - \id_{\sG} = \dbar \eta +\eta \dbar = [\dbar,\eta],
\]
and hence ensures that we know precisely how $\sG(X)$ retracts onto its cohomology~$\fg[\alpha]$.
\end{rmk}

Applying Chevalley-Eilenberg chains to Equation (\ref{Lieproj}), we obtain the following quasi-isomorphism for the global sections of the untwisted Kac-Moody factorization algebra:
\beqn\label{hopfquasi3}
\begin{tikzcd}
\clieu_*(\pi_{\sH}^{0,*}) : \displaystyle \UU\sG (X) = \clieu_*(\Omega^{0,*}(X , \fg)) \ar[r,"\simeq"] &\clieu_*(\CC[\epsilon] \tensor \fg) .
\end{tikzcd}
\eeqn
Unpacking the right hand side, we have
\[
\clieu_*(\CC[\alpha] \tensor \fg) = \clieu_*(\fg \oplus \fg[-1]) = \clieu_*(\fg, \Sym (\fg^{ad})),
\] 
where $\Sym(\fg^{ad})$ is the symmetric product of the adjoint representation of $\fg$. 
By the Poincar\'{e}-Birkoff-Witt theorem, there is an isomorphism of vector spaces $\Sym(\fg) \cong U \fg$, so we can interpret this cochain complex as~$\clieu_*(\fg, U \fg^{ad})$.

Any $U(\fg)$-bimodule $M$ is automatically a module for the Lie algebra $\fg$ by the formula $x \cdot m = xm - mx$ where $x \in \fg$ and $m \in M$.
Moreover, for any such bimodule there is a quasi-isomorphism of cochain complexes 
\[
\clieu_*(\fg, M) \xto{\simeq} {\rm Hoch}_*(U\fg, M) 
\]
which is induced from the inclusion of $\fg \hookrightarrow U \fg$. 
(See, for instance, Theorem 3.3.2 of~\cite{LodayCyclic}.)
Applied to the bimodule $M = U\fg$ itself we obtain a quasi-isomorphism 
\[
\clieu_*(\fg , U\fg^{ad}) \xto{\simeq} {\rm Hoch}(U\fg).
\]
The right hand side is the Hochschild homology of $U\fg$ with values in $U\fg$ equipped with the standard bimodule structure. 
Composing with the quasi-isomorphism (\ref{hopfquasi3}) we obtain a quasi-isomorphism $\UU\sG(X) \xto{\simeq} \Hoch(U\fg)$, as desired.

We now consider the twisted case. 
Let $\theta$ be a nontrivial degree $(d+1)$ invariant polynomial on $\fg$. 
The factorization homology is then 
\[
\UU_\theta (\sG)(X) = \left(\Sym(\Omega^{0,*}(X) \tensor \fg)[K] , \dbar + \d_{CE} + K \cdot \d_\theta\right) .
\]
We wish to show that this cochain complex admits a quasi-isomorphism to $\Hoch_*(U \fg)[K]$.
The twisted complex is a $K$-linear deformation of the ordinary Lie algebra homology of $\sG(X)$. 
In particular, it does not follow that the orthogonal projection (\ref{Lieproj}) defines a quasi-isomorphism to $\Hoch_*(U \fg)[K]$.
In order to find an explicit quasi-isomorphism, we appeal to the homological perturbation lemma.
For more details on this result, see Section 2.5 of~\cite{GwThesis}. 

In the untwisted case, upon tensoring with $\CC[K]$, Remark \ref{rmk: hpl1} implies that we have a deformation retraction of cochain complexes
\begin{equation*}
  \begin{tikzpicture}[baseline=(L.base),anchor=base,->,auto,swap]
     \path node (L) {$\clieu_*(\fg[\alpha])[K]$} ++(4,0) node (M) {$\clieu_*(\sG (X))[K]$} 
     (M.mid) +(0,.075) coordinate (raise) +(0,-.075) coordinate (lower);
     \draw (L.east |- lower) to node {$\scriptstyle \iota$} (M.west |- lower);
     \draw (M.west |- raise) to node {$\scriptstyle \pi_{\sH}^{0,*} $} (L.east |- raise);
     \draw (M.south east) ..controls +(1,-.5) and +(1,.5) .. node {$\scriptstyle \eta$} (M.north east);
  \end{tikzpicture}\end{equation*}
To obtain the twisted complex, we turn on the deformation $K \, \d_\theta$ on the left-hand side. 
The homological perturbation lemma provides formulas for the resulting deformations of the projection, inclusion, and homotopy maps.
Explicitly, these formulas involve the formal inverse to the operator $\id_{\sG} - K \,\d_\theta \circ \eta$ defined by
\[
(\id_{\sG} - K \d_\theta \eta)^{-1} = \sum_{n \geq 0} \frac{K^n}{n!} (\d_\theta \circ \eta)^n  .
\] 
Note that acting on any element in the symmetric algebra~$\Sym(\Omega^{0,*}(\Omega^{0,*}(X) \tensor \fg[1])$, this formula is well-defined since only finitely many terms in the series act nontrivially.
(To see this, observe that $\d_\theta$ lowers symmetric power and $\eta$ preserves it, so any polynomial will eventually be annihilated.)

With this operator in hand, we define the maps
\begin{eqnarray*}
\widetilde{\pi}^{0,*}_{\sH} & = & \pi + K \cdot \pi \circ (\id_{\sG} - K \d_\theta \eta)^{-1} \circ \d_\theta \circ \eta, \\
\widetilde{\eta} & = & \eta + K \cdot \eta \circ (\id_{\sG} - K \d_\theta \eta)^{-1} \circ \d_\theta \circ \eta .
\end{eqnarray*}
Note that modulo $K$, these reduce to the original maps above. 
The inclusion map $\iota$ and the differential on $\clieu_*(\fg[\alpha])$ do not get deformed in our situation because the perturbed piece of the differential $\d_{\theta}$ vanishes identically on the harmonic forms. 
The homological perturbation lemma implies that the resulting diagram
\begin{equation*}
  \begin{tikzpicture}[baseline=(L.base),anchor=base,->,auto,swap]
     \path node (L) {$\clieu_*(\fg[\alpha])[K]$} ++(3,0) node (M) {$\UU_\theta (\sG)(X) $} 
     (M.mid) +(0,.075) coordinate (raise) +(0,-.075) coordinate (lower);
     \draw (L.east |- lower) to node {$\scriptstyle \iota$} (M.west |- lower);
     \draw (M.west |- raise) to node {$\scriptstyle \Tilde{\pi}_{\sH}^{0,*} $} (L.east |- raise);
     \draw (M.south east) ..controls +(1,-.5) and +(1,.5) .. node {$\scriptstyle \Tilde{\eta}$} (M.north east);
  \end{tikzpicture}\end{equation*}
is a deformation retraction of cochain complexes. 
With the quasi-isomorphism $\Tilde{\pi}^{0,*}_\sH$ in hand, the result of the proposition now follows from the same argument as in the untwisted case. 
\end{proof}

\subsubsection{Twisted Hochschild homology}

We deduce a consequence of this calculation for the Hochschild homology of the $A_\infty$ algebra $U(\Tilde{\fg}^\bullet_{d,\theta})$.
Let $p_{\bf q} :  \CC^d \setminus \{0\} \to X$ be the quotient map and consider the commuting diagram
\[
\xymatrix{
\CC^d \setminus \{0\} \ar[r]^-{p_{\bf q}} \ar[d]^-{r} & X \ar[d]^{\Bar{r}} \\
\RR_{>0} \ar[r]^-{\Bar{p}_{\bf q}} & S^1
}
\]
where $r$ is the radial projection map and $\Bar{r}$ is the induced map on the quotient.
The action of $\ZZ$ on $\CC^d \setminus\{0\}$ gives $\sG_{\CC^d \setminus \{0\}}$ the structure of a $\ZZ$-equivariant factorization algebra. 
In turn, $\ZZ$ acts on the pushforward factorization algebra .
We have seen that there is a locally constant subfactorization algebra on $\RR_{>0}$, equivalent as an $E_1$ (or $A_\infty$) algebra to $U(\Tilde{\fg}^\bullet_{d,\theta})$, that sits as dense subalgebra of the pushforward factorization algebra $r_* \sG_{\CC^d \setminus \{0\}}$.
The action of $\ZZ$ preserves the dense subalgebra.

This relationship induces a map at the level of global sections on the circle~$S^1$,
and it is quite interesting due to a nontrivial dependence on ${\bf q}$.
The subtlety here is that global sections coincide with the $\ZZ$-invariant global sections on $\RR$, 
i.e., the sections that are ``periodic'' with respect to the action of $\ZZ$.
For instance, the global sections of the sub-factorization algebra are {\em not} Hochschild chains of $U(\Tilde{\fg}^\bullet_{d,\theta})$, 
but a version that takes into account the monodromy around the circle.
Systematic discussions of this phenonema can be found in Section 5.5.3 of \cite{LurieHA}, Lemma 3.18 of~\cite{AFTopMan}, or Section 7.4 of~\cite{CG1}.
We denote this ${\bf q}$-twisted Hochschild homology by $\Hoch_*(U(\Tilde{\fg}^\bullet_{d,\theta}), {\bf q})$.
Concretely, it is the Hochschild homology of the $E_1$ algebra $U \Tilde{\fg}_{d,\theta}^\bullet$ with coefficients in the bimodule $U \Tilde{\fg}_{d, \theta}^\bullet$, equiped with the ordinary left module structure and right module structure determined by the automorphism corresponding to the element $1 \in \ZZ$ on the algebra.

As the locally constant factorization algebra on $\RR_{>0}$ sits inside the pushforward, 
we obtain a canonical map of global sections
\[
\Hoch_*(U(\Tilde{\fg}^\bullet_{d,\theta}), {\bf q}) \to \Bar{r}_* \UU_\alpha(\sG_X) (S^1) ,
\]
which is, in fact, a quasi-isomorphism, by our results in the preceding section. 

Now, the global sections of the pushforward factorization algebra agree with the global sections of the factorization algebra on the source space,
so we have a quasi-isomorphism
\[
\Bar{\rho}_* \UU_\alpha(\sG_X) (S^1) \simeq \UU_{\alpha} (\sG)(X) .
\]
It follows that there is a quasi-isomorphism of Hochschild homologies
\beqn\label{hoch1}
\Hoch_*(U(\Tilde{\fg}^\bullet_{d,\theta}), {\bf q}) \simeq \Hoch_* (U\fg)[K] .
\eeqn
Peculiarly, this statement is purely algebraic as the dependence on the manifold for which the Kac-Moody factorization algebra lives has dropped out.
The thing to note is that the quasi-isomorphism {\em does} depend on ${\bf q}$. 

\subsection{Character formulas by coupling to a free theory}

\def\Cl{{\rm Cl}}

We turn to a class of free field theories on Hopf manifolds that have a symmetry by the local Lie algebra $\sG_X$. 
Following Section \ref{sec: qft}, we study this situation by coupling the local Lie algebra $\sG_X$ to the free theory.
Our main result in this section, Proposition \ref{prop: twistedchar}, is an interpretation of this coupling at the {\em quantum level} as a character of~$\fg$. 

There are two main differences between the theory we consider here and the one considered in Section~\ref{sec: qft}.
First, in this section we are working on a closed $d$-fold, namely the Hopf surface $X = X_{\bf q}$. 
Although there is still a factorization algebra of observables on $X$, the main statement in this section concerns the global sections of this factorization algebra.
Since the theory actually makes sense on any complex manifold, 
our result --- which is specific to Hopf manifolds --- is an avatar of a large class of analogous results.

Second, the theory we consider here is a free theory of {\em fermions}. 
Thus, we will work with super vector spaces and super cochain complexes.
These lead to minor modifications to the approach of Section~\ref{sec: qft}, 
but yields a statement that is easiest to understand.

Before delving into the details, 
we note for physicists that we develop here a holomorphic version of the Adler-Bardeen-Jackiw anomaly,
as we are studying fermionic matter fields coupled to a background holomorphic gauge field.
(See \cite{Rabinovich} for the traditional ABJ anomaly as seen within this Costello formalism.)
A more exact comparison is with the Konishi anomaly, 
as these holomorphic theories sometimes arise as twists of supersymmetric theories.
By computing global sections on Hopf manifolds,
we recover analogues of the superconformal indices,
since a Hopf manifold has $S^1 \times S^{2d-1}$ as its underlying manifold.

\begin{rmk}
As a matter of convention, if $V$ is a super vector space, we denote by $\Pi(V)$ the super vector space obtained by reversing the parity. 
\end{rmk}

\subsubsection{The free $bc$ system}

To define the theory, we again start with a $\fg$-module $V$.
The theory is very similar in spirit to what is known as the $bc$ system in conformal field theory (which is usually considered in the context of the gauging the bosonic string). 
Hence, we borrow the terminology. 

\begin{dfn}
The {\em classical $bc$ system} valued in the super vector space $W$ on a complex manifold $X$ has space of fields
\[
\sE_{bc} (X) = \Omega^{0,*}(X , W) \oplus \Omega^{d,*}(X , W)[d-1],
\]
with the linear BRST operator given by $Q = \dbar$.
We will write fields as pairs $(c,b)$. 
There is a $(-1)$-shifted symplectic pairing is given by integration along $X$ combined with the evaluation pairing between $W$ and its dual: 
\[
\<c,b\> = \int_X \<c, b\>_{W}.
\] 
The action functional for this free theory is thus
\[
S_{bc} (c,b) = \int_X \<b , \dbar c\>_{W} .
\]
\end{dfn}

\begin{rmk}
Note that this theory is a modest variant of the definition of the higher $\beta\gamma$ system given in Section~\ref{sec: qft}. 
The only difference is that we allow for values in a super vector space $W$, as opposed to an ordinary (bosonic) one. 
When $d=1$ this theory is the usual $bc$ system (valued in $W$) from chiral conformal field theory.
\end{rmk}

Being a free theory, there is a natural BV quantization defined for any $X$.
Its definition mirrors Definition \ref{dfn: qobs} for the $\beta\gamma$ system. 
We denote the resulting factorization algebra of quantum observables by~$\Obs^\q_{bc}$. 

Before moving on to studying $\sG_X$-equivariance of this factorization algebra, we characterize the global observables of the $bc$ system with values in $W$ evaluated on Hopf manifolds. 
To state the result we introduce the following definition, whose bosonic version is familiar. 

\begin{dfn}
Let $W$ be a super vector space, and view $W \oplus W^*$ as an abelian super Lie algebra. 
Define the central extension of super Lie algebras
\[
\CC \cdot \hbar \to {\rm Heis}_\hbar (W \oplus W^*) \to W \oplus W^*
\]
arising from the $2$-cocycle defined by the natural pairing between $W$ and its dual. 
The $\hbar$-dependent {\em Weyl algebra} associated to $W$ is
\[
\Weyl_\hbar(W \oplus W^*) := U({\rm Heis}_\hbar(W \oplus W^*)) .
\]
\end{dfn}

\begin{rmk}
When $W = V$ is purely bosonic, this definition recovers the usual ($\hbar$-dependent) Weyl algebra of $V \oplus V^*$. 
When $W = \Pi(V)$ is purely fermionic, $\Weyl_\hbar(\Pi(V) \oplus \Pi(V^*))$ is the ($\hbar$-dependent) Clifford algebra of $V\oplus V^*$ associated to the natural quadratic form.  
\end{rmk} 

\begin{lem}\label{lem: hopfcliff}
Let $X$ be a Hopf manifold. 
Consider the observables of the higher $bc$ system on $X$ valued in the super vector space $W$. 
There is a natural quasi-isomorphism
\[
\Obs^\q_{bc}(X) \xto{\simeq} {\rm Hoch}_*(\Weyl_\hbar(W \oplus W^*)) .
\]
\end{lem}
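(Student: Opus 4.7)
The plan is to follow the strategy of Proposition \ref{prop: hopf}, with Chevalley--Eilenberg chains replaced by a Koszul-type small model for Hochschild chains and the Lie bracket replaced by the Weyl product.

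First, I would use Hodge theory and Lemma \ref{lem:tanre} to reduce to a finite-dimensional model. By that lemma, $\Omega^{0,*}(X) \simeq \CC[\alpha]$ with $|\alpha|=1$, and the parallel analysis sketched in the discussion following the lemma gives that $\Omega^{d,*}(X)[d-1]$ has cohomology spanned by a class $b$ in degree $0$ and $b\alpha$ in degree $1$. Tensoring with $W$ and $W^*$ respectively, the cohomology of $\sE_{bc}(X)$ is the finite-dimensional super vector space $H = (W \oplus W^*) \otimes \CC[\alpha]$ concentrated in degrees $0$ and $1$, and the $(-1)$-shifted symplectic pairing on $\sE_{bc}(X)$ descends to a non-degenerate pairing on $H$ identifying the two cohomological degrees via the evaluation pairing between $W$ and~$W^*$.

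Second, a choice of Hermitian metric on $X$ gives, via the Green's function for $\dbar$, a deformation retraction of $(\sE_{bc}(X), \dbar)$ onto $(H, 0)$, exactly as in Remark \ref{rmk: hpl1}. Dualising and taking completed symmetric algebras, this lifts to a deformation retraction of $\Tilde{\Obs}^{cl}_{bc}(X)$ onto the finite-dimensional complex built on the dual of $H$. Turning on the perturbation $\hbar \Delta$ and invoking the homological perturbation lemma in the manner of the proof of Proposition \ref{prop: hopf}, one obtains a quasi-isomorphism between $\Tilde{\Obs}^q_{bc}(X)$ and a finite-dimensional complex $(C_H, \hbar \Delta_H)$, where $\Delta_H$ is the BV Laplacian associated to the residual symplectic pairing on $H$. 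The homological-perturbation series truncates because $\Delta$ lowers symmetric degree while the Green's-function homotopy preserves it, so on any fixed polynomial only finitely many corrections contribute.

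Third, write $U = W \oplus W^*$, so $H$ decomposes as $U \oplus U[-1]$ with symplectic pairing between the two summands. The complex $(C_H, \hbar \Delta_H)$ is then precisely the super-signed Koszul/HKR small model for the Hochschild chains of the Weyl algebra of the symplectic super vector space $U$: the transferred BV Laplacian coincides, up to a scalar absorbed into $\hbar$, with the Hochschild boundary built from the symplectic form, giving the desired quasi-isomorphism $\Tilde{\Obs}^q_{bc}(X) \xto{\simeq} \Hoch_*(\Weyl_\hbar(W \oplus W^*))$. The main obstacle is this last identification: since $\Weyl_\hbar(W \oplus W^*)$ is a tensor product of a Weyl algebra in the bosonic directions of $W$ and a Clifford algebra in the fermionic directions, the signs arising from the bidegrees of the Dolbeault classes $b$ and $b\alpha$ must be matched carefully against the Koszul signs of the Hochschild complex. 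I expect this to be routine but bookkeeping-heavy, modelled on the standard HKR computation for polynomial algebras and its Weyl-algebra deformation.
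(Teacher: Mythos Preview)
Your reduction to a finite-dimensional BV complex via Hodge theory and the homological perturbation lemma is sound and parallels the paper's strategy closely. The divergence is entirely in your third step, and that is where the paper takes a cleaner route.

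Rather than working with observables directly, the paper begins by invoking the general fact (from \cite{CG1}, Chapter~4) that the quantum observables of a free BV theory are quasi-isomorphic to the Chevalley--Eilenberg chains of a Heisenberg-type dg Lie algebra $\Hat{\sL}$: here $\sL = \Omega^{d,*}(X,W^*)[d-1]\oplus\Omega^{0,*}(X,W)$ is abelian, and $\Hat{\sL}$ is its shifted central extension by $\CC\hbar[-1]$ via the integration pairing. The Hodge-theoretic reduction is then performed \emph{on the dg Lie algebra}, yielding a small dg Lie algebra $\widetilde{\sL}\cong \CC[\alpha]\otimes(W\oplus W^*)\oplus\CC\hbar[-1]$ with bracket $[1\otimes w^*,\alpha\otimes w]=\hbar\langle w^*,w\rangle$. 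One then recognizes $\clieu_*(\widetilde{\sL})$ as $\clieu_*({\rm Heis}_\hbar(W\oplus W^*),\,U({\rm Heis}_\hbar(W\oplus W^*))^{\rm ad})$---this is the same ``$\fg[\alpha]$'' trick used in Proposition~\ref{prop: hopf}---and finishes with the standard quasi-isomorphism $\clieu_*(\fg,U\fg^{\rm ad})\xto{\simeq}\Hoch_*(U\fg)$ (Loday, \cite{LodayCyclic}, Theorem~3.3.2).

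What this buys is that your difficult third step---identifying the small BV complex $(C_H,\hbar\Delta_H)$ with a Koszul/HKR model for $\Hoch_*(\Weyl_\hbar(W\oplus W^*))$---is replaced by a one-line citation. Your proposed identification is not wrong, but it is not a standard statement either: the Hochschild complex of $\Weyl_\hbar(U)$ is built from tensor powers of a noncommutative algebra, and matching it with a symmetric-algebra BV complex requires precisely the Koszul-duality machinery that Loday's theorem packages. The paper's Lie-algebra-chains detour turns this into something off-the-shelf. Your approach would ultimately go through, but you would essentially be reproving that comparison inside the argument; the paper's framing avoids this.
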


\begin{proof}
The observables of any free BV theory can be modeled as the Lie algebra chains of a certain dg Lie algebra. 
(See chapter 4 of \cite{CG1}.)
For the higher $bc$ system valued in $W$, the dg Lie algebra is a (shifted) central extension of the form
\[
\CC  \,\hbar [-1] \to \Hat{\sL} \to \sL
\]
where
\[
\sL = \Omega^{d,*}(X, W^*)[d-1] \oplus \Omega^{0,*}(X, W)
\]
is an abelian dg Lie algebra and the cocycle defining the extension is
\[
\begin{array}{ccc}
\sL \times \sL & \to & \CC \, \hbar [-1]\\
 (c, b) &\mapsto& \hbar \int_X \<c, b\>_W .
\end{array}
\] 
As a cochain complex, $\Hat{\sL} = \sL \oplus \CC  \,\hbar[-1]$. 

Our proof strategy is thus to compute this Lie algebra homology by finding a small model with an obvious identification with the relevant Hochschild homology.

First, we identify this dg Lie algebra $\Hat{\sL}$ with a smaller dg Lie algebra, via Hodge theory, 
as we did in the proof of Proposition~\ref{prop: hopf}.
Thanks to  (\ref{hopfquasi}), we know how to deal with the $c$ fields,
so we turn to replacing the $b$ fields by a simpler model.

Fix a Hermitian metric and hence obtain an orthogonal projection onto the $(d,*)$-harmonic forms
\[
\pi_{\sH}^{d,*} : \left(\Omega^{d,*}(X), \dbar \right) \xto{\simeq} \sH^{d,*}_{\dbar}(X) \cong \CC\{b, b \alpha\},
\]
where $b$ has bidegree $(d,d-1)$ and $b \alpha$ has bidegree $(d,d)$.
(Note that we are using the notation of lemma~\ref{lem:tanre}.)
Observe that $\CC\{b, b \alpha\} = \CC[\alpha]b$, which is naturally isomorphic to $\CC[\alpha][-(d-1)]$,
the shift of the complex $\CC[\alpha]$ up by degree~$d-1$.

The fields with values in $W^*$ are $\Omega^{d,*}(X, W^*)[d-1] $, 
so the harmonic representative $b$ contributes a shift up by $d-1$ that cancels the shift down by $d-1$ in the definition of the fields. 
Hence we obtain a quasi-isomorphism of the $b$ fields onto their cohomology:
\[
\pi_{\sH}^{d,*} \otimes W^* [d-1]: \Omega^{d,*}(X, W^*)[d-1] \xto{\simeq} W^* \otimes \CC[\alpha],
\]
In conjunction with the map (\ref{hopfquasi}), 
we obtain a quasi-isomorphism of dg Lie algebras
\beqn
\label{hopfquasi2}
\Hat{\sL} \xto{\simeq}  \CC[\alpha] \tensor (W^* \oplus W) \oplus \CC \, \hbar [-1]. 
\eeqn
We denote the small complex on the right hand side by~$\widetilde{\sL}$.

The bracket for $\widetilde{\sL}$ is determined by the formula
\[
[1 \tensor w^* , \alpha \tensor w] = \hbar \<w^*, w\>,
\]
and the analogous formula with the roles of $w$ and $w^*$ swapped.

By directly unraveling the definitions, one finds
\[
\clieu_*(\widetilde{\sL}) = \clieu_*({\rm Heis}_\hbar(W \oplus W^*), U({\rm Heis}_\hbar(W \oplus W^*))).
\]
As we know there is a quasi-isomorphism
\[
\clieu_*(\fg, U\fg^{ad}) \xto{\simeq} \Hoch_*(U\fg)
\]
for any dg Lie algebra $\fg$,
we have a sequence of quasi-isomorphisms
\[
\clieu_*(\Hat{\sL}) \xto{\simeq} \clieu_*(\widetilde{\sL}) \xto{\simeq} \Hoch_*(U({\rm Heis}_\hbar(W \oplus W^*))).
\]
A standard result about free BV theories (see section 4.2 of \cite{CG1}) provides a quasi-isomorphism
\[
\Obs^{\q}_{bc} (X) \xto{\simeq} \clieu_*(\Hat{\sL}),
\] 
and so we have the claim, by composing all these quasi-isomorphisms.
\end{proof}

We are most interested in the case that $W = \Pi(V)$, where $V$ is an ordinary (bosonic) vector space. 
In this case, the lemma implies that there is a quasi-isomorphism
\[
\Obs^q_{bc}(X) \xto{\simeq} {\rm Hoch}_*(\Cl_\hbar(V \oplus V^*)) 
\]
where $\Cl_\hbar(V \oplus V^*)$ denotes the $\hbar$-dependent Clifford algebra. 
In this case, there is a lovely simplification of the Hochschild homology. 

If we choose a basis $\{v_i\}$ of $V$, and dual basis $\{v_i^*\}$ of $V^*$, then there is a homomorphism 
\[
\int_{Ber} : {\rm Hoch}_* (\Cl (V \oplus V^*)) \to \CC 
\]
determined by picking off the coefficient of the element $v_1 \cdots v_{\dim(V)} v_1^* \cdots v^*_{\dim(V)}$ in the Hochschild complex. 
In other words, this map is precisely the Berezin integral that projects onto the ``top fermion.'' 

It is a standard fact that the Clifford algebra is Morita trivial \cite{KasselCliff}, so that ${\rm Hoch}_*(\Cl(V \oplus V^*)) \simeq {\rm Hoch}_* (\CC) \cong \CC$.
Hence, $\int_{Ber}$ is a quasi-isomorphism. 

After inverting $\hbar$ and invoking Lemma \ref{lem: hopfcliff}, we obtain a composition of quasi-isomorphisms 
\beqn\label{hopfcliffquasi}
\Obs^\q_{bc} (X) [\hbar^{-1}] \xto{\simeq} {\rm Hoch}_*(\Cl_\hbar (V \oplus V^*)[\hbar^{-1}]) \xto{\simeq}\CC[\hbar,\hbar^{-1}] .
\eeqn
The first quasi-isomorphism is the one from Lemma~\ref{lem: hopfcliff}, and the second is Berezin integration. 

We summarize our computations as follows.

\begin{lem}
On a Hopf manifold $X_{\bf q}$, there is a natural quasi-isomorphism
\[
\Obs^\q_{bc} (X) [\hbar^{-1}] \xto{\simeq} \CC[\hbar,\hbar^{-1}]
\]
out of the quantum observables for a free fermion $bc$ system.
\end{lem}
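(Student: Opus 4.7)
The plan is to obtain the desired quasi-isomorphism as the composition spelled out in equation~\eqref{hopfcliffquasi}, building directly on Lemma~\ref{lem: hopfcliff}. First I specialize the preceding lemma to the case $W = \Pi(V)$ for $V$ an ordinary finite-dimensional vector space. Since the Heisenberg super Lie algebra built from a purely fermionic space has a symmetric (rather than antisymmetric) cocycle on its bosonic incarnation, its enveloping algebra $\Weyl_\hbar(\Pi(V) \oplus \Pi(V^*))$ is, by inspection of generators and relations, canonically identified with the $\hbar$-dependent Clifford algebra $\Cl_\hbar(V \oplus V^*)$ of the hyperbolic pairing on $V \oplus V^*$. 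Lemma~\ref{lem: hopfcliff} therefore delivers a natural quasi-isomorphism
\[
\Obs^\q_{bc}(X) \xto{\simeq} \Hoch_*(\Cl_\hbar(V \oplus V^*)).
\]

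The next step is to pass to $\hbar^{-1}$ coefficients and invoke the Morita triviality of the Clifford algebra. Over $\CC[\hbar,\hbar^{-1}]$ the quadratic form defining $\Cl_\hbar(V \oplus V^*)$ is non-degenerate (after rescaling generators by $\sqrt{\hbar}$ in a formal extension, or more invariantly by using that the hyperbolic form stays split and non-degenerate once $\hbar$ is inverted), so this Clifford algebra is Morita equivalent to its center $\CC[\hbar,\hbar^{-1}]$; see, for instance, \cite{KasselCliff}. Morita invariance of Hochschild homology then gives
\[
\Hoch_*(\Cl_\hbar(V \oplus V^*))[\hbar^{-1}] \;\simeq\; \Hoch_*(\CC[\hbar,\hbar^{-1}]) \;\cong\; \CC[\hbar,\hbar^{-1}].
\]

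Composing these two quasi-isomorphisms produces the map asserted in the lemma. To make the composite concrete and verify naturality, I would describe the second quasi-isomorphism explicitly via the Berezin integral $\int_{Ber}$: after choosing dual bases $\{v_i\}$ of $V$ and $\{v_i^*\}$ of $V^*$, the map extracts the coefficient of the top element $v_1\cdots v_{\dim V}\, v_1^*\cdots v_{\dim V}^*$ in the Hochschild complex. Naturality in $X_{\bf q}$ follows because both constituent quasi-isomorphisms are natural: the identification of Lemma~\ref{lem: hopfcliff} is induced by Hodge-theoretic projection onto harmonic forms, which varies smoothly in the metric data on the Hopf manifold, and the Berezin/Morita step is purely algebraic and independent of~${\bf q}$.

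The only mild obstacle is the Morita triviality step, which requires some care because we are working with a $\ZZ$-graded (super) algebra over the ground ring $\CC[\hbar,\hbar^{-1}]$; one must check that the standard Clifford Morita equivalence lifts to this graded, $\hbar$-dependent setting. This can be done by hand for $\dim V = 1$ (where one exhibits an explicit idempotent projecting onto a rank-one Clifford module) and then by tensor iteration, or by noting that the result for $\hbar = 1$ specializes to the classical statement and that the $\hbar$-dependent family is simply the Rees construction, which preserves Morita equivalence after inverting the Rees parameter.
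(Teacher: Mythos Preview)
Your proposal is correct and follows essentially the same route as the paper: specialize Lemma~\ref{lem: hopfcliff} to $W=\Pi(V)$ to land in the Hochschild homology of the $\hbar$-dependent Clifford algebra, then invert $\hbar$ and use Morita triviality of the Clifford algebra (with the Berezin integral as the explicit quasi-isomorphism) to reach $\CC[\hbar,\hbar^{-1}]$. The paper presents this lemma as a summary of exactly that discussion, so your argument matches it step for step.
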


This map encodes the ``expected value'' of a observables for this system.

\subsubsection{Quantum equivariance}

We now give our fermionic fields charge, by equipping the odd vector space $W = \Pi(V)$ with the structure of a  $\fg$-representation. 
The objective is to extract a character on the observables of the $bc$ system from the $\sG_X$-equivariant quantization. 
By the same formalism as in Section~\ref{sec: qft}, the equivariant quantization determines a quantum Noether map
\[
J_X^\q : \Cur^\q_{\Theta_X}\sG(X) [\hbar^{-1}]  \to \Obs^\q_{bc}(X) [\hbar^{-1}] ,
\]
where $\Theta_X$ is the obstruction to solving the $\sG_X$-equivariant quantization. 
Recall that as a plain factorization algebra, there is an isomorphism
\[
\UU_{\Theta_X}\sG(X) [\hbar^{-1}]  \cong \Cur^\q_{\Theta_X}\sG(X) [\hbar^{-1}] .
\]
The explicit form of the obstruction is irrelevant in what immediately follows, but we discuss it in more detail in Section~\ref{sec: hopfobs} below. 

Combining the quasi-isomorphisms of Lemma \ref{prop: hopf} and Equation (\ref{hopfcliffquasi}), we obtain a commutative diagram
\[
\begin{tikzcd}
\UU_{\Theta_X} (\sG)(X) [\hbar^{-1}]  \ar[r, "J_X^\q"] \ar[d, "\simeq"] & \Obs^\q_{bc}(X) [\hbar^{-1}] \ar[d, "\simeq"] \\
\Hoch_*(U \fg) [\hbar^{-1}] \ar[r,dotted,"{\rm ch}_{X,V}"] & \CC[\hbar,\hbar^{-1}] .
\end{tikzcd}
\] 
The dotted map exists since the quantum Noether map preserves the projection onto the harmonic forms from which both quasi-isomorphisms are constructed.
At the level of $H^0$ we obtain the following.

\begin{prop}\label{prop: twistedchar}
The $\sG_X$-equivariant quantization of the $bc$ system on $X$ valued in the $\fg$-representation $\Pi(V)$ determines a map 
\[
{\rm ch}_{X,V} : {\rm HH}_0(U \fg)[\hbar^{-1}] = \Sym(\fg)_{\fg} [\hbar^{-1}] \to \CC[\hbar,\hbar^{-1}] .
\] 
This map is natural in the representation~$V$.
\end{prop}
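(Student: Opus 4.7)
The plan is to read off the character map ${\rm ch}_{X,V}$ directly from the commutative square displayed just before the statement, taking $H^0$ on both rows. The constructions of the two vertical quasi-isomorphisms and the horizontal quantum Noether map are already in hand: the left vertical is the $\hbar$-inverted version of the quasi-isomorphism from Proposition \ref{prop: hopf} (combined with the isomorphism $\Cur^\q_{\Theta_X}\sG \cong \UU_{\Theta_X}\sG$ in $\CC[\hbar,\hbar^{-1}]$-linear factorization algebras), the right vertical is the composite Hodge-theoretic/Berezin quasi-isomorphism of equation \eqref{hopfcliffquasi}, and the top horizontal is the global sections of the quantum Noether map constructed by the general Costello--Gwilliam machinery applied to the $\sG_X$-equivariant BV quantization of the $bc$ system valued in~$\Pi(V)$. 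All that remains is to justify the commutativity and then pass to $H^0$.

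First I would verify commutativity of the square at the chain level (or at least up to coherent homotopy). Both vertical maps are built from orthogonal projection onto $\dbar$-harmonic representatives, followed in each case by a further identification: on the left, the identification $\cliels(\fg[\alpha]) \simeq \Hoch_*(U\fg)$ via the Chevalley--Eilenberg-to-Hochschild comparison combined with the homological-perturbation-lemma correction $\widetilde{\pi}_\sH^{0,*}$ needed in the twisted case; on the right, the identification $\cliels(\widetilde{\sL}) \simeq \Hoch_*(\Cl_\hbar(V \oplus V^*))$ followed by Morita-trivial Berezin integration. The key point is that the Noether functional $I^\sG$ is built pointwise from the $\fg$-action on $V$ wedged against the symplectic pairing, so it commutes strictly with harmonic projection on the $(0,*)$ side; on the $(d,*)$ side one uses that the harmonic model $\CC[\alpha] b$ absorbs the shift and identifies precisely with the Heisenberg generators extending $W \oplus W^*$. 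The homotopies $\eta$ and $\widetilde\eta$ appearing in both perturbation arguments are compatible in the appropriate sense because they are constructed from the same Green's function for $\dbar$ on~$X_{\bf q}$.

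Second, having established commutativity, I would define ${\rm ch}_{X,V}$ to be the map induced on $H^0$, using the identification $HH_0(U\fg) \cong \Sym(\fg)_\fg$ (which follows from Poincaré--Birkhoff--Witt together with the isomorphism of $\fg$-coinvariants $\Sym(\fg)_\fg \cong \Sym(\fg^\vee)^\fg$ dualized). Naturality in $V$ follows by functoriality: a map $V \to V'$ of $\fg$-representations induces a map of classical field theories $\sE_{bc,V} \to \sE_{bc,V'}$, which intertwines with the respective Noether functionals (both given by the same formula $\int \langle b, \alpha \cdot c\rangle$), and hence with their quantum counterparts; the Berezin integral is likewise natural in $V$, being defined by a Morita equivalence $\Cl_\hbar \simeq \CC$ that is natural with respect to inclusions of representations.

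The main obstacle is the compatibility in the twisted case. The projection $\widetilde{\pi}_\sH^{0,*}$ from Proposition \ref{prop: hopf} is an infinite series in $K$ (schematically $\sum K^n (\d_\theta \eta)^n$), and the corresponding statement on the observables side involves a perturbation of the BV differential by $\hbar\Theta_X = \hbar\, \fj(\ch^\fg_{d+1}(V))$, so the two perturbations must be shown to match under the Noether map order-by-order in $\hbar$ (equivalently in $K$ after the identification $\hbar = (2\pi i)^d$). This reduces to a Feynman-diagrammatic identity of exactly the same shape as the wheel computation of $\Theta_V$ in Lemma \ref{lem: obs} and Proposition \ref{prop: bg anomaly}: the perturbation terms on both sides are integrals over the diagonal of products of propagators arranged in a wheel, and the identification is forced by $U(d)$-equivariance together with the uniqueness in Proposition \ref{prop: trans j}. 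Once this diagrammatic matching is checked, the diagram commutes and the proposition follows.
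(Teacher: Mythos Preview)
Your approach is correct and matches the paper's: the proposition is essentially a corollary of the commutative square displayed immediately before it, and the paper's entire justification is the one sentence ``The dotted map exists since the quantum Noether map preserves the projection onto the harmonic forms from which both quasi-isomorphisms are constructed.'' Taking $H^0$ then yields the character map.

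That said, your ``main obstacle'' paragraph overcomplicates the situation and contains a conflation. The perturbation on the current-algebra side is by $K \cdot \d_\theta$ (a coderivation coming from the local cocycle), whereas the perturbation on the observables side is by the BV Laplacian $\hbar\Delta$; these are not directly matched ``order-by-order in $\hbar$'' via a new Feynman computation. Rather, the quantum Noether map $J^\q_X$ is already, by its very construction in the Costello--Gwilliam formalism, a cochain map intertwining the twisted differential on $\UU_{\Theta_X}\sG$ with the BV differential on $\Obs^\q_{bc}$---the wheel computation of Section~\ref{sec: qft} is what produces $\Theta_X$ in the first place, not something to be redone here. Once $J^\q_X$ is in hand, the only remaining point is that both vertical arrows are built from the \emph{same} harmonic projection and the \emph{same} Green's-function homotopy $\eta$ on $X_{\bf q}$, so the homological-perturbation corrections on the two sides are automatically compatible with $J^\q_X$ (which is local and hence respects the harmonic decomposition). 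The paper does not treat this as a genuine obstacle, and no further diagrammatic identity is invoked.
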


As discussed earlier, a character of $\fg$ is a linear functional on ${\rm HH}_0(U \fg)$,
so we have produced an $\hbar$-dependent character from each Hopf manifold~$X$ and finite-dimensional $\fg$-representation~$V$.
Although we will not pursue an explicit formula here, 
this character ${\rm ch}_{X,V}$ varies in a beautiful way over the moduli space of Hopf manifolds,
so that one can obtain $q$-character formulas.


\subsubsection{A remark on the anomaly}\label{sec: hopfobs}

The $bc$ system on any manifold $X$ is {\em free}, and the anomaly $\Theta_X$ to solving the $\sG_X$-equivariant QME parametrizes the central extension of $\sG_X$ that acts on the quantum theory.

We stress that these constructions work for {\em any} complex $d$-fold $X$, 
so we can consider the higher $bc$ system on $X$ with values in the $g$-representation $\Pi(V)$. 
Furthermore, this theory is natural in the complex manifold $X$, in the sense that any holomorphic embedding of complex $d$-folds $f : X \to X'$ pulls back the $bc$ system on $X'$ to the $bc$ system on $X$. 
There is thus a ``universal'' $bc$ system on a site ${\rm Hol}_d$ of complex $d$-folds and local biholomorphisms (maps that are locally holomorphic isomorphisms).
The construction $\sG$ also determines a sheaf on this site, sending $X$ to $\Omega^{0,*}(X, \fg)$, 
and so we can consider the universal equivariant $bc$ system of $\sG$ acting on~$\sE_{bc}$. 

The anomaly $\Theta_X$ to solving the one-loop $\sG_X$-equivariant QME is encoded by a local functional of the form
\[
I_{\Theta_X}(\fc(X))=\int_X \fc(X) \wedge \Tilde{\fj}(\alpha),
\]
where $\fc(X) \in \Omega^*(X)$ and where $\Tilde{\fj}$ is a linear map of the form
\[
\Tilde{\fj} : \Sym(\sG_X[1]) \to \Omega^*(X) .
\]
Universality then puts restrictions on the differential forms that can appear in the one-loop anomaly. 

Indeed, since the anomaly $\Theta_X$ must be natural with respect to holomorphic embeddings, we see that $\fc(X)$ must be some polynomial in the Chern classes of $X$. 
Indeed, since $\Theta_X$ must also be natural with respect to holomorphic embeddings, we see that $\fc(X)$ must be some polynomial in the Chern classes of~$X$. 

For a Hopf manifold $X$, the Chern classes $c_i(X)$ vanish when $i=1,\ldots,d-1$ for degree reasons. 
Also, if we consider a local biholomorphism of the form $\CC^d \hookrightarrow X$, we see that the anomaly $\Theta_X$ must pull back to the anomaly on $\CC^d$ computed in Section~\ref{sec: qft}. 
Thus, for $X$ a Hopf manifold, the anomaly $\Theta_X$ must be proportional to a functional of the form
\[
\int_X \theta(\alpha \wedge \partial \alpha \wedge \cdots \wedge \partial \alpha)
\]
where $\theta \in \Sym^{d+1}(\fg^*)^\fg$ and $\alpha \in \sG_X$. 
In other words, we know the anomaly up to a scalar factor, which depends in some way on the representation~$V$.

To summarize, we have argued that the local class representing the extension of $\sG_X$ acting on the quantization of the $bc$ system on any Hopf manifold must be of a multiple of $\fj_X(\ch_{d+1}^\fg(V))$. 
To fully identify this class, we need machinery that we hope to develop in future work. 

\subsection{The Kac-Moody vertex algebra and compactification} 

We turn briefly to the variant of the Kac-Moody factorization algebra associated to the cocycles from Section ~\ref{sec: nekext}.
This class of cocycles is related to the ordinary Kac-Moody vertex algebra on Riemann surfaces through compactification, as we now show. 

Consider the complex manifold $X = \Sigma \times \PP^{d-1}$, 
where $\Sigma$ is a Riemann surface and $\PP^{d-1}$ is $(d-1)$-dimensional complex projective space.
Let $\omega \in \Omega^{d-1,d-1}(\PP^{d-1})$ be the natural volume form, 
which clearly satisfies the conditions of Lemma~\ref{lem: cocycle KM} and so determines a degree one cocycle $\phi_{\kappa, \omega} \in \cloc^*(\sG_{\Sigma \times \PP^{d-1}})$ after a choice of $\fg$-invariant bilinear form $\kappa : \fg \times \fg \to \CC$. 
Consider then the twisted enveloping factorization algebra of $\sG_{\Sigma \times \PP^{d-1}}$ by the cocycle~$\phi_{\kappa, \omega}$. 

Recall that if $p : X \to Y$ and $\sF$ is a factorization algebra on $X$, then the pushforward $p_* \sF$ on $Y$ is defined on opens by $p_* \sF : U \subset Y \mapsto \sF(p^{-1} U)$. 

\begin{prop}
Let $\pi : \Sigma \times \PP^{d-1} \to \Sigma$ be the projection. 
There is a quasi-isomorphism between the following two factorization algebras on $\Sigma$:
\begin{enumerate}
\item $\pi_* \UU_{\phi_{\kappa, \theta}} \left(\sG_{\Sigma \times \PP^{d-1}}\right)$, the pushforward along $\pi$ of the Kac-Moody factorization algebra on $\Sigma \times \PP^{d-1}$ of type $\phi_{\kappa,\omega}$, and
\item $\UU_{{\rm vol}(\omega) \kappa} (\sG_\Sigma)$, the Kac-Moody factorization algebra on $\Sigma$ associated to the invariant pairing ${\rm vol}(\omega) \cdot \kappa$. 
\end{enumerate}
\end{prop}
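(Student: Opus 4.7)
The plan is to use Hodge theory on $\PP^{d-1}$ to produce a small model for the pushforward, and then invoke the homological perturbation lemma (exactly as in the proof of Proposition~\ref{prop: hopf}) to identify the twist.

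First, for each open $U \subset \Sigma$, note that $\pi^{-1}(U) = U \times \PP^{d-1}$, and since $\PP^{d-1}$ is already compact,
\[
\sG_{\Sigma \times \PP^{d-1}}(\pi^{-1}(U)) \;\cong\; \Omega^{0,*}_c(U) \,\widehat\otimes\, \Omega^{0,*}(\PP^{d-1}) \otimes \fg ,
\]
with no compact-support requirement in the $\PP^{d-1}$ direction. Equip $\PP^{d-1}$ with the Fubini--Study metric. Since $H^{0,*}(\PP^{d-1}) \cong \CC$ is concentrated in degree zero, orthogonal projection $\pi_\sH$ onto $\dbar$-harmonic forms together with the Green's function for $\dbar$ on $\PP^{d-1}$ determines a deformation retraction of $(\Omega^{0,*}(\PP^{d-1}), \dbar)$ onto $(\CC, 0)$. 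Tensoring with $\Omega^{0,*}_c(U)\otimes \fg$ produces a deformation retraction of precosheaves of dg Lie algebras on $\Sigma$,
\[
\pi_* \sG_{\Sigma \times \PP^{d-1}} \;\longleftrightarrow\; \sG_\Sigma ,
\]
with some contracting homotopy $\eta$ built out of the $\PP^{d-1}$ Green's function. Applying $\clieu_*$ already gives the untwisted statement $\pi_* \UU \sG_{\Sigma \times \PP^{d-1}} \xrightarrow{\simeq} \UU \sG_\Sigma$.

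To handle the twist, write the differential on $\pi_* \UU_{\phi_{\kappa,\omega}} \sG_{\Sigma \times \PP^{d-1}}(U)$ as the untwisted CE differential plus the perturbation $K\cdot \phi_{\kappa,\omega}$, and transport this perturbed differential across the deformation retraction via the homological perturbation lemma, in the same manner as the twisted step of Proposition~\ref{prop: hopf}. The leading term of the transported differential is the restriction $\bar\phi$ of $\phi_{\kappa,\omega}$ to the harmonic subcomplex. For $\alpha_0, \alpha_1 \in \sG_\Sigma(U)$, viewed as forms pulled back along $\pi$ (and so constant, hence harmonic, along $\PP^{d-1}$), Fubini gives
\[
\phi_{\kappa,\omega}(\alpha_0,\alpha_1) \;=\; \int_{U \times \PP^{d-1}} \omega \wedge \kappa(\alpha_0 \wedge \partial \alpha_1) \;=\; {\rm vol}(\omega)\cdot \int_U \kappa(\alpha_0 \wedge \partial_\Sigma \alpha_1),
\]
which is exactly the cocycle defining $\UU_{{\rm vol}(\omega)\kappa}(\sG_\Sigma)$. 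The higher-order corrections produced by the series $(\mathrm{id} - K\,\eta\,\phi_{\kappa,\omega})^{-1}$ must then be checked to vanish: the key points are that $\phi_{\kappa,\omega}$ is bilinear, the homotopy $\eta$ raises antiholomorphic degree in the $\PP^{d-1}$ direction, and the harmonic inclusion only meets bidegree $(0,0)$ along $\PP^{d-1}$, so that $\phi_{\kappa,\omega}\circ \eta$ evaluated against harmonic inputs is forced to vanish by bidegree.

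The resulting quasi-isomorphisms are natural in $U \subset \Sigma$ (everything in sight is built from sheaf-theoretic data and Fubini), so they assemble into a quasi-isomorphism of factorization algebras on $\Sigma$. The main obstacle is the perturbation-lemma bookkeeping: one must rule out every iterated term $\eta \circ \phi_{\kappa,\omega} \circ \cdots$ produced by the inverse series, and make precise the bidegree/harmonic-projection argument that forces these to vanish. The K\"ahler structure on $\PP^{d-1}$ and the fact that $\omega$ is a top-degree form make this plausible, but spelling it out rigorously---rather than just verifying the leading cocycle---is where the real work lies.
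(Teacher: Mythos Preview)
Your approach is correct, but it is more elaborate than what the paper does. The paper avoids the homological perturbation lemma entirely by working with the \emph{inclusion} rather than the projection. Concretely, pullback along $\pi$ gives a strict map of dg Lie algebras
\[
\iota : \sG_\Sigma(U) = \Omega^{0,*}_c(U)\otimes\fg \;\hookrightarrow\; \Omega^{0,*}_c(U\times\PP^{d-1})\otimes\fg = \pi_*\sG_{\Sigma\times\PP^{d-1}}(U),
\]
which is a quasi-isomorphism because $H^{0,*}(\PP^{d-1})\cong\CC$. The key observation---which you also make---is that $\iota^*\phi_{\kappa,\omega}$ is \emph{exactly} the cocycle ${\rm vol}(\omega)\,\fj_\Sigma(\kappa)$, on the nose, by Fubini. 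Hence $\iota$ is a strict map of centrally extended $L_\infty$ algebras, and applying the twisted enveloping functor yields a strict map of factorization algebras; that it is a quasi-isomorphism follows immediately from filtering by symmetric degree. No perturbation series, no correction terms to kill.

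What you gain by your route is a map in the projection direction, and the HPL machinery is of course the right tool if one insists on that direction. Your bidegree argument for the vanishing of the higher terms is essentially correct (and becomes trivial once one imposes the side condition $\eta\circ\iota=0$, since $\d_{\phi_{\kappa,\omega}}$ lands in the central line where $\eta$ acts by zero). But the paper's argument shows this is unnecessary overhead: going in via the inclusion, the cocycle restricts exactly rather than merely up to homotopy, so the whole HPL step disappears.
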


The twisted enveloping factorization on the right-hand side is the familiar Kac-Moody factorization alegbra on Riemann surfaces associated to a multiple of the pairing $\kappa$.
The twisting ${\rm vol}(\omega) \kappa$ corresponds to a cocycle of the type in the previous section 
\[
J({\rm vol}(\omega) \kappa) = {\rm vol}(\omega) \int_\Sigma \kappa(\alpha, \partial \beta)
\]
where ${\rm vol}(\omega) = \int_{\PP^{d-1}} \omega$. 

\begin{proof}
Let $U \subset \Sigma$ be an open subset. 
The factorization algebra $\pi_* \UU_{\phi_{\kappa, \theta}} \left(\sG_{\Sigma \times \PP^{d-1}}\right)$ assigns to $U$, the cochain complex
\beqn\label{KMPn}
\left(\Sym \left(\Omega^{0,*} (U \times \PP^{d-1})\right)[1] [K], \dbar + K \phi_{\kappa, \omega}|_{U \times \PP^{d-1}} \right),
\eeqn
where $\phi_{\kappa, \omega}|_{U \times \PP^{d-1}}$ is the restriction of the cocycle to the open set $U \times \PP^{d-1}$. 
Projective space is Dolbeault formal: its Dolbeault complex is quasi-isomorphic to its cohomology.
Thus, we have\footnote{Here, $\Hat{\tensor}$ is the completed projective tensor product.} 
\[
\Omega^{0,*} (U \times \PP^{d-1}) = \Omega^{0,*}(U) \Hat{\tensor} \Omega^{0,*}(\PP^{d-1}) \simeq \Omega^{0,*}(U) \Hat{\tensor} H^*(\PP^{d-1}, \sO) \cong \Omega^{0,*}(U) .
\]
Under this quasi-isomorphism, the restricted cocycle has the form
\[
\phi_{\kappa,\omega}\Big|_{U \times \PP^{d-1}} (\alpha \tensor 1, \beta \tensor 1) = \int_{U} \kappa(\alpha, \partial \beta) \int_{\PP^{n-1}} \omega 
\]
where $\alpha,\beta \in \Omega^{0,*} (U)$ and $1$ denotes the unit constant function on $\PP^{d-1}$. 
But the right hand side is precisely the value of the local functional ${\rm vol}(\omega) J_\Sigma (\kappa)$ on the open set $U \subset \Sigma$. 
Thus, the cochain complex (\ref{KMPn}) is quasi-isomorphic to 
\beqn
\left(\Sym \left(\Omega^{0,*} (U) \right)[1] [K], \dbar + K {\rm vol}(\omega) J_\Sigma (\kappa) \right) .
\eeqn
We recognize this complex as the value of the Kac-Moody factorization algebra on $\Sigma$ of type ${\rm vol}(\omega) J_\Sigma (\kappa)$.
It is immediate to see that identifications above are natural with respect to maps of opens, so that the factorization structure maps are the desired ones,
completing the proof. 
\end{proof}

Now, pick Riemann surfaces $\Sigma_1,\Sigma_2$ and let $\omega_1,\omega_2$ be their K\"{a}hler forms. 
Consider the two projections
\[
\begin{tikzcd}
& \Sigma_1 \times \Sigma_2 \arrow[dl,"\pi_1"'] \arrow[dr,"\pi_2"] & \\
\Sigma_1 & & \Sigma_2
\end{tikzcd}
\]
Consider the closed $(1,1)$-form $\omega = \pi_1^* \omega_1 + \pi_2^* \omega_2 \in \Omega^{1,1}(\Sigma_1 \times \Sigma_2)$. 
According to the proposition above, for any symmetric invariant pairing $\kappa \in \Sym^2 (\fg^*)^\fg$ this form determines a bilinear local functional
\[
\phi_{\kappa,\omega}(\alpha) = \int_{\Sigma_1 \times \Sigma_2} \omega \wedge \kappa(\alpha, \partial \alpha) 
\]
on the local Lie algebra $\sG_{\Sigma_1\times \Sigma_2}$.
A similar calculation as in the previous example implies that the pushforward factorization algebra $\pi_{i*}\UU_{\phi_{\kappa, \omega}}\sG$, $i=1,2$, is isomorphic to the Kac-Moody factorization algebra on the Riemann surface $\Sigma_i$ with level equal to the Euler characteristic $\chi(\Sigma_j)$, where $j \ne i$. 
This result was alluded to in Section 5 of Johansen \cite{JohansenKM}, 
where it is shown that there exists a copy of the Kac-Moody chiral algebra inside the operators of a twist of the $\cN=1$ supersymmetric multiplet (both the gauge and matter multiplets, in fact) on the K\"{a}hler manifold $\Sigma_1 \times \Sigma_2$. 
In Section~\ref{sec: qft} we saw how the $d = 2$ Kac-Moody factorization algebra embeds inside the operators of a free holomorphic theory on a complex surface. 
This holomorphic theory, the $\beta\gamma$ system, is the minimal twist of the $\cN=1$ chiral multiplet.
Thus, we obtain an enhancement of Johansen's result to a two-dimensional current algebra.

\section{Large $N$ limits} \label{sec: largeN}

\def\cycls{{\rm Cyc}_*}
\def\lqt{{\ell q t}}
\def\colim{{\rm colim}}
\def\sl{\mathfrak{sl}}

We take a detour from the main course of this paper to examine the case that the ordinary Lie algebra underlying the current algebra is $\gl_N$, and study the behavior as $N$ goes to infinity.
This provides a clean explanation for the nature of the most important local cocycles that we have studied throughout this work.

The essential fact is the remarkable theorem of Loday-Quillen \cite{LQ} and Tsygan~\cite{Tsy},
which yields a natural map
\[
\lqt(A) : \underset{N \to \infty}{\colim} \, \cliels(\gl_N(A)) \cong \cliels(\gl_\infty(A)) \to \Sym(\cycls(A)[1])
\]
for any dg algebra $A$ over a field $k$ of characteristic~0.
Naturality here means that it works over the category of dg algebras and maps of dg algebras.
(This construction works even for $A_\infty$ algebras.)
When $A$ is unital, this map is a quasi-isomorphism.

This construction makes sense even when working with the {\em local} Lie algebra cochains,
once we introduce a local version of the cyclic cochains.
In consequence we obtain natural local cocycles for all $\sG \ell_{N} = \fgl_N \tensor \Omega^{0,*}$
from cyclic cocycles of $\Omega^{0,*}$.
This uniform-in-$N$ construction illuminates the simplicity of the chiral anomaly.

Our approach here is modeled on prior work of Costello-Li \cite{CLbcov2} and Movshev-Schwarz~\cite{MovSch},
but it is also satisfyingly parallel to the approach of \cite{FHK},
as we explain below.

\subsection{Local cyclic cohomology}

We need a local notion of a cyclic cocycle. 
Our approach is modeled on the work we undertook earlier in this paper,
where we used the concept of a local Lie algebra earlier as a natural setting for currents. 
In practice, we replace a (dg) Lie algebra with a (dg) associative algebra and replace Lie algebra cochains with cyclic cochains, 
always keeping locality in place.

\begin{dfn}\label{def: localalg}
A {\em $C^\infty$-local dg algebra} on a smooth manifold $X$~is:
\begin{enumerate}
\item[(i)] a $\ZZ$-graded vector bundle $A$ on $X$ of finite total rank, whose sheaf of sections we denote~$\sA^{sh}$;
\item[(ii)] a degree one differential operator $\d : \sA^{sh} \to \sA^{sh}$;
\item[(iii)] a degree zero bidifferential operator $\cdot : \sA^{sh} \times \sA^{sh} \to \sA^{sh}$
\end{enumerate}
such that the collection $(\sA^{sh}, \d, \cdot)$ has the structure of a sheaf of associative dg algebras.
\end{dfn}

\begin{rmk}
It's perhaps abusive to use the term ``local algebra" here, since in the conventional mathematical sense a local algebra refers to an ordinary algebra with a unique maximal ideal. 
We choose the terminology in analogy with the concept of a local Lie algebra on a manifold but stress the difference with the usual commutative algebra definition by adding the adjective~$C^\infty$. 
\end{rmk}

We reserve the notation $\sA$ for the cosheaf of compactly supported sections of the bundle $A \to X$.
By the assumptions, this is a cosheaf of dg associative algebras. 
We will abusively refer to a $C^\infty$-local algebra $(A, \d, \cdot)$ simply by its cosheaf $\sA$.

\begin{eg}
The sheaf of smooth functions provides a trivial example of a $C^\infty$-local algebra on any manifold. 
On a complex manifold, the basic example for us is the Dolbeault complex $\Omega^{0,*}_X$.
This example is, of course, also commutative. 
\end{eg}

Any bundle of finite dimensional associative (dg) algebras defines a $C^\infty$-local algebra where the structure maps are differential operators of order zero.

There is a forgetful functor from $C^\infty$-local algebras to local Lie algebras, by remembering only the commutator determined by $\cdot$. 
Thus, every $C^\infty$-local algebra is a local Lie algebra (with same underlying bundle). 

For $C^\infty$-local algebras, there is an appropriate notion of cohomology respecting the locality, 
analogous to local Lie algebra cohomology. 
To define it, first consider the underlying $\ZZ$-graded vector bundle $A$ of a $C^\infty$-local algebra. 
The $\infty$-jet bundle $JA$ of $A$ is a graded left $D_X$-module via the canonical Grothendieck connection on $\infty$-jets,
as is true for any graded vector bundle,
but it has additional structure as well.
Because the differential and product on $A$ are differential operators, 
they intertwine with the $D_X$-module structure on $JA$.
Hence $JA$ is also a dg associative algebra in the category of dg $D_X$-modules,
using the symmetric monoidal product~$- \otimes_{C^\infty_X} -$. 

In this symmetric monoidal dg category, 
one can mimic many standard constructions from homological algebra.
For our current purposes, we are interested in cyclic cohomology,
and hence as a first step, in $\Hoch^*(R,R^*)$, the Hochschild cohomology of an algebra $R$ with coefficients in its linear dual $R^*$.
The usual formulas apply verbatim in the dg category of dg $D_X$-modules.
Hence, the dg $D$-module of Hochschild cochains on $JA$~is 
\[
\Hoch^* (JA, JA^\vee) = \prod_{n \geq 0} {\rm Hom}_{C^\infty_X} (JA^{\tensor n}, C^\infty_X)[-n]
\]
with the usual Hochschild differential.
(We note that the superscript $\otimes n$ means $\otimes_{C^\infty_X}$ iterated $n$ times.)

The {\em reduced} Hochschild cochains is the product without the $n=0$ component. 

\def\Hoch{{\rm Hoch}}
\def\Hochloc{{\rm Hoch}_{\rm loc}}
\def\Cyc{{\rm Cyc}}
\def\Cycloc{{\rm Cyc}_{\rm loc}}

\begin{dfn}\label{dfn: hochloc}
The {\em $C^\infty$-local Hochschild cochains} of a $C^\infty$-local algebra $\sA$ on $X$~is the sheaf
\[
\Hochloc^*(\sA) = \Omega^*_X[2d] \tensor_{D_X} \Hoch^*_{red} (JA, JA^\vee) .
\] 
We denote the global sections of this sheaf of cochain complexes by~$\Hochloc^*(\sA(X))$.
\end{dfn}

The reader will observe its similarity to its counterpart in local Lie algebra cohomology introduced in Section~\ref{sec: localcocycle}. 
Just as in local Lie algebra cohomology, we can concretely understand an element in $\Hochloc^*(\sA(X))$ as follows.
It is a power series on $\sA(X)$ that is a sum of functionals of the form
\[
\alpha_1 \tensor \cdots \otimes \alpha_k \mapsto \int_X  D_1(\alpha_1) \cdots D_k(\alpha_k) \, \omega_X
\]
where each $D_i$ is a differential operator from $\sA$ to~$C^\infty(X)$ and $\omega_X$ is a smooth top form on~$X$. 

There is a cyclic version of this cohomology.
For each $n$, there is an action of the cyclic group $C_n$ on $JA^{\tensor n}$,
and hence on the $n$th component of the reduced Hochschild complex $\Hoch_{red}^* (JA, JA^\vee)$.
Taking the termwise quotient $D_X$-module, we obtain the {\em reduced cyclic cochains}
\[
\Cyc_{red}^* (JA, JA^\vee) = \prod_{n > 0} {\rm Hom}_{C^\infty_X} (JA^{\tensor n}, C^\infty_X) / C_n .
\]
The Hochschild differential restricts to this subspace to yield a dg $D_X$-module. 
We mimic Definition~\ref{dfn: hochloc} for the $C^\infty$-local version of cyclic cohomology of a $C^\infty$-local algebra~$\sA$. 

\begin{dfn}\label{dfn: cycloc}
The {\em $C^\infty$-local cyclic cochains} of a $C^\infty$-local algebra $\sA$ on $X$ is the sheaf
\[
\Cycloc^*(\sA) = \Omega^*_X[2d] \tensor_{D_X} \Cyc^*_{red} (JA) .
\] 
We denote the global sections of this sheaf of cochain complexes by~$\Cycloc^*(\sA(X))$.
\end{dfn}

To make things concrete, 
consider the most relevant $C^\infty$-local algebra for us: the Dolbeault complex $\Omega^{0,*}_X$ on a complex manifold $X$. 
For this $C^\infty$-local Lie algebra, there is a natural degree zero cocycle in $C^\infty$-local cyclic cohomology.

\begin{lem}
\label{lem: univ}
In complex dimension $d$, 
the functional on $\Omega^{0,*}$ defined by
\[
\Theta^\infty_d (\alpha_0 \tensor \cdots \tensor \alpha_d) = \alpha_0 \wedge \partial \alpha_1 \cdots \wedge \partial \alpha_d
\]
is a degree zero cocycle in $\Cycloc^*(\Omega^{0,*})$. 
\end{lem}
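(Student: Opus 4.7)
The plan is to verify three properties: cyclic invariance, Hochschild closure, and internal (Dolbeault $\dbar$) closure. Since the local cochain complex is built by tensoring over $D_X$ with $\Omega^*_X[2d]$, all identities below need only hold modulo $\partial$- and $\dbar$-exact forms, which is precisely the freedom to integrate by parts. Degree zero will follow by counting: $d+1$ inputs contribute Hochschild degree $d$, the $d$ copies of the holomorphic operator $\partial$ shift by $+d$ in form type, and the $[2d]$ shift on $\Omega^*_X$ matches the output landing naturally in top-form degree.

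For cyclic invariance, I would rewrite
\[
\Theta^\infty_d(\alpha_d,\alpha_0,\ldots,\alpha_{d-1}) = \alpha_d\wedge\partial\alpha_0\wedge\partial\alpha_1\wedge\cdots\wedge\partial\alpha_{d-1}
\]
by noting that $\partial\alpha_0\wedge\partial\alpha_1\wedge\cdots\wedge\partial\alpha_{d-1} = \partial(\alpha_0\wedge\partial\alpha_1\wedge\cdots\wedge\partial\alpha_{d-1})$, because $\partial^2=0$ and $\partial$ is a graded derivation. Applying the Leibniz rule to $\partial(\alpha_d\wedge\alpha_0\wedge\partial\alpha_1\wedge\cdots\wedge\partial\alpha_{d-1})$ and discarding this exact form, one transfers the $\partial$ from $\alpha_0$ to $\alpha_d$ at the cost of a sign; graded-commutativity of the wedge then moves $\partial\alpha_d$ to the right end, yielding $\pm\Theta^\infty_d(\alpha_0,\ldots,\alpha_d)$ with exactly the sign required for cyclic invariance.

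For Hochschild closure, I would write out the coboundary
\[
(b\Theta^\infty_d)(\alpha_0,\ldots,\alpha_{d+1}) = \sum_{i=0}^{d}(-1)^i \Theta^\infty_d(\alpha_0,\ldots,\alpha_i\alpha_{i+1},\ldots,\alpha_{d+1}) + (-1)^{d+1}\Theta^\infty_d(\alpha_{d+1}\alpha_0,\alpha_1,\ldots,\alpha_d)
\]
and apply the graded Leibniz rule $\partial(\alpha_i\alpha_{i+1}) = \partial\alpha_i\wedge\alpha_{i+1} + (-1)^{|\alpha_i|}\alpha_i\wedge\partial\alpha_{i+1}$ inside each term for $i\geq 1$. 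The resulting pairs telescope against neighboring boundary terms, and the wraparound term cancels the remaining leftover after rotating $\alpha_{d+1}$ into position via graded-commutativity and one integration by parts. This is the standard HKR-type computation realizing $\Theta^\infty_d$ as the cyclic cocycle associated to the holomorphic volume pairing on $\Omega^{0,*}$; it works uniformly because $(\Omega^{0,*},\wedge)$ is graded-commutative and $\partial$ is a graded derivation commuting with~$\dbar$.

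Finally, internal closure under $\dbar$ follows from $[\dbar,\partial]=0$ and the Leibniz rule for $\dbar$: applying $\dbar$ to $\Theta^\infty_d(\alpha_0,\ldots,\alpha_d)$ produces a sum of terms with one $\dbar$ acting on some $\alpha_i$, which after integration by parts (permissible since we are tensored over $D_X$ with $\Omega^*_X$) recombine into $\dbar$ of a single total expression. The main obstacle I anticipate is sign bookkeeping when the $\alpha_i$ have arbitrary Dolbeault degrees in the Hochschild step; I would manage this by first verifying the identity on homogeneous inputs under the standard Koszul sign convention and then extending by multilinearity, mirroring the approach used in the proof of Proposition~\ref{prop j map} for the Lie-algebraic cocycle $\fj_X(\theta)$.
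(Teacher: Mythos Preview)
Your proposal is correct and follows essentially the same approach as the paper, which likewise decomposes the differential into the Hochschild piece and the $\dbar$ piece, invokes graded commutativity of the wedge product for the former, and observes that the latter produces a total derivative (referring back to Proposition~\ref{prop j map}). Your version is simply more explicit: where the paper writes in one line that cyclic invariance and Hochschild closure ``follow from graded commutativity of the wedge product,'' you correctly spell out that cyclic invariance needs an integration by parts in $\partial$ (not just commutativity), and that Hochschild closure uses the Leibniz rule for $\partial$ together with a telescoping cancellation---this is the point the paper's terse sentence is gesturing toward.
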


This cocycle is ``universal'' in the sense that it only depends on dimension.

\begin{proof}
The proof is similar to that of Proposition \ref{prop j map}. 
Note that the differential on $C^\infty$-local cochains consists of two terms: the $\dbar$ operator and the ordinary Hochschild differential. 
It follows from graded commutativity of the wedge product that the cochain is cyclic and closed for the Hochschild differential. 
To see that it is closed for the other piece of the differential, observe that
\[
\dbar \Theta^\infty_d(\alpha_0,\cdots,\alpha_d) = \Theta^\infty_d(\dbar \alpha_0, \alpha_1,\ldots,\alpha_d) \pm \Theta_d^\infty(\alpha_0, \dbar \alpha_1,\ldots \alpha_d) \pm \cdots \pm \Theta_d^\infty(\alpha_0, \alpha_1,\ldots \dbar \alpha_d) .
\]
The right hand side is the cocycle $\Theta_d^\infty$ evaluated on the derivation $\dbar$ applied to the element $\alpha_0 \tensor \cdots \tensor \alpha_d$. 
The left hand side is a total derivative and hence vanishes in the $C^\infty$-local cochain complex. 
\end{proof}

\subsection{Local Loday-Quillen-Tsygan theorem and the chiral anomaly}

We now turn to the relationship between cyclic cocycles for a $C^\infty$-local algebra $\sA$ and cocycles for the local Lie algebras $\gl_N( \sA)$ and~$\gl_\infty (\sA)$.
The Loday-Quillen-Tsygan theorem implies the following,
since the map $\lqt$ is natural and hence respects locality everywhere.

\begin{prop}
\label{prop: cycloc}
Let $\sA$ be a $C^\infty$-local algebra.
For every positive integer $N$, there is a map of sheaves
\[
\lqt_N^* : \Cycloc^*(\sA)[-1] \to \cloc^*(\gl_N( \sA)) 
\] 
that factors through a map of sheaves
\[
\lqt^* : \Cycloc^*(\sA)[-1] \to \cloc^*(\gl_\infty( \sA)) = \lim_{N \to \infty} \cloc^*(\gl_N( \sA))  .
\]
\end{prop}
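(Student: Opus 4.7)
The plan is to bootstrap the classical Loday--Quillen--Tsygan map into the local setting by running the construction internally in the symmetric monoidal category of (dg) left $D_X$-modules, and then de~Rham-ing down to get a map of sheaves of complexes.

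First I would recall the precise shape of the classical LQT map on cochains. For a dg algebra $R$ over $\CC$, one has a natural trace-symmetrization map
\[
\lqt_N^*(R) : \Cyc^*_{red}(R)[-1] \to \clies_{red}(\gl_N(R))
\]
sending a cyclic cochain $\varphi$ of arity $n$ to the Lie cochain whose value on $M_1 \otimes a_1, \ldots, M_n \otimes a_n$ (with $M_i \in \gl_N$ and $a_i \in R$) is the antisymmetrization of $\tr(M_1 \cdots M_n)\, \varphi(a_1, \ldots, a_n)$. This formula is manifestly natural in the dg algebra $R$, in the sense that it commutes with maps of dg algebras. Crucially, it is also compatible with the block inclusions $\gl_N \hookrightarrow \gl_{N+1}$, because enlarging the matrix and padding with zeros leaves the relevant traces unchanged; this is what lets the finite-$N$ maps assemble into a single map landing in $\clies_{red}(\gl_\infty(R)) = \varprojlim_N \clies_{red}(\gl_N(R))$.

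Next I would observe that nothing in this construction uses that we work over $\CC$: it only uses the symmetric monoidal structure and internal Hom. Applied to the jet bundle $JA$, which by hypothesis is a dg associative algebra object in the symmetric monoidal dg category $(D_X\text{-mod}, \otimes_{C^\infty_X})$, the same formula produces a morphism
\[
\lqt_N^*(JA) : \Cyc^*_{red}(JA)[-1] \to \clies_{red}(\gl_N(JA))
\]
of dg $D_X$-modules, compatible with $N$. Here $\gl_N(JA) = \gl_N \otimes_{C^\infty_X} JA$ is the jet bundle of the local Lie algebra $\gl_N(\sA)$, since finite direct sums commute with jets. Trace and antisymmetrization are $C^\infty_X$-linear and commute with the Grothendieck connection, so the construction really does live in $D_X$-modules.

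Finally I would apply the de Rham functor $\Omega^*_X[2d] \otimes_{D_X} (-)$ to both sides. By the definitions of $\Cycloc^*(\sA)$ and $\cloc^*(\gl_N(\sA))$ recalled in Definitions~\ref{dfn: cycloc} and Section~\ref{sec:cloc}, this produces exactly the claimed map of sheaves
\[
\lqt_N^* : \Cycloc^*(\sA)[-1] \to \cloc^*(\gl_N(\sA)),
\]
and the $N$-compatibility from the previous step descends to a map into the limit $\cloc^*(\gl_\infty(\sA))$. The main thing to check carefully, and the only place where something can go wrong, is the interaction of the trace-symmetrization formula with the $D_X$-module structures and with passage to reduced (co)cyclic complexes: namely, that the antisymmetrization kills the degenerate $n=0$ strata, that the Hochschild/cyclic differential on $JA$ is intertwined with the Chevalley--Eilenberg differential on $\gl_N(JA)$ under the formula, and that everything is strict enough to survive tensoring with $\Omega^*_X[2d]$ over $D_X$. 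Each of these is a formal consequence of the corresponding classical identity for $\lqt_N^*(R)$ applied internally, but unpacking the signs and the cyclic symmetrization in the $D_X$-linear setting is the step I expect to require the most care.
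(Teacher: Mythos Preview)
Your proposal is correct and is essentially a detailed unpacking of the paper's own argument: the paper simply asserts that the proposition follows because the classical Loday--Quillen--Tsygan map is natural in the dg algebra and hence ``respects locality everywhere,'' which is exactly what you spell out by running the construction internally in $D_X$-modules on $JA$ and then applying $\Omega^*_X[2d]\otimes_{D_X}(-)$. The paper gives no further detail, so your version is strictly more explicit than theirs.
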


\begin{rmk}
A version of this result was given in~\cite{CLbcov2} for $\sA = \Omega^{0,*}(X)$, 
where $X$ is a Calabi-Yau manifold.
They interpret $C^\infty$-local cocycles for $\Omega^{0,*}(X) \tensor \fgl_\infty$ as the space of ``admissible'' deformations for holomorphic Chern-Simons theory on $X$,
and they identify the cyclic side in terms of Kodaira-Spencer gravity on~$X$.
\end{rmk}

Proposition \ref{prop: cycloc} sends a degree zero $C^\infty$-local cyclic cocycle to a degree one local Lie algebra cocycles for $\fgl_N(\sA)$.
Of particular interest is the case $\sG l_{N} = \fgl_N \tensor \Omega^{0,*}$. 
The degree zero cocycle $\Theta_d^\infty \in \Cycloc^*(\Omega^{0,*})$ from Lemma~\ref{lem: univ} thus determines a degree one cocycle 
\[
\lqt^*_N(\Theta_d^\infty) \in \cloc^*(\sG l_N)
\]
for each $N > 0$. 
In fact, we have already met this class of cocycles for~$\sG l_{N}$. 

\begin{dfn}
For each $N$ and $k$, the functional $\theta_{k,N}(A) = {\rm tr}_{\fgl_N} (A^k)$ defines a homogenous degree $k$ polynomial on $\fgl_N$ that is $\fgl_N$-invariant.
\end{dfn}

\begin{lem}
\label{lem:pullbackofthetainfinity}
For every $N$, 
\[
\lqt_N^*(\Theta_d^\infty) = \fj(\theta_{d+1, N})
\]
where $\fj$ from Definition~\ref{dfn: j}.
\end{lem}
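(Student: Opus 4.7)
The plan is to unpack both sides of the proposed equality as explicit expressions on pure-tensor inputs $X_i = M_i \otimes \alpha_i \in \gl_N \otimes \Omega^{0,*}$ and then match them term-by-term. First, I would recall the standard cochain-level formula for the LQT map: given a reduced cyclic cochain $\phi$ on a dg associative algebra $A$ with $d+1$ arguments, the associated Lie algebra cochain on $\gl_N(A)$ is the antisymmetrization
\begin{equation*}
\lqt_N(\phi)(X_0, \ldots, X_d) = \frac{1}{(d+1)!} \sum_{\sigma \in S_{d+1}} \operatorname{sgn}(\sigma)\, \operatorname{tr}\!\left(M_{\sigma(0)} \cdots M_{\sigma(d)}\right) \phi(a_{\sigma(0)}, \ldots, a_{\sigma(d)}),
\end{equation*}
with implicit Koszul signs. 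Because this formula is built from natural algebraic operations, it upgrades directly to the $C^\infty$-local setting of Proposition \ref{prop: cycloc}. Substituting $\phi = \Theta_d^\infty$ produces a signed sum over $S_{d+1}$ of terms of the form $\operatorname{tr}(M_{\sigma(0)} \cdots M_{\sigma(d)}) \, \alpha_{\sigma(0)} \wedge \partial\alpha_{\sigma(1)} \wedge \cdots \wedge \partial\alpha_{\sigma(d)}$.

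Next, I would expand $\fj(\theta_{d+1,N})$. By Definition \ref{dfn: j}, one has $\fj(\theta_{d+1,N})(X_0, \ldots, X_d) = \theta_{d+1,N}(M_0, \ldots, M_d)\cdot \alpha_0 \wedge \partial\alpha_1 \wedge \cdots \wedge \partial\alpha_d$, where $\theta_{d+1,N}(M_0, \ldots, M_d)$ is the symmetric polarization of $X \mapsto \operatorname{tr}(X^{d+1})$, which unpacks to $\frac{1}{(d+1)!}\sum_{\tau \in S_{d+1}} \operatorname{tr}(M_{\tau(0)} \cdots M_{\tau(d)})$. Viewed as a Chevalley--Eilenberg cochain on the shifted dg Lie algebra $\sG_{\CC^d}[1]$, this expression is implicitly antisymmetrized over its inputs; after antisymmetrization and the change of summation variable $\tau \mapsto \tau\sigma^{-1}$, the resulting double sum collapses into a single signed sum over $S_{d+1}$ that matches the LQT expression exactly.

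The remaining step is to verify that the sign and combinatorial factors line up. The main subtlety, and the principal obstacle, is that the integrand $\alpha_0 \wedge \partial\alpha_1 \wedge \cdots \wedge \partial\alpha_d$ is not literally cyclically invariant at the cochain level but only modulo total $\partial$-derivatives; these vanish in $\cloc^*$ thanks to the $\Omega_X^*\otimes_{D_X}(-)$ quotient that implements integration by parts. Combining this Dolbeault integration-by-parts identity with cyclicity of the trace identifies the orbits of the respective symmetric and alternating sums, and so the two cocycles agree. In short, the geometric content of the lemma is already contained in the fact that both sides arise from the same natural construction applied to the same algebraic datum; the proof itself is careful Koszul-sign bookkeeping.
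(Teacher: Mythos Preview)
Your proposal is correct and follows essentially the same route as the paper: both unwind the dual Loday--Quillen--Tsygan map on pure tensors and match the result against the explicit formula for $\fj(\theta_{d+1,N})$. The paper's version is slightly more economical because it immediately invokes the graded commutativity of $\Omega^{0,*}$ to collapse the generalized trace $\sum_{i_0,\ldots,i_d}(X_0)_{i_0 i_1}\otimes\cdots\otimes(X_d)_{i_d i_0}$ to the ordinary trace of the matrix product, sidestepping most of the explicit $S_{d+1}$ bookkeeping you outline; your integration-by-parts observation about cyclicity modulo total $\partial$-derivatives is exactly what makes that shortcut legitimate in $\cloc^*$, and the paper leaves this implicit.
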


In a sense $\Theta^\infty_d$ is the ``universal'' cocycle --- in that it only depends on the complex dimension and not on any Lie algebraic data --- that determines the most important local cocycles we have encountered before.

This universality is perhaps most apparent when we view cocycles as anomalies to solving the quantum master equation. 
For concreteness, consider the $\beta\gamma$ system with values in $V$ as in Section \ref{sec: qft}.
This theory is natural in the vector space $V$ in the sense that if $V \to W$ is a map of vector spaces, then there is an induced map of theories from the theory based on $V$ to the theory based on $W$.\footnote{This means, for instance, that there is an induced map between the spaces of solutions to the equations of motion.}
Formal aspects of BV quantization implies that anomalies to solving the QME get pulled back along such maps between theories. 

If we choose an identification $V \cong \CC^N$, this implies the the anomaly to solving the $\gl_N = \gl(V)$-equivariant QME is pulled back from the anomaly to solving the $\gl_\infty$-equivariant QME. 
For the $\beta\gamma$ system on $\CC^d$ with values in $\CC^\infty = \cup_{N > 0} \CC^N$, the anomaly to solving the $\gl_\infty$-equivariant QME is precisely the class $\Theta_{d}^\infty$. 

This is consistent with our calculations in Section \ref{sec: qft} and this Lemma \ref{lem:pullbackofthetainfinity}. 
Indeed, if $V$ is additionally a $\fg$-representation, we can further pull-back the anomaly along the map of theories induced by the defining map $\rho : \fg \to \fgl(V)$ of the representation. 

\begin{proof}(of Lemma \ref{lem:pullbackofthetainfinity})
Let $A$ be a dg algebra.
Consider the Lie algebra $\fgl_N(A)$ and the colimit $\gl_\infty(A) = {\rm colim} \; \fgl (A)$. 
At the level of homology, the ordinary Loday-Quillen-Tsygan map is of the form
\[
\begin{array}{ccl}
\clieu_{*+1}(\fgl_N(A)) & \to & {\rm Cyc}_{*}(A) \\
X_0 \wedge \cdots \wedge X_n & \mapsto & \sum_{\sigma \in S_n} (-1)^{\sigma} {\rm tr} \left(X_0 \tensor X_{\sigma(1)} \tensor\cdots \tensor X_{\sigma(n)} \right), 
\end{array}
\] 
which induces a dual map in cohomology ${\rm Cyc}^*(A, A^\vee) \to \clie^{*+1}(\fgl_N(A))$. 
In the formula, we have used the generalized trace map
\[
{\rm tr} : {\rm Mat}_N(A)^{\tensor(n+1)} \to A^{\tensor (n+1)} 
\]
that maps an $(n+1)$-tuple $X_0\tensor \cdots \tensor X_d$ to 
\[
\sum_{i_0,\ldots,i_n} (X_0)_{i_0 i_1} \tensor (X_1)_{i_1i_2} \tensor \cdots \tensor (X_n)_{i_n i_0}
\]
where $(X_k)_{ij} \in A$ denotes the $ij$ matrix entry of~$X_k$.

The map on local functionals is essentially this ordinary (dual) Loday-Quillen-Tsygan map applied to the $\infty$-jets of the commutative algebra $\Omega^{0,*}$. 
Since $\Omega^{0,*}$ is commutative, the generalized trace is simply the trace of the product.

We can thus read off the image of $\Theta^\infty_d$ under the $\ell q t_N^*$ as the local Lie algebra cocycle
\begin{align*}
\ell q t_N^*(\Theta_d^\infty)\left(\alpha_0, \cdots, \alpha_d) = {\rm tr}_{\fgl_N}(\alpha_0 \wedge \partial \alpha_1\wedge \cdots \wedge \partial \alpha_d\right),
\end{align*}
which is precisely $\fj(\theta_{d+1,N})$. 
\end{proof}

\subsection{Holomorphic translation invariant cohomology}

We turn our attention to $C^\infty$-local cyclic cocycles defined on affine space $\CC^d$ that are both translation invariant and $U(d)$-invariant. 
We show that up to homotopy there is a unique such cyclic cocycle on the $C^\infty$-local algebra  $\Omega^{0,*}(\CC^d)$ given by $\Theta^\infty_d$. 

\begin{prop}\label{prop: cyctrans}
The class $\Theta^\infty_d$ spans the $U(d)$-invariant, holomorphic translation invariant, $C^\infty$-local cyclic cohomology of $\Omega^{0,*}(\CC^d)$ in degree zero.
Thus
\[
H^0\left(\Cycloc^*(\Omega^{0,*}(\CC^d))^{U(d) \ltimes \CC^d_{hol}} \right) \cong \CC .
\] 
\end{prop}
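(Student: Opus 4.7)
The plan is to mirror the proof of the Lie algebra analog, Proposition~\ref{prop: trans j}, carried out in Appendix~\ref{sec: hol trans}, and adapt it to the cyclic setting. The goal is to reduce the computation of the $\CC^d_{\rm hol}$- and $U(d)$-invariant $C^\infty$-local cyclic cohomology of $\Omega^{0,*}(\CC^d)$ to a small explicit complex whose cohomology is transparent. Three tools drive the reduction: translation invariance along $\CC^d$ (passing from the full site to jets at $0$), homotopical $\CC^d_{\rm hol}$-invariance (trivializing $\partial/\partial \bar z_i$ through the contraction operators $\eta_i$), and $U(d)$-weight counting.

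Concretely, I would proceed as follows. First, translation invariance along $\CC^d$ identifies the translation-invariant part of $\Cycloc^*(\Omega^{0,*}(\CC^d))$ with a $[2d]$-shift of reduced cyclic cochains on the formal dg jet algebra
\[
R_d \;=\; \CC[[\hat z_1,\ldots,\hat z_d,\hat{\bar z}_1,\ldots,\hat{\bar z}_d]] \otimes \Lambda[d\bar z_1,\ldots,d\bar z_d]
\]
with $\dbar = \sum_i d\bar z_i\, \partial/\partial \hat{\bar z}_i$, together with a Koszul resolution encoding the residual $\CC^d$-action on the fiber. Next, imposing the remaining $\CC^d_{\rm hol}$ invariance forces strict invariance under $\partial/\partial \hat z_i$ and the presence of null-homotopies for $\partial/\partial \hat{\bar z}_i$ given by the $\eta_i$, i.e., one works with a Koszul complex adjoining $d$ degree $(-1)$ generators. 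Third, decompose under $U(d)$: since the integrand defining $\Cycloc^*$ produces a $(d,d)$-density, any degree-zero survivor must have $U(d)$-weight zero, arity $d+1$, and carry exactly $d$ copies of the $(0,1)$-form variables $d\bar z_i$ together with matching holomorphic derivatives in the remaining arguments. A short bookkeeping argument then forces any such cocycle to take the form $\alpha_0 \otimes \cdots \otimes \alpha_d \mapsto c\cdot \alpha_0 \wedge \partial \alpha_1 \wedge \cdots \wedge \partial \alpha_d$ up to coboundaries, and a Koszul-style spectral sequence confirms $H^0 \cong \CC \cdot [\Theta_d^\infty]$.

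The main obstacle will be the second step: cyclic cochains carry only $C_n$-equivariance rather than the full $S_n$-equivariance enjoyed by local Lie algebra cochains, so one must verify that the Koszul resolution by the $\eta_i$ and the quotient by holomorphic translations descend cleanly through the cyclic quotients. A pragmatic way to sidestep this technical point is first to establish the Hochschild analogue by rerunning the argument of Appendix~\ref{sec: hol trans} in parallel (where the $S_n$-symmetry is present), and then to invoke Connes' $SBI$ long exact sequence together with $U(d)$-equivariance to extract the cyclic statement from the Hochschild one. As a sanity check on the lower bound, the local Loday--Quillen--Tsygan map of Proposition~\ref{prop: cycloc} combined with Lemma~\ref{lem:pullbackofthetainfinity} shows $\lqt_N^*[\Theta_d^\infty] = [\fj(\theta_{d+1,N})]$ is nontrivial for every $N$, so $[\Theta_d^\infty] \neq 0$ automatically; the genuine content of the proposition is therefore the upper bound $\dim H^0 \leq 1$, which is precisely what the weight-counting argument above delivers.
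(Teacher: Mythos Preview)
Your outline is broadly correct and shares the paper's first two reductions: pass to jets at the origin to kill translations, then recognize that the $\CC^d_{\rm hol}$-invariant subcomplex is a derived tensor product $\CC\cdot \d^d z \otimes^{\mathbb L}_{\CC[\partial_{z_i}]} \Cyc^*_{\rm red}(\CC[[z_1,\ldots,z_d]])[d]$, i.e.\ Lie algebra homology of the abelian $\CC^d$ with coefficients in reduced cyclic cochains of the formal disk. Where you diverge is the endgame.

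The paper does not do weight-counting on cyclic words, nor does it route through Hochschild and the $SBI$ sequence. Instead it computes $\Cyc_*(\CC[[z_1,\ldots,z_d]])$ directly via the Hochschild--Kostant--Rosenberg theorem together with the formal Poincar\'e lemma: the mixed complex $(\CC[[z_i]][\d z_i][t^{-1}],\, t\,\d_{dR})$ is quasi-isomorphic to $\CC[t^{-1}]$, so dually $\Cyc^*_{\rm red}\simeq \CC[t]/\CC$ with $|t|=2$. The Lie algebra homology of the abelian $\CC^d$ acting trivially is then $\Sym(\CC^d[1])\otimes (\CC[t]/\CC)\,\d^d z[d]$, and $U(d)$-invariance singles out the line $\partial_{z_1}\cdots\partial_{z_d}\otimes t^d$. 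This is the entire cohomological content; $\Theta^\infty_d$ lands on $t^d$.

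Your approach would work, but at higher cost. The weight-counting sketch is not yet a proof: you identify the correct arity and form degree, but you have not controlled the Hochschild/cyclic differential in that graded piece, so ``up to coboundaries'' is asserted rather than shown. Your fallback via $SBI$ is sound in principle, but it requires a separate Hochschild computation and then a long exact sequence argument in the presence of the $U(d)$-action; by contrast, the HKR + formal Poincar\'e step collapses the cyclic side to a single graded line in one move and renders the $C_n$ vs.\ $S_n$ concern moot. The LQT nontriviality check you mention is a nice redundancy but is not needed once the complex is seen to be one-dimensional in degree~$0$.
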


For a definition of the notation used in the proposition we refer to Appendix \ref{sec: hol trans}.

\begin{proof}
The calculation is similar to that of the holomorphic translation invariant local Lie algebra cohomology of $\sG_d$ given in Appendix \ref{sec: hol trans}. 
We list the steps of the calculation first, and we will justify them below. 
\begin{enumerate}
\item[(1)] There is an identification of the holomorphic translation invariant deformation complex 
\beqn\label{step1}
\Cycloc^*(\Omega^{0,*}(\CC^d)) \simeq \CC \cdot \d^d z \tensor^{\mathbb{L}}_{\CC[\partial_{z_i}]} {\rm Cyc}_{red}^*(\CC[[z_1,\ldots,z_d]])[d] .
\eeqn
Notice the overall shift down by the dimension $d$. 
\item[(2)] We can recast the right-hand side as the Lie algebra homology of the $d$-dimensional abelian Lie algebra $\CC^d = {\rm span}\left\{\partial_{z_i}\right\}$ with coefficients in the module 

\noindent$\Cyc_{red}^*(\CC[[z_1,\ldots,z_d]]) \d^d z [d]$:
\[
\CC \cdot \d^d z \tensor^{\mathbb{L}}_{\CC[\partial_{z_i}]} {\rm Cyc}_{red}^*(\CC[[z_1,\ldots,z_d]])[d] \cong {\rm C}^{\rm Lie}_*\left(\CC^d ; \Cyc_{red}^*(\CC[[z_1,\ldots,z_d]]) \d^d z \right)[d] .
\] 
\item[(3)] The $U(d)$-invariant subcomplex is quasi-isomorphic to $\left(\CC[t] / \CC\right) [2d]$, where $t$ is a formal variable of degree $+2$. 
From this, the claim follows. 
\end{enumerate}

Step (1) follows from a result completely analogous to Corollary 2.29 in \cite{BWthesis} for local Lie algebra cohomology. 
The commutative algebra $\CC[\partial_{z_i}]$ is equal to the enveloping algebra of the abelian Lie algebra $\CC^d = {\rm span} \{\partial_{z_i}\}$. 
Hence, the right hand side of Equation (\ref{step1}) is precisely the Lie algebra homology in step (2). 

We now justify Step (3). 
First, we apply the Hochschild-Kostant-Roesenberg theorem to the cyclic homology of the ring $\CC[[z_1,\ldots,z_d]]$.
It asserts a quasi-isomorphism 
\[
{\rm Cyc}_*(\CC[[z_1,\ldots,z_d]]) \simeq \left(\CC[[z_i]][\d z_i] [t^{-1}], t \d_{dR}\right)
\]
where the $\d z_i$ have degree $-1$ and $t$ is a formal parameter of degree $+2$ (note that the operator $t \d_{dR}$ is of degree $+1$). 
The formal Poincar\'{e} lemma applied to $\hD^d$ then implies a quasi-isomorphism
\[
{\rm Cyc}_*(\CC[[z_1,\ldots,z_d]]) \simeq \CC [t^{-1}] .
\]
Thus, the holomorphic invariant subcomplex is quasi-isomorphic to
\beqn\label{step3}
\clieu_*(\CC^d ; (\CC [t] / \CC) \cdot \d^d z) [d] .
\eeqn
Here, we have identified the dual of $\CC[t^{-1}]$ with $\CC[t]$ and quotiented out by the constant term since we are taking reduced cohomology. 
Notice that $(\CC [t] / \CC) \cdot \d^d z$ has a trivial $\CC^d$-action. 

We have yet to take $U(d)$-invariants. 
The complex (\ref{step3}) is equal to
\[
\Sym^* \left(\CC^d[1]\right) \tensor (\CC [t] / \CC) \cdot \d^d z) [d] .
\]
A $U(d)$-invariant element must be proportional to the factor $\partial_{z_1} \cdots \partial_{z_d} \in \Sym^d(\CC^d[1])$.
Hence, the $U(d)$-invariant subcomplex is
\[
\CC \cdot (\partial_{z_1} \cdots \partial_{z_d}) \tensor (\CC [t] / \CC) \cdot \d^d z) [2d] = \left(\CC[t] / \CC\right) [2d]
\] 
as desired. 
The class of $\Theta^\infty_d$ corresponds to the element $t^{d}$ in this presentation. 
\end{proof}

Consider the dg algebra $A_d$ that we have used as an algebraic model for the Dolbeault complex of punctured affine space $\Omega^{0,*}(\CC^d \setminus 0)$. 
In Theorem 2.3.5 of \cite{FHK}, they show that there is a unique $U(d)$-invariant class in the cyclic cohomology of $A_d$ in degree one given by the functional
\[
a_0 \tensor \cdots \tensor a_d \mapsto \oint a_0 \wedge \partial a_1 \cdots \wedge \partial a_d .
\]
Up to our conventional degree shifts, we are seeing the analogous uniqueness result at the level of local functionals.

\subsection{A noncommutative example}

The main objects that have appeared in this section so far are the cyclic chains and cochains of the commutative dg algebra $\Omega^{0,*}(X)$. 
In this subsection, we display a variant of the above examples where we introduce a noncommutative deformation of this algebra. 
Specifically, we assume $X$ is a holomorphic symplectic manifold and assume we have a deformation quantization of holomorphic functions.
This introduces a dg algebra deformation of the Dolbeault complex, and we can consider the resulting deformation of the current algebra.
We display the flexibility of our techniques by exhibiting a free field realization of the resulting current algebra using a noncommutative version of the $\beta\gamma$ system. 

Noncommutative gauge theories appear in the description of the open sectors of superstring theories~\cite{WittenNonComm}, and our primary interest in this class of examples is that we expect them to appear as a symmetries in the corresponding sectors of supergravity and $M$-theory. 
More definitive results in this direction have appeared in the program for studying the superstring theory through its holomorphic twists developed in the papers of Costello and Li in~\cite{CostelloLiSUGRA} and by Costello in~\cite{CosOmega, CosM2}. 

As usual, suppose $X$ is a complex manifold, and as above, consider the local Lie algebra $\sG l_N = \Omega^{0,*}(X) \tensor \fg$ on $X$ for every $N > 0$. 
If $X$ is additionally holomorphic symplectic, we obtain a deformation of this family of local Lie algebras described in the following way. 
Suppose that $\star_\epsilon$ is a formal holomorphic deformation quantization of $(X,\omega)$. 
This is an $\epsilon$-dependent associative product on holomorphic functions 
\[
\star_\epsilon : \sO^{hol}(X) \times \sO^{hol}(X) \to \sO^{hol}(X)[[\epsilon]]
\]
where, term-by-term in $\epsilon$, the product is given by a holomorphic bidifferential operator. 
This associative product on $\sO^{hol}(X)[[\epsilon]]$ extends to one on the Dolbeault complex, giving the following definition. 

\begin{dfn}
Define the sheaf of associative dg algebras
\[
\sA_\epsilon := (\Omega^{0,*}(X)[[\epsilon]], \dbar, \star_{\epsilon})
\]
where the differential is the usual $\dbar$ operator, and $\star_{\epsilon}$ is the Moyal product induced from the deformation quantization.
\end{dfn}

In fact, $\sA_\epsilon$ is essentially a $C^\infty$-local algebra in the sense of Definition~\ref{def: localalg}. 
The only subtlety is that $\sA_\epsilon$ is not given by the sections of a finite rank vector bundle.
However, it is a pro-$C^\infty$-local algebra in the sense that it can be expressed as a limit of $C^\infty$-local algebras
\[
\sA_\epsilon = \lim_{k \to \infty} \sA_\epsilon / \epsilon^{k+1} .
\] 

This algebra allows us to define a non-commutative variant of the current algebra. 
Namely, we can consider the Lie algebra of $N \times N$ matrices with values in $\sA_\epsilon$ that we denote by $\gl_N(\sA_{\epsilon})$. 
Again, this is not a local Lie algebra in the strict sense, since the underlying vector bundle is infinite dimensional. 
However, it is finite rank over the ring $\CC[[\epsilon]]$, and all of the same constructions of local Lie algebras still make sense in this context. 
Note that this current algebra reduces modulo $\epsilon$ to the local Lie algebra $\sG l_N = \Omega^{0,*}_X \tensor \gl_N$:
\[
\sG l_N = \lim_{\epsilon \to 0} \gl_N(\sA_{\epsilon})  .
\]

\subsubsection{Classical Noether current}

Just like in the case of the ordinary current algebra associated to $\sG l_N$, we can contemplate a free field realization of $\gl_N(\sA_{\epsilon})$.
The simplest way to do this is to consider the analogue of the $\beta\gamma$ system in this noncommutative context. 
The $\beta\gamma$ system was built from the Dolbeault complex on the complex manifold $X$. 
The non-commutative variant is obtained by replacing the Dolbeault complex with the dg algebra~$\sA_\epsilon$. 

Let $V$ be a finite dimensional $\CC$-vector space.
The free theory we consider has fields 
\[
(\gamma, \beta) \in \sA_\epsilon \tensor V \oplus \sA_\epsilon \tensor V^* [d-1] 
\]
and action functional
\[
S(\beta,\gamma) = \int_X \Tr_V(\beta \star_{\epsilon} \dbar \gamma)
\]
By trace we mean the usual map $\Tr_V : \End(V) = V \tensor V^* \to \CC$. 
We will refer to this as the ``non-commutative $\beta\gamma$ system" on $X$ with values in $V$.

\begin{rmk}
Note that this is not a classical theory in a strict sense because the space of fields is not the sections of a {\em finite} rank vector bundle. 
We can make sense of this rigorously by considering our theory as one defined over the base ring $\CC[[\epsilon]]$. 
In other words, we have defined a family of field theories over the formal disk with coordinate~$\epsilon$. 
\end{rmk}

\begin{lem}\label{lem: nonbg}
As a classical BV theory, the non-commutative $\beta\gamma$ system with values in $V$ is equivalent to the ordinary $\beta\gamma$ system with values in $V$ (considered as a trivial family of field theories over the formal disk with coordinate~$\epsilon$). 
\end{lem}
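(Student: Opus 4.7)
The plan is to exhibit the equivalence via the identity on the space of fields, noting that both theories share the same underlying graded vector space $\sE = \Omega^{0,*}(X)[[\epsilon]] \otimes (V \oplus V^*[d-1])$ and the same linear BRST operator $\dbar$; only the action functional differs, through its use of the star product. Thus it suffices to show that $S_\epsilon - S_0$ is trivial in local functional cohomology, i.e., equals a total $\dbar$-derivative as a Lagrangian density. Writing $\star_\epsilon = \wedge + \sum_{k \geq 1}\epsilon^k B_k$, where each $B_k$ is a holomorphic bidifferential operator built from $k$-fold contractions with the holomorphic Poisson bivector $\omega^{-1}$, we expand
\[
S_\epsilon - S_0 \;=\; \sum_{k \geq 1}\epsilon^k \int_X \Tr_V\bigl(B_k(\beta, \dbar \gamma)\bigr).
\]

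The central observation is that each $B_k$ is built from holomorphic derivatives $\partial/\partial z^i$, which anticommute with $\dbar$, contracted against the $\dbar$-closed tensor $\omega^{-1}$. Consequently, up to signs,
\[
B_k(\beta, \dbar \gamma) + B_k(\dbar \beta, \gamma) \;=\; \pm\,\dbar B_k(\beta, \gamma).
\]
After symmetrizing the action in $(\beta, \gamma)$ via integration by parts (which introduces only $\dbar$-exact corrections to $S_0$), the right-hand side above is manifestly $\dbar$-exact, so each $\epsilon^k$-correction vanishes as a local functional. This is, in effect, a direct avatar of the classical trace property for Kontsevich-type deformation quantizations of holomorphic symplectic manifolds, $\int_X f \star_\epsilon g = \int_X fg$ against the Liouville volume $\omega^n$, a consequence of $\omega^{-1}$ being divergence-free with respect to~$\omega^n$.

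It follows that $S_\epsilon$ and $S_0$ represent the same local functional modulo exact terms, so the identity map on $\sE$ realizes the claimed equivalence of classical BV theories as a trivial $\CC[[\epsilon]]$-family. The principal obstacle in executing this plan is tracking signs and degrees carefully in the Dolbeault-graded setting, together with verifying the commutation relations between $B_k$ and $\dbar$ with correct signs; once these formal checks are in place, the rest is bookkeeping. An alternative structural approach would invoke the formality of the Hochschild complex of $\sA_\epsilon$ to produce a gauge equivalence to the commutative algebra $\Omega^{0,*}(X)[[\epsilon]]$ over $\CC[[\epsilon]]$ that immediately yields the BV-equivalence, though this requires substantially more machinery.
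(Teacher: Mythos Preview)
Your core argument is correct and matches the paper's: both theories have the same fields and BRST operator, and the action functionals agree because the $\epsilon$-corrections to the integrand are total spatial derivatives. The paper's proof is a single line: locally $f \star_\epsilon g = fg + \epsilon\,\varepsilon_{ij}\,\partial_{z_i}f\,\partial_{z_j}g + \cdots$, so $\beta \star_\epsilon \dbar\gamma$ and $\beta\,\dbar\gamma$ differ by a total derivative. You arrive at the same conclusion via the trace property $\int_X f \star_\epsilon g = \int_X fg$ (divergence-freeness of $\omega^{-1}$ with respect to the Liouville form), which is exactly the integrated version of the paper's statement.

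However, your exposition takes a confusing detour. The claim that it suffices to show $S_\epsilon - S_0$ is ``a total $\dbar$-derivative as a Lagrangian density'' conflates two different notions: for the \emph{identity} on fields to be a BV equivalence you need $S_\epsilon = S_0$ as functionals, i.e., the integrands differ by a total \emph{spatial} derivative, not that the difference is $\dbar$-exact in the BV complex. Your Leibniz identity $B_k(\beta,\dbar\gamma) + B_k(\dbar\beta,\gamma) = \pm\dbar B_k(\beta,\gamma)$ is true but does not by itself give $\int B_k(\beta,\dbar\gamma)=0$; the second term on the left does not vanish, and ``symmetrizing the action in $(\beta,\gamma)$'' is not meaningful here since $\beta$ and $\gamma$ live in different form degrees. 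What actually does the work is the trace property you cite at the end, which is independent of the $\dbar$-Leibniz discussion. Dropping that detour would make your argument both cleaner and identical to the paper's.
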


\begin{proof}
Locally, on $\CC^d$, the $\star_{\epsilon}$-product has the form
\[
f \star_{\epsilon} g = fg + \epsilon \varepsilon_{ij} \frac{\partial f}{\partial z_i} \frac{\partial g}{\partial z_j} + \cdots
\]
From this, we see that $\beta \star_{\epsilon} \dbar \gamma$ and $\beta \dbar \gamma$ differ by a total derivative. 
Thus, locally, this non-commutative $\beta\gamma$ system is equivalent to the usual one (up to adjoining the formal parameter~$\epsilon$).
\end{proof}

It appears that adding the non-commutative deformation does not deform the free holomorphic field theory. 
Once we consider symmetries, however, we see a deformation of the usual free field realization. 

Fix an identification of $V \cong \CC^N$, for some $N \geq 1$. 
As in the non-commutative case, there is a symmetry of this $\beta\gamma$ system by the current algebra built from the ordinary local Lie algebra $\sG l_N$, but this does not use the symplectic structure on $X$. 
However, once we turn on the non-commutative deformation, we see that the $\beta\gamma$ system has a symmetry by the deformed current algebra built from $\gl_N(\sA_{\epsilon})$. 

Indeed, there is a Noether current in this setup given by
\[
I_{\epsilon,N} (\alpha, \beta,\gamma) = \int \Tr_{V}(\beta \wedge (\alpha \star_{\epsilon} \gamma)) 
\]
where $\alpha \in \gl_N(\sA_{\epsilon})$.
By $\alpha \star_\epsilon \gamma$ we mean the algebra action of $\gl_N(\sA_{\epsilon})$ on $\sA_\epsilon \tensor V$. 

\begin{lem}
This Noether current determines a map of factorization algebras on $X$
\[
J^\cl_\epsilon: \UU(\gl_N(\sA_{\epsilon})) \to \Obs^{\cl}_{\epsilon, N}
\]
where $\Obs^{\cl}_{\epsilon, N}$ is the factorization algebra of classical observables of the non-commutative $\beta\gamma$ system with values in $V = \CC^N$.
Modulo $\epsilon$, this map reduces to the map of factorization algebras in Proposition~\ref{prop:CNT}.
\end{lem}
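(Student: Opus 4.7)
The plan is to apply the classical Noether theorem of \cite{CG2} (Theorem 11.0.1.1, which underlies Proposition~\ref{prop:CNT}) in this noncommutative setting, working $\CC[[\epsilon]]$-linearly as suggested by the remark preceding the lemma. Once the hypotheses are verified, the theorem directly produces the map $J^\cl_\epsilon$ as a map of $P_0$-factorization algebras. The hypotheses reduce to two assertions: that $I_{\epsilon,N}$ is a local functional on $\sE_{\epsilon,N} \oplus \gl_N(\sA_\epsilon)[1]$, and that it satisfies the $\gl_N(\sA_\epsilon)$-equivariant classical master equation
\[
(\dbar + \d_{\gl_N(\sA_\epsilon)})\, I_{\epsilon,N} + \tfrac{1}{2}\{I_{\epsilon,N}, I_{\epsilon,N}\} = 0.
\]

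Locality is immediate, because $\star_\epsilon$ is an $\epsilon$-adically convergent series whose coefficients are holomorphic bidifferential operators, so $\Tr_V(\beta \wedge (\alpha \star_\epsilon \gamma))$ is built from polydifferential operators applied to $\alpha, \beta, \gamma$. The $\dbar$-piece of the CME follows because $\dbar$ is a derivation for $\star_\epsilon$ (by construction of $\sA_\epsilon$) and because $\d_{\gl_N(\sA_\epsilon)}$ is induced by the $\star_\epsilon$-commutator. The essential computation is the bracket term. I would compute the functional derivatives $\delta_\beta I_{\epsilon,N}(\alpha_i)$ and $\delta_\gamma I_{\epsilon,N}(\alpha_j)$, contract them via the shifted symplectic pairing, and then invoke associativity of $\star_\epsilon$ to obtain
\[
\{I_{\epsilon,N}(\alpha_1), I_{\epsilon,N}(\alpha_2)\} = \int_X \Tr_V\!\bigl(\beta \star_\epsilon (\alpha_1 \star_\epsilon \alpha_2 - \alpha_2 \star_\epsilon \alpha_1) \star_\epsilon \gamma\bigr) = I_{\epsilon,N}([\alpha_1,\alpha_2]_{\star_\epsilon}).
\]
This is exactly what is needed for the quadratic-in-$\alpha$ part of the CME to cancel the Chevalley-Eilenberg differential on $\gl_N(\sA_\epsilon)$, and the lack of higher brackets on $\gl_N(\sA_\epsilon)$ means no further terms are generated.

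For the reduction statement, setting $\epsilon = 0$ replaces $\star_\epsilon$ by the pointwise product on $\Omega^{0,*}$, identifying $\gl_N(\sA_\epsilon) \otimes_{\CC[[\epsilon]]} \CC$ with the local Lie algebra $\sG l_N$ and $I_{\epsilon,N} \bmod \epsilon$ with the Noether current of Proposition~\ref{prop:CNT}. Combined with Lemma~\ref{lem: nonbg}, which identifies $\sE_{\epsilon,N}/\epsilon$ with the ordinary $\beta\gamma$ system valued in $V$, this shows $J^\cl_\epsilon \bmod \epsilon$ tautologically agrees with the map of Proposition~\ref{prop:CNT}.

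The main obstacle is not the algebraic content of the CME, which is a clean consequence of associativity, but rather the functional-analytic setup: $\gl_N(\sA_\epsilon)$ is only a pro-local Lie algebra, not a local Lie algebra in the strict sense of Definition~\ref{def: localalg} (its underlying bundle is not of finite rank). One must verify that the constructions of $\UU$, $\Cur^\cl$, and the classical Noether theorem extend $\CC[[\epsilon]]$-linearly and commute with the inverse limit $\sA_\epsilon = \lim_k \sA_\epsilon/\epsilon^{k+1}$, reducing on each $\epsilon^{k+1}$-truncation to the strict local case. This extension is routine, since each truncation $\sA_\epsilon/\epsilon^{k+1}$ is a genuine $C^\infty$-local algebra, but it is the only substantive technical step beyond the one-line associativity computation.
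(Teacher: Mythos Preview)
The paper does not supply a proof of this lemma; it is stated and immediately followed by the next subsection, treating the result as a routine adaptation of Proposition~\ref{prop:CNT} to the $\CC[[\epsilon]]$-linear, noncommutative setting. Your proposal carries out exactly that adaptation---verifying locality, checking the equivariant classical master equation via associativity of $\star_\epsilon$, and handling the pro-local subtlety by truncation---and is both correct and more detailed than anything the paper offers.
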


\subsubsection{Equivariant quantization}

Since the noncommutative $\beta\gamma$ system is still free, there exists a unique quantization $\Obs^\q_{\epsilon, N}$ as a factorization algebra on $X$ for each $N$. 

Let's turn to the quantization of the classical $\fgl_N(\sA_\epsilon)$ symmetry, where the situation is similar to the $\sG_X$-equivariant $\beta\gamma$ system studied in Section \ref{sec: qft}.
Although the global case is interesting, we will restrict ourselves to the simplified local situation where 
\[
X = \CC^d = \CC^{2n}
\] 
and $\omega$ is the standard symplectic form. 
We can employ analogous Feynman diagrammatic methods to contemplate quantum equivariance in the noncommutative context.

We ask that the Noether current $I_{\epsilon,N}$ solves the $\fgl_N(\sA_{\epsilon})$-equivariant quantum master equation.
Locally, on $\CC^d$, the obstruction to satisfying the QME is given by the following local cocycle 
\beqn\label{noncommobs}
\int \Tr_{\fgl_N} (\alpha \star_{\epsilon} \partial \alpha \star_{\epsilon} \cdots \star_{\epsilon} \partial \alpha) \in \cloc^*(\fgl_N(\sA_\epsilon)) .
\eeqn
In the ordinary commutative case, we were able to characterize this anomaly as being determined by an element in $\Sym^{d+1}(\fg^\vee)$.
For the noncommutative situation, we do not have a direct way of identifying this local cocycle.

We arrive at an explicit characterization by taking the large $N$ limit, where we are able to identify this anomaly algebraically. 
Indeed, we have the Loday-Quillen-Tsygan map for local functionals
\[
\lqt^* : \Cycloc^*(\sA_\epsilon)[-1] \to \cloc^*(\gl_\infty(\sA_\epsilon)) = \lim_{N \to \infty} \cloc^*(\gl_N( \sA))  .
\]
Thus, the large $N$ anomaly must come from a class in $\Cycloc^*(\sA_\epsilon)$ of cohomological degree zero. 

By a similar proof as in Proposition \ref{prop: cyctrans}, one can show that the cohomology of the translation invariant subcomplex of $\Cycloc^*(\sA_\epsilon)$ is equal to (a shift of) the cyclic cohomology of the formal Weyl algebra
\[
\Cyc^*(\Hat{A}_{2n}, \Hat{A}_{2n}^\vee) [2n] .
\]
Here, $\Hat{A}_{2n}$ is the formal Weyl algebra on generators $\{x_1,\ldots, x_n, y_1,\ldots y_n\}$ satisfying the commutation relation
\[
[x_i, y_j] = \epsilon \delta_{ij} .
\] 
This cyclic cohomology is studied in depth in \cite{Willwacher}, where it is shown that there is {\em unique}, up to scaling, nontrivial class in the cyclic cohomology
\[
\Theta_{\epsilon}^\infty \in HC^{2n} (\Hat{A}_{2n}, \Hat{A}_{2n}^\vee) .
\]
For us, a multiple of this class represents the anomaly to the equivariant quantization the noncommutative $\beta\gamma$ system at large $N$. 

We can now use the universal nature of this class to characterize anomalies at finite $N$ to obtain the following quantum Noether map. 

\begin{prop}
The $\fgl_N(\sA_\epsilon)$-equivariant quantization determines a map of factorization algebras on $\CC^d = \CC^{2n}$: 
\[
J^\q_\epsilon: \UU_{a \Theta_{\epsilon, N}} (\fgl_N(\sA_\epsilon)) \to \Obs^\q_{\epsilon, N}
\]
where $a \Theta_{\epsilon, N} \in H^1_{loc}(\fgl_N(\sA_{\epsilon}))$ is scalar multiple the class obtained from the universal cyclic cocycle $\Theta_{\epsilon}^\infty$ under the Loday-Quillen-Tsygan map
\[
\lqt^* : \Cycloc^*(\sA_\epsilon)[-1] \to \cloc^*(\gl_N( \sA))  .
\]
\end{prop}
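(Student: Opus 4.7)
The plan is to parallel the argument of Section~\ref{sec: qft} for the commutative $\beta\gamma$ system, with the noncommutative deformation handled by appealing to the large $N$ universality result. First, I would set up the Feynman diagrammatic machinery verbatim from Section~\ref{sec: qft}: since the underlying free theory $\sA_\epsilon \otimes V \oplus \sA_\epsilon \otimes V^*[d-1]$ is equivalent to the ordinary $\beta\gamma$ system by Lemma~\ref{lem: nonbg}, we can use the same propagator $P_{\epsilon<L}$ (where $\epsilon$ here is the UV cutoff, not the deformation parameter --- notation to be cleaned up) and the same heat kernel $K_\epsilon$. The only change is at the interaction vertex: in place of the undeformed functional $\int \<\beta, \alpha\cdot\gamma\>_V$, we now use $I_{\epsilon, N}(\alpha,\beta,\gamma) = \int \Tr_V(\beta \wedge (\alpha \star_\epsilon \gamma))$. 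As in Lemma~\ref{lem: obs}, only wheel diagrams of valency $(d+1)$ contribute to the one-loop anomaly to the equivariant QME, and the $L\to 0$ limit exists as a local cocycle, yielding an anomaly $\Theta_{\epsilon,N} \in \cloc^*(\fgl_N(\sA_\epsilon))$ of cohomological degree one.

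Next I would identify the local form of this anomaly. The combinatorics of the wheel diagrams, with the $\star_\epsilon$-product at each vertex and with the analytic propagator unchanged, shows that $\Theta_{\epsilon, N}$ is represented by the functional displayed in Equation~(\ref{noncommobs}), up to a constant that is fixed by the explicit computation of Appendix~\ref{sec: feynman} (only the combinatorics of how many vertices receive $\partial$ rather than the nature of the algebra enters that constant). This anomaly is manifestly natural with respect to the inclusions $\fgl_N(\sA_\epsilon) \hookrightarrow \fgl_\infty(\sA_\epsilon)$ induced by $\CC^N \hookrightarrow \CC^\infty$, since both the interaction $I_{\epsilon, N}$ and the resulting weights depend only on matrix entries and pull back compatibly.

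For the large $N$ identification, I would apply Proposition~\ref{prop: cycloc}: the anomaly at $N=\infty$ lies in the image of $\lqt^*$ and hence comes from a class in $\Cycloc^0(\sA_\epsilon)$. Imposing translation invariance (which our wheel-diagram anomaly manifestly has, since both $I_{\epsilon, N}$ and $P_{\epsilon<L}$ are translation invariant on $\CC^{2n}$) and applying the analogue of Proposition~\ref{prop: cyctrans} for $\sA_\epsilon$ reduces the calculation to $HC^{2n}(\Hat{A}_{2n}, \Hat{A}_{2n}^\vee)$. Willwacher's uniqueness theorem then guarantees that any such nontrivial class is proportional to $\Theta_\epsilon^\infty$, so the $\gl_\infty$-anomaly equals $a \cdot \lqt^*(\Theta_\epsilon^\infty)$ for some scalar $a$. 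Naturality under the inclusions forces $\Theta_{\epsilon,N} = a \cdot \lqt_N^*(\Theta_\epsilon^\infty)$ for this same $a$, independently of $N$.

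Finally, with the anomaly identified, the existence of $J^\q_\epsilon$ follows directly from the general quantum Noether theorem of \cite{CG2} (Theorem 12.1.0.2), applied to the pro-$C^\infty$-local Lie algebra $\fgl_N(\sA_\epsilon)$ acting on the free BV theory: the theorem produces a map of factorization algebras from the twisted quantum current algebra $\Cur^\q_{\Theta_{\epsilon,N}}(\fgl_N(\sA_\epsilon))$ to $\Obs^\q_{\epsilon,N}$, which after specializing $\hbar$ yields the desired map out of the twisted enveloping factorization algebra $\UU_{a\Theta_{\epsilon, N}}(\fgl_N(\sA_\epsilon))$. The main obstacle will be the rigor around $\sA_\epsilon$ not being of finite rank over $\CC$: one has to verify that the CG formalism extends to pro-$C^\infty$-local algebras in families over $\CC[[\epsilon]]$ --- concretely, that the $L\to 0$ limits exist order-by-order in $\epsilon$ (which they do, since each coefficient in $\epsilon$ is built from finitely many holomorphic bidifferential operators, so the usual holomorphic renormalization results of \cite{BWhol} apply) and that the quantum Noether theorem of \cite{CG2} can be read off level-wise in the $\epsilon$-adic filtration.
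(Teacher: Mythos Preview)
Your proposal follows essentially the same route as the paper, which presents the argument in the paragraphs preceding the proposition rather than in a formal proof environment: compute the one-loop anomaly as a wheel diagram yielding the functional~(\ref{noncommobs}), pass to large $N$ via Proposition~\ref{prop: cycloc}, reduce the translation-invariant local cyclic cohomology of $\sA_\epsilon$ to the cyclic cohomology of the formal Weyl algebra $\Hat{A}_{2n}$, invoke Willwacher's uniqueness to pin the class down as a multiple of $\Theta_\epsilon^\infty$, and then descend to finite $N$ by naturality before applying the quantum Noether theorem of~\cite{CG2}.

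One small correction: you write that the scalar is ``fixed by the explicit computation of Appendix~\ref{sec: feynman}'', but that appendix treats only the commutative vertex; the $\star_\epsilon$-product inserts additional holomorphic bidifferential operators at each vertex, so the analytic weight is genuinely different order-by-order in $\epsilon$. The paper's remark immediately following the proposition makes this explicit --- determining $a$ is left as a ``tedious, albeit seemingly straightforward'' Feynman diagram analysis that is \emph{not} carried out. Since the proposition only asserts the existence of some scalar $a$, this does not affect your argument, but you should drop the claim that Appendix~\ref{sec: feynman} fixes it.
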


\begin{rmk}
In order to nail down the constant $a$ would require a tedious, albeit seemingly straightforward, Feynman diagram analysis akin to Section \ref{sec: qft}. 
\end{rmk}

By Lemma \ref{lem: nonbg}, we see that on $\CC^{2n}$ the factorization algebra of the noncommutative $\beta\gamma$ system $\Obs^\q_{\epsilon, N}$ is actually isomorphic to the factorization algebra
\[
\Obs^\q_{\CC^{N}} \tensor_{\CC} \CC[[\epsilon]]
\]
where $\Obs^\q_{\CC^{N}}$ is the ordinary $\beta\gamma$ system of maps $\CC^{2n} \to \CC^N$. 
Thus, as an immediate corollary, we see that the quantum Noether map is of the form
\[
\UU_{a \Theta_{\epsilon, N}} (\fgl_N(\sA_\epsilon)) \to \Obs^\q_{\CC^{N}} \tensor_{\CC} \CC[[\epsilon]] .
\]
This means that inside the $\beta\gamma$ system we have {\em two} different free field realizations: (1) the one from Section \ref{sec: qft} where we realized the ordinary current algebra at some central extension in $\Obs^\q_{\CC^{N}}$, and (2) the one we have just exhibited, which realizes a central extension of the algebra $\fgl_N(\sA_{\epsilon})$. 

\appendix

\addtocontents{toc}{\protect\setcounter{tocdepth}{1}}

\section{Computing the deformation complex}\label{sec: hol trans}

In this appendix we prove Proposition~\ref{prop: trans j}. 
That is, we compute the holomorphically translation invariant component of $H_{\rm loc}(\sG_d)$, 
the Lie algebra cohomology of the local Lie algebra $\sG_d = \Omega^{0,*}_c \tensor \fg$ on~$\CC^d$. 

\subsection{Holomorphic translation invariance}

We have already discussed the local cohomology cochain complex $\cloc^*(\sG_d)$ in Section~\ref{sec:cloc}.
To pick out the subcomplex of holomorphically translation invariant elements,
we introduce yet another dg Lie algebra $\CC^{d}_{\rm hol}$ whose invariants are precisely this subcomplex.

\begin{dfn}
Let $\CC^{d}_{\rm hol} = \CC^{2d} \oplus \CC^d[1]$ be generated by the partial derivatives $\partial/\partial z_i$ and $\partial/\partial \zbar_i$ in degree 0 and by elements $\{\Bar{\eta}_i\}_{i=1}^d$ in degree $-1$.
Equip it with a trivial bracket and with a differential that $\eta_i$ to $\frac{\partial}{\partial \zbar_i}$.
\end{dfn}

There is a canonical inclusion of dg Lie algebras
\[
\CC\{\partial / \partial z_1, \ldots, \partial / \partial z_d\} \hookrightarrow \CC^{d}_{\rm hol}
\]
so that any representation ``forgets'' down to an action of holomorphic infinitesimal translations.
But a dg representation of this abelian dg Lie algebra has an action of all the partial derivatives,
but where the actions of the $\partial/\partial \zbar_i$ are trivial homotopically.
In this sense $\CC^{d}_{\rm hol}$ encodes the idea of infinitesimal translations that are purely holomorphic up to homotopy.

Directly from these definitions one can verify the following.

\begin{lem}
The canonical inclusion of enveloping algebras
\[
\CC[\partial / \partial z_1, \ldots, \partial / \partial z_d] \hookrightarrow U(\CC^{d}_{\rm hol})
\]
is a quasi-isomorphism.
\end{lem}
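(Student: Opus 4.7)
The plan is to identify $U(\CC^d_{\rm hol})$ explicitly as a Koszul complex and then recognize it as a standard resolution.

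Since the bracket on $\CC^d_{\rm hol}$ is trivial, the Poincar\'e--Birkhoff--Witt theorem (in its dg form) identifies the enveloping algebra with the graded-symmetric algebra on the underlying cochain complex. Thus, as a graded algebra,
\[
U(\CC^d_{\rm hol}) \;\cong\; \Sym\bigl(\CC^d_{\rm hol}\bigr) \;\cong\; \CC\bigl[\tfrac{\partial}{\partial z_1},\ldots,\tfrac{\partial}{\partial z_d},\tfrac{\partial}{\partial \zbar_1},\ldots,\tfrac{\partial}{\partial \zbar_d}\bigr] \otimes \Lambda\bigl[\eta_1,\ldots,\eta_d\bigr],
\]
where the $\eta_i$ sit in degree $-1$ and hence generate an exterior algebra. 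The differential, determined as a derivation by $\eta_i \mapsto \partial/\partial \zbar_i$, acts trivially on the polynomial generators $\partial/\partial z_i$ and $\partial/\partial \zbar_i$.

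The next step is to recognize this as the Koszul complex of the sequence $(\partial/\partial \zbar_1, \ldots, \partial/\partial \zbar_d)$ inside the polynomial ring $R = \CC[\partial/\partial z_i, \partial/\partial \zbar_i]$. Explicitly, write
\[
U(\CC^d_{\rm hol}) \;\cong\; R \otimes \Lambda[\eta_1,\ldots,\eta_d], \qquad d(\eta_i) = \tfrac{\partial}{\partial \zbar_i}.
\]
This is precisely the Koszul complex $K_\bullet(R;\tfrac{\partial}{\partial \zbar_1},\ldots,\tfrac{\partial}{\partial \zbar_d})$. Since the elements $\partial/\partial \zbar_i$ form a regular sequence in the polynomial ring $R$, this complex is a free resolution of the quotient
\[
R\big/\bigl(\tfrac{\partial}{\partial \zbar_1},\ldots,\tfrac{\partial}{\partial \zbar_d}\bigr) \;\cong\; \CC[\tfrac{\partial}{\partial z_1},\ldots,\tfrac{\partial}{\partial z_d}],
\]
concentrated in cohomological degree zero.

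Putting this together, the projection $U(\CC^d_{\rm hol}) \to \CC[\partial/\partial z_i]$ (killing $\eta_i$ and $\partial/\partial \zbar_i$) is a quasi-isomorphism, and the given inclusion is a one-sided inverse at the level of cochains. Hence it too is a quasi-isomorphism. There is no real obstacle here: the only thing that could go wrong is a sign issue in verifying that the PBW identification intertwines the differential on $U(\CC^d_{\rm hol})$ with the Koszul differential, which is immediate from the derivation property and the degrees of the generators.
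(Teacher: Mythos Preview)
Your argument is correct. The paper does not actually supply a proof of this lemma; it simply remarks that one can verify the statement directly from the definitions. Your Koszul-complex argument is exactly the kind of direct verification the paper has in mind, and it is carried out cleanly: since the bracket on $\CC^d_{\rm hol}$ is trivial, the enveloping algebra is the graded-symmetric algebra on the underlying complex, and the differential $\eta_i \mapsto \partial/\partial \zbar_i$ makes this into the Koszul resolution of $\CC[\partial/\partial z_1,\ldots,\partial/\partial z_d]$ over $\CC[\partial/\partial z_i,\partial/\partial \zbar_i]$. Nothing more is needed.
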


In other words, $U(\CC^{d}_{\rm hol})$ is quasi-isomorphic to the algebra of constant coefficient holomorphic differential operators on~$\CC^d$. 

\subsection{Language to phrase the main result}

We now turn to the main objects of interest here.

\begin{dfn}
Let $\cloc^*(\sG_d)^{\CC^d_{\rm hol}}$ denote the subcomplex in $\cloc^*(\sG_d)$ consisting of elements strictly invariant under $\CC^d_{\rm hol}$.
Let
\[
\cloc^*(\sG_d)^{U(d) \ltimes \CC^d_{\rm hol}}
\]
denote the subcomplex of elements that are invariant under both translation by $\CC^d_{\rm hol}$ and rotation by the unitary group~$U(d)$.
\end{dfn}

We are interested in the map $\fj$, from Section~\ref{sec: hol trans main}, for the affine space $\CC^d$.
We will use this map to completely characterize the degree one $U(d)$-invariant, holomorphically translation invariant local functionals on~$\sG_d$. 

The degree one result will follow from a stronger, general result on the cochain level.
To formulate it, we introduce some notation.

\subsubsection{De Rham forms for dg Lie algebras}

Let $\cL$ denote an arbitrary dg Lie algebra. 
Interpret the dg commutative algebra given by the Chevalley-Eilenberg cochains $\clie^*(\cL)$ as functions on a formal moduli space~$B \cL$:
\[
\sO(B \cL) = \clie^*(\cL) .
\]
In the same line of thought, define the $k$-forms on $B\cL$~by
\begin{align*}
\Omega^k(B \cL) & := \clie^*(\cL ; \Lambda^k(\cL^\vee [-1])) \\
 & =  \clie^*(\cL ; \Sym^k(\cL^\vee))[-k] .  
\end{align*}
Here, $\cL^\vee$ denotes the coadjoint representation of~$\cL$. 

\begin{eg}
A simple example gives evidence that this interpretation is not so far-fetched.
Consider the case $\cL = \CC^n [-1]$, a purely abelian Lie algebra.
Then
\[
\sO(B \cL) = \clie^*(\cL) = \CC[[t_1,\ldots,t_n]]
\]
with generators $t_i$ in degree 0.
(These generators are the coordinates on the formal $n$-disk.)
Similarly, the de Rham forms are
\begin{align*}
\Omega^k(B \cL) 
&= \sO(B \cL)  \otimes \Lambda^k(\cL^\vee) \\
 &=\CC[[t_1,\ldots, t_n]] \tensor \Lambda^k[\d t_1, \cdots, \d t_n],
\end{align*}
where we use $\d t_i$ to denote a basis for the coadjoint representation~$\cL^\vee$.
(We use $\Lambda^k$ denote the $k$th exterior power of the vector space spanned by those generators.)
Everything is in cohomological degree zero.
Manifestly everything agrees with the usual constructions of algebraic de Rham forms.
\end{eg}

Let $\partial : \Omega^{k}(B\cL) \to \Omega^{k+1}(B\cL)$ denote the de Rham operator for $B\cL$. 
The space of {\em closed} $k$-forms is defined by the totalization of the double complex
\[
\Omega^{k}_{cl}(B \cL) = {\rm Tot}\left( \Omega^k(B\cL) \xto{\partial} \Omega^{k+1}(B \cL)[-1] \to \cdots \right).
\]
The case where $k=0$ is the usual de Rham complex, which we will denote by~$DR(B\cL)$.

The constant functions on $B\fg$ can be appended to obtain a complex
\[
DR_{\rm aug}(B\cL) = {\rm Tot}\left( \CC[1] \to \Omega^0(B\cL) \xto{\partial} \Omega^{1}(B \cL)[-1] \to \cdots \right),
\]
which is acyclic.
(Consider the spectral sequence for the underlying double complex where one uses the de Rham differential first. The Poincar\'e lemma ensures the cohomology vanishes on this page.)
The inclusion map $\Omega^{k}_{cl}(B \cL) \to DR_{\rm aug}(B\cL)$ has quotient given by the opposite truncation
\[
 {\rm Tot}\left( \CC[1] \to \Omega^0(B\cL) \xto{\partial} \cdots \xto{\partial} \Omega^{k-1}(B \cL)\right).
\]
We denote it by~$\tau_{< k} DR_{\rm aug}(B \cL)$. 
The de Rham differential $\partial: \Omega^{k-1}(B \cL) \to \Omega^{k}(B \cL)$ determines a cochain map
\[
\tau_{< k} DR_{\rm aug}(B \cL)[d] \xto{\partial} \Omega^{k}_{cl}(B \cL),
\]
whose cone is the augmented de Rham complex.
This observation implies that the map determines a quasi-isomorphism from the truncation to the closed $k$-forms.

\subsubsection{Improving $\fj$}

Before stating the main result of this appendix, we note that there is a natural enhancement of the cochain map
\[
\fj : \Sym^{d+1} (\fg^*)^{\fg} [-1] \to \cloc^*(\sG_d)
\]
from Section~\ref{sec: hol trans main} to a cochain map
\beqn\label{fj1}
\fj : \Omega^{d+1}_{cl} (B \fg) [d] \to \cloc^*(\sG_d)  
\eeqn
that we now describe. 

Because $\Omega^d(B \fg) \cong \clie^*(\fg, \Lambda^{d} \fg^\vee)$, 
a $d$-form $\eta$ of cohomological degree $m$ determines a linear map
\[
\eta: \Lambda^d \fg \to \clie^m(\fg).
\]
We can extend this map over $\Omega^{0,*}$ to obtain a linear map
\[
\widetilde{\eta}: \Lambda^d \sG_d \to \clie^*(\sG_d),
\]
and an element of $\clie^*(\sG_d)$ can be evaluated on an element of $\sG_d$ to obtain a de Rham form.
Hence, we define the element $\widetilde{\fj}(\eta)$ in $\clie^*(\sG_d)$ by 
\[
\widetilde{\fj} (\eta) (\alpha) = \widetilde{\eta}(\partial \alpha \wedge \cdots \wedge \partial \alpha) (\alpha) .
\]
We extend $\widetilde{\fj}$ to forms $\Omega^k(B\fg)$ with $k < d$ as the zero map.

Direct computation then shows the following.

\begin{lem}
The construction above determines a cochain map $\widetilde{\fj}: \tau_{<d+1} DR_{\rm aug}(B\fg)[2d] \to \cloc^*(\sG_d)$. 
\end{lem}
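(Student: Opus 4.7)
The plan is to verify directly that $\widetilde{\fj}$ intertwines the differentials on the two complexes. Since $\widetilde{\fj}$ is declared to be the zero map on the components $\CC[1]$ and $\Omega^k(B\fg)$ for $k<d$, and to be given by the explicit formula only on $\Omega^d(B\fg)$, the cochain-map condition reduces to two identities in $\cloc^*(\sG_d)$:
\begin{itemize}
\item[(a)] for $\eta \in \Omega^d(B\fg)$, the equality $d_{\cloc}\, \widetilde{\fj}(\eta) = \widetilde{\fj}(d_{CE}\,\eta)$, where $d_{CE}$ is the Chevalley--Eilenberg differential of $\Omega^d(B\fg)$;
\item[(b)] for $\eta \in \Omega^{d-1}(B\fg)$, the equality $\widetilde{\fj}(\partial_{dR}\,\eta) = 0$.
\end{itemize}
Note that the truncation $\tau_{<d+1}$ is crucial: it suppresses the de Rham differential out of $\Omega^d(B\fg)$, so we never need to compute $\widetilde{\fj}$ on $\Omega^{d+1}(B\fg)$.

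For identity (a), I would simply adapt the proof of Proposition \ref{prop j map}, which was written for $\theta \in \Sym^{d+1}(\fg^*)^\fg$ but in fact establishes $(\dbar + d_{\sG}) \fj(\theta) = \fj(d_{CE}\theta)$ for an arbitrary $\theta \in \Lambda^{d+1}\fg^\vee$. The $\dbar$ contribution vanishes because $\partial$ and $\dbar$ anticommute and $\widetilde{\eta}$ is built from graded wedging on the Dolbeault slots, and the $d_{\sG}$ contribution is handled by the naturality of $\widetilde{\eta}\colon \Lambda^d \sG_d \to \clie^*(\sG_d)$ in the Lie algebra argument. The argument uses no special property of $\eta$ beyond being an element of $\clie^*(\fg;\Lambda^d\fg^\vee)$, so it extends verbatim from the $\fg$-invariant subspace to the whole of $\Omega^d(B\fg)$.

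The substantive new content is identity (b), and I expect this to be the main obstacle. The idea is that, under the formula for $\widetilde{\fj}$, the de Rham differential on $B\fg$ becomes the holomorphic de Rham differential $\partial$ on $\CC^d$ once one plugs in $d$ copies of the same argument $\partial \alpha$. Concretely, using Cartan's formula for $\partial_{dR} \eta$ on tangent vectors at the ``point'' $\alpha \in B\fg$,
\[
(\partial_{dR}\eta)(v_1,\ldots,v_d)(\alpha) \;=\; \sum_i (-1)^{i+1}\, D_{v_i}\!\left[\eta(v_1,\ldots,\widehat{v_i},\ldots,v_d)(\alpha)\right],
\]
and specialising all $v_i$ to $\partial\alpha$ (on which the Lie bracket term vanishes, since these are ``constant'' tangent vectors on the formal scheme), one identifies $D_{\partial\alpha}$ at $\alpha$ with the holomorphic de Rham operator $\partial$ acting on the integrand $\widetilde{\eta}(\partial\alpha,\ldots,\partial\alpha)(\alpha)$. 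Thus $\widetilde{\fj}(\partial_{dR}\eta)$ is a total holomorphic derivative at the level of densities, and by construction of $\cloc^*(\sG_d) = \Omega^*_X[2d] \otimes_{D_X} \clies(JL)$, such $\partial$-exact densities vanish in the quotient by the $D_X$-action. The hardest bookkeeping is to check that the coordinate computation of $\partial_{dR}$ on $\Omega^{d-1}(B\fg)$ really does assemble into a single $\partial$-exact Dolbeault density after wedging with $d$ copies of $\partial\alpha$; I would verify this by expanding $\eta$ in a basis for $\Lambda^{d-1}\fg^\vee$ and matching terms, which is tedious but mechanical once one has chosen explicit conventions for the de Rham differential on $B\fg$.
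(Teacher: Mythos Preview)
Your proposal is correct and matches the paper's approach: the paper simply asserts that ``direct computation then shows the following'' without spelling out details, and your breakdown into the two identities (a) and (b) is exactly the direct verification one would perform. Your observation that the truncation kills the de Rham differential out of $\Omega^d(B\fg)$, together with the recognition that $\widetilde{\fj}(\partial_{dR}\eta)$ lands in $\partial$-exact densities (hence vanishes in $\cloc^*$), is the substantive point, and it is handled correctly.
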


As this truncated de Rham complex $\tau_{<d+1} DR_{\rm aug}(B\fg)$ is quasi-isomorphic to  $\Omega^d_{cl}(B\fg)$, 
we obtain the existence of the desired map~(\ref{fj1}),
although we do not provide an explicit formula.

\subsection{The main result}

We now state the main result.

\begin{prop}\label{prop: local def}
The map $\fj$ factors through the subcomplex of invariants under rotation and holomorphic translation:
\beqn
\fj : \Omega^{d+1}_{cl}(B \fg) [d] \to \cloc^*(\sG_d)^{U(d) \ltimes \CC^d_{\rm hol}}.
\eeqn
In particular, if $\fg$ is an ordinary Lie algebra (i.e., concentrated in degree zero), then we obtain an isomorphism
\[
H^1(\fj) : \Sym^{d+1}(\fg^\vee)^\fg \xto{\cong} H^1  \left(\cloc^*(\sG_d)\right)^{U(d) \ltimes \CC^d_{\rm hol}}.
\] 
\end{prop}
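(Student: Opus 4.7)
The factoring claim is immediate from inspection of the formula defining $\fj$. The operations used to build $\fj(\theta)$---the holomorphic de Rham differential $\partial$, the wedge product of Dolbeault forms, the evaluation of the invariant polynomial $\theta$, and integration over $\CC^d$---are all manifestly invariant under both translations on $\CC^d$ and rotations by $U(d)$. Moreover, the operator $\iota_{\partial/\partial \bar z_i} = \bar\eta_i$ annihilates $\fj(\theta)$ because the integrand is already a $(d,d)$-form and applying a $(0,-1)$-contraction produces an insufficient form-degree. Hence the image lies in the strict invariants for $\CC^d_{\rm hol} \rtimes U(d)$, and the enhancement to closed $(d{+}1)$-forms on $B\fg$ is verified in the same way.

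For the cohomological statement, the plan is to compute the right-hand side by reducing to a small, algebraically tractable complex, in parallel to Proposition~\ref{prop: cyctrans} above. First, unwinding the definition of $\cloc^*(\sG_d)$ and using that the $\infty$-jet bundle of $\sG_d$ has stalk at $0$ equivalent (via the formal Poincar\'e lemma) to $\fg[[z_1,\ldots,z_d,\bar z_1,\ldots, \bar z_d, d\bar z_1,\ldots,d\bar z_d]]$ with Dolbeault-type differential, the $\CC^d_{\rm hol}$-invariance can be implemented by taking a derived tensor product against $\CC$ over the enveloping algebra $U(\CC^d_{\rm hol})$. By the lemma that $U(\CC^d_{\rm hol}) \simeq \CC[\partial_{z_1},\ldots,\partial_{z_d}]$, and a further application of the formal Poincar\'e lemma to collapse the $\bar z_i,\, d\bar z_i$ variables, the invariant complex is quasi-isomorphic to
\[
\CC \cdot d^d z \,\otimes^{\mathbb L}_{\CC[\partial_{z_1},\ldots,\partial_{z_d}]} \clie^*_{\rm red}\!\bigl(\fg[[z_1,\ldots,z_d]]\bigr)\,[d].
\]
The derived tensor product is computed by the Koszul resolution and identifies this complex with the Lie algebra chains $C^{\rm Lie}_*(\CC^d; \clie^*_{\rm red}(\fg[[z_i]])\, d^d z)\,[d]$ where $\CC^d = \mathrm{span}\{\partial_{z_i}\}$ acts by formal translation.

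Now impose $U(d)$-invariance. Because $\CC^d$ acts trivially on the $\fg$-factor, the $\CC^d$-action on $\fg[[z_i]]$ exponentiates to the $z$-translation action, and HKR-type formality (or direct inspection of the CE complex of the pro-nilpotent dg Lie algebra $\fg[[z_i]]$ extracted through the bar resolution) identifies the CE cochains, up to $U(d)$-equivariant quasi-isomorphism, with $\Sym^*(\fg^\vee)[[z_i]]$ equipped with a suitable Koszul-type differential. After taking $U(d)$-invariants one is forced to include the unique $U(d)$-invariant element $\partial_{z_1}\wedge\cdots\wedge\partial_{z_d}$ in $\Lambda^d \CC^d[1]$, which absorbs the $d$-fold top shift, together with $U(d)$-invariant polynomial expressions in the $z_i$'s that only survive in the diagonal $\fg$-weight. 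Tracking bidegrees, in total cohomological degree $1$ the only surviving contributions come from $\Sym^{d+1}(\fg^\vee)^\fg$, where each $d+1$-tuple pairs one input with a polynomial of total $z$-weight $d$, generating $\partial z_1\cdots\partial z_d$ after integration.

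The main obstacle will be the $U(d)$-invariance step: one must verify that in all other total degrees no further classes survive after taking $U(d)$-invariants, which requires a careful cohomological bookkeeping of the weights of $\partial_{z_i}$'s, $z_i$'s, and the CE bar complex of $\fg[[z_i]]$ under the simultaneous $U(d)$-action. Once this is established, the final matching with $\fj$ is routine: chasing the identifications above, the class associated to $\theta \in \Sym^{d+1}(\fg^\vee)^\fg$ is represented, up to nonzero normalization, by the integrand $\theta(\alpha,\partial\alpha,\ldots,\partial\alpha)$, which is precisely $\fj(\theta)$. Hence $H^1(\fj)$ is an isomorphism onto the invariant subspace.
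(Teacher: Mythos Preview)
Your overall strategy---reduce to a derived tensor product over $\CC[\partial_{z_i}]$, interpret it as Lie algebra homology of the abelian $\CC^d$, then impose $U(d)$-invariance---is reasonable and parallels the cyclic computation of Proposition~\ref{prop: cyctrans}. However, the step where you invoke ``HKR-type formality'' to identify $\clie^*_{\rm red}(\fg[[z_i]])$ with $\Sym^*(\fg^\vee)[[z_i]]$ up to $U(d)$-equivariant quasi-isomorphism is not justified and is not what actually happens. HKR concerns Hochschild (co)chains of a commutative algebra, not Chevalley--Eilenberg cochains of $\fg[[z_i]]$ for a noncommutative $\fg$; there is no formality statement of the sort you describe available here, and in fact the CE differential on $\fg[[z_i]]$ mixes the $\fg$- and $z$-directions nontrivially. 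You also acknowledge the $U(d)$-invariance bookkeeping as unfinished, and that is precisely the core of the argument.

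The paper's proof avoids these issues by working at the level of underlying graded vector spaces first. It identifies the $\CC^d_{\rm hol}$-invariant graded space as $\CC[\d z_1,\ldots,\d z_d][2d]\otimes \cred^\#(\fg[[z_i]])$, then takes $U(d)$-invariants by first decomposing under the central $U(1)\subset U(d)$ into weight pieces and then applying classical invariant theory for $U(d)$ on $\Lambda^k V\otimes\Lambda^k V^\vee$ (one-dimensional, spanned by the identity). This yields exactly $\bigoplus_{k=0}^d\Omega^k(B\fg)[2d-k]$ as a graded space, after which one checks that the remaining differential is the de~Rham differential $\partial_{B\fg}$. The advantage of this route is that no formality is needed: the invariant-theory step is done before any differential is considered, so the identification with $\Omega^*_{\rm red}(B\fg)$ is purely combinatorial, and the differential is then identified by inspection. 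Your approach could in principle be completed, but you would still need to carry out exactly this $U(d)$-invariant-theory computation inside the Koszul model, and the HKR shortcut does not bypass it.
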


Note that this result contains Proposition~\ref{prop: trans j}, since for an ordinary Lie algebra one has
\[
H^1( \Omega^{d+1}_{cl}(B \fg) [d]) = H^{d+1} ( \Omega^{d+1}_{cl}(B \fg) ) = H^0 (\fg, \Sym^{d+1}(\fg^\vee)) .
\]

In brief, the proof involves two central ideas.
The first is that the translation-invariant local functionals ought to be built from translation-invariant differential operators and translation-invariant measures,
and such functionals are thus pinned down by their behavior at one point.
The second is that rotation invariance then drastically cuts down the remaining possibilities.
Indeed, as the proposition indicates, the only freedom is concentrated in the dependence on the Lie algebra $\fg$ and not on the spatial directions along~$\CC^d$.

We start by ignoring the differentials and simply figure out the graded subspaces of invariant elements.
Note that for a complex $V$, we use $V^\#$ to denote the underlying graded vector space.

\begin{lem}
The subspace $\cloc^\#(\sG_d)^{\CC^d}$ of elements invariant under translation along $\CC^d$ is isomorphic to 
\[
(\Omega^\#(\CC^d))^{\CC^d}[2d] \otimes \cred^\#(\fg[[z_1,\ldots,z_d, \zbar_1,\ldots, \zbar_d, \d\zbar_1,\ldots, \d\zbar_n]]).
\]
\end{lem}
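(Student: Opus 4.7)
The plan is to unpack the definition $\cloc^*(\sG_d) = \Omega^*_X[2d]\otimes_{D_X}\cred^*(JL)$ using the standard de~Rham presentation of the $D_X$-module tensor product, and then show that translation invariance on $X = \CC^d$ forces a complete separation between the ``form part'' and the ``jet part'' of any cochain. Throughout, I will ignore differentials, as the statement concerns only the underlying graded vector space.

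First, I would identify the relevant bundles and their jets at the origin. The graded vector bundle $L$ underlying $\sG_d$ is $\Lambda^*\bar T^*_{\CC^d}\otimes\fg$, so its fiber at $0$ is $\Lambda^*[d\bar z_1,\ldots,d\bar z_d]\otimes \fg$, and the $\infty$-jet at the origin is
\[
J_0 L \;=\; \fg[[z_1,\ldots,z_d,\bar z_1,\ldots,\bar z_d,d\bar z_1,\ldots,d\bar z_d]],
\]
exactly the power series Lie algebra in the statement. Next, invoking the standard identification $\Omega^*_X\otimes_{D_X}M\simeq DR(M)$, where $DR(M)=\Omega^*_X\otimes_{\sO_X}M$ carries the de~Rham-like differential built from the flat connection on $M$, I would rewrite
\[
\cloc^\#(\sG_d) \;\cong\; \Omega^*_X[2d]\otimes_{\sO_X}\cred^\#(JL)
\]
as graded sheaves (with the Grothendieck differential suppressed).

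The key trivialization step is this: because $X = \CC^d$ is a translation-homogeneous space, the $\infty$-jet bundle $JL$ admits a canonical $\CC^d$-equivariant isomorphism $JL\cong C^\infty_X\otimes_{\CC}J_0L$ as a sheaf of $C^\infty_X$-modules (forgetting the $D_X$-structure), where the translation action on the right is by translation on the $C^\infty_X$-factor alone. Applying $\cred^\#(-)$ fiberwise gives $\cred^\#(JL)\cong C^\infty_X\otimes_\CC\cred^\#(J_0L)$, and hence as graded $\sO_X$-modules with $\CC^d$-action,
\[
\Omega^*_X\otimes_{\sO_X}\cred^\#(JL)\;\cong\;\Omega^*_X\otimes_\CC\cred^\#(J_0L),
\]
with $\CC^d$ acting diagonally (by Lie derivative on forms and by translation on $C^\infty_X$).

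Finally, I would take $\CC^d$-invariants. Since the $\CC^d$-action on the second factor $\cred^\#(J_0L)$ is trivial (it is a fiber), and the invariants of $\Omega^*_X$ under translation are exactly the constant-coefficient forms $(\Omega^\#(\CC^d))^{\CC^d}=\Lambda^*[dz_i,d\bar z_i]$, we obtain
\[
\cloc^\#(\sG_d)^{\CC^d}\;\cong\;(\Omega^\#(\CC^d))^{\CC^d}[2d]\otimes\cred^\#(J_0L),
\]
which is the claimed isomorphism. The main subtlety --- and the step to treat most carefully --- is to be sure that one really is trivializing $JL$ as a $C^\infty_X$-module equivariantly for translations, so that ``translation invariant'' means invariance under the geometric $\CC^d$-action and is \emph{not} conflated with Grothendieck-horizontality (which would be much too strong and would kill almost everything). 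Once that distinction is clean, the remainder of the argument is bookkeeping.
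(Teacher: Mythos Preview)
Your proposal is correct and follows essentially the same route as the paper's own proof: both arguments trivialize the jet bundle $JL$ using the linear coordinates on $\CC^d$, identify the fiber at the origin with $\fg[[z_i,\zbar_i,\d\zbar_i]]$, and then observe that translation-invariant sections of a trivialized bundle are determined by their value at a point. The paper phrases this more tersely (``translation-invariant sections are determined by their value at a single point''), whereas you unwind the $D_X$-module definition $\Omega^*_X[2d]\otimes_{D_X}\cred^*(JL)$ explicitly as a de~Rham complex and carry the $\CC^d$-equivariance through the trivialization $JL\cong C^\infty_X\otimes J_0L$. Your closing caution about not confusing geometric translation invariance with Grothendieck-horizontality is well taken and is exactly the point the paper leaves implicit by working only at the level of graded spaces.
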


Note the overall downward shift by degree~$d$.
The translation-invariant differential forms are
\[
\CC[\d z_1,\ldots, \d z_d, \d \zbar_1,\ldots, \d \zbar_d].
\] 
The graded Lie algebra underlies the dg Lie algebra of Dolbeault forms on the formal $d$-dimensional disk with values in~$\fg$,
which we interpret at the fiber at the origin of the jet bundle of~$\sG_d$.


\begin{proof}
Here we are just noting a simple fact: 
for any trivial bundle on $\CC^d$, 
translation-invariant sections are thus determined by their value at a single point,
which we can take to be the origin.

Each bundle $\Lambda^k T_\CC^* \to \CC^d$, whose sections are complex-valued $k$-forms, admits a natural trivialization by the frame arising from the choice of linear coordinates. 
For instance, the complexified cotangent bundle $T^*_\CC \to \CC^d$ has the frame $\{\d z_1,\ldots,\d z_d, \d \zbar_1,\ldots,\d \zbar_d\}$; for other $k$, take wedge powers of that frame. 
The bundle $\fg \times \CC^d \to \CC^d$ is likewise trivial,
and hence the jet bundle inherits a trivialization.
The trivialization is explicitly given by using the linear coordinate system arising from identifying the manifold with the vector space $\CC^d$;
it gives a natural basis for differential operators and hence for jets.

Putting these observations together, 
the fiber at the origin of the jet bundle for $\sG_d$ can be seen as Dolbeault forms on the formal $d$-dimensional disk with values in $\fg$.
As $\cloc^*$ is a version of reduced Lie algebra cochains, we obtain the claim.
\end{proof}

We would now like to trivialize homotopically the action of the antiholomorphic derivatives.
On the formal $d$-dimensional disk, there is a natural trivialization (by contraction with the vector fields $\partial_{\zbar_i}$),
which also makes sense on $\CC^d$ globally.
The strict invariants for the extended Lie algebra $\CC^d_{\rm hol}$ are thus expressions that have no dependence on the antiholomorphic coordinates~$\zbar_i$.

\begin{lem}\label{lem: a8}
The underlying graded subspace of the $\CC^d_{\rm hol}$-invariants $\cloc^*(\sG_d)^{\CC^d_{\rm hol}}$ is isomorphic to 
\[
\CC[\d z_1,\ldots, \d z_d][2d] \otimes \cred^\#(\fg[[z_1,\ldots,z_n]]),
\]
the reduced Lie algebra cochains of the Lie algebra~$\fg[[z_1,\ldots,z_n]]$.
\end{lem}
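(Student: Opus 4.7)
The plan is to extend the identification of the preceding lemma by taking further invariants under the degree $-1$ generators $\eta_1, \ldots, \eta_d$ of $\CC^d_{\rm hol}$, using that the degree $0$ piece of $\CC^d_{\rm hol}$-invariance is already encoded by the $\CC^d$-invariance handled there. The key observation is that $\eta_i = \iota_{\partial/\partial \zbar_i}$ acts on Dolbeault forms as a contraction, hence as a graded derivation of degree $-1$ on the translation-invariant local cochains, acting on \emph{both} tensor factors: it removes $\d \zbar_i$ from the measure factor $(\Omega^\#(\CC^d))^{\CC^d}$ and extends to a derivation on the Chevalley-Eilenberg cochain algebra built from the formal Dolbeault jets $\fg[[z_i, \zbar_i, \d \zbar_i]]$, where it sends $\d\zbar_i \mapsto 0$ and then extends as an odd derivation on $\cred^\#$.

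First I would take strict $\eta_i$-invariants factor by factor. A homogeneous element with $a_i$ factors of $\d\zbar_i$ in the measure factor and $b_i$ factors of $\d\zbar_i$ in the jet factor is sent under $\eta_i$ to a sum of terms with $(a_i-1)$ or $(b_i-1)$ factors; by the derivation property, the strict kernel of all the $\eta_i$'s forces $a_i = b_i = 0$ for every $i$. This immediately reduces the measure factor to $\CC[\d z_1, \ldots, \d z_d]$ and the jet factor to $\cred^\#(\fg[[z_i, \zbar_i]])$. The second step would be to eliminate the remaining $\zbar_i$-dependence, using that $\cloc^\#$ is defined via the tensor product $\Omega^*_X \otimes_{D_X} \cred^*(JL)$: the translation-invariance of $\omega$ forces $\omega \cdot \partial/\partial \zbar_i = 0$, and hence the $D_X$-relation yields $\omega \otimes (\partial/\partial \zbar_i \cdot J) = 0$; combined with the vanishing of $\d\zbar_i$'s secured by Step 1, this identifies any $\zbar_i$-multiplied jet with zero in the quotient, leaving $\fg[[z_1, \ldots, z_d]]$.

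The hard part will be Step~2, i.e.\ rigorously tracking how the $\CC^d_{\rm hol}$-action interacts with the $D_X$-tensor structure once $\d\zbar_i$'s have been killed. Naively, the presentation in the preceding lemma allows $\zbar_i$-variables in jets as bookkeeping for allowed $\partial/\partial \zbar_i$-derivatives of inputs; the content here is that once we also require strict $\eta_i$-invariance, the Grothendieck connection on $JL$ (which moves $\partial/\partial \zbar_i$ between the two sides of the $D_X$-tensor) forces the residual $\zbar_i$-dependence to be trivial, because the only way to produce a $\zbar_i$ in the translation-invariant representative is via $\partial/\partial \zbar_i$ shifted from the measure side, where it vanishes. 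Once this elimination is justified, the identification
\[
\cloc^*(\sG_d)^{\CC^d_{\rm hol}} \cong \CC[\d z_1, \ldots, \d z_d][2d] \otimes \cred^\#(\fg[[z_1, \ldots, z_d]])
\]
follows formally, and naturality in the construction shows it respects the $U(d)$-action, which will be needed in the subsequent argument for Proposition~\ref{prop: local def}.
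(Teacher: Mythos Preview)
Your overall strategy matches the paper's: both use that the extra generators $\eta_i = \iota_{\partial/\partial\zbar_i}$ should eliminate antiholomorphic dependence from the translation-invariant complex identified in the preceding lemma. The paper itself offers only a one-sentence justification (``the strict invariants\ldots are thus expressions that have no dependence on the antiholomorphic coordinates~$\zbar_i$''), so you are attempting to supply details the paper omits. However, several of your details are not correct as written.

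First, the action you describe on the jet factor is off. The contraction $\iota_{\partial/\partial\zbar_i}$ on formal Dolbeault forms satisfies $\iota_{\partial/\partial\zbar_i}(\d\zbar_j) = \delta_{ij}$, not $\d\zbar_i \mapsto 0$. When you pass to the dual action on linear Chevalley--Eilenberg cochains, the effect is $(\eta_i\cdot\phi)(\alpha) = \pm\phi(\iota_i\alpha)$, which \emph{raises} rather than lowers the $\d\zbar$-index on the dual side. So the kernel computation in your Step~1 does not yield what you claim on the jet factor. Relatedly, $\eta_i$ is a derivation of the local Lie algebra $\sG_d$, not a vector field on the base $\CC^d$; its induced action on $\cloc^* = \Omega^*_X[2d]\otimes_{D_X}\cred^*(JL)$ is through the second factor only, so your assertion that it also contracts the de Rham measure factor needs independent justification.

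Second, and more seriously, your Step~2 is circular as stated. The $D_X$-relation you invoke is already built into the definition of $\cloc^*$ and was used in the preceding lemma to restrict to the fiber at the origin; invoking it again cannot impose a new constraint. The formal jet variable $\zbar_i$ records the Taylor expansion of the \emph{input} in the $\zbar_i$-direction and is not the base coordinate on which $D_X$ acts via the right module structure on $\Omega^*_X$; the Grothendieck connection relates the two, but your sketch does not track this. In particular, ``translation-invariance of $\omega$ forces $\omega\cdot\partial/\partial\zbar_i = 0$'' is ambiguous (Lie derivative versus the right $D$-action, which on forms of non-top degree involves the de Rham differential) and does not by itself kill jet $\zbar_i$-dependence. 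You correctly flag this as the hard part; it genuinely is, and the argument needs to be reworked rather than patched.
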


Here $\CC[\d z_1,\ldots, \d z_d][d]$ forms the translation-invariant {\em holomorphic} differential forms, 
shifted down by degree~$d$.
This is the underlying associated graded of the translation invariant subcomplex of the de Rham complex 
\[
\Omega^* (\CC^d, M)
\]
where $M$ is the $D$-module $\cred^*(J \sO^{hol}(\CC^d))$. 
The flat connection on this $D$-module is induced from the canonical one on the $\infty$-jets of the trivial bundle. 

Thanks to some standard results in invariant theory, there is then a simple answer for which such elements are $U(d)$-invariant.

\begin{lem}
The $U(d) \ltimes \CC^d_{hol}$-invariant subspace $\cloc^\#(\sG_d)^{U(d) \ltimes \CC^d}$
is canonically isomorphic to the (shift of the) reduced de Rham forms
\[
\Omega^\sharp_{\rm red}(B\fg) [2d] = \sO_{red}(B \fg)[2d] \oplus \Omega^{1} (B \fg)[2d-1] \oplus \cdots \oplus \Omega^{d}(B \fg)[d]. 
\]
\end{lem}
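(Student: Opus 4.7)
The plan is to combine the preceding lemma with a classical $GL(V)$-invariant-theory computation. First I would observe that by the preceding lemma it suffices to compute the $U(d)$-invariants of
\[
\CC[\d z_1,\ldots,\d z_d][2d]\otimes\cred^\#(\fg[[z_1,\ldots,z_d]]),
\]
where $U(d)$ acts trivially on $\fg$ and in its defining representation $V=\CC^d$ on the remaining generators; concretely, $V^*$ is spanned by the $\d z_i$ (and by the $z_i$) while $V$ is spanned by the $\partial_{z_i}$. I would identify the continuous linear dual of $\CC[[z_1,\ldots,z_d]]$, for its $\mathfrak m$-adic topology, with the polynomial algebra $\bigoplus_{j\ge 0}\Sym^j(V)$ in the $\partial_{z_i}$, so that using $\Sym^n(W[-1])\cong\Lambda^n(W)[-n]$ the Chevalley factor rewrites as
\[
\cred^\#(\fg[[z]])\;\cong\;\bigoplus_{n\ge 1}\Lambda^n\!\Bigl(\fg^\vee\otimes\bigoplus_{j\ge 0}\Sym^j(V)\Bigr)[-n].
\]
Decomposing $\Lambda^n$ along the direct sum in $j$ yields a sum, over tuples $(n_0,n_1,\ldots)$ with $\sum n_j=n$, of summands $\bigotimes_j\Lambda^{n_j}(\fg^\vee\otimes\Sym^j V)$.

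The main step will be to run classical $GL(V)$-invariant theory. For a summand to contribute $U(d)$-invariants paired against $\Lambda^k(V^*)\subset\CC[\d z]$, its $V$-content must include the Schur functor $\Lambda^k(V)=S^{(1^k)}(V)$. Under Schur-Weyl duality this is the sign-isotypic subspace of $V^{\otimes k}$, whereas the $V$-content of $\Lambda^{n_j}(\fg^\vee\otimes\Sym^j V)$ sits inside $(\Sym^j V)^{\otimes n_j}$, which is itself the $(S_j)^{n_j}$-invariant subspace of $V^{\otimes jn_j}$. I would then argue that any vector simultaneously $\prod_j(S_j)^{n_j}$-invariant and sign-isotypic under $S_k$ is both fixed and negated by any transposition inside some $S_j$ with $j\ge 2$, and hence must vanish. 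Consequently only tuples with $n_j=0$ for $j\ge 2$ survive, and the constraint $\sum j\,n_j=k$ forces $n_1=k$ and $n_0=n-k$.

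On the surviving summand $\Lambda^{n-k}(\fg^\vee)\otimes\Lambda^k(\fg^\vee\otimes V)$, I would apply the Cauchy identity $\Lambda^k(\fg^\vee\otimes V)=\bigoplus_{\mu\vdash k}S^\mu(\fg^\vee)\otimes S^{\mu'}(V)$, which isolates $\mu=(k)$ as the unique partition with $S^{\mu'}(V)=\Lambda^k(V)$, giving $\Sym^k(\fg^\vee)\otimes\Lambda^k(V)$. Pairing $\Lambda^k(V)$ against $\Lambda^k(V^*)$ leaves $\Lambda^{n-k}(\fg^\vee)\otimes\Sym^k(\fg^\vee)=\clie^{n-k}(\fg;\Sym^k(\fg^\vee))$; summing over $n\ge k$ recovers $\clie^*(\fg;\Sym^k(\fg^\vee))=\Omega^k(B\fg)[k]$, which under the outer shift $[2d]$ becomes $\Omega^k(B\fg)[2d-k]$. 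The $k=0$ summand is reduced because $\cred^*$ was. A careful bookkeeping of the three shifts---the $[-n]$ from $\Sym^n(W[-1])$, the position of $\Lambda^k(V^*)$ in degree $k$ of $\CC[\d z]$, and the outer $[2d]$---then confirms that the bidegrees align on the nose.

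The hard part will be not the invariant calculation itself but producing a transparent inverse isomorphism. I would realise it by the map $\widetilde{\fj}$ of Section \ref{sec: hol trans main}, which sends $\omega\in\Omega^k(B\fg)=\clie^*(\fg;\Sym^k(\fg^\vee))[-k]$ to the local cochain whose value on $\alpha\in\sG_d$ plugs $k$ copies of $\partial\alpha$ into the $\Sym^k(\fg^\vee)$-slots of $\omega$ and the remaining copies of $\alpha$ into its antisymmetric $\fg$-slots. The $k$ holomorphic derivatives supply the $\Lambda^k(V^*)$-content that absorbs the $\d z$-factor, while the $\Sym^k(\fg^\vee)$-entries pair with the $\Lambda^k(V)$-content isolated above; verifying that $\widetilde{\fj}$ realises the invariant-theoretic identification is then a direct slot-by-slot check.
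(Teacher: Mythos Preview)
Your proof is correct and follows the same overall arc as the paper's: start from the preceding lemma's identification with $\CC[\d z_1,\ldots,\d z_d][2d]\otimes\cred^\#(\fg[[z]])$, then use $U(d)$-invariant theory to show that only the $j=0$ and $j=1$ pieces of the jet variable contribute, leaving exactly $\Omega^k(B\fg)$ for each $k\le d$.

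The execution differs in an instructive way. The paper expands $\Sym^{>0}\bigl(\fg^\vee[-1]\otimes\Sym(V^\vee)\bigr)$ by iterating the identity $\Sym^{>0}(W\oplus Z)=\Sym^{>0}(W)\oplus\Sym(W)\otimes\Sym^{>0}(Z)$, and then asserts in one sentence that, since we are pairing against the totally antisymmetric $\Lambda^*(V)$, no factor of $\Sym^{\ge 2}(V^\vee)$ can survive; it finishes by invoking the standard fact that the $U(d)$-invariants of $\Lambda^k(V)\otimes\Lambda^k(V^\vee)$ are one-dimensional. Your route instead decomposes $\Lambda^n$ of the direct sum by tuples $(n_0,n_1,\ldots)$ and gives a cleaner justification of the same vanishing: the transposition argument (a $\Sym^j$-fixed vector antisymmetrizes to zero once $j\ge 2$) makes precise the step the paper leaves implicit, and the Cauchy identity then isolates $\Sym^k(\fg^\vee)\otimes\Lambda^k(V)$ in one stroke. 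Your $V$/$V^*$ convention is flipped relative to the paper's, which is harmless. Your degree bookkeeping checks out, and your proposal to exhibit the inverse via $\widetilde{\fj}$ is a nice addition that the paper does not carry out inside this lemma.
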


Here we mean that there is no de Rham differential, 
but the $k$-forms are put in their ``usual'' place 
(i.e., in our motivating example, the $k$-forms would sit in degree $k$).
By $\sO_{red}(B \fg)$ we mean that we quotient out the copy $\Sym^0(\fg^\vee)$ of the constants from~$\clie^*(\fg)$.

\begin{proof}
Sitting inside of $U(d)$ is its center, a copy of $U(1)$ as multiples of the identity.
This group equips the $\CC^d_{hol}$-invariant subcomplex with a weight grading, as follows.
The group $U(d)$ acts in the defining way on $\CC^d$,
so each coordinate $z_i$ has weight $1$ and so $\d z_i$ also has weight 1.
Each $k$-form has weight $k$; for instance, the volume element $\d^d z$ has weight~$d$.
Let $\Lambda^k[\d z_1,\ldots,\d z_d]$ denote the vector space of translation-invariant holomorphic $k$-forms.
Every element in this space has weight~$k$.

On the other hand, $z_i^\vee$ has weight~$-1$. 
Let 
\[
\Sym^{>0} \left(\fg^\vee [z_1^\vee,\ldots,z_d^\vee][-1] \right)_{(-k)}
\]
denote the subspace of elements with weight~$-k$.
This space is spanned by symmetric words built from monomials of the form $x \otimes (z_1^\vee)^{i_1} \cdots (z_d^\vee)^{i_d}$, where $x \in \fg^\vee$,
and the sum of the $z^\vee$-degrees over all the monomials must add to~$k$.

Our complex is built from both kinds of elements.
To have total weight zero, a monomial in these terms $z_i^\vee$ and $\d z_i$ must have an equal number of $z_i^\vee$ and $\d z_i$.
In other words, the weight zero elements of our complex decomposes as a direct sum
\beqn\label{decomp}
\bigoplus_{k = 0}^n \Lambda^k[\d z_1,\ldots,\d z_d] \otimes \Sym^{>0} \left(\fg^\vee [z_1^\vee,\ldots,z_d^\vee][-1] \right)_{(-k)} .
\eeqn
But we wish to go a step further and pick out the invariants under the action of the entire group~$U(d)$.

We will denote by $V$ the fundamental $U(d)$-representation, and $V^\vee$ its dual.
W can rewrite the decomposition (\ref{decomp}) as 
\[
\bigoplus_{k = 0}^n \Lambda^k(V)[-k] \tensor \Sym^{>0} \left(\fg^\vee [-1] \tensor \Sym(V^\vee)\right)_{(-k)} .
\]
We expand the term $\Sym^{>0} \left(\fg^\vee [-1] \tensor \Sym(V^\vee)\right)$ as
\beqn\label{gack}
\Sym^{>0} \left(\fg^\vee[-1] \tensor \left(\CC \oplus V^\vee  \oplus \Sym^2(V^\vee)\oplus \cdots \right)\right) .
\eeqn
Using the relation 
\beqn\label{symrel}
\Sym^{>0} (W \oplus Z) = \Sym^{>0} (W) \oplus (\Sym(W) \tensor \Sym^{>0}(Z)),
\eeqn
we see that this expression~(\ref{gack}) is equal to 
\begin{align}
\Sym^{>0} & \left(\fg^\vee[-1] \oplus \fg^\vee [-1] \tensor V^\vee \right) \oplus \label{sym1} \\
& \Sym \left(\fg^\vee[-1] \oplus \fg^\vee [-1] \tensor V^\vee\right) \tensor \Sym^{>0} \left(\fg^\vee[-1] \tensor \left(\Sym^2(V^\vee)\oplus \cdots \right) \label{sym2} \right)
\end{align}
In fact, we want to the $U(d)$-invariants of the tensor product of this enormous mess with the totally antisymmetric representation~$\Lambda^*(V)$. 
Thus, none of the terms $\Sym^k(V^\vee)$ can contribute, so we can forget about the second line~(\ref{sym2}) when we taking $U(d)$-invariants. 

Using the relation~(\ref{symrel}) again, we expand the first line~(\ref{sym1}) as
\[
 \Sym^{>0} \left(\fg^\vee[-1]\right) \oplus \Sym \left(\fg^\vee[-1]\right) \tensor \Sym^{>0} \left(\fg^\vee [-1] \tensor V^\vee\right) .
 \]
Note that the first term has $U(d)$-weight zero. 
Thus, we find that the space of $U(d)$-invariants is equal to the $U(d)$-invariants of
\[
 \Sym^{>0} \left(\fg^\vee[-1]\right) \oplus \bigoplus_{k = 1}^n \Lambda^k(V) [-k] \tensor \left(\Sym \left(\fg^\vee[-1]\right) \tensor \Sym^{>0} \left(\fg^\vee [-1] \tensor V^\vee\right) \right)_{(-k)} .
\]
Once we turn on the Lie differential, the first term above (corresponding to $k=0$ in our original notation) is precisely 
\[
\left(\Sym^{>0}(\fg^\vee[-1]), \d_{CE}\right) = \sO_{red}(B \fg) .
\]
Note that $U(d)$ acts trivial here.

The $k=1$ term is of the form
\[
V[-1] \tensor \Sym (\fg^\vee[-1]) \tensor (\fg^\vee \tensor V^\vee[-1]) .
\]
We are left to compute the $U(d)$-invariants of $V \tensor V^\vee$, which is one dimensional generated spanned by the identity $\id_V \in V \tensor V^\vee$. 
Thus, the space of $U(d)$-invariants corresponding to the $k=1$ term is equal to 
\[
\Sym (\fg^\vee [-1]) \tensor \fg^\vee [-2] 
\]
which we identify with $\Omega^{1}(B\fg)[-1]$ once we turn on the Lie differential. 

In general, we see that for each $k$ we are looking at the $U(d)$-invariants of
\[
\Lambda^k(V) \tensor \Sym (\fg^\vee[-1]) \tensor \Sym^k(\fg^\vee) \tensor \Lambda^k(V^\vee) [-2k]  .
\]
Extracting the dependence on $V$, we must compute the $U(d)$-invariants of $\Lambda^k(V) \tensor \Lambda^k(V^\vee)$.

It is a standard fact in invariant theory that the $U(d)$-invariants of $V^{\tensor k} \tensor (V^\vee)^{\tensor l}$ is zero unless $k=l$, in which case the 
space of invariants is spanned by permutations of the identity morphism $V^{\tensor k} \to V^{\tensor k}$. 
See, for instance, Theorem 2.1.4 of \cite{Fuks}. 
Since we are taking the antisymmetric product, each permutation is equal to some multiple of the identity. 
Thus, the $U(d)$-invariants of $\Lambda^k(V) \tensor \Lambda^k(V^\vee)$ is one-dimensional spanned by the identity. 

It follows that once we turn on the Lie differential, the $U(d)$-invariants of the degree $k$ piece in the decomposition is equal to
\[
\Sym (\fg^\vee[-1]) \tensor \Sym^k(\fg^\vee) [-2k] = \Omega^{k} (B \fg) [-k] .
\]
Accounting for the overall shift by $2d$, we obtain the result. 
\end{proof}

\begin{proof}[Proof of Proposition~\ref{prop: local def}]
We have observed that before turning on the external differential, the graded vector space of $U(d)$-invariant, holomorphic translation invariant local functionals is equal to
\[
\label{bg def complex1}
\xymatrix{
\ul{-2d} & \cdots & \ul{-d-1} & \ul{-d} \\
\sO_{red}(B \fg) & \cdots & \Omega^{d-1} (B \fg) & \Omega^{d}(B \fg) .
}
\]
The differential is the restriction of the de Rham differential on the de Rham complex $\Omega^*(\CC^d, M)$ as we pointed out following Lemma \ref{lem: a8}. 
This is precisely the de Rham differential, as one can immediately verify, on $B \fg$ 
\[
\partial_{B \fg} : \Omega^k(B \fg) \to \Omega^{k+1} (B \fg)
\]
which completes are proof. 
\end{proof} 

\section{Normalizing the charge anomaly} \label{sec: feynman}

In this section we conclude the proof of Proposition \ref{prop: bg anomaly} by an explicit calculation of the Feynman diagrams controlling the charge anomaly for the $\beta\gamma$ system on $\CC^d$. 
We have already identified the algebraic piece of the anomaly with the $(d+1)$st component of the Chern character of the representation. 
The only thing left to compute is the analytic factor. 
We can therefore assume that we have an abelian Lie algebra, and simply compute the weight of the wheel $\Gamma$ with $(d+1)$-vertices where the external edges are labeled by elements $\alpha \in \Omega_c^{0,*}(\CC^d)$.
After choosing a numeration of the internal edges $e = 0,\ldots d$, we can label the edges $e = 0,\ldots, d-1$ by the analytic propagator by $P^{an}_{\epsilon<L}$ and the label the edge $e = d$ by the analytic heat kernel $K_\epsilon^{an}$. 
We recall the precise form of these kernels in the proof below. 
The vertices are labeled by the trivalent functional $I^{an} (\alpha, \beta,\gamma) = \int \alpha \wedge \beta \wedge \gamma$ (there is no Lie bracket since the algebra is abelian). 
Denote the resulting weight, which is a functional on the space $\Omega^{0,*}_c(\CC^d)$, by
\[
W^{an}_{\Gamma}(P_{\epsilon < L}, K_\epsilon, I^{an}) .
\]
The main computation left to do is the $\epsilon \to 0, L \to 0$ limit of this weight.

For more details on the notations, such as the explicit forms of the heat kernels and propagators, we use in the proof below we refer the reader to \cite{BWhol}, where the general prescription for quantizing holomorphic theories has been written down. 

\begin{lem} 
As a functional on the abelian dg Lie algebra $\Omega_c^{0,*}(\CC^d)$, one has
\[
\lim_{L \to 0} \lim_{\epsilon \to 0} W^{an}_{\Gamma}(P^{an}_{\epsilon < L}, K^{an}_\epsilon, I^{an})(\alpha^{(0)},\ldots, \alpha^{(d)}) = \frac{1}{(2 \pi i)^d} \frac{1}{(d+1)!} \int \alpha^{(0)} \partial \alpha^{(1)} \cdots \partial \alpha^{(d)}  .
\]
\end{lem}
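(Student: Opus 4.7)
The plan is to write the weight as an iterated integral over a configuration space, localize it to the diagonal via the Gaussian structure of the heat kernel, and then extract the leading contribution by Taylor expanding the inputs. Concretely, recall that the analytic heat kernel on $\CC^d$ for the Hodge Laplacian on $\Omega^{0,*}$ has the form
\[
K_t^{an}(z,w) = \frac{1}{(4\pi t)^d} e^{-|z-w|^2/4t} \prod_{i=1}^d \bigl(\d\bar z_i - \d \bar w_i\bigr),
\]
up to a sign and factor conventions, and the propagator is $P^{an}_{\epsilon<L} = \int_\epsilon^L (\dbar^*\otimes 1)K^{an}_t \, \d t$, which takes values in $\Omega^{0,d-1}\otimes\Omega^{0,d-1}$ on the pair. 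The first step is to substitute these expressions into the wheel weight
\[
W^{an}_\Gamma = \int_{(\CC^d)^{d+1}} K^{an}_\epsilon(z_0,z_1)\, P^{an}_{\epsilon<L}(z_1,z_2)\cdots P^{an}_{\epsilon<L}(z_d,z_0)\, \alpha^{(0)}(z_0)\cdots \alpha^{(d)}(z_d).
\]

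Next, I would change variables to $u=z_0$ and $w_j = z_j-z_0$ for $j=1,\ldots,d$, so that the Gaussian exponents become a quadratic form in the $w_j$'s with bandwidth $\sqrt{\epsilon}$ on the $K_\epsilon$ edge and $\sqrt{t_j}\in[\sqrt\epsilon,\sqrt L]$ on the propagator edges. Since each $P^{an}_{\epsilon<L}$ contributes $d-1$ antiholomorphic one-forms and the $K_\epsilon$ contributes $d$ such forms, together with the antiholomorphic derivatives produced by $\dbar^*$, the total antiholomorphic form degree on the relative coordinates $w_1,\ldots,w_d$ equals $d(d-1)+d = d^2$; only the top Dolbeault degree in each $w_j$ survives the integration, which forces each external input $\alpha^{(j)}$ to contribute its $\dbar$-holomorphic variation in $w_j$ through its Taylor expansion about $u$. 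In other words, writing $\alpha^{(j)}(u+w_j) = \alpha^{(j)}(u) + \sum_i w_j^i \partial_{z_i}\alpha^{(j)}(u) + O(|w_j|^2)$, only the linear-in-$w_j$ terms survive, and they assemble into the holomorphic derivative $\partial\alpha^{(j)}(u)$.

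With the form-degree counting in place, the remaining computation is a Gaussian integral over $(\CC^d)^d$ in the $w_j$ variables coupled by the exponents of the heat kernels. It reduces, after the Schwinger-parameter integral over $t_1,\ldots,t_{d-1}\in[\epsilon,L]$ and the $\epsilon,L\to 0$ limit, to a standard residue calculation: the normalized volume of the simplex parametrizing the ordering of propagator times yields the factor $1/d!$, combined with an additional $1/(d+1)$ from the cyclic symmetry of the wheel and distinguished-edge choice gives $1/(d+1)!$, while the Gaussian normalization $(4\pi t)^{-d}$ integrated against the Bochner--Martinelli-type residue contributes the universal factor $(2\pi i)^{-d}$. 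After cleaning up the signs one arrives at
\[
\lim_{L\to 0}\lim_{\epsilon\to 0}W^{an}_\Gamma(\alpha^{(0)},\ldots,\alpha^{(d)}) = \frac{1}{(2\pi i)^d (d+1)!}\int_{\CC^d} \alpha^{(0)}\wedge\partial\alpha^{(1)}\wedge\cdots\wedge\partial\alpha^{(d)}.
\]

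The main obstacle is twofold. First, the bookkeeping of signs and form degrees in the $\dbar^*$ contribution of each propagator must be done meticulously, since the identification of ``only linear Taylor terms contribute'' depends on saturating all the $\d\bar z_i$ slots; a miscount here shifts the answer by combinatorial factors. Second, exchanging the $\epsilon\to 0$ limit with the Schwinger integrals requires uniform estimates on the Gaussian tails; the cleanest route is to follow the renormalization argument of \cite{BWhol}, specifically the analogue of the computation there for one-loop anomalies of holomorphic theories on $\CC^d$, which both justifies the exchange of limits and furnishes the normalizing constants $(2\pi i)^{-d}$ and $1/(d+1)!$.
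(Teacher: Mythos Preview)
Your overall strategy matches the paper's: pass to relative coordinates, use form-degree counting to see that only certain antiholomorphic slots survive, apply Wick's lemma to the Gaussian in the difference variables, and reduce the Schwinger-parameter integral to a scalar. The paper uses consecutive differences $w^{(a)}=z^{(a)}-z^{(a-1)}$ rather than your $w_j=z_j-z_0$; the former is more convenient because each propagator then depends on a single $w^{(a)}$, which makes both the form-degree count and the quadratic form in the exponent cleaner, but this is a cosmetic difference.

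The genuine gap is in your extraction of the constant $\tfrac{1}{(d+1)!}$. There is no ``simplex parametrizing the ordering of propagator times'': the wheel has $d$ propagator edges (you wrote $t_1,\ldots,t_{d-1}$, but there are $d$), each with an \emph{independent} Schwinger parameter ranging over $[\epsilon,L]$, so no ordered simplex appears. Nor is there an extra $1/(d+1)$ from cyclic symmetry---the distinguished edge carrying $K_\epsilon$ is fixed in the definition of $W^{an}_\Gamma$, so no averaging over edge choices occurs. In the paper, after Wick's lemma and the Gaussian integration the remaining scalar is
\[
\lim_{\epsilon\to 0}\int_{[\epsilon,L]^d}\frac{\epsilon}{(\epsilon+t_1+\cdots+t_d)^{d+1}}\,\d t_1\cdots \d t_d,
\]
and this integral must be evaluated directly (e.g.\ by the substitution $t_i=\epsilon u_i$ and iterated integration) to obtain $\tfrac{1}{(d+1)!}$. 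Your narrative replaces this computation with an incorrect heuristic. The rest of your outline is sound, but the lemma is precisely about this normalizing constant, so the step you skipped is the one that carries the content.
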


\begin{proof}

We enumerate the vertices by integers $a = 0,\ldots, d$. 
Label the coordinate at the $i$th vertex by $z^{(a)} = (z_1^{(a)}, \ldots, z_d^{(a)})$. 
The incoming edges of the wheel will be denoted by homogeneous Dolbeault forms 
\[
\alpha^{(a)} = \sum_{J} A^{(a)}_J \d \zbar_J^{(a)} \in \Omega_c^{0,*}(\CC^d) .
\]
where the sum is over the multiindex $J = (j_1,\ldots, j_k)$ where $j_a = 1,\ldots, d$ and $(0,k)$ is the homogenous Dolbeault form type. 
For instance, if $\alpha$ is a $(0,2)$ form we would write
\[
\alpha = \sum_{j_1 < j_2} A_{(j_1,j_2)} \d \zbar_{j_1} \d\zbar_{j_2} .
\]
Denote by $W^{an}_L$ weight $\epsilon \to 0$ limit of the analytic weight of the wheel with $(d+1)$ vertices.
The $L\to 0$ limit of $W^{an}_L$ is the local functional representing the one-loop anomaly $\Theta$. 

The weight has the form
\[
W^{an}_L(\alpha^{(0)},\ldots,\alpha^{(d)}) = \lim_{\epsilon \to 0} \int_{\CC^{d(d+1)}} \left(\alpha^{(0)}(z^{(0)}) \cdots \alpha^{(d)}(z^{(d)}) \right) K^{an}_\epsilon(z^{(0)},z^{(d)}) \prod_{a =1}^d P^{an}_{\epsilon,L} (z^{(a-1)}, z^{(a)}) .
\]
We introduce coordinates
\begin{align*}
w^{(0)} & = z^{(0)} \\
w^{(a)} & = z^{(a)} - z^{(a-1)} \;\;\; 1 \leq a \leq d .
\end{align*}
The heat kernel and propagator part of the integral is of the form
\[
\begin{array}{ccl}
\displaystyle
K^{an}_\epsilon(w^{(0)},w^{(d)}) \prod_{a =1}^d P^{an}_{\epsilon,L} (w^{(a-1)}, w^{(a)}) & = & \displaystyle \frac{1}{(2 \pi i \epsilon)^d} \int_{t_1,\ldots,t_d = \epsilon}^L \frac{\d t_1 \cdots \d t_d}{(2 \pi i t_1)^d \cdots (2 \pi i t_d)^d} \frac{1}{t_1\cdots t_d}  \\ & & \displaystyle \times \d^d w^{(0)} \prod_{i=1}^d (\d \Bar{w}^{(1)}_i + \cdots + \d \Bar{w}^{(d)}_i) \\ & \times &  \displaystyle \prod_{a = 1}^d \d^d w^{(a)} \left(\sum_{i = 1}^d \Bar{w}_i^{(a)} \prod_{j \ne i} \d \Bar{w}_{j}^{(a)}\right) e^{-\sum_{a,b = 1}^d M_{a b} w^{(a)} \cdot \Bar{w}^{(b)}}
\end{array}
\]
Here, $M_{ab}$ is the $d \times d$ square matrix satisfying
\[
\sum_{a,b = 1}^d M_{a b} w^{(a)} \cdot \Bar{w}^{(b)} = |\sum_{a = 1}^d w^{(a)} |^2 / \epsilon + \sum_{a = 1}^d |w^{(a)}|^2 / t_a .
\]
Note that
\[
\prod_{i=1}^d (\d \Bar{w}^{(1)}_i + \cdots + \d \Bar{w}^{(d)}_i) \prod_{a = 1}^d \left(\sum_{i = 1}^d \Bar{w}_i^{(a)} \prod_{j \ne i} \d \Bar{w}_{j}^{(a)}\right) = \left( \sum_{i_1,\ldots i_d} \epsilon_{i_1\cdots i_d} \prod_{a=1}^d \Bar{w}^{(a)}_{i_a}\right) \prod_{a=1}^d \d^d \Bar{w}^{(a)} .
\]
In particular, only the $\d w_i^{(0)}$ components of $\alpha^{(0)} \cdots \alpha^{(d)}$ can contribute to the weight.

For some compactly supported function $\Phi$ we can write the weight as
\[
\begin{array}{ccl}
W (\alpha^{(0)}, \ldots, \alpha^{(d)}) & = & \lim_{\epsilon \to 0} \displaystyle \int_{\CC^{d(d+1)}} \left(\prod_{a = 0}^{d} \d^d w^{(a)} \d^d \Bar{w}^{(a)}\right) \Phi \\ & \times & \displaystyle \frac{1}{(2 \pi i \epsilon)^d} \int_{t_1,\ldots,t_d = \epsilon}^L \frac{\d t_1 \cdots \d t_d}{(2 \pi i t_1)^d \cdots (2 \pi i t_d)^d} \frac{1}{t_1\cdots t_d} \\ & \times & \displaystyle \sum_{i_1,\ldots, i_d} \epsilon_{i_1\cdots i_d} \Bar{w}_{i_1}^{(1)} \cdots \Bar{w}_{i_d}^{(d)} e^{-\sum_{a,b = 1}^d M_{a b} w^{(a)} \cdot \Bar{w}^{(b)}} 
\end{array}
\]

Applying Wick's lemma in the variables $w^{(1)}, \ldots, w^{(d)}$, together with some elementary analytic bounds, we find that the weight above becomes to the following integral over $\CC^d$
\[
f(L) \int_{w^{(0)} \in \CC^d}  \d^d w^{(0)} \d^d \Bar{w}^{(0)} \sum_{i_1,\ldots, i_d} \epsilon_{i_1\cdots i_d}  
\left(\frac{\partial}{\partial w_{i_1}^{(1)}} \cdots \frac{\partial}{\partial w_{i_d}^{(d)}} \Phi\right)|_{w^{(1)}=\cdots=w^{(d)} = 0} 
\]
where
\[
f(L) = \frac{1}{(2 \pi i)^d} \lim_{\epsilon \to 0} \int_{t_1,\ldots,t_d = \epsilon}^L \frac{\epsilon}{(\epsilon + t_1 + \cdots + t_d)^{d+1}} \d^d t .
\]
In fact, $f(L)$ is independent of $L$ and is equal to $\frac{1}{(d+1)!}$ after direct computation. 
Finally, plugging in the forms $\alpha^{(0)}, \ldots, \alpha^{(d)}$, we observe that the integral over $w^{(0)} \in \CC^d$ simplifies to
\[
\frac{1}{(2 \pi i)^d} \frac{1}{(d+1)!} \int_{\CC^d} \alpha^{(0)} \partial \alpha^{(1)} \cdots\partial \alpha^{(d)}
\]
as desired.
\end{proof}

\bibliographystyle{alpha}
\bibliography{hic}

\end{document}